\documentclass[11pt, letterpaper, reqno]{amsart}
\usepackage{amssymb, amsmath, amsthm, color, marginnote, hyperref, cancel ,amscd, amsfonts,mathtools}
\usepackage{enumitem}
\usepackage[all]{xy}
\usepackage{hyperref}
\usepackage[T1]{fontenc} 
\usepackage{mathrsfs}
\usepackage{ wasysym}

\usepackage{multicol}
\usepackage{rotating}
\usepackage{graphicx,fancyhdr}
\usepackage{multirow}

\hypersetup{colorlinks=true,citecolor=cyan,linkcolor=cyan,linktocpage=true}

\newtheorem{theorem}{Theorem}
\newtheorem*{characteristic}{Theorem \ref{characteristic}}
\newtheorem*{structure1}{Theorem \ref{structure1}}
\newtheorem*{simplequot}{Theorem \ref{simplequot}}
\newtheorem*{mainA}{Theorem A}
\newtheorem*{mainB}{Theorem B}
\newtheorem*{corSW}{Corollary \ref{SW}}
\newtheorem*{cor3cases}{Corollary \ref{3cases}}

\newtheorem{SWconjecture}[theorem]{Sierra-Walton Conjecture}
\newtheorem{cor}[theorem]{Corollary}
\newtheorem*{infcor}{Informal Corollary}
\newtheorem{lemma}[theorem]{Lemma}
\newtheorem{prop}[theorem]{Proposition}

\theoremstyle{definition}
\newtheorem{remark}[theorem]{Remark}

\newcommand{\pf}{\begin{proof}}
\newcommand{\epf}{\end{proof}}

\newcommand{\ad}{\operatorname{ad}}

\newcommand{\Der}{\operatorname{Der}}
\newcommand{\End}{\operatorname{End}}
\newcommand{\rad}{\operatorname{rad}}

\newcommand{\Hom}{\operatorname{Hom}}

\newcommand{\rk}{\operatorname{rk}}

\newcommand{\Q}{{\mathbb Q}}

\newcommand{\Z}{{\mathbb Z}}

\newcommand{\cC}{\mathcal C}
\newcommand{\cD}{\mathscr D}

\newcommand{\cF}{\mathcal F}

\newcommand{\cL}{\mathcal L}
 
\newcommand{\cP}{\mathcal P}
\newcommand{\cQ}{\mathcal Q}

\newcommand{\Aut}{\mbox{\rm Aut\,}}

\newcommand{\g}{\mathfrak g}
\newcommand{\G}{\mathfrak G}
\newcommand{\fgl}{\mathfrak gl}

\newcommand{\fso}{\mathfrak so}
\newcommand{\fsp}{\mathfrak sp}

\newcommand{\Cs}{\mathscr C}

\newcommand{\fsl}{\mathfrak{sl}}

\newcommand{\fa}{\mathfrak{a}}
\newcommand{\fb}{\mathfrak{b}}

\newcommand{\fg}{\mathfrak{g}}
\newcommand{\fh}{\mathfrak{h}}
\newcommand{\fH}{\mathfrak{H}}

\newcommand{\fm}{\mathfrak{m}}

\newcommand{\fp}{\mathfrak{p}}

\newcommand{\fq}{\mathfrak{q}}

\newcommand{\fr}{\mathfrak{r}}
\newcommand{\fs}{\mathfrak{s}}

\newcommand{\fz}{\mathfrak{z}}

\newcommand{\fB}{\mathfrak{B}}
\newcommand{\fG}{\mathfrak{G}}

\newcommand{\fM}{\mathfrak{M}}
\newcommand{\fS}{\mathfrak{S}}

\newcommand{\Ad}{\mathrm{Ad}}

\newcommand{\Ann}{\mathrm{Ann}}
\newcommand{\Ass}{\mathrm{Ass}}

\newcommand{\Card}{\mathrm{Card\,}}
\newcommand{\Cent}{\mathrm{Cent}}
\newcommand{\Cl}{\mathrm{Cl}}
\newcommand{\codim}{\mathrm{codim}}

\newcommand{\Com}{\mathrm{Com}}

\newcommand{\Deg}{\mathrm{Deg\,}}
\newcommand{\diag}{\mathrm{diag}}

\newcommand{\Elem}{\mathrm{Elem}}

\newcommand{\F}{\mathrm{F}}

\newcommand{\Free}{\operatorname{Free}}
\newcommand{\Gal}{{\mathrm {Gal}}}

\newcommand{\Ker}{\operatorname{Ker}}

\newcommand{\Mat}{\mathrm{Mat}}

\newcommand{\Out}{\mathrm{Out}}

\newcommand{\Spec}{{\mathrm {Spec}}\,}

\newcommand{\Tr}{{\mathrm {Tr}}}

\newcommand{\Vect}{\mathrm{Vect}}
\newcommand{\Vir}{\mathrm{Vir}}

\newcommand{\Witt}{\mathrm{Witt}}

\renewcommand{\d}{\mathrm{d}}

\begin{document}
\title[Weakly Noetherian  Lie algebras]{Weakly Noetherian   Lie algebras\\and the Sierra-Walton Conjecture}

\author[O. Mathieu]{Olivier Mathieu}
\address[O.~Mathieu]{CNRS,
Institut Ca\-mille Jordan
UMR 5028 du CNRS, 
Universit\'e Claude Bernard Lyon
69622 Villeurbanne Cedex, France}
\email{mathieu@math.univ-lyon1.fr}

\subjclass{17B35, 17B68, 16P99, 16R40}
\keywords{Infinite dimensional Lie algebras, Sierra-Walton conjecture, Krichever-Novikov Lie algebras,  Affine Lie Aalgebras, Central identities}

\address{
SUSTech, Shenzhen International Center for Mathematics, Shenzhen, China}

\begin{abstract} Let $K$ be a field of characteristic zero. Motivated by the conjecture that an enveloping algebra
$U(\fg)$ is Noetherian only if $\fg$ is finite dimensional, we define the notion of weakly Noetherian Lie algebras.

The main result, Theorem A, states that weakly Noetherian Lie algebras have a very constrained structure. In the specific case of $\Z^n$ graded Lie algebras, it implies an explicit classification of the perfect strictly weakly Noetherian Lie algebras, stated in Theorem B. The proofs of both theorems are quite long, and uses concrete results due to  Tits,
Formanek, Razmyslov, Grabowski and the  author.

The first theorem provides some insight on the desired conjecture. The second one implies the  conjecture for
all perfect $\Z^n$-graded Lie algebras, improving a celebrated theorem of Sierra and Walton.

\end{abstract}

\thanks{\noindent  \emph{2020 MSC.} 16S30, 17B35.
N.~A. was  partially supported by the Secyt (UNC),
Research  partially supported
by UMR 5028 du CNRS and
by the Shenzhen International Center of Mathematics, SUSTech.}

\maketitle
\setcounter{tocdepth}{2}
\tableofcontents

\section*{Introduction} 

\subsection{General introduction}
Over  a field $K$ of characteristic 0,
any  finite dimensional Lie algebra $\g$
admits a Levi decomposition 
$\fg\simeq\fs\ltimes \fr$, where $\fs$ is 
semisimple and $\fr$ is a solvable ideal.
Motivated by the conjecture 
mentionned in the abstract, we pose the 
 very naive question:

\hskip1cm{\it Do infinite dimensional Lie algebras
admit a structure }

\hskip1cm {\it  theory similar to the finite dimensional case?}

\noindent In general the answer is obviously negative, but we can expect a positive answer for some classes of infinite dimensional Lie algebras. For example, Aldosray, Amayo and Stewart
\cite{ AS1,AS2,Amayo-Stewart, Stewart}
has proved some general results for Lie algebras satisfying some Artinian-Noetherian  conditions. 
Here we try a different approach:
we say that a Lie algera $\fG$ is {\it weakly   Noetherian} if
\begin{enumerate}
\item[(a)] for any subalgebra $\fm\subset\fG$,
the space $\fm/
[\fm,\fm]$ is finite dimensional.
\end{enumerate}
If, in addition
\begin{enumerate}
\item[(b)] any chain of 
characteristic ideals in $\fG$ stabilizes,
\end{enumerate}
\noindent $\fG$ is called {\it Noetherian}. Moreover $\fG$ is called
{\it strictly weakly Noetherian} if $\fG\otimes L$ is weakly Noetherian, for any finite extension $L$ of $K$.
Typical examples of Noetherian Lie algebras are the Krichever-Novikov Lie algebras $\Vect_X$ of vector fields on a smooth algebraic curve $X$, as it is proved in
Section \ref{ExKN}. 

At first glance, the notion of 
weakly Noetherian Lie algebras looks like an abstract nonsense notion.
In fact, we will see that it
is connected with interesting concrete algebraic theories, as explained  in Subsection
\ref{idea}.

First we consider the case of $\Z^n$-graded Lie algebras,
$\cL=\oplus_{{\bf m}\in\Z^n}\,\cL_{\bf m}$, where is is assumed that all homogenous components have finite dimensions.  Given a finite extension $L$ of $K$, we define

\begin{align*}
\Witt(L)&=\Der\,L[t]\\
\Vir(L)&=\Der\,L[t,t^{-1}],
\end{align*}

\noindent and we denote as  $\widehat{\Vir}(L)$  the Virasoro algebra, that is the universal central extension of $\Vir(L)$. For 
perfect $\Z^n$-graded Lie algebra, there is an explicit classification, up to a finite dimensional factor.

\begin{mainB}
Let $\cL$ be a perfect strictly 
weakly Noetherian
$\Z^n$-graded Lie algebra
and let $\hat{\cL}$ be its its universal central extension. Then we have

\begin{align*}
\widehat{\cL}\simeq \fg\,\oplus 
\,\big[\oplus_{i=1}^n\Witt(E_i)\big]\,\oplus 
\,\big[\oplus_{j=1}^m\widehat{\Vir}(F_j)\big]
\end{align*}

\noindent where $\fg\simeq \fs\ltimes\fr$ is a  (perfect) finite dimensional Lie algebra,
$n$, $m$ are integers and 
$E_1,\ldots,E_n,F_1,\ldots, F_m$ are finite extensions of $K$.
\end{mainB}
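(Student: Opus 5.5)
The plan is to derive Theorem B from the general structure theorem (Theorem A) together with the known classification of graded simple Lie algebras of polynomial growth (Mathieu's classification of $\Z$-graded simple Lie algebras of growth one, and its $\Z^n$ analogues). I will first reduce to the simple case, then identify the simple pieces, and finally pass to the universal central extension.

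\emph{Step 1: finitely many graded simple quotients.} Let $\cL$ be perfect, strictly weakly Noetherian and $\Z^n$-graded with finite dimensional components. Applying the structure results for weakly Noetherian Lie algebras (Theorem A), I expect $\cL$ to have a graded ideal $\fr$ with the following properties: $\fr$ is solvable, or more precisely is "finite dimensional modulo the infinite dimensional part." The quotient $\cL/\fr$ should then be a finite direct sum of graded simple Lie algebras $\fs_1,\dots,\fs_k$. Here the weak Noetherian condition (a), applied to suitable subalgebras such as the nonnegative parts and Borel-type subalgebras, forces each infinite dimensional $\fs_i$ to have polynomial growth, in fact growth one in a suitable direction. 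I will also use the theorems of Razmyslov and Formanek on central identities: they show that the centroid of each $\fs_i$ is a finite extension $L_i$ of $K$. This is exactly why "strictly" weakly Noetherian is needed, since it allows base change to $L_i$ while keeping the hypothesis, so each $\fs_i$ may be treated as central simple over $L_i$.

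\emph{Step 2: identify the simple factors.} Next I need to classify the graded central simple Lie algebras over $L$ that are weakly Noetherian. By the classification of simple graded Lie algebras of growth one, the candidates are finite dimensional simple algebras, loop algebras $\fs\otimes L[t,t^{-1}]$ (possibly twisted), the Witt-type algebras $\Vir(L)$, $\Witt(L)$, and Cartan-type algebras $W_n, S_n, H_n, K_n$ with their variants. Each candidate other than $\Witt$, $\Vir$ and the finite dimensional algebras must be excluded by exhibiting a subalgebra $\fm$ with $\dim\fm/[\fm,\fm]=\infty$. For loop algebras, take $\fm=\fh\otimes L[t]+\fn\otimes tL[t]$-type subalgebras, or $\fb\otimes t L[t]$ with $\fb$ solvable; this gives an infinite abelianization, using Tits-type arguments on parabolics. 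For higher-rank Cartan types, take the subalgebra of vector fields $f(x_1)\partial_{x_2}$. Grabowski's results on derivations and abelianizations of subalgebras of vector fields are the tool for handling $\Witt$/$\Vir$-type subalgebras. For $\Vir(L)$ itself, every subalgebra has finite codimension structure and a finite dimensional abelianization, consistent with Section \ref{ExKN}.

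\emph{Step 3: lift to $\widehat{\cL}$.} The ideal $\fr$, together with the finite dimensional simple factors, combines into a finite dimensional perfect $\fg=\fs\ltimes\fr$. The remaining point is to show that the extension of $\bigoplus\Witt(E_i)\oplus\bigoplus\Vir(F_j)$ by $\fg$ splits after passing to the universal central extension. I plan to do this with cohomology. $H^2$ of $\Witt$ is zero and $H^2$ of $\Vir$ is one dimensional (Gelfand–Fuks), which produces $\widehat{\Vir}(F_j)$. Moreover, $H^1$ and $H^2$ with coefficients in finite dimensional modules vanish, because a finite dimensional module over $\Witt$ or $\Vir$ is trivial; this uses simplicity and infinite dimensionality. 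It follows that the infinite dimensional factors act trivially on $\fg$ and commute with it, so the sum is direct.

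The main obstacle is Step 2. There, the exclusion of every Cartan-type and loop-type algebra requires an explicit subalgebra with infinite abelianization, and in addition it must be ensured that Theorem A really forces growth one, so that the growth-one classification applies in the $\Z^n$-graded setting. I would first try to project the grading to a generic $\Z$-grading, reducing to the $\Z$-graded case.
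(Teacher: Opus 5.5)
There is a genuine gap, and it lies in Step 1. You postulate a graded ideal $\fr$, "small modulo the infinite-dimensional part", with $\cL/\fr$ a finite sum of simple graded algebras. That is exactly what carries the weight of Theorem B, and nothing in your sketch produces it. Three facts are needed.

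(i) Maximal ideals must be graded. This fails for graded Lie algebras in general: the loop algebra $\fs\otimes K[t,t^{-1}]$ has the non-graded maximal ideals $\fs\otimes(t-a)K[t,t^{-1}]$. The paper gets gradedness from Theorem \ref{characteristic}: maximal ideals of a weakly Noetherian algebra are characteristic, hence stable under the degree derivation. The finite-codimension case of that theorem is the hard core of Part A (affine Lie algebras, dense subalgebras of $\fs\otimes K[[t]]$, and Theorem \ref{structure1} via Formanek--Razmyslov).

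(ii) There are only finitely many maximal ideals (Lemma \ref{finite}).

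(iii) The intersection of the maximal ideals is small. A priori $\rad(\cL_{(0)})$ could have infinite-dimensional simple quotients of its own, and so on transfinitely. The paper excludes this with Theorem A. Because maximal ideals are characteristic, every simple quotient of $\cL_{(1)}$ has nonzero outer derivations (Corollary \ref{quot}). The graded simple quotients are $\Witt(L)$ or $\Vir(L)$ (Theorem \ref{simplequot}), whose outer derivations vanish by Grabowski. Hence $\cL_{(1)}=0$ (Corollary \ref{vanishing}). Even then $\cL/\cL_{(0)}$ is not obviously finite dimensional: each $\cD^k\rad^f(\cL)$ has finite codimension, but their intersection need not. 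The paper needs the quasi-minuscule weight argument of Lemma \ref{stable} to make the lower central series of $\rad^f(\cL)$ stabilize.

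Step 2 has a second gap. To invoke the classification of graded simple Lie algebras of finite growth, you need weak Noetherianity to force polynomial growth. You give no argument for this, and none is easy; the paper never uses growth. After reducing to a $\Z$-grading and finite generation (Lemma \ref{reduc}, Corollary \ref{strongNoeth}), it argues on the rank of the root lattice:
\begin{itemize}
\item In rank $\geq 2$, Lemma \ref{dicho} gives either weak integrability, hence an affine algebra by Theorem \ref{integrability} (impossible for a simple algebra), or $(\mathcal{H}_{\operatorname{re}})$. The latter yields the infinite-dimensional abelian section $\oplus_n\cL_E^{\tilde\beta+n\tilde\alpha}$ over a finite extension $E$.
\item In rank one, Lemma \ref{S} leaves $\fsl_2$, $\Witt$, $\Vir$ or a nonabelian free subalgebra. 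The last is excluded over a finite extension by Lemma \ref{basic}(c).
\end{itemize}
Hilbert 90 (Lemma \ref{form}) then descends the result to $K$. This is where strictness is used. It is not needed for passing to the centroid, whose finiteness over $K$ comes from Proposition \ref{centroid}, not from Formanek--Razmyslov.

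Finally, in Step 3 the claim that $H^2$ with finite-dimensional coefficients vanishes is false for $\Vir$; as you note yourself, $H^2(\Vir,K)\neq 0$. What you actually need is the centralizer argument of Lemma \ref{centralizer} with $\Out(\Witt)=\Out(\Vir)=0$. That argument works once the finite-dimensional piece has been isolated.
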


In order to state the  structure result in 
general we first need some additional definitions.
Recall that the {\it radical} $\rad(\fG)$ of a Lie algebra
$\fG$ is the intersection of all maximal ideals $\fm$ of codimension $>1$. Assume now that
$\fG$ be a weakly Noetherian Lie algebra satisfying 
the following simplifying hypothesis

\begin{enumerate}
\item[(h1)] $\fG=[\fG,\fG]$, and 
\item[(h2)] $\fG$ does not admit finite dimensional simple quotients.
\end{enumerate}

\noindent Then we define, by transfinite induction, a decreasing sequence,
indexed by ordinals $\alpha$,  
of Lie subalgebras 
$$\fG=\fG_{(0)}\supset \fG_{(1)}\supset \ldots\fG_{(\alpha)}\supset\ldots$$ 

\noindent as follows:

\begin{enumerate}
\item[(a)] if the ordinal $\alpha$ is a successor,
that is $\alpha=\beta+1$ for some $\beta$, then
$$\fG_{(\alpha)}=
[\rad(\fG_{(\beta)}),\rad(\fG_{(\beta)})]$$
\item[(b)] If $\alpha>0$ is a limit ordinal, then
$$\fG_{(\alpha)}=[\cap_{\beta<\alpha}\,\rad(\fG_{(\beta)}), \cap_{\beta<\alpha}\,\rad(\fG_{(\beta)})].$$
\end{enumerate}

In its simplified form, 
the main result is as follows:

\begin{mainA} (simplified version) Let $\fG$ be a weakly Noetherian Lie algebra 
satisfying (h1) and (h2).
Then, for any ordinal $\alpha$:

\begin{enumerate}
\item[(a)] the Lie algebra $\fG_{(\alpha)}$
is a  perfect ideal,

\item[(b)] 
$\fG_{(\alpha)}/\fG_{(\alpha+1)}$
is a central extension of 
$\fG_{(\alpha)}/\rad\fG_{(\alpha)}$ by a finite
dimensional center,

\item[(c)] any
simple quotient  of $\fG_{(\alpha)}$ is infinite dimensional,
and

\item[(d)]
for $\alpha\geq 1$, no Krichever-Novikov algebras occur as a quotient of $\fG_{(\alpha)}$.
\end{enumerate}
\smallskip
Moreover if $\fG$ is Noetherian,
then $\fG_{(\alpha)}=0$ for $\alpha$ big enough.
\end{mainA}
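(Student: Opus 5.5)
The plan is to prove (a)--(c) together by transfinite induction on $\alpha$, with a single inductive step, and then to treat (d) and the Noetherian statement separately. The inductive hypothesis is that $\fG_{(\alpha)}$ is a perfect ideal of $\fG$, is weakly Noetherian (a subalgebra of a weakly Noetherian algebra is again weakly Noetherian), and has no finite dimensional simple quotient. For $\alpha=0$ this is exactly (h1) and (h2).

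For the step, set $\fH=\fG_{(\alpha)}$ and $R=\rad(\fH)$. The first step is to show that $\fH/R$ is a subdirect product of \emph{finitely many} simple Lie algebras, all infinite dimensional. Each maximal ideal $\fm$ of codimension $>1$ gives a simple quotient $\fH/\fm$, which is infinite dimensional by hypothesis. To see that only finitely many are needed, I would intersect successive maximal ideals: $\fm_1\cap\dots\cap\fm_k$ has as quotient the direct sum $\oplus \fH/\fm_i$, because simple perfect quotients with distinct kernels are independent.

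Producing a subalgebra whose abelianization grows with $k$ is where weak Noetherianity must enter, and this is the main obstacle. My first attempt would use (a) of the definition applied to subalgebras $\fm\subset\fH$ of the form (preimage of a diagonal-type or Borel-type subalgebra). For this I would invoke the concrete inputs cited in the abstract (Tits, Formanek--Razmyslov central identities, Grabowski) to show that an infinite dimensional simple quotient contains a subalgebra with nonzero abelianization. Summing over $k$ factors then yields $\dim \fm/[\fm,\fm]\geq k$, which bounds $k$.

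Once $\fH/R$ is known to be a finite product, $R$ is characteristic. Since $\fH$ is an ideal of $\fG$, $R$ is an ideal of $\fG$, and so is $[R,R]$. For (b), apply weak Noetherianity to the subalgebra $R$ itself: $R/[R,R]$ is finite dimensional. Then show that $\fH/[R,R]$ is a central extension of $\fH/R$. The finite dimensional space $R/[R,R]$ is an $\fH/R$-module, hence a finite dimensional module over the product of infinite dimensional simple algebras. Such a module must be trivial, since each simple factor maps to $\fgl(R/[R,R])$ with kernel a nonzero ideal, hence everything.

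This central extension description is at the moment only for $\fH/[R,R]$; the claim concerns $\fG_{(\alpha)}/\fG_{(\alpha+1)}$ with $\fG_{(\alpha+1)}=[R,R]$, so it is exactly what is needed. Perfectness of $[R,R]$ is the delicate part of (a). Here I would show that $R$ is perfect modulo a finite dimensional central piece, then replace $R$ by its perfect core, using that $R/[R,R]$ is finite dimensional and central. Property (c) for $\fG_{(\alpha+1)}$ is argued as follows. A finite dimensional simple quotient $\fs$ of $[R,R]$ would have a kernel whose $\fG$-conjugates intersect in a finite codimension ideal; that would give a finite dimensional quotient of $\fG$ with a nontrivial Levi factor, contradicting (h1)--(h2) via the central extension structure above. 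Limit ordinals are handled identically, using $\cap_{\beta<\alpha}\rad(\fG_{(\beta)})$.

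For (d), suppose some $\fG_{(\alpha)}$ with $\alpha\ge1$ had a Krichever--Novikov quotient $\Vect_X$. Then $\fG_{(\alpha)}$ lies inside $R_{\beta}$, and $R_\beta/[R_\beta,R_\beta]$ is finite dimensional. I would then exploit that $\Vect_X$ has a subalgebra (vector fields vanishing to high order at a point) together with its action through the preceding quotient. This should produce infinitely many independent abelian quotients of a subalgebra, contradicting weak Noetherianity. Finally, if $\fG$ is Noetherian, then as soon as $\fG_{(\alpha)}\neq0$, perfectness and (c) force $\rad\fG_{(\alpha)}\subsetneq\fG_{(\alpha)}$. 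Hence the sequence of characteristic ideals $\fG_{(\alpha)}$ is strictly decreasing while nonzero, and the ascending chain in (b) of the Noetherian definition, or a cardinality argument, forces $\fG_{(\alpha)}=0$ eventually.
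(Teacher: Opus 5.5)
\textbf{Main gap: part (c) for $\alpha\geq 1$.} Your induction cannot pass beyond $\alpha=0$ without this step. You assert that the kernel $\fr$ of a finite-dimensional simple quotient of $[R,R]$ has ``$\fG$-conjugates'' meeting in an ideal of finite codimension, i.e.\ that $\fr$ contains an ideal of $\fG$ of finite codimension in $[R,R]$. This is exactly the hard point of the paper, and it is not formal. In example (a) of Section \ref{counter}, $R=\fsl_2\otimes K[t]$ is perfect. The kernels $\fsl_2\otimes(t-a)K[t]$ of its $\fsl_2$-quotients contain no nonzero ideal of the whole algebra, because $\mathrm{d}/\mathrm{d}t$ moves them.

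The paper fills this with Proposition \ref{propfinitecodim}: in a weakly Noetherian algebra, every maximal ideal of finite codimension $>1$ is characteristic. The argument runs as follows. If a derivation $\partial$ moved $\fm$, the $\partial$-adic completion would be $\fs\otimes K[[t]]$, by the affine Lie algebra lemmas of Section \ref{Charac}. A dense subalgebra of $\fs\otimes K[[t]]$ then has infinitely many simple quotients of fixed dimension over their centroids (via the Nullstellensatz). This contradicts Theorem \ref{structure1}, which rests on Formanek--Razmyslov central polynomials and the finite dimensionality of the centroid.

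Once every maximal ideal $\fr$ of $\fG_{(\alpha)}$ is known to be characteristic, it is an ideal of $\fG$. Since $\fG_{(\alpha)}\subset\rad\fG$ for $\alpha\geq1$, Corollary \ref{quot}(a) gives $\Out(\fG_{(\alpha)}/\fr)\neq 0$. This excludes finite-dimensional quotients by Cartan's theorem, and Krichever--Novikov quotients by Grabowski's $\Out(\Vect_X)=0$. That is the actual mechanism for (d); your sketch via abelian quotients of subalgebras of vanishing vector fields does not supply it.

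\textbf{Other steps that need repair.}

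\emph{Finiteness of maximal ideals.} This step is unnecessary, and your argument for it fails. For each $k$ you produce a different subalgebra with abelianization of dimension $\geq k$, which weak Noetherianity allows, since it gives no uniform bound. Moreover, any line $Kx$ already has nonzero abelianization, so no deep input is involved there. What you actually need is this: a perfect Lie algebra with no finite-dimensional simple quotient acts trivially on every finite-dimensional module $M$. Indeed, its image in $\mathfrak{gl}(M)$ is perfect, hence either zero or with a simple Levi quotient.

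\emph{Why $R$ is characteristic.} ``$R$ is characteristic because $\fH/R$ is a finite product'' is a non sequitur. For example, $\fsl_2\otimes K[[t]]$ has a simple quotient by its radical, yet the radical is not $\mathrm{d}/\mathrm{d}t$-stable. In the simplified setting the correct reason is Lemma \ref{closure}: a maximal ideal of infinite codimension is closed, hence characteristic.

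\emph{Perfectness of $\fG_{(\alpha+1)}$.} You cannot replace $\fG_{(\alpha+1)}=[R,R]$ by a perfect core. Instead, let $\fG$ act on the finite-dimensional module $R/\cD^2R$ (Lemma \ref{basic}(e)); by the triviality above this gives $[R,R]=\cD^2R$.

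\emph{The Noetherian part.} Perfectness and (c) do not produce a maximal ideal; Section \ref{counter} exhibits perfect algebras with none. ACC also says nothing about a descending sequence, whose stabilization is a cardinality matter. Noetherianity enters only through Corollary \ref{idealACC}. It gives ACC on all ideals of $\fG_{(\alpha)}$, hence a maximal ideal, hence $\fG_{(\alpha+1)}\subset\rad\fG_{(\alpha)}\subsetneq\fG_{(\alpha)}$ whenever $\fG_{(\alpha)}\neq0$.
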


Therefore, Theorem A shows that Noetherian Lie algebras $\fG$ have a very
constrained structure. Roughly speaking, $\fG$
is a "tower" of central extensions of 
 infinite dimensional  simple Lie algebras.

\subsection{Relation with the Sierra-Walton conjecture}

\noindent Our investigation is motivated by the following:

\begin{SWconjecture}\label{conjecture:enveloping-Noetherian} \cite{Sierra-Walton}
If $\fg$ is infinite dimensional, then $U(\fg)$ is  not Noetherian.
\end{SWconjecture}

This question has been stated by several authors since the 70's, among them 
R. K. Amayo, I. Stewart, J. Dixmier, V. Latishev, K. A. Brown \cite{Amayo-Stewart, Brown} 
A  breakthrough result was obtained in 2013 by S. Sierra and Ch. Walton:

\begin{theorem}\label{SWthm} \cite{Sierra-Walton}
The enveloping algebra of the Witt algebra $W(K)$
is not Noetherian.
\end{theorem}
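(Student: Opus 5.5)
The plan is to follow the strategy of Sierra and Walton: find a two-sided ideal of $U(W)$, where $W=\Witt(K)=\Der K[t]$, whose quotient is a noncommutative algebra known not to be Noetherian. Equivalently, construct an algebra homomorphism $\phi\colon U(W)\to A$ whose image $B=\phi(U(W))$ is not left (or right) Noetherian. Since a quotient of a Noetherian ring is Noetherian, this suffices.

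\emph{Step 1: a geometric representation.} Take the basis $e_n=t^{n+1}\partial_t$, $n\ge -1$, of $W$. Realize $W$ through a family of ring maps into a twisted or skew setting coming from orbits of a point on a curve. Concretely, Sierra and Walton map $U(W)$ to the ring of differential-type operators on $K[t]$, or to a skew extension $K(t)[s;\sigma]$ with $\sigma$ a shift. A cleaner variant is to let $W$ act on $K[t,t^{-1}]\,t^{\lambda}$ (density modules) through
$$e_n\mapsto t^{n+1}\partial_t+\lambda(n+1)t^n.$$
This gives a homomorphism into a localized Weyl algebra. Its image, however, is too big, and may well be Noetherian. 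So I would instead use the Sierra--Walton map
$$\phi(e_n)=(t^{n+1}-t^{n+1}\text{-shifted})\cdot s,$$
landing in the skew polynomial ring $K(t)[s;\sigma]$ with $\sigma(t)=t+1$, or equivalently the map $e_n\mapsto f_n(t)\,s$ with $f_n$ chosen so that the bracket relation $[e_m,e_n]=(n-m)e_{m+n}$ holds.

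\emph{Step 2: identify the image as a ring of "point-vanishing" type.} Show that $B$ is contained in the subring of $K(t)[s;\sigma]$ whose degree-$k$ components lie in an explicit subspace of $K[t]$, and that the idealizer-type conditions cut out all of $B$ up to finite codimension in each degree. The point is that the relevant $\sigma$-orbit of the point $t=0$ (or of a chosen point) is infinite and not dense in a way that would save Noetherianity. This places $B$ among the rings known to fail the ascending chain condition, by Rogalski's idealizer results.

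\emph{Step 3: exhibit the infinite ascending chain directly.} Consider the left ideals $I_k=\sum_{j\le k}B\,x_j$, where $x_j$ are degree-one elements vanishing to increasing order along the orbit. Check that $x_{k+1}\notin I_k$ by a degree and vanishing-order count: each element of $I_k$ has its degree-one part constrained at the orbit points $\sigma^{-i}(0)$ for $i\le k$, while $x_{k+1}$ violates the constraint at the next point. This yields a strictly increasing chain of left ideals of $B$, so $B$ is not left Noetherian, and hence neither is $U(W)$. Right Noetherianity fails by the antiautomorphism $x\mapsto -x$ of $U(W)$.

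\emph{Main obstacle.} The hard step is Step 2/3: controlling the image $B$ precisely enough to prove non-membership $x_{k+1}\notin I_k$. One needs an invariant (a vanishing condition at an orbit point) preserved under left multiplication by all of $B$. If this direct computation becomes unwieldy, I would fall back on the paper's Theorem B route. Here $W$ is $\Z$-graded and strictly weakly Noetherian, but it is not perfect in the required sense, since $[W,W]=W$ holds but $W$ is of polynomial type. The obstruction to Noetherianity would then come from a subalgebra $\fm$ with $\fm/[\fm,\fm]$ infinite dimensional, and that route does not apply directly. So the explicit idealizer argument is the one I would commit to.
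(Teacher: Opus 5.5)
The paper gives no proof of this statement. It is quoted from \cite{Sierra-Walton} and used as an external input (Corollary~\ref{SW} rests on it), so your proposal has to stand on its own, and as written it has a genuine gap at every substantive step. Your opening reduction is fine: a quotient of a left Noetherian ring is left Noetherian, and the antiautomorphism of $U(W)$ induced by $x\mapsto -x$ swaps left and right. Step~1, however, never defines a homomorphism, and the form you propose cannot be one. In $K(t)[s;\sigma]=\bigoplus_k K(t)s^k$ one has $[f_ms,f_ns]=\big(f_m\sigma(f_n)-f_n\sigma(f_m)\big)s^2$, whereas $(n-m)f_{m+n}s$ lies in $s$-degree one. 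So both sides vanish, and taking $m=-1$ gives $f_k=0$ for every $k\ge -1$. A map must respect the grading, $e_n\mapsto f_ns^n$, with $s$ invertible if $e_{-1}$ is kept. For instance, with $su=(u-1)s$ and $f_n=u+an+b$ one checks $f_m(u)f_n(u-m)-f_n(u)f_m(u-n)=(n-m)f_{m+n}(u)$. Note also that over a one-variable base every infinite $\sigma$-orbit is (critically) dense. So the ``non-dense orbit'' mechanism you invoke in Step~2 is not available in the ring you chose.

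Steps~2 and~3 are where the theorem lives, and you only describe them. You never determine the image $B$. Invoking Rogalski's idealizer theorems first requires proving that $B$ is, or is suitably close to, an idealizer of a point ideal in a twisted homogeneous coordinate ring, and establishing such a description is the real content of the result. The chain in Step~3 is also incoherent. If the $x_j$ vanish to increasing order along the orbit, then $x_{k+1}$ satisfies more of the vanishing conditions than $x_1,\dots,x_k$, so it cannot be kept out of $I_k$ by ``violating the constraint at the next point.'' What you need is a linear condition that every element of $I_k$ satisfies in the relevant degree, that is stable under left multiplication by all of $B$, and that $x_{k+1}$ fails.

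Keeping $e_{-1},e_0$ makes this unmanageable: for any graded map, $B$ is not connected graded and its degree-zero part contains $\phi(e_0),\phi(e_{-1})\phi(e_1),\phi(e_{-1})^2\phi(e_2),\dots$. First reduce to $W_+=\bigoplus_{n\ge1}Ke_n$. This is legitimate because $U(W)$ is a free right $U(W_+)$-module on PBW monomials in $e_{-1},e_0$ (including $1$). Hence $U(W)J\cap U(W_+)=J$ for every left ideal $J$ of $U(W_+)$, and strict chains lift.

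Finally, your fallback rests on a misreading. $\Witt(K)$ is perfect, since $[e_{-1},e_{n+1}]=(n+2)e_n$, and it is strongly Noetherian by Section~\ref{ExKN}, so no subalgebra has infinite-dimensional abelianization. The real reason the paper's machinery cannot give this statement is that Theorem~B explicitly allows $\Witt(E_i)$ summands. The failure of Noetherianity of $U(\Witt(K))$ is an associative phenomenon that the paper imports rather than proves.
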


Consequently, S. Sierra and Ch. Walton stated this question as a conjecture.
From Sierra-Walton Theorem, one can derived the same result for all Kri\-che\-ver-Novikov algebras \cite{Buzaglo}.
The conjecture was confirmed recently for some other classes of Lie algebras 
\cite{BellBuz1,BellBuz2} and \cite{AndMath}. The old work \cite{U} is also 
connected with these questions. 
As a consequence of Theorem B, we obtain the following generalization of Sierra-Walton Theorem.

\begin{corSW} Let $\cL$ be a perfect 
$\Z^n$-graded Lie algebra. 
The algebra $U(\cL)$ is left Noetherian
only if $\cL$ has finite dimension.
\end{corSW}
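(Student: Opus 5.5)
Assume $U(\cL)$ is left Noetherian; we show $\cL$ is finite dimensional. The plan has four steps. First, $\cL$ is weakly Noetherian. Second, this passes to $\cL\otimes L$, so $\cL$ is strictly weakly Noetherian. Third, Theorem B reduces everything to the Witt and Virasoro summands. Fourth, those summands are excluded by the Sierra–Walton Theorem \ref{SWthm}.

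\emph{Step 1: weakly Noetherian.} Let $\fm\subset\cL$ be a subalgebra. Since $U(\cL)$ is a free $U(\fm)$-module by PBW, left ideals of $U(\fm)$ extend to left ideals of $U(\cL)$ with $U(\cL)I\cap U(\fm)=I$. Hence $U(\fm)$ is left Noetherian. The augmentation ideal $U(\fm)\fm$ is then finitely generated, so $\fm$ is generated as a Lie algebra by finitely many elements $x_1,\dots,x_k$. Their images span $\fm/[\fm,\fm]$, which is therefore finite dimensional.

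\emph{Step 2: strictly weakly Noetherian.} Let $L$ be a finite extension of $K$. Then $U(\cL\otimes L)=U(\cL)\otimes_K L$ is a finite free module over the left Noetherian ring $U(\cL)$, so it is left Noetherian too. Applying Step 1 to $\cL\otimes L$ shows $\cL$ is strictly weakly Noetherian.

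\emph{Step 3: apply Theorem B.} Since $\cL$ is perfect, Theorem B applies and gives
$$\widehat{\cL}\simeq\fg\oplus\big[\oplus_i\Witt(E_i)\big]\oplus\big[\oplus_j\widehat{\Vir}(F_j)\big].$$
The center $\fz$ of $\widehat{\cL}$ is finite dimensional. This should follow from the structure of $\widehat{\cL}$ above, or from the perfectness and $\Z^n$-grading with finite-dimensional graded pieces. Because $\cL=\widehat{\cL}/\fz$, it suffices to show that no Witt or Virasoro summand occurs.

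Suppose one does occur. Then some quotient of $\cL$ is isomorphic to $\Witt(E)$ or $\Vir(F)$ modulo a central subspace. Using that $\cL\simeq \widehat{\cL}/\fz$ and the summands are ideals, we get a surjection from $\cL$ onto $\Witt(E)/c$ or $\Vir(F)/c$. Here $c$ is finite dimensional and central, and since these algebras are centerless, $c=0$. Quotients of left Noetherian rings are left Noetherian, so $U(\Witt(E))$ or $U(\Vir(F))$ would be left Noetherian. Each is a Lie algebra over $K$ whose base change contains Witt-type algebras over $K$ as subalgebras: $\Der K[t]\subset\Der E[t]$ and $\Der K[t,t^{-1}]\subset \Der F[t,t^{-1}]$. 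By the subalgebra argument of Step 1, $U(W)$ would then be Noetherian for $W$ one of these two $K$-algebras. The Witt case contradicts Theorem \ref{SWthm} directly. The $\Vir(K)$ case contradicts the Krichever–Novikov extension \cite{Buzaglo}; alternatively, $\Vir(K)$ contains $\Der K[t]$ after reindexing, namely the span of the $t^{n+1}\partial$ with $n\ge -1$, which again contradicts Theorem \ref{SWthm}.

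The main obstacle is Step 3, the lifting from $\widehat{\cL}$ back to $\cL$. The worry is that the Witt and Virasoro factors might not descend to genuine quotients of $\cL$, since $\fz$ could mix with them. I would handle this as follows. The image of $\fz$ in each summand lies in that summand's center, which is $0$ for $\Witt$ and one-dimensional for $\widehat{\Vir}$. Hence the projection $\widehat{\cL}\to\Witt(E_i)$ or $\widehat{\cL}\to\Vir(F_j)$ factors through $\cL$.
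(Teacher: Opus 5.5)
Your proof is correct and follows the paper's route. $U(\cL)$ left Noetherian forces $\cL$ to be strictly (weakly) Noetherian, so Theorem B applies, and any Witt or Virasoro summand would give a section of $\cL$ containing $\Der K[t]$, contradicting Theorem \ref{SWthm}; you simply spell out the passage of Noetherianity to subalgebras and quotients and the descent from $\widehat{\cL}$ to $\cL$, which the paper leaves implicit. One small correction: $\cL$ is $\widehat{\cL}$ modulo the kernel of the universal central extension (a central subspace), not necessarily modulo the full center, but your final projection argument works verbatim with that kernel, and the finite-dimensionality of the center is never needed.
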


A refined version of Theorem A is proved in section
\ref{proofA}. As a consequence we deduce:

\begin{cor3cases}  Any infinite dimensional Noetherian Lie algebra $\fG$ contains two characteristic ideals $\fp\supset\fq$ with
\begin{align*}
\codim&\, \fp<\infty & \codim\,\fq=\infty
\end{align*} 
\noindent such that the  Lie algebra $\fp/\fq$ 
is just-infinite and  either

\begin{enumerate}
\item[(type A)] $\fp/\fq$ is simple, or

\item[(type B)] $\fp/\fq$ is residually nilpotent, or

\item[(type C)] $\fp/\fq$ is perfect, residually semi-simple
and does not satisfies any polynomial identity.
\end{enumerate} 
\end{cor3cases}

\noindent By \cite{Grab1}, any Krichever-Novikov algebra
$\Vect_X$ is simple, that is of type A. Given any point
 $P$ on the curve $X$, the Lie subalgebra $\Vect_X^{(2)}$ of vector fields vanishing twice at $P$ is residually nilpotent,
 that is of type B.  However, we do not know examples of Noetherian Lie algebras of type $C$.

It follows from Corollary 
\ref{3cases} that the Sierra-Walton conjecture can be reduced to these three types of Lie algebras. 

Hovewer Theorem A and some intuitive arguments, briefly outlined in Subsection \ref{undecidable}, suggest that the conjecture could be undecidable for some simple Lie algebras $\fS$.

\subsection{The ideas of the proofs}\label{idea}

The proof of Theorem A is quite long. Although its statement looks formal, its proof is based on concrete results.
The core of the proof consists of the following  result:

\begin{characteristic}
Let $\fG$ be a weakly Noetherian Lie algebra.
 
 Any maximal ideal of codimension $>1$ is characteristic.
\end{characteristic}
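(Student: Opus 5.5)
We must show: if $\fG$ is weakly Noetherian and $\fm\subset\fG$ is a maximal ideal with $\dim\fG/\fm>1$, then $\delta(\fm)\subset\fm$ for every derivation $\delta\in\Der\fG$. The approach is to look at the quotient $\fS=\fG/\fm$, which is a simple Lie algebra (nonabelian, since its dimension exceeds 1). We suppose some derivation $\delta$ has $\delta(\fm)\not\subset\fm$ and derive a contradiction with weak Noetherianity.

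\textbf{Step 1: the bimodule map.} Consider the composite $\phi:\fm\to\fG\to\fS$, $x\mapsto \delta(x)+\fm$. For $x\in\fm$ and $y\in\fG$ we have $\delta[y,x]=[\delta y,x]+[y,\delta x]$, and $[\delta y,x]\in\fm$ since $\fm$ is an ideal. Hence $\phi([y,x])=[\bar y,\phi(x)]$, so $\phi$ is a $\fG$-module map from $\fm$ to the adjoint module $\fS$. It kills $[\fm,\fm]$ modulo $\fm$: indeed $\delta[x,x']=[\delta x,x']+[x,\delta x']\in\fm$. Therefore $\phi$ factors through $\fm/[\fm,\fm]$, and its image is a nonzero ideal of $\fS$, hence equal to $\fS$. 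So $\fm/[\fm,\fm]$ surjects onto $\fS$. If $\fS$ is infinite dimensional, this already contradicts condition (a) applied to the subalgebra $\fm$.

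\textbf{Step 2: the finite-dimensional case.} Now assume $\fS$ is finite dimensional, hence simple of dimension at least 3. No immediate contradiction arises from Step 1, so I would exploit the flexibility in the choice of subalgebra. Since $\fG/\fm$ is finite dimensional simple, the Levi-type splitting should give a subalgebra $\fs\subset\fG$ mapping isomorphically onto $\fS$, provided one first reduces to a finite-dimensional situation; realistically, I would instead use the automorphism-like flow of $\delta$. The idea is to consider the subalgebras $\fm_k=\fm\cap\delta^{-1}(\fm)\cap\dots$, or better the ideals $\fm^{(k)}$ of the derived series and their images. Consider the subalgebra $\fh=\fm+K\,x$ for suitable $x$, or the intersection $\fm\cap\fm'$ with $\fm'=\ker(\fG\to\fS\oplus\fS)$ built from $\phi$. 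The cleanest target is to produce subalgebras $\fn$ with $\fn/[\fn,\fn]$ containing $\fS\otimes V$ for arbitrarily large $V$. To do this, iterate: $\delta$ induces, on $\fm/[\fm,\fm]$, a surjection onto $\fS$, and the kernel $\fm_1$ is again a maximal-type ideal of $\fm$ with quotient $\fS$. Applying Step 1 inside $\fm_1$ (with $\delta$ restricted appropriately, using Leibniz to show $\delta^2$ gives a module map $\fm_1\to\fS$) gives an infinite chain $\fm\supset\fm_1\supset\fm_2\supset\cdots$ whose successive quotients are adjoint $\fS$-modules. Then I would take $\fn=\fm_k$-adapted subalgebras and show $\fn/[\fn,\fn]$ has dimension growing with $k$, contradicting (a).

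\textbf{Main obstacle.} The hard step is Step 2, where $\fS$ is finite dimensional: the abelianization pieces can be absorbed by commutators $[\fm_i,\fm_j]$, because $[\fS,\fS]=\fS$. I expect one must use a genuinely structural input, namely that an infinite filtration by adjoint modules of a simple finite-dimensional $\fS$ forces something like a loop algebra $\fS\otimes A$ inside a quotient. Then the claim would reduce to producing, from $\fS\otimes A$ with $A$ infinite dimensional, a subalgebra such as $\fS\otimes I$ with $I/I^2$ infinite dimensional, or $\fb\otimes A$ for a Borel $\fb$, whose abelianization contains $\fh\otimes A$. The latter is infinite dimensional and contradicts (a). I would try the Borel route first.
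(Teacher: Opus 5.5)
\textbf{Step 1 is correct.} The map $x\mapsto\delta x+\fm$ is a $\fG$-module map $\fm/[\fm,\fm]\to\fS$. Its image is nonzero, hence all of $\fS$. This settles the infinite-codimension case, and it is the same content as the paper's argument via the finite closure (Lemma~\ref{closure}), just phrased directly.

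\textbf{The finite-codimension case is not proved, and neither of your two mechanisms works.}

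Put $\fm_1=\fm$ and $\fm_{k+1}=\{x\in\fm_k\mid \delta x\in\fm_k\}$. Then $[\fm_k,\fm_l]\subset\fm_{k+l}$. Each nonzero $\fm_k/\fm_{k+1}$ embeds via $\delta$ into $\fm_{k-1}/\fm_k$, so it is isomorphic to $\fS$ as a $\fG$-module. As a Lie algebra it is abelian (already $[\fm,\fm]\subset\fm_2$), so it is not a ``maximal-type'' simple quotient. Hence $\dim\fm_k/[\fm_k,\fm_k]\geq\dim\fm_k/\fm_{2k}$, and this does grow without bound. But that contradicts nothing: condition (a) only asks that each single subalgebra have finite-dimensional abelianization, not that there be a uniform bound.

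As for the loop algebra, it does appear, but only after completion. The paper shows (Lemma~\ref{aff[[t]]}) that $\varprojlim\fG/\fm_k\simeq\fS\otimes K[[t]]$. So $\fG/\bigcap_k\fm_k$ is merely a \emph{dense subalgebra} of $\fS\otimes K[[t]]$. The completion is not a section of $\fG$ (and it is trivially not weakly Noetherian). Nothing makes $\fG$ stable under multiplication by $t$ or by any infinite-dimensional algebra $A$; if it were, $K[t]\,x$ would already be an infinite-dimensional abelian subalgebra. So there is no $\fb\otimes A$ or $\fh\otimes A$ inside $\fG$ for your Borel route to exploit.

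\textbf{The missing idea} is a way to manufacture, on $\fG$ itself, operators that imitate multiplication by functions. This is exactly Corollary~\ref{dense2} (no dense subalgebra of $\fS\otimes K[[t]]$ is weakly Noetherian), which is the heart of the paper's proof. It proceeds as follows:
\begin{enumerate}
\item Pass to a finitely generated dense subalgebra contained in $\fS\otimes_L A$, with $A\subset L[[t]]$ finitely generated.
\item Specialize at the closed points of a dense open subset of $\Spec A$. This gives infinitely many maximal ideals whose quotients are simple of the same dimension $d$ over their centroids.
\item Evaluate a Formanek--Razmyslov central polynomial $P_d$ (Theorem~\ref{RazFor}) on suitable tuples from $U(\fG)$, as in Theorem~\ref{structure1}. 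This produces infinitely many linearly independent elements of $\Cent(\fG)$, contradicting Proposition~\ref{centroid}.
\end{enumerate}
The infinite-dimensional abelian subalgebra you were after is in fact what the proof of Proposition~\ref{centroid} finally builds, namely $\{a_nz_n\}$ with $a_n\in\Cent(\fG)$. Getting there, however, requires the central-polynomial and centroid machinery, which your proposal does not supply. As it stands, Step~2 is a heuristic rather than a proof.
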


The difficult point in the proof of Theorem \ref{characteristic} concerns maximal ideals of finite codimension. It is based on
the theory of affine Lie algebras, as
initiated  in \cite{Tits1,Tits2},  and the next result:

\begin{structure1} (simplified version) Let $\fG$ be a  weakly Noetherian Lie algebra and let $d> 1$ be an integer.

Then $\fG$ contains only finitely many maximal ideals $\fm$  of codimension $d$.
\end{structure1}

Its proof combines the theory of central identities of Formanek \cite{Formanek} and Razmyslov \cite{Razmyslov}, together with  the finitness of the dimension of  centroids, i.e.  Proposition \ref{centroid}.

Obviously  the proof of Theorem A runs by transfinite induction, and we will illustrate the use of Theorem \ref{characteristic}
to go to the second step. Using hypotheses (h1) and (h2), it is quite easy to prove that 
$\fG_{(1)}$ is perfect and that $\fG_{(0)}/\fG_{(1)}$ is a central extension of $\fG_{(0)}/\rad(\fG_{(0)})$. To continue
by induction, we consider a maximal ideal $\fm$ of $\fG_{(1)}$
and the corresponding simple Lie algebra $\fS=\fG_{(1)}/\fm$.
Using  Theorem \ref{characteristic},  we show that
the Lie algebra $\Out(\fS)$ of outer derivations of $\fS$
is nonzero. Thus 
by  \cite{Bourbaki}, $\fS$ is infinite dimensional, which allows to pursue the induction. Moreover by
a theorem of  Grabowski
\cite{Grab2}, $\fS$ is not a 
Krichever-Novikov Lie algebra either.

\bigskip
The proof of Theorem B starts with the following theorem,
whose proof is essentially based on author's result
\cite{Mathieu-jalg,Mathieu-inv1,Mathieu-inv2, AndMath} for
algebraically closed fields. The descent
argument  from $\overline{K}$ to $K$ is an easy
consequence of Hilbert's 90 Theorem. 

\begin{simplequot} Let $\cL$ be a simple
infinite-dimensional $\Z^n$-graded
Lie algebra over $K$. 

If $\cL$ is strictly Noetherian, 
then $\cL$ is isomorphic to
$\Witt(L)$ or $\Vir(L)$ for some 
finite extension $L$ of $K$.
\end{simplequot}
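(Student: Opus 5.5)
Here $\cL$ is simple, infinite dimensional, $\Z^n$-graded with finite dimensional homogeneous components, and strictly (weakly) Noetherian. The plan is to pass to an algebraic closure, apply the author's earlier classification there, and descend back to $K$ with Hilbert 90.

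\emph{Step 1: reduce to a simple $\Z^n$-graded algebra over a finite extension.} Let $C$ be the centroid of $\cL$. Since $\cL$ is simple, $C$ is a field. By Proposition \ref{centroid}, applied to the weakly Noetherian algebra $\cL$, $C$ is finite dimensional, hence a finite extension $L$ of $K$. Because $\cL$ is graded with finite dimensional components, $C$ consists of degree-$0$ maps: a nonzero element of $C$ commutes with the grading operators, after passing to a finite extension where these split. So $\cL$ is an $L$-Lie algebra, graded over $L$ and central simple over $L$.

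Set $\overline{\cL}=\cL\otimes_L\overline{K}$. It is simple and $\Z^n$-graded with finite dimensional components. The main hypothesis to check is that $\overline{\cL}$ is weakly Noetherian. Any subalgebra $\fm\subset\overline{\cL}$ is generated by finitely many elements modulo $[\fm,\fm]$, and these are defined over some finite extension $L'$. Strict weak Noetherianity of $\cL$ gives $\dim(\fm'/[\fm',\fm'])<\infty$ for the corresponding $L'$-form $\fm'$. I expect some care here: the subalgebra $\fm$ need not be defined over a finite extension. The fallback is to use only the consequences actually needed downstream, namely finiteness properties of graded subalgebras and of abelian quotients. These can be tested on finitely generated graded pieces, which are defined over finite extensions, and there strictness applies.

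\emph{Step 2: classify over $\overline{K}$.} Next we invoke the author's classification \cite{Mathieu-jalg,Mathieu-inv1,Mathieu-inv2} of simple $\Z^n$-graded Lie algebras of polynomial growth, together with \cite{AndMath}. Weak Noetherianity should force growth conditions, e.g.\ via the finite dimensionality of $\fm/[\fm,\fm]$ for subalgebras such as $\overline{\cL}_{\geq0}$ along suitable directions. After a regrading, this leaves $\overline{\cL}$ among the Witt, Virasoro-type (loop $\Vir$), Cartan-type and loop/affine algebras. We then exclude all but $\Witt(\overline{K})$ and $\Vir(\overline{K})$. For loop algebras $\fs\otimes\overline{K}[t,t^{-1}]$ and Cartan-type algebras in $\geq 2$ variables, we exhibit a subalgebra $\fm$ with infinite dimensional abelianization. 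Examples are an abelian subalgebra of infinite dimension, such as $h\otimes\overline{K}[t]$ or $\overline{K}[x_2]\partial_1$, or a subalgebra like $x\overline{K}[x,y]\partial_y$. The remaining rank-one cases (Witt and $\Vir$) are genuinely weakly Noetherian, as shown in Section \ref{ExKN}. \textbf{This exclusion step is the main obstacle.} It requires matching the hypothesis of the classification theorem (polynomial growth, or the rank-one hypothesis in \cite{Mathieu-inv2}) with what weak Noetherianity provides. I would try first to bound growth by showing that if $\dim\overline{\cL}_{\bf m}$ grows, some graded subalgebra has infinite abelianization.

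\emph{Step 3: descend to $L$.} Now $\cL$ is an $L$-form of $\Witt(\overline{K})$ or $\Vir(\overline{K})$. Forms are classified by $H^1(\Gal(\overline{K}/L),\Aut)$. The relevant automorphism groups are $\Aut\,\Witt\simeq\overline{K}^*$, acting by $t\mapsto at$ (after fixing the subalgebra structure), and $\Aut\,\Vir\simeq\overline{K}^*\rtimes\Z/2$. For this we use that the grading element is canonical up to scalar, being the unique ad-semisimple direction of the rank-one grading. By Hilbert 90, $H^1(\Gal,\overline{K}^*)=0$, so every form of $\Witt$ is trivial. For $\Vir$, the $\Z/2$ part ($t\mapsto t^{-1}$) could a priori give twisted forms. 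These correspond to $\Der$ of the coordinate ring of a form of $\mathbb{G}_m$, i.e.\ a conic minus two conjugate points. Compatibility with the given $\Z^n$-grading over $L$, whose degree-$0$ part contains an $L$-rational grading element, should kill this twist, yielding $\cL\simeq\Witt(L)$ or $\Vir(L)$, viewed as $K$-algebras.
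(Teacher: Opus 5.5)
\textbf{Gap: polynomial growth.} There is a genuine gap, and it is where you flagged it. Step 2 needs weak Noetherianity to force polynomial growth, so that the classification of simple graded Lie algebras of finite growth applies. Nothing in your proposal supplies this, and the paper never proves it. After regrading with Lemma \ref{reduc}, finite dimensionality of $\fm/[\fm,\fm]$ for $\fm=\oplus_{n>0}\cL_n$ only gives finite generation of the positive part. That bounds $\dim\cL_n$ exponentially, not polynomially. The absence of free subalgebras does not by itself bound growth either.

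\textbf{How the paper avoids growth.} The paper fixes a Cartan subalgebra $\fh\subset\cL_0$, splits it over a finite extension $E$ (Lemma \ref{split}), and argues on the rank of the root lattice.
\begin{enumerate}
\item[(i)] In rank $\geq 2$, Lemma \ref{dicho} gives two cases. If $\cL_{\overline K}$ is weakly integrable, it is affine by Theorem \ref{integrability}, which is impossible for a simple algebra with finite dimensional centroid. Otherwise $(\mathcal{H}_{\operatorname{re}})$ holds, and the infinite $\tilde\alpha$-string through $\tilde\beta$ gives the infinite dimensional abelian section $\oplus_{n}\cL_E^{\tilde\beta+n\tilde\alpha}$ of $\cL_E$ (Lemma \ref{Nork2}).
\item[(ii)] In rank one, Lemma \ref{S} leaves $\fsl_2$, $\Witt$, $\Vir$, or a nonabelian free subalgebra. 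The free subalgebra is ruled out by Lemma \ref{basic}(c) applied to $\cL_E$.
\end{enumerate}
This is the missing idea: a dichotomy on roots and strings that makes no reference to growth.

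\textbf{Gap: transfer to $\overline K$.} As written, Step 1 is a second gap: you do not prove that $\cL\otimes_L\overline K$ is weakly Noetherian. The rank argument shows you do not need it. It only produces witnesses of non-Noetherianity that are defined over a finite extension, namely two generators of a free Lie algebra or root spaces over the splitting field $E$, and that is exactly where strictness enters. A polynomial growth bound is not a ``finitely witnessed'' property of this kind. So your fallback cannot deliver what Step 2 actually needs.

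\textbf{Step 3.} This is essentially the paper's Lemma \ref{form}, but you should work with grading-preserving automorphisms from the start. Then $\Aut_0=\overline K^*$, and Hilbert 90 finishes. Your description of the full automorphism group of $\Witt(\overline K)=\Der\,\overline K[t]$ as $\overline K^*$ is inaccurate, since it also contains the translations $t\mapsto t+b$. For $\Vir$, the graded viewpoint removes the $\Z/2$ twist without any further discussion.
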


Given an arbitrary weakly  Notherian Lie algebra $\fG$, the 
definition of the filtration

$$\fG\supset \fG_{(0)}\supset \fG_{(1)}\supset \ldots\fG_{(\alpha)}\supset\ldots$$ 

\noindent is generalized in Section \ref{proofA}. However, when
$\fG$ does not satisfy (h1) or (h2), we have
$\fG\neq \fG_{(0)}$. Nevertheless, $\fG_{(0)}$ satisfies the conclusion of Theorem A.

Now, let $\cL$ be a Lie algebra satisfying the hypotheses of Theorem B. Theo\-rem A(d) and Theorem \ref{simplequot} easily imply that $\cL_{(1)}=0$. 
Moreover, we also deduce
that $\cL_{(0)}/\rad(\cL_{(1)})$ is  a finite sum
of copies of Witt and Virasoro Lie algebras.
Using an argument about representation theory of
finite dimensional Lie algebras, 
we also prove that $\cL/\cL_{(0)}$ is finite dimensional.
Then Grabowski\'s theorem  \cite{Grab2} is used to show that, up to isogeny, $\cL$ is the direct sum of
$\cL/\cL_{(0)}$ and $\cL_{(0)}$, which concludes the proof of Theorem B.

\subsection{Organization of the paper}
Section \ref{Def} contains the main definitions and 
notations. In the next section \ref{counter},
we provide examples of non-Noetherian Lie algebras, 
in order to underline the specificities of the Noetherian or weakly Noetherian Lie algebras.

Then, Part A is devoted to the proof of Theorem A.
Section \ref{Noetherianity} explains the basic properties of weakly Noetherian Lie algebras.
In Section \ref{ExKN}, we show that the Krichever-Novikov Lie algebras are Noetherian. 
The next three Section prepare  the
proof of Theorem A. In Section
\ref{Seccentroid}, an easy argument shows  that centroids of weakly Noetherian Lie algebras are finite dimensional. In section \ref{Ass(c)} we prove a refined version of Theorem \ref{structure1}
and, in Section \ref{characteristic},
we prove Theorem \ref{characteristic}.
Finally, the proof of Theorem A
is given in the last section \ref{proofA}.

Next, Part B, provides the proof of Theorem B.
Section \ref{Z-Noetherian} provides the general properties of $\Z^n$-graded Noetherian Lie algebras.
In Section \ref{classification}, we explain the proof of 
Theorem \ref{simplequot}, and the last Section \ref{proofB} is devoted to the proof of Theorem B.

\section{Main Definitions and Notations}\label{Def}

From now on, we denote by $K$ a field of characteristic zero, and we write $\overline K$ for its algebraic closure.

Depending upon the context, we will 
use symbols like $\fG$, $\fg$, $\cL$ for Lie algebras
and $\fm$, $\fr\ldots $ for ideals of Lie algebras. Usually $\fG$ denotes a Lie algebra of 
arbitrary dimension, $\fg$ a Lie algebra of finite dimension and $\cL$ a graded Lie algebra.

We collect now the main definitions, which mostly concern the  infinite dimensional Lie algebras. 

\subsection{Simple Lie algebras and radicals}
As usual, a one-dimensional Lie algebra
is not considered as a simple Lie algebra.
Accordingly, the
{\it radical} of  a Lie algebra $\fG$, denoted by $\rad \fG$,
is  the intersection  of all  its maximal  ideals of codimension $> 1$. Its finite radical,
denoted as $\rad^f \fG$, is the intersection  of all  its maximal  ideals $\fm$ with

\centerline{$1<\dim \fm<\infty.$}

We define a {\it semisimple} Lie algebra as a {\it finite dimensional Lie algebra} which is a direct sum of simple Lie algebras.  With our restrictive convention, neither a simple Lie algebra of infinite dimension nor an infinite sum
of simple finite dimensional Lie algebra is semisimple.

\subsection{Derived and descending central series}
Let $\fG$ be a Lie algebra of arbitrary dimension.  Recall that $\fG$ is called {\it perfect} if $\fG=[\fG,\fG]$.

The descending central series of $\fG$ is 
defined by induction by
\begin{align*}
\Cs^1\fG &\coloneqq\fG, \\
\Cs^{k+1}\fG &\coloneqq [\fG, \Cs^k\fG]  
\noindent{\rm\,\,for\,\,} k \ge 1,\\
\Cs^{\omega}\fG &\coloneqq \cap_{k \geq 1} \Cs^k\fG 
\end{align*}

\noindent and the derived series  by 
\begin{align*}
\cD^{0} \fG &\coloneqq \fG\\
\cD^{k +1} \fG &\coloneqq [\cD^{k} \fG, \cD^{k} \fG]\noindent{\rm\,\,for\,\,} k \ge 0,\\
\cD^{\omega}\fG &\coloneqq \cap_{k \geq 1} \cD^k\fG.
\end{align*}

\noindent With these conventions we have

\centerline{
$[\Cs^k\fG,\Cs^\ell\fG]\subset \Cs^{k+\ell}\fG$ and 
$\cD^{k} \fG\subset \Cs^{2^k}\fG$.}

\medbreak
 We say that the derived series
$\cD^k \fG$  {\it stabilizes} if
there is an integer $N$ such that

$$\cD^k\fG=\cD^N\fG \,\,\forall k\geq N,$$

\noindent which is equivalent to
$\cD^{N+1}\fG=\cD^N\fG$. The statement that  the central 
series $\Cs^k\fG$ stabilizes is similarly defined. In general, the Lie algebra 
$\cD^{\omega}(\fG)$ is not perfect, and , for any ordinal $\alpha$ we could define $\cD^{\alpha}\fG$.
In the paper, we are only interested in the last term of the series, say $\cD^*\fG$, which can be simply defined without using ordinals.
Indeed $\cD^*\fG$ is  the sum of all perfect ideals, which is obviously
the largest perfect ideal in $\fG$.
When $\cD^*\fG=0$, the Lie algebra is called
{\it purely imperfect}.

We shall use repeatedly the following  obvious facts.

\begin{lemma}\label{stabilization}
Let $\fG$ be a Lie algebra. 
\begin{enumerate}
\item[(a)] If $\fG/[\fG,\fG]$ has finitedimension and its central series stabilizes, then $\fG / \Cs^{\omega}\fG$ is a finite dimensional  nilpotent Lie algebra.

\item[(b)] If the derived series of $\fG$ stabilizes, then 
$\cD^{\omega}\fG$ is a perfect Lie algebra.

\end{enumerate}
\end{lemma}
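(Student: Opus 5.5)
For (a), I would work with the finite dimensional quotients $\fG/\Cs^{k}\fG$ and show that their dimensions are uniformly bounded. Set $V=\fG/[\fG,\fG]=\fG/\Cs^2\fG$, which is finite dimensional by hypothesis. The bracket induces a surjective linear map
$$V\otimes \Cs^k\fG/\Cs^{k+1}\fG\to \Cs^{k+1}\fG/\Cs^{k+2}\fG,\qquad \bar x\otimes \bar y\mapsto \overline{[x,y]}.$$
This map is well defined because $[[\fG,\fG],\Cs^k\fG]\subset\Cs^{k+2}\fG$ and $[\fG,\Cs^{k+1}\fG]=\Cs^{k+2}\fG$. Induction on $k$ then shows that every graded piece $\Cs^k\fG/\Cs^{k+1}\fG$ is finite dimensional, with dimension at most $(\dim V)^k$.

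Next I use stabilization: there is an $N$ with $\Cs^N\fG=\Cs^{k}\fG$ for all $k\geq N$. Hence $\Cs^{\omega}\fG=\Cs^N\fG$. Consequently $\fG/\Cs^{\omega}\fG=\fG/\Cs^N\fG$ has a finite filtration whose successive quotients are finite dimensional, so it is finite dimensional. It is nilpotent because its descending central series is the image of $\Cs^k\fG$, which reaches $0$ at step $N$. No step here looks delicate; the only point to check carefully is that the bilinear map above is well defined.

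For (b), choose $N$ with $\cD^{k}\fG=\cD^N\fG$ for all $k\geq N$. Then $\cD^{\omega}\fG=\cap_k\cD^k\fG=\cD^N\fG$, since the sequence is decreasing and eventually constant. Moreover $[\cD^N\fG,\cD^N\fG]=\cD^{N+1}\fG=\cD^N\fG$, so $\cD^{\omega}\fG$ is perfect. The only small care needed here is the identification of the intersection with $\cD^N\fG$.
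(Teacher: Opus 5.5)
Your proof is correct. The surjection $V\otimes \Cs^k\fG/\Cs^{k+1}\fG\to\Cs^{k+1}\fG/\Cs^{k+2}\fG$ is well defined because $[\Cs^2\fG,\Cs^k\fG]\subset\Cs^{k+2}\fG$, so it bounds every graded piece, and (b) follows at once from $\cD^{\omega}\fG=\cD^N\fG$. The paper gives no proof, calling the lemma "obvious facts"; your argument is the standard check it leaves to the reader.
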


\subsection{Residual and strict properties}
Let $\cP$ be a certain property of Lie algebras.
When a Lie algebra $\fG$ satisfies $\cP$, we say
that $\fG$ is $\cP$. 
Given $\fG$  a Lie algebra, we say that

\begin{enumerate}
\item[(a)] $\fG$ is {\it residually $\cP$}
if, for any $x\in \fG$, there is an ideal
$\fm\subset \fG$ with $x\notin\fm$ such that
$\fG/\fm$ is $\cP$,

\item[(b)] $\fG$ is {\it strictly $\cP$}
if $\fG\otimes L$ is $\cP$ for any finite extension $K$ of $L$.
\end{enumerate}

For exemple,  $\fG$ is  {\it residually nilpotent} if $\Cs^\omega  \fG=0$. 
Furthermore,  $\fG$ is   {\it residually semi-simple} if  $\rad^f\g=0$.
Equivalently, there is a collection $\{\fs_\alpha\mid \alpha\in A\}$ of finite dimensional simple Lie algebras $\fs_\alpha$ such that
$\fG$ is a dense subalgebra of $\prod_{\alpha\in A}\,\fs_\alpha$.

The previous two properties are not exclusive each other. For example, the Lie $\fG=\fsl(2)\otimes tK[t]$. is simultaneously residually semi-simple and residually nilpotent.

\subsection{Other definitions}
We now recollect standard definitions for
arbitrary Lie algebras $\fG$.  

\begin{enumerate}
\item[(a)]  Given another Lie algebras $\fG'$,
we say that $\fG$ and $\fG'$ are {\it commensurable} if they contain the same   subalgebra of finite codimension. 

\item[(b)] A Lie algebra
$\fs$ is called a {\it section} of $\fG$ if there is a Lie subalgebra  $\fp$ of $\fg$ and an ideal
$\fq$ of $\fp$ such that 

\centerline{$\fs\simeq \fp/\fq$.}

\item[(c)] The {\it centralizer} of a subset $S\subset\fG$ is the Lie subalgebra
$$C_\fG(S)\coloneqq\{x\in \fG\mid [x,S]=0\}.$$

The {\it normalizer} of a subalgebra 
$\fa\subset\fG$ is the Lie subalgebra
$$N_\fG(\fa)\coloneqq\{x\in \fG
\mid [x,\fa]\subset\fa\}.$$

\item[(d)]
The  Lie algebra $\fG$ is called  {\it just-infinite} if $\dim\,\fG=\infty$ but any nonzero ideal
of $\fG$ has finite codimension.

\item[(e)] Given an arbitrary algebra $A$, we denote as $\Der\,A$ the Lie algebra of derivation of $A$.
For a Lie algebra $\fG$, the Lie algebra of {\it outer derivations} is

$$\Out(\fG):= \Der\fG/\ad(\fG).$$

\item[(f)] Given a Lie algebra $\fG$ and an ideal
$\fm$, we say that $\fG$ is an {\it extension} of
$\fG/\fm$ by $\fm$. When $\fm$ is a central ideal,
$\fG$ is called a {\it central extension of 
$\fG/\fm$}. 

\item[(g)]  For a perfect Lie algebra $\fG$,
we write $\widehat{\fG}$ for its  universal central extension.

\item[(h)]
An ideal $\fm$ is called a 
{\it characteristic ideal} if
$\partial \,\fm\subset\fm$ for any
derivation $\partial$ of $\fG$.
\end{enumerate}

Recall the following folklore result.

\begin{lemma}\label{perfect-char} Any perfect ideal $\fm$ is characteristic.
\end{lemma}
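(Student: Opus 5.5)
The plan is to check directly that a derivation preserves $\fm$. Let $\partial\in\Der\,\fG$ and let $\fm\subset\fG$ be a perfect ideal, so $\fm=[\fm,\fm]$. I will show $\partial\fm\subset\fm$ by evaluating $\partial$ on brackets, which is the only information perfectness supplies.

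The first step uses perfectness to reduce to brackets. Every element of $\fm$ is a finite sum $\sum_i [x_i,y_i]$ with $x_i,y_i\in\fm$. Since $\partial$ is linear, it suffices to prove $\partial[x,y]\in\fm$ for all $x,y\in\fm$.

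The second step applies the Leibniz rule:
\begin{align*}
\partial[x,y]=[\partial x,y]+[x,\partial y].
\end{align*}
Here $\partial x$ and $\partial y$ are arbitrary elements of $\fG$, while $y,x\in\fm$. Because $\fm$ is an ideal of $\fG$, both $[\partial x,y]$ and $[x,\partial y]$ lie in $\fm$. Hence $\partial[x,y]\in\fm$, and therefore $\partial\fm=\partial[\fm,\fm]\subset[\fG,\fm]\subset\fm$.

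There is no real obstacle in this argument. The only point to watch is that the ideal property is needed for $\fm$ inside the whole of $\fG$, not merely for $\fm$ as a subalgebra. That is exactly what absorbs the arbitrary elements $\partial x,\partial y\in\fG$. Perfectness is what guarantees that every element of $\fm$ is a sum of brackets, so that the Leibniz rule applies to it.
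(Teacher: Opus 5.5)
Your proof is correct and is the same argument as the paper's. The paper compresses it into the single line $\partial[\fm,\fm]\subset[\partial\fm,\fm]\subset[\fG,\fm]\subset\fm$. Your version writes out both Leibniz terms and states the reduction to brackets explicitly.
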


\begin{proof} For any derivation $\partial$
of $\fG$, we have

\centerline{
$\partial \,[\fm,\fm]=
[\partial\fm,\fm]\subset [\fG,\fm]\subset \fm,$}

\noindent which proves the claim.
\end{proof}

We will also use the following technical lemma:

\begin{lemma}\label{centralizer} Let $\fa$ be an ideal of a Lie algebra $\fG$
and  let $\fz$ be the center of $\fa$. Assume that
$\Out(\fa/\fz)=0$.
\begin{enumerate}
\item[(a)] If $\fz=0$, then as  Lie algebras
$$\fG= C_\fG(\fa)\oplus \fa\simeq  \fG/\fa\oplus \fa.$$
\item[(b)] If $\fG$ and $\fa$ are perfect, then as Lie algebras
$$\widehat{\fG}\simeq \widehat{\fG/\fa}\oplus \widehat{\fa}.$$
\end{enumerate}
\end{lemma}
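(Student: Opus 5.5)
For (a), the plan is to use the adjoint action of $\fG$ on the ideal $\fa$. For $x\in\fG$, the restriction $\ad(x)|_\fa$ is a derivation of $\fa$. When $\fz=0$, $\ad:\fa\to\Der\,\fa$ is injective, so $\Out(\fa)=\Out(\fa/\fz)=0$ gives a unique $y\in\fa$ with $\ad(x)|_\fa=\ad(y)|_\fa$. Hence $x-y\in C_\fG(\fa)$, so $\fG=C_\fG(\fa)+\fa$. Moreover $C_\fG(\fa)\cap\fa=\fz=0$, and $C_\fG(\fa)$ is an ideal, since for $c\in C_\fG(\fa)$, $g\in\fG$ and $a\in\fa$ the Jacobi identity gives $[[g,c],a]=[g,[c,a]]-[c,[g,a]]=0$. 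Thus $\fG=C_\fG(\fa)\oplus\fa$ as Lie algebras, and $C_\fG(\fa)\simeq\fG/\fa$.

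For (b), I would reduce to (a) applied to the central quotient. Set $\bar\fG=\fG/\fz$ and $\bar\fa=\fa/\fz$. Since $\fz$ is characteristic in $\fa$, it is an ideal of $\fG$.

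First step: the center of $\bar\fa$ is zero. Let $a\in\fa$ with $[a,\fa]\subset\fz$. Then $b\mapsto[a,b]$ is a linear map $\fa\to\fz$ killing $[\fa,\fa]$, because $[a,[b,c]]=[[a,b],c]+[b,[a,c]]\in[\fz,\fa]+[\fa,\fz]=0$. As $\fa$ is perfect, this map is zero, so $a\in\fz$.

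With the center trivial, (a) applies and gives $\bar\fG=C\oplus\bar\fa$ with $C\simeq\bar\fG/\bar\fa\simeq\fG/\fa$. Both summands are perfect, being quotients of perfect algebras. The universal central extension of a direct sum of perfect algebras is the direct sum of their universal central extensions, since $H_2$ of a direct sum of perfect algebras splits, the mixed term $\fG_1/[\fG_1,\fG_1]\otimes\fG_2/[\fG_2,\fG_2]$ vanishing. This gives $\widehat{\bar\fG}\simeq\widehat{\fG/\fa}\oplus\widehat{\bar\fa}$.

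It remains to identify universal central extensions along central quotients. If $\fG$ is perfect and $\fz'$ is central in $\fG$, then $\widehat{\fG}\to\fG\to\fG/\fz'$ is a central extension: the kernel $K$ of the composite satisfies $[\widehat\fG,K]\subset\ker(\widehat\fG\to\fG)$, which is central, so $[\widehat\fG,[\widehat\fG,K]]=0$, and perfectness of $\widehat\fG$ plus Jacobi give $[\widehat\fG,K]=0$. A perfect central extension of a composite of central extensions is then universal, so $\widehat{\fG}\simeq\widehat{\fG/\fz'}$. Here $\fz$ is not central in $\fG$, only in $\fa$, so this applies directly only to the pair $\fa,\bar\fa$, giving $\widehat{\bar\fa}\simeq\widehat\fa$.

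The main obstacle is showing $\widehat\fG\simeq\widehat{\bar\fG}$ when $\fz$ is not central in $\fG$. I would first try to prove $[\fG,\fz]=0$ directly. For $g\in\fG$, $\ad(g)$ preserves the characteristic ideal $\fz$. Writing $\fa=[\fa,\fa]$, each $z\in\fz$ is a sum of brackets $[a,b]$, and $[g,[a,b]]=[[g,a],b]+[a,[g,b]]$. Alternatively, the decomposition $\bar\fG=C\oplus\bar\fa$ lifts $C$ to $\fG$, and $\fG$ acts on the finite-dimensional-or-not abelian $\fz$ through a representation factoring via this lift. If the action were nontrivial, $\fG$ would not be a central extension of $\bar\fG$. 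In that case I would instead compare directly with $\widehat{\fG/\fa}\oplus\widehat\fa$ via the natural map to $\fG$ and check that it is a universal central extension. If the general statement fails here, I expect the intended use to have $\fz$ central in $\fG$ anyway, which follows for instance if $\fG$ acts trivially on $\fz$ when $\fz$ is finite-dimensional and $\fG$ is perfect with no finite-dimensional quotients.
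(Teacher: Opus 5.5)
\paragraph{Verdict.} The proposal is incomplete: it never proves the step that (b) depends on, although that step closes in a few lines.

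\paragraph{What matches the paper.} Your argument for (a), the proof that $\fa/\fz$ is centerless, the application of (a) to $\fG/\fz$, and the facts about universal central extensions all follow the paper's route and are correct. For passing to a central quotient, the clean justification is that $\widehat{\fG}$ is perfect and centrally closed, hence is the universal central extension of any perfect algebra it centrally covers.

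\paragraph{The gap.} To get $\widehat{\fG}\simeq\widehat{\fG/\fz}$ you need $[\fG,\fz]=0$. You leave this as the ``main obstacle'', assert that $\fz$ is central only in $\fa$, and allow that the statement might fail. The paper's proof makes the same jump, from $\fG/\fz\simeq\fG/\fa\oplus\fa/\fz$ directly to $\widehat{\fG}\simeq\widehat{\fG/\fa}\oplus\widehat{\fa}$, without comment. So you were right to flag it, but it has to be closed.

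Your first idea, expanding $[g,[a,b]]$ by Jacobi, cannot work on its own, because it never uses $\Out(\fa/\fz)=0$. Without that hypothesis centrality really does fail. Take $\fa$ to be the universal central extension of $\fsl_2\otimes A$ with $A=K[s^{\pm1},t^{\pm1}]$; its center is $\Omega^1_A/dA$. The derivation $s\partial_s$ acts as the identity on the nonzero central class of $s\,t^{-1}dt$. Yet $\fa\rtimes\langle\partial_s,s\partial_s,s^2\partial_s\rangle$ is perfect, so it is a perfect algebra acting nontrivially on the center of a perfect ideal.

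\paragraph{The fix.} The missing step uses the same trick as your own first step.
\begin{enumerate}
\item Let $\tilde C\subset\fG$ be the preimage of $C$. Then $\fG=\tilde C+\fa$, and $[\tilde C,\fa]\subset\fz$ because $[C,\bar{\fa}]=0$.
\item For $c\in\tilde C$, $\ad(c)|_{\fa}$ is a derivation of $\fa$ with values in the center of $\fa$. Hence $[c,[x,y]]=[[c,x],y]+[x,[c,y]]=0$ for all $x,y\in\fa$.
\item Since $\fa=[\fa,\fa]$, this gives $[c,\fa]=0$. So $\tilde C\subset C_\fG(\fa)$ and $\fG=C_\fG(\fa)+\fa$.
\item Therefore $[\fG,\fz]\subset[C_\fG(\fa),\fz]+[\fa,\fz]=0$, so $\fz$ is central in $\fG$.
\end{enumerate}
Now your central-quotient argument applies to the perfect algebra $\fG$ and gives
\[
\widehat{\fG}\simeq\widehat{\fG/\fz}\simeq\widehat{\fG/\fa}\oplus\widehat{\fa/\fz}\simeq\widehat{\fG/\fa}\oplus\widehat{\fa}.
\]
Thus (b) holds exactly as stated, and there is no need to fall back on special features of the intended application.
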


\begin{proof} We first prove Assertion (a). Consider  the natural Lie algebra homorphism 

$$\pi:\fG\to\Der\fa ,  z\mapsto \ad(x)\mid_\fa.$$

\noindent By hypothesis we have $\Der\fa\simeq \fa$
and therefore $\fG/\Ker\pi\simeq \fa$. Since 
$\Ker\pi=C_\fG(\fa)$, we deduce

$$\fG= C_\fG(\fa)\oplus \fa\simeq  \fG/\fa\oplus \fa,$$

\noindent which proves the claim.

\medskip
We now prove Assertion (b). Let $\fz'$ be the space of all
elements $z\in \fa$ whose image in $\fa/\fz$ is central. Thus for
$z\in\fz'$, we have

$$\ad(x)\ad(y)(z)=0,\,\,\forall\, x,y\in\fa.$$

Since $\fa$ is perfect we have

$$[\fa,\fz']=[[\fa,\fa],\fz']\subset [\fa,[\fa],\fz']]=0,$$

\noindent therefore $\fz'=\fz$, which means that $\fa/\fz$ is centerless. We have just prove that
$\fG/\fz\simeq \fG/\fa \oplus \fa/\fz$, therefore we have

$$\widehat{\fG}\simeq \widehat{\fG/\fa}\oplus \widehat{\fa},$$

\noindent which completes the proof.
\end{proof}

\subsection{Structure theory of finite dimensional Lie algebra}

 The following classical result is attributed to E. Cartan, cf. 
 \cite{Bourbaki}{\S5}.

\begin{theorem} \label{cartan} Let $\fg$ be a finite dimensional Lie algebra.

\begin{enumerate}
\item[(a)]  The radical $\rad \fg$ of $\fg$ is the maximal solvable ideal.

\item[(b)] Set $\fs=\fg/\rad \fg$. Then
$\fg\simeq\fs\ltimes \rad \fg$.

\item[(c)] If $\fg$ is perfect, then $\rad \fg$ is nilpotent.

\item[(d)] If $\fg$ is solvable, then $[\fg,\fg]$ is nilpotent.

\item[(e)] If $\fg$ is semisimple, then $\Out(\fg)=0$.
\end{enumerate}
\end{theorem}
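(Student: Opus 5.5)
The plan is to reduce everything to the classical finite-dimensional theory in \cite{Bourbaki}. The only point needing an argument is that the radical defined in Section \ref{Def} (the intersection of all maximal ideals of codimension $>1$) agrees, for finite dimensional $\fg$, with the classical radical. Let $\fr$ denote the maximal solvable ideal of $\fg$; it exists because the sum of two solvable ideals is solvable. Assertions (b)--(e) will then be quoted or quickly derived from the standard statements for $\fr$.

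For (a), I would argue by two inclusions. First, let $\fm$ be a maximal ideal of codimension $>1$. Then $\fg/\fm$ has no proper nonzero ideals and is not one-dimensional, so it is simple and non-abelian. The image of $\fr$ in $\fg/\fm$ is a solvable ideal, hence zero, and so $\fr\subset\fm$. This gives $\fr\subset\rad\fg$. Conversely, $\fg/\fr$ is semisimple by Cartan's criterion, say $\fg/\fr=\oplus_i\fs_i$ with each $\fs_i$ simple. The kernels $\fm_i$ of the projections $\fg\to\fs_i$ are maximal ideals of codimension $\dim\fs_i>1$, and $\cap_i\fm_i=\fr$. Hence $\rad\fg\subset\fr$.

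For (b), this is Levi's theorem, which provides a semisimple subalgebra $\fs\subset\fg$ complementary to $\rad\fg$ (\cite{Bourbaki}, \S6.8). For (d), I would extend scalars to $\overline K$, where solvability and nilpotency are unchanged. Lie's theorem then makes $\fg\otimes\overline K$ upper triangular in the adjoint representation, so $[\fg,\fg]$ acts by strictly upper triangular matrices. By Engel's theorem, $[\fg,\fg]$ is nilpotent. For (c), I would use the standard fact that $[\fg,\rad\fg]$ is a nilpotent ideal (\cite{Bourbaki}, \S5.3). If $\fg=\fs\ltimes\rad\fg$ is perfect, then
$$\fg=[\fg,\fg]=[\fs,\fs]+[\fg,\rad\fg],$$
and projecting onto $\rad\fg$ along $\fs$ gives $\rad\fg\subset[\fg,\rad\fg]$. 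Therefore $\rad\fg=[\fg,\rad\fg]$ is nilpotent.

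For (e), I would use that the Killing form $\kappa$ of a semisimple $\fg$ is nondegenerate. Given a derivation $D$, the functional $x\mapsto\Tr(D\circ\ad x)$ is represented as $\kappa(y,\cdot)$ for some $y\in\fg$. Then $D-\ad y$ is a derivation whose image is orthogonal to $\fg$ for the Killing form of $\Der\fg$, and this forces it to vanish. An alternative is Whitehead's first lemma, $H^1(\fg,\fg)=0$.

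The main obstacle is only the bookkeeping in (a): one has to make sure the paper's convention, which excludes one-dimensional quotients, really recovers the solvable radical. This works precisely because codimension-one ideals correspond to abelian quotients, which are discarded. Everything else is textbook material.
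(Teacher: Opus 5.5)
Your proposal is correct and follows essentially the same route as the paper, which gives no argument and simply cites the classical theory in \cite{Bourbaki}; your (b)--(e) are exactly those standard results (Levi, the nilpotency of $[\fg,\rad\fg]$, Lie--Engel, and the Killing form or Whitehead's lemma). Your two-inclusion argument for (a) usefully supplies the one step the paper leaves implicit: its radical, defined as the intersection of maximal ideals of codimension $>1$, coincides with the maximal solvable ideal, because a maximal ideal has codimension one exactly when its quotient is abelian.
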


\section{Examples of non-Noetherian Lie algebras}\label{counter}

We provide examples of  non-Noetherian Lie algebras which
underline the specificity of Noetherian Lie algebras.
In what follows, $\fG$ denotes a Noetherian Lie algebra
and $\fH$ will designate a non-Noetherian Lie algebra.
For simplicity, we will always assume that $\fG$ and $\fH$ are perfect.

\subsection{Perfect Lie algebras without maximal ideals}

Theorem A states that   $\fG_{(\alpha)}=0$ for $\alpha$ big enough.
In fact, by Corollary \ref{idealACC}, any nonzero Noetherian Lie algebra admits a maximal ideal, therefore the descending series
$\fG_{(\alpha)}$ is decreasing.

However there are perfect Lie algebras $\fH$ without maximal ideals.
Such an example can be obtained easily using 
an HNN extension of free Lie algebras. Let
$\Free(\aleph_0)$ be a free Lie algebra over countably many letters. Since $\Free(\aleph_0)$ and $[\Free(\aleph_0),\Free(\aleph_0)]$ are isomorphic, there is an injective  Lie algebra homomorphism

$$\Phi: \Free(\aleph_0)\to \Free(\aleph_0)$$

\noindent such that $\Phi(\Free(\aleph_0))=
[\Free(\aleph_0),\Free(\aleph_0)]$. 

Let 
$\F_1, \F_2,\ldots$ be copies of the Lie algebra
$\Free(\aleph_0)$. Thus we get an inductive system

$$\F_1 \xlongrightarrow{\Phi}\F_2
 \xlongrightarrow{\Phi}\ldots.$$

\noindent Let

$$\cF=\varinjlim\,\F_i=\cup_{i=1}^\infty \F_i$$

\noindent be its inductive limit.

Since $\F_i=[\F_{i+1},\F_{i+1}]$, the Lie algebra
$\cF$ is perfect. It is clear that $[F_1,F_1]$ is an ideal. We observe that  $\fH:=\cF/[\F_1,\F_1]$ is an union of the
solvable ideals $\F_n/[\F_1,\F_1]=\F_n/\cD^n \F_n$, therefore
$\fH$ has no simple quotients. 

In conclusion, we  ask

\hskip2cm{\it Does a perfect  weakly Noetherian Lie algebra }

\hskip2cm{\it $\fG\neq 0$ admits at least one maximal ideal?}

\subsection{Perfect Lie algebras $\fH$ with a non-characteristic radical}

Theorem \ref{characteristic} insures that 
any maximal ideal of $\fG$ is characteristic.
In particular, $\rad(\fG)$ is characteristic.

However, it is not the case for non-Noetherian Lie algebra.
A typical example is 
$\fH=\fsl_2\otimes K[[t]]$. Its radical, namely
$\fsl_2\otimes t K[[t]]$,  is not stable by the
derivation $\partial=\frac{\d}{\d t}$.

\subsection{Simple quotients of the radicals}

For any weakly Noetherian Lie algebra, it will  be established that 
\begin{enumerate}
\item[(a)] all simple quotients of
$\rad^f(\fG)$ or of $\rad(\fG)$ have infinite dimension,
\item[(b)] no simple quotient of
$\rad(\fG)$  is a Novikov-Krichever-algebra
\end{enumerate}

However, none of these statements is correct for 
non-Noetherian  Lie algebras $\fH$.

\begin{enumerate}
\item[(a)] Let $\fs\subset \Der K[t]$ be the Lie algebra with
basis $\{\frac{\d}{\d t}, t\frac{\d}{\d t}, 
t^2\frac{\d}{\d t}\}$ and set
$\fH=\fs\ltimes \big(\fsl_2\otimes K[t]\big)$.

Then we have $\rad^f(\fH)=\rad(\fH)= \fsl_2\otimes K[t]$
and these radicals have  quotients isomorphic to 
 $\fsl_2$.

\item[(b)] Let $C$ be a smooth affine curve, let $S$ be a smooth
affine surface and let $\pi:S\to C$ be a smooth fibration.
By assumption, the fibers

$$S_c=\{x\in S\mid \pi(x)=c\}$$ 

\noindent are smooth curves for any $c\in C$.
Set 

$$\fH=\{\partial\in \Der K[S]\mid \partial K[C]\subset K[C]\}.$$

The kernel of the natural morphism 
$\fH\to\Vect_C, \partial\mapsto \partial\mid_{K[C]}$ is clearly 
the radical $\rad(\fH)$ of $\fH$. Moreover each
Krichever-Novikov Lie algebra $\Vect_{S_c}$ is an homorphic image of
$\rad(\fH)$.
\end{enumerate}

\part*{\hskip35mm Part A: Proof of Theorem A}

\section{Various Noetherianity properties for Lie algebras}\label{Noetherianity}

\subsection{Definition of Noetherianity for Lie algebras}
We define three notions of Noetherianity for Lie algebras:

\begin{enumerate}
\item[(1)]
A Lie algebra $\fG$ is called {\it weakly Noetherian}  if 
$$\dim\,\fm/[\fm,\fm]<\infty$$
\noindent for all subalgebras $\fm$ in $\fG$.
\item[(2)]
A weakly Noetherian Lie algebra $\fG$ is called
{\it Noetherian} if  satisfies the  ascending chain condition (ACC in brief)  on 
the characteristic ideals.
\item[(3)]
A Lie algebra $\fG$ is called {\it strongly Noetherian} if all its subalgebras are finitely generated. 
\end{enumerate}

We observe that the notion of Noetherianity for
the associative algebras is preserved by finite 
extensions, as shown by the following 
well-known statement.

\begin{lemma} If an associative algebra $A$
 is left Noetherian, then $A$ is strictly left  Noetherian.
\end{lemma}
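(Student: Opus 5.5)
Fix a finite extension $L$ of $K$ and set $A_L=A\otimes_K L$. We want $A_L$ to be left Noetherian. The plan is to view $A_L$ as a left $A$-module and reduce the question to the Noetherianity of that module.

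First, as a left $A$-module (via $a\mapsto a\otimes 1$), $A_L\simeq A^{\oplus d}$, where $d=[L:K]$. To see this, choose a $K$-basis $e_1,\ldots,e_d$ of $L$; then $A_L=\bigoplus_{i=1}^d A\otimes e_i$, and each summand is isomorphic to $A$ as a left $A$-module. Since $A$ is left Noetherian, the standard closure of Noetherian modules under finite direct sums (via the short exact sequences $0\to A^{k}\to A^{k+1}\to A\to 0$) shows that $A^{\oplus d}$ is a Noetherian left $A$-module. Hence every ascending chain of left $A$-submodules of $A_L$ stabilizes.

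Next, let $I_1\subset I_2\subset\cdots$ be an ascending chain of left ideals of $A_L$. Each $I_k$ is stable under left multiplication by $A_L$, and in particular under left multiplication by $A\otimes 1$, so each $I_k$ is a left $A$-submodule of $A_L$. By the previous step the chain stabilizes. Thus $A_L$ is left Noetherian, and since $L$ was arbitrary, $A$ is strictly left Noetherian.

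There is no real obstacle here. The only point needing care is that $A\otimes 1$ is central-free-compatible: $K$ is central in $A$, so $A\otimes 1$ is a subalgebra over which $A_L$ is a finitely generated free left module. Left ideals of $A_L$ then form a subfamily of the $A$-submodules, and ACC on the larger family implies ACC on the smaller one. The same argument works verbatim for any finite-dimensional (indeed any finitely generated) extension.
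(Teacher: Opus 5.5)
Your proof is correct and is the paper's argument: $A\otimes_K L$ is a free left $A$-module of rank $[L:K]$, so it satisfies ACC on left $A$-submodules, and every left ideal of $A\otimes_K L$ is such a submodule. Your closing aside overreaches: for a finitely generated but transcendental field extension such as $K(t)$, the module $A\otimes_K K(t)$ is free of infinite rank over $A$, so the argument does not carry over verbatim; that case would instead need Hilbert's basis theorem together with central localization.
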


\begin{proof} Let $L\supset K$ be a finite extension. Obviously, 
$A\otimes L$ is
a left free $A$-module of finite rank.
Thus $A\otimes L$ satisfies the ACC condition on left $A$-submodules. We deduce that $A\otimes L$ satisfies the ACC on left $A\otimes L$-modules, that is on the left ideals.
Therefore
$A\otimes L$ is left Noetherian.
\end{proof}

By contrast,  the notions of Noetherianity for the Lie algebras are  not obviously preserved by finite extensions. For clarification, let state the following obvious Corollary.

\begin{cor}\label{logical} The various notions of Noetherianity are logically connected by the following scheme:

\begin{align*}
&U(\fg) \text{\,\,strictly Noetherian}
&\Leftrightarrow \hskip17mm & U(\fg) \text{\,\, Noetherian}\\
&\hskip12mm \Downarrow&&\hskip12mm\Downarrow\\
&\fg \text{\,\,strictly strongly Noetherian}&
\Rightarrow\hskip17mm
&\fg \text{\,\, strongly Noetherian}\\
&\hskip12mm \Downarrow&&\hskip12mm\Downarrow\\
&\fg \text{\,\,strictly Noetherian}&
\Rightarrow\hskip17mm
&\fg \text{\,\, Noetherian}\\
&\hskip12mm\Downarrow&&\hskip12mm\Downarrow\\
&\fg \text{\,\,strictly weakly Noetherian}
&\Rightarrow\hskip17mm &\fg \text{\,\, weakly Noetherian}
\end{align*}
\end{cor}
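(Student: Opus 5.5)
The scheme splits into three kinds of implications. The horizontal arrows say that a ``strictly $\cP$'' property implies the corresponding plain property $\cP$. The vertical arrows descend from $U(\fg)$ Noetherian, to strongly Noetherian, to Noetherian, to weakly Noetherian, and they must be checked both in the plain and in the strict column. The top equivalence is the preceding Lemma.

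\emph{The horizontal arrows.} These are immediate: take the trivial extension $L=K$ in the definition of ``strictly'', since $\fg\otimes_K K=\fg$.

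\emph{The top equivalence.} The direction $\Rightarrow$ is again trivial, taking $L=K$. For $\Leftarrow$, I will use the identification $U(\fg)\otimes L\simeq U(\fg\otimes L)$. Then the preceding Lemma, applied to $A=U(\fg)$, shows that $U(\fg\otimes L)$ is left Noetherian for every finite extension $L$.

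\emph{The vertical arrows.} It suffices to prove them for a single Lie algebra. The strict versions then follow by applying the plain statement to $\fg\otimes L$, again using $U(\fg\otimes L)=U(\fg)\otimes L$.

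\begin{enumerate}
\item[(i)] \emph{$U(\fg)$ Noetherian $\Rightarrow$ $\fg$ strongly Noetherian.} Let $\fm\subset\fg$ be a subalgebra. For finitely generated subalgebras $\fm_1\subset\fm_2\subset\ldots$ exhausting $\fm$, consider the left ideals $U(\fg)\fm_i$. By ACC this chain stabilizes, say at $U(\fg)\fm_N$. I then need $\fm\cap U(\fg)\fm_N=\fm_N$, or at least that $U(\fg)\fm'$ determines $\fm'$ for subalgebras $\fm'\subset\fm$. This is the one point requiring care, and I would handle it with PBW. Choose a basis of $\fg$ extending a basis of $\fm'$. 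Then $U(\fg)\fm'$ is the span of PBW monomials containing at least one factor from $\fm'$, i.e.\ the kernel of $U(\fg)\to U(\fg)\otimes_{U(\fm')}K$. Hence $U(\fg)\fm'\cap\fg=\fm'$, and the chain $\fm_i$ stabilizes, so $\fm$ is finitely generated.
\item[(ii)] \emph{Strongly Noetherian $\Rightarrow$ Noetherian.} If every subalgebra is finitely generated, then $\fm/[\fm,\fm]$ is spanned by the images of finitely many generators, so it is finite dimensional; this gives weak Noetherianity. For ACC on characteristic ideals, let $I_1\subset I_2\subset\ldots$ be such ideals. The union $I=\bigcup_k I_k$ is a subalgebra, hence finitely generated, and all generators lie in some $I_N$. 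Since $I_N$ is itself a subalgebra, $I=I_N$.
\item[(iii)] \emph{Noetherian $\Rightarrow$ weakly Noetherian.} This is true by definition.
\end{enumerate}

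The only nonformal step is the PBW argument in (i), showing that $U(\fg)\fm'\cap\fg=\fm'$. Everything else is bookkeeping.
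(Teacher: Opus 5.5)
Your proof is correct and follows the paper's route. The paper calls this corollary obvious and gives no proof beyond the preceding Lemma for the top equivalence. Your PBW argument is the standard way to fill in the one nonformal arrow: $U(\fg)$ is a free right $U(\fm')$-module once the $\fm'$-basis vectors are placed last in the PBW order, so $U(\fg)\fm'\cap\fg=\fm'$.

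The only slip is in step (i). An arbitrary subalgebra need not be a countable union of finitely generated ones, so phrase that step by contradiction instead. If $\fm$ is not finitely generated, choose $x_{k+1}\in\fm\setminus\langle x_1,\dots,x_k\rangle$ for each $k$. This gives a strictly increasing chain of subalgebras, and hence, by injectivity of $\fm'\mapsto U(\fg)\fm'$, a strictly increasing chain of left ideals, contradicting the ACC.
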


\subsection{Elementary properties of weakly Noetherian Lie algebras}

We have collected the elementary properties
of weakly Noetherian Lie algebras in the
following Lemma.

\begin{lemma}\label{basic} 
Let $\fG$ be a weakly Noetherian Lie algebra. 

\medbreak
\begin{enumerate}[leftmargin=*,label=\rm{(\roman*)}]

\medbreak
\item[(a)]\label{item:section} Any section
$\fs$ of   $\fG$  is  also weakly Noetherian. 

\medbreak
\item[(b)]\label{item:abelian-section} Any abelian  section  of $\fG$ is finite dimensional.

\medbreak
\item[(c)]\label{item:nofree}
$\fG$ does not contain nonabelian free subalgebras.

\medbreak
\item[(d)]\label{item:commensurable}
If  $\fG'$ is commensurable with  $\fG$, then
$\fG'$ is also weakly Noetherian.

\medbreak
\item[(e)]\label{item:derived-finite-codim}
The $k$-th term $\cD^{k} \fG$ of the derived series has finite codimension
in $\g$ for any $k \geq 1$. 
In particular, if $\fG$ is solvable, then
then $\dim \fG < \infty$. 

\medbreak
\item[(f)]\label{item:derived-virnilpotent}
We have $\cD^\omega \fG=\Cs^\omega [\fG,\fG]$. 

\medbreak
\item[(g)] \label{item:perfect-virnilp}
If $\fG$ is perfect and $\fm$ is an ideal of  finite codimension, 
then $\cD^\omega\fm= \Cs^\omega\fm$.
\end{enumerate}
\end{lemma}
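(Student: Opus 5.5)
I would prove the seven items in the order stated; each later item uses only the earlier ones.

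\emph{Items (a)--(d).}
\begin{enumerate}
\item[(a)] Let $\fs=\fp/\fq$ with $\fp\subset\fG$ a subalgebra and $\fq$ an ideal of $\fp$. Any subalgebra of $\fs$ has the form $\fm/\fq$ with $\fq\subset\fm\subset\fp$ a subalgebra of $\fG$. Its abelianization $(\fm/\fq)/[\fm/\fq,\fm/\fq]=\fm/([\fm,\fm]+\fq)$ is a quotient of $\fm/[\fm,\fm]$, hence finite dimensional.
\item[(b)] If $\fa$ is an abelian section, every subspace of $\fa$ is a subalgebra. By (a), $\fa=\fa/[\fa,\fa]$ is finite dimensional.
\item[(c)] A nonabelian free Lie algebra on generators $x,y$ contains the subalgebra generated by the elements $\ad(x)^k y$, $k\ge 0$. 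This subalgebra is free on infinitely many generators, so its abelianization is infinite dimensional.
\item[(d)] Let $\fh$ be a common subalgebra of finite codimension in $\fG$ and $\fG'$. For a subalgebra $\fm\subset\fG'$, the intersection $\fm\cap\fh$ has finite codimension in $\fm$ and is a subalgebra of $\fG$. Hence $\dim (\fm\cap\fh)/[\fm\cap\fh,\fm\cap\fh]<\infty$. Since $[\fm\cap\fh,\fm\cap\fh]\subset[\fm,\fm]\cap\fh$, the subspace $[\fm,\fm]$ contains a subspace of finite codimension in $\fm$. Therefore $\fm/[\fm,\fm]$ is finite dimensional.
\end{enumerate}

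\emph{Item (e).} Argue by induction on $k$. Each $\cD^k\fG$ is a subalgebra, so $\cD^k\fG/\cD^{k+1}\fG$ is its abelianization and is finite dimensional. Summing these finitely many finite dimensional steps gives $\codim\,\cD^{k}\fG<\infty$. If $\fG$ is solvable, then $\cD^k\fG=0$ for some $k$, so $\dim\fG<\infty$.

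\emph{Item (f).} Set $\fh=[\fG,\fG]$, so $\cD^{k+1}\fG=\cD^k\fh$. The inclusion $\cD^k\fh\subset\Cs^{2^k}\fh$ gives $\cD^\omega\fG\subset\Cs^\omega\fh$. For the reverse inclusion:
\begin{enumerate}
\item Fix $k$. By (e), $\fh/\cD^k\fh$ is finite dimensional, and it is solvable.
\item By Theorem \ref{cartan}(d), the derived algebra of a finite dimensional solvable Lie algebra is nilpotent. Apply this to $\fG/\cD^k\fh$: this quotient is finite dimensional, since $\cD^k\fh=\cD^{k+1}\fG$ has finite codimension by (e). Its derived algebra is $\fh/\cD^k\fh$, which is therefore nilpotent.
\item Hence $\Cs^N\fh\subset\cD^k\fh$ for some $N$, so $\Cs^\omega\fh\subset\cD^k\fh$.
\item Intersecting over $k$ gives $\Cs^\omega\fh\subset\cD^\omega\fG$.
\end{enumerate}
The key point is that we use nilpotency of the derived algebra of a finite dimensional solvable quotient, not of the solvable algebra itself.

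\emph{Item (g).} Let $\fm$ be an ideal of finite codimension in the perfect algebra $\fG$.
\begin{enumerate}
\item The inclusion $\cD^\omega\fm\subset\Cs^\omega\fm$ holds as in (f).
\item For the reverse, fix $k$. By (a) and (e), $\fm/\cD^k\fm$ is finite dimensional. Each $\cD^k\fm$ is characteristic in $\fm$, hence an ideal of $\fG$.
\item So $\fG/\cD^k\fm$ is finite dimensional and perfect, with solvable ideal $\fm/\cD^k\fm$.
\item By Theorem \ref{cartan}(a),(c), the solvable ideal $\fm/\cD^k\fm$ lies in the radical of $\fG/\cD^k\fm$, which is nilpotent. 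Hence $\fm/\cD^k\fm$ is nilpotent, so $\Cs^N\fm\subset\cD^k\fm$ for some $N$.
\item Intersecting over $k$ gives $\Cs^\omega\fm\subset\cD^\omega\fm$.
\end{enumerate}

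The only real obstacle is this reverse inclusion in (f) and (g). It requires choosing the right finite dimensional quotient so that Theorem \ref{cartan}(c) or (d) gives nilpotency, not merely solvability. In (g) one must also check that $\cD^k\fm$ is an ideal of $\fG$; this holds because derived terms of an ideal are characteristic in it.
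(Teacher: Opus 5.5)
Your proposal is correct and follows essentially the same route as the paper. It uses the same free subalgebra generated by the elements $\ad(x)^k y$ in (c), the same counting of abelian sections in (e), and the same application of Theorem \ref{cartan}(d), resp.\ (a),(c), to the finite dimensional quotients $\fG/\cD^{k}\fG$ and $\fG/\cD^k\fm$ in (f) and (g). The differences are cosmetic: in (d) you work directly with the common subalgebra of finite codimension instead of reducing to the case $\fG\subset\fG'$, and in (g) you state explicitly that $\cD^k\fm$ is an ideal of $\fG$, which the paper uses without comment.
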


\noindent
\pf

Assertions (a) and (b) are obvious. 

We prove Assertion (c) by contradiction.
Otherwise, $\fG$ contains a free Lie algebra
over two generators $x$ and $y$. The
subalgebra $F$ generated by the set 
$S:=\{\ad(x)^n(y)\mid n\geq 0\}$ is 
the free Lie algebra over $S$. Therefore
$F/[F,F]$ is an infinite dimensional 
abelian section,
which contradicts weak Noetherianity.

Assertion (d) amounts to prove that if
 $\fG'$ contains $\fG$ as a subalgebra 
 of finite codimension, then 
 $\fG'$ is also weakly Noetherian.
 Let $\fa'$ be an arbitrary subalgebra
 of $\fG'$.
 Set $\fa=\fa'\cap \fG$. In view of the short exact sequence
 
 $$0\to \fa/[\fa,\fa]\to\fa'/[\fa,\fa]\to
 \fa'/\fa\to 0,$$

\noindent we deduce that 

\begin{align*}
\dim \fa'/[\fa',\fa']&\leq\dim \fa'/[\fa,\fa]\\
&=\dim \fa/[\fa,\fa]+\dim \fa'/\fa\\
&\leq \dim \fa/[\fa,\fa]+\dim \fG'/\fG<\infty,
\end{align*}

\noindent which proves that $\fG'$ is weakly Noetherian.

Assertion (e) follows from the fact that
$\fG/\cD^k\fG$ has a composition series consisting of $k$ abelian sections, thus
$\fG/\cD^k\fG$ has finite dimension.

We now prove Assertion (f). 
Let $k$ be any integer. It has been proved that
the solvable Lie algebra $\fG/\cD^k\fG$ is 
finite dimensional. By Theorem \ref{cartan}(d),
$[\fG/\cD^k\fG,\fG/\cD^k\fG]$ is nilpotent.
Therefore 

$$\cD^k\fG\supset \cC^\omega [\fG,\fG],$$

\noindent thus
$\cD^\omega\fG\supset \cC^\omega [\fG,\fG]$.
Since obviously

$$\cD^\omega \fG=\cD^\omega[\fG,\fG]\subset\Cs^\omega [\fG,\fG], $$
we finally deduce that

$$\cD^\omega\fG=\Cs^\omega [\fG,\fG].$$

Finally, we prove Assertion (g). 
Set $\fg=\fG/\cD^k \fm$. By the previous 
consi\-de\-rations, $\fg$ is 
of finite dimensional. Since $\fm/\cD^k\fm$ is a solvable ideal of the perfect Lie algebra $\fg$, Theorem \ref{cartan}(a)(c) asserts that  
$\fm/\cD^k\fm$ is nilpotent. Therefore
$$\cD^k\fm\supset \cC^\omega\fm,$$

\noindent for all integers $k$. It follows that
$\cD^\omega\fm\supset \cC^\omega\fm$ and therefore

$$\cD^\omega\fm= \cC^\omega\fm,$$ 

\noindent as claimed.
\end{proof}

\subsection{Finite closure of ideals}

Let  $\fG$ be a Lie algebra, and let $\fm$ be an ideal. Let $\cF$ be the set of all
ideals $\fr\supset\fm$ such that 
$\dim\fr/\fm<\infty$.
We define the {\it finite closure} 
$\Cl_{\fG}(\fm)$ of $\fm$
in $\fG$ as the ideal

$$\Cl_{\fG}(\fm)=\sum_{\fr\in\cF}\,\fr.$$

\noindent For an ideal $\fm$ of finite codimension, we have $\Cl_{\fG}(\fm)=\fG$,
therefore this notion 
is only interesting for ideals
of infinite codimension.
An ideal $\fm$  is called 
{\it closed} if $\fm=\Cl_{\fG}(\fm)$.
In general, $\cF$ has no maximal 
element, thus  $\Cl_{\fG}(\fm)/\fm$ could be infinite dimensional.

\begin{lemma}\label{closure} 
Any chain of finite dimensional ideals 
in  a weakly Noetherian Lie algebra $\fG$ stabilizes.

Moreover,  any ideal $\fm$ satisfies

\begin{enumerate}
\item[(a)] $\Cl_{\fG}(\fm)/\fm$ is finite 
dimensional, and
\item[(b)] $\Cl_{\fG}(\fm)$ is a characteristic ideal.
\end{enumerate}
\end{lemma}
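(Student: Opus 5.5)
The plan is to reduce everything to one finiteness statement: in a weakly Noetherian Lie algebra $\fG$, the sum $F(\fG)$ of all finite dimensional ideals is finite dimensional. The sum of two finite dimensional ideals is again a finite dimensional ideal, so the finite dimensional ideals form a directed family whose union is $F(\fG)$. Hence the ACC claim in the first sentence of Lemma \ref{closure} is equivalent to $\dim F(\fG)<\infty$, and Assertion (a) is its special case for $\fG/\fm$, since $\Cl_\fG(\fm)/\fm=F(\fG/\fm)$. Here $\fG/\fm$ is weakly Noetherian by Lemma \ref{basic}(a).

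To prove the ACC, I take an increasing chain $\fr_1\subset\fr_2\subset\cdots$ of finite dimensional ideals and set $R=\bigcup\fr_i$, which is weakly Noetherian. I split $R$ along Levi decompositions (Theorem \ref{cartan}).
\begin{enumerate}
\item[(1)] The solvable radical $\fs_i$ and the nilradical $\fn_i$ of $\fr_i$ are characteristic in $\fr_i$. Hence they are ideals of $\fG$, and they form increasing chains with unions $S\supset N$.
\item[(2)] $R/S$ is a union of semisimple ideals, hence a direct sum of finite dimensional simple ideals. If there were infinitely many summands, choosing a nonzero Cartan element in each would produce an infinite dimensional abelian subalgebra of a section, contradicting Lemma \ref{basic}(b). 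So $R/S$ is finite dimensional.
\item[(3)] $[\fs_i,\fs_i]\subset\fn_i$, so $S/N$ is an abelian section and is finite dimensional by Lemma \ref{basic}(b).
\item[(4)] It remains to show that the locally nilpotent ideal $N$ is finite dimensional.
\end{enumerate}

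Step (4) is the main obstacle. I would first use weak Noetherianity to see that $N/[N,N]$ is finite dimensional. Then I would choose finitely many elements $x_1,\dots,x_d$ in some $\fn_j$ whose images span $N/[N,N]$, and let $A\subset\fn_j$ be the finite dimensional subalgebra they generate. The aim is to prove $A=N$. For each $k$, one has $\fn_k\subset A+[N,N]$, and $[N,N]\cap\fn_k\subset[\fn_m,\fn_m]$ for some $m\ge k$. The difficulty is to upgrade this to $\fn_m=A+[\fn_m,\fn_m]$, after which the standard fact gives $\fn_m=A$: in a finite dimensional nilpotent Lie algebra, elements spanning the abelianization generate. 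If this bookkeeping fails, I would instead apply Lemma \ref{basic}(b) to the increasing chain of centers, or more generally of the terms of the upper central series of the $\fn_i$ restricted to $N$, to bound $\dim\fn_i$ uniformly.

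For Assertion (b), I take a derivation $\partial$ of $\fG$. For a finite dimensional ideal $\fr$, the identity $[x,\partial r]=\partial[x,r]-[\partial x,r]$ shows that $\fr+\partial\fr$ is again a finite dimensional ideal. Hence $\partial F(\fG)\subset F(\fG)$, so $\Cl_\fG(0)$ is characteristic. For a general ideal $\fm$ with $\partial\fm\subset\fm$, $\partial$ descends to a derivation of $\fG/\fm$, and the same argument applied there gives $\partial\Cl_\fG(\fm)\subset\Cl_\fG(\fm)$. I expect this to be the intended reading, namely that the finite closure of a characteristic ideal is characteristic. Without some stability of $\fm$ under $\partial$, I do not see how to control $\partial\fm$.
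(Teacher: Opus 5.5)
Your reduction of the ACC claim and of (a) to $\dim F(\fG)<\infty$ is fine, and steps (1)--(3) are correct. There are, however, two genuine gaps.

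\emph{Step (4), which carries the whole ACC claim, is not proved, and the bookkeeping you sketch cannot prove it.} The facts you list use only $\dim N/[N,N]<\infty$ and nilpotence of the $\mathfrak{n}_i$. They all hold in the following example. Take $N=Ke_1\oplus\bigoplus_{j\geq 2}Ke_j$ with $[e_1,e_j]=e_{j-1}$ for $j\geq 3$ and all other brackets of basis vectors zero, and let $\mathfrak{n}_i=Ke_1\oplus\cdots\oplus Ke_i$. Then $A=Ke_1$, $\mathfrak{n}_k\subset A+[N,N]=N$ and $[N,N]\cap\mathfrak{n}_k=[\mathfrak{n}_{k+1},\mathfrak{n}_{k+1}]$. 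Yet $e_m\notin A+[\mathfrak{n}_m,\mathfrak{n}_m]$ and $\dim N=\infty$. So a correct argument must use either that the $\mathfrak{n}_i$ are ideals (they are not here) or more of weak Noetherianity (here $\bigoplus_{j\geq 2}Ke_j$ is an infinite dimensional abelian subalgebra). Your fallback does not close the gap either: Lemma \ref{basic}(b) bounds each factor $Z_k(N)/Z_{k-1}(N)$, but not the number of factors.

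The paper needs no Levi decomposition at all. Pass to a subchain with $\dim\fr_k>(\dim\fr_{k-1})^2+\dim\fr_{k-1}$. Then the kernel $C_{\fr_k}(\fr_{k-1})$ of $x\mapsto\ad(x)|_{\fr_{k-1}}$ contains some $x_k\notin\fr_{k-1}$. The $x_k$ are linearly independent and pairwise commute, giving an infinite dimensional abelian subalgebra.

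Inside your framework you could also finish as follows. Choose $j$ with $N=[N,N]+\mathfrak{n}_j$; then $Q=N/\mathfrak{n}_j$ is perfect. Every element of $Q$ lies in one of the nilpotent ideals $\mathfrak{n}_p/\mathfrak{n}_j$, so Engel's theorem puts each of them inside some $Z_k(Q)$. Perfectness forces $Z_2(Q)=Z_1(Q)$, as in the proof of Lemma \ref{centralizer}(b). Hence $Q$ is abelian and perfect, so $Q=0$.

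\emph{Assertion (b) is stated for an arbitrary ideal $\fm$, and you prove it only when $\partial\fm\subset\fm$.} That restricted version is useless where the lemma is applied. In Theorem \ref{characteristic} one must show that a maximal ideal of infinite codimension, which is closed, is characteristic. In Corollary \ref{idealACC} the ideals of $\fa$ are arbitrary.

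The missing idea is to argue with $\mathfrak{c}:=\Cl_{\fG}(\fm)$ itself rather than descending $\partial$ to $\fG/\fm$.
\begin{enumerate}
\item[(i)] By (a), $\mathfrak{c}$ is closed.
\item[(ii)] For a derivation $\partial$, the identity $[x,\partial c]=\partial[x,c]-[\partial x,c]$ shows that $\mathfrak{c}+\partial\mathfrak{c}$ is an ideal.
\item[(iii)] Since $\partial[\mathfrak{c},\mathfrak{c}]\subset[\partial\mathfrak{c},\mathfrak{c}]\subset\mathfrak{c}$, the map $c\mapsto\partial c\bmod\mathfrak{c}$ factors through $\mathfrak{c}/[\mathfrak{c},\mathfrak{c}]$. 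This space is finite dimensional because $\mathfrak{c}$ is a subalgebra of the weakly Noetherian $\fG$.
\item[(iv)] Hence $\dim(\mathfrak{c}+\partial\mathfrak{c})/\mathfrak{c}<\infty$, so $\mathfrak{c}+\partial\mathfrak{c}\subset\Cl_{\fG}(\mathfrak{c})=\mathfrak{c}$.
\end{enumerate}
This is where weak Noetherianity enters (b), and no stability of $\fm$ under $\partial$ is needed.
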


\begin{proof} We first prove the first assertion
by contradiction. Assume otherwise, thus there 
is  an increasing chain

$$0=\fm_0\subset \fm_1\subset\fm_2\subset\ldots$$

\noindent of finite dimensional ideals. We can 
select a subchain in a way that

$$\dim \fm_{k}>\dim \fm_{k-1}^2.$$

Since $C_{\fm_{k}}(\fm_{k-1})$ is the kernel of the Lie algebra morphism

$$\ad:\fm_{k}\to \fgl(\fm_{k-1}), 
x\mapsto \ad(x)\mid_{\fm_{k-1}},$$

\noindent  dimension considerations show that there is an element
$x_k\in C_{\fm_{k}}(\fm_{k-1})$ which 
belongs to $\fm_{k}\setminus \fm_{k-1}$.
By construction, the elements $x_1,x_2,\cdots$
are linearly independent and they pairwise commute, which contradicts that $\fG$ is weakly Noetherian.

We now prove Assertion (a). Since any chain
of finite dimension ideals in $\fG/\fm$ stabilizes,
we conclude $\Cl_{\fG/\fm}(0)$ is finite dimensional.
Since 
$$\Cl_{\fG}(\fm)/\fm\simeq\Cl_{\fG/\fm}(0),$$ 

\noindent we conclude that  $\dim \Cl_{\fG}(\fm)/\fm<\infty.$

In order to prove Assertion (b), we can assume that
$\fm$ is closed. Let
$\partial$ be a derivation of $\fg$
and set $\fm'=\fm+\partial\fm$.
We have 

$$\partial[\fm,\fm]\subset 
[\partial\fm,\fm]\subset [\fg,\fm]\subset\fm.$$

\noindent It follows that 

$$\dim\,\fm'/\fm\leq \dim \fm/[\fm,\fm]<\infty.$$

\noindent Since $\fm'$ is an ideal and $\fm$ is closed, we have 
$\fm'=\fm$ which amounts to
$\partial\,\fm\subset\fm$. Since $\fm$ is stable by any derivation $\partial$, we conclude that
$\fm$ is characteristic.
\end{proof}

\begin{cor}\label{idealACC} Let $\fG$ be a Noetherian Lie algebra and let $\fa$ be an ideal.
\begin{itemize}
\item[(a)]  $\fa$  satisfies the ACC condition on chains of ideals,
\item[(b)] In particular, any quotient of $\fa$ is a Noetherian Lie algebra.
\end{itemize}
\end{cor}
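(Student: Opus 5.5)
The plan is to reduce everything to the ACC on characteristic ideals of $\fG$, using Lemma \ref{closure} to convert ideals of $\fa$ into characteristic ideals of $\fG$.

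For part (a), let $\fb_1\subset\fb_2\subset\ldots$ be an ascending chain of ideals of $\fa$. We want to show it stabilizes. The first step is to pass from ideals of $\fa$ to ideals of $\fG$. An ideal $\fb$ of $\fa$ need not be an ideal of $\fG$, but it is of a controlled form. The natural candidate is the ideal of $\fG$ generated by $\fb$, or alternatively the largest ideal of $\fG$ contained in $\fb$. Neither obviously preserves strict inclusions, however. So we use a different route: we work with the perfect or characteristic pieces.

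Observe that if $\fb$ is an ideal of $\fa$, then $[\fb,\fb]$ is stable under $\ad(x)$ for all $x\in\fG$ only if $\fb$ is. Instead, the cleanest approach is the following. Any derivation of $\fG$ preserving $\fa$ restricts to a derivation of $\fa$, and each $\ad(x)$, $x\in\fG$, is such a derivation. Hence, if we can show that the relevant ideals of $\fa$ are characteristic in $\fa$, they become $\ad(\fG)$-stable, i.e.\ ideals of $\fG$. We then need them to be characteristic in $\fG$, for which Lemma \ref{closure}(b) is the tool.

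Concretely, replace each $\fb_i$ by its finite closure $\Cl_{\fa}(\fb_i)$ inside $\fa$. Note that $\fa$ is weakly Noetherian by Lemma \ref{basic}(a). By Lemma \ref{closure}(b) applied to $\fa$, $\Cl_\fa(\fb_i)$ is characteristic in $\fa$, hence an ideal of $\fG$. Applying Lemma \ref{closure}(b) again in $\fG$, $\Cl_\fG(\Cl_\fa(\fb_i))$ is characteristic in $\fG$. These form an ascending chain of characteristic ideals of $\fG$, so the chain stabilizes by Noetherianity of $\fG$.

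It remains to descend back to the $\fb_i$. Each descent step loses only finitely many dimensions, by Lemma \ref{closure}(a). But the relevant issue is stabilization, not dimension. We therefore argue that once the outer closure $\fc$ is constant, the chain $\Cl_\fa(\fb_i)$ lies in $\fc$ with $\dim \fc/\Cl_\fa(\fb_i)$ bounded. Indeed, $\Cl_\fa(\fb_i)\supset\Cl_\fa(\fb_1)$, and $\dim \fc/\Cl_\fa(\fb_1)<\infty$ via Lemma \ref{closure}(a). So that chain stabilizes. Similarly, once $\Cl_\fa(\fb_i)=\fd$ is constant, $\fb_i\supset\fb_{i_0}$ with $\dim \fd/\fb_{i_0}<\infty$, so the $\fb_i$ stabilize too.

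The main obstacle is the point in the previous paragraph where we use $\dim \Cl(\fb)/\fb<\infty$ relative to a fixed $\fb_{i_0}$. The $\fb_i$ for $i\geq i_0$ all lie between $\fb_{i_0}$ and $\fd=\Cl_\fa(\fb_{i_0})$, and $\fd/\fb_{i_0}$ is finite dimensional. Hence any chain in between stabilizes by dimension count. The same argument handles the middle layer. Therefore the whole argument is a three-layer sandwich, and the only substantive inputs are Lemma \ref{closure}(a),(b) and the fact that characteristic ideals of an ideal are ideals of $\fG$.

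For part (b), a quotient $\fa/\fb$ is weakly Noetherian by Lemma \ref{basic}(a). Its characteristic ideals are in particular ideals, which pull back to ideals of $\fa$ containing $\fb$. So the ACC on them follows from (a).
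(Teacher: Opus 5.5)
Your proof is correct and is essentially the paper's argument: you turn each ideal $\fb_i$ of $\fa$ into the characteristic ideal $\Cl_\fG(\Cl_\fa(\fb_i))$ of $\fG$ via Lemma~\ref{closure}(b), apply the ACC on characteristic ideals, and descend using the finite-dimensionality in Lemma~\ref{closure}(a); the paper merges your two descent layers into one by noting that $\fb_k$ has finite codimension in $\Cl_\fG(\Cl_\fa(\fb_k))$. One small slip: the bound should be $\dim \mathfrak{c}/\Cl_\fa(\fb_N)<\infty$, where $N$ is the index from which the outer chain is constant, not $\dim \mathfrak{c}/\Cl_\fa(\fb_1)<\infty$, because $\mathfrak{c}$ need not equal $\Cl_\fG(\Cl_\fa(\fb_1))$.
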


\begin{proof} Let

$$\fm_1\subset\fm_2\subset\ldots$$

\noindent be a chain of ideals of $\fa$. By Assertion (b) of Lemma \ref{closure}, 
 $\Cl_\fa(\fm_k)$ is an
ideal of $\fG$, for any integer $k$.
By Assertion (b) of Lemma \ref{closure}
the chain 

$$\Cl_\fG( \Cl_\fa(\fm_1))
\subset\Cl_\fG( \Cl_\fa(\fm_2))\subset\ldots.$$

\noindent consist of characteristic ideals of $\fG$. Hence the chain stabilizes,
so there exist an integer $N$ such that

$$\Cl_\fG( \Cl_\fa(\fm_k))=\Cl_\fG( \Cl_\fa(\fm_N)),$$

\noindent for any $k\geq N$. Hence the chain

$$\fm_k\subset\fm_{k+1}\subset
\fm_{k+2}\ldots$$

\noindent is nested between $\fm_k$ and
$\Cl_\fG( \Cl_\fa(\fm_k))$. By Assertion (a)
of Lemma \ref{closure}, $\fm_k$ has finite codimension in
$\Cl_\fG( \Cl_\fa(\fm_k))$. We deduce that the
chain $\fm_k\subset\fm_{k+1}\subset
\fm_{k+2}\ldots$ stabilizes, which proves that $\fa$ satisfies the ACC condition on ideals,
that is Assertion (a).

It follows that any quotient of $\fa$ is Noetherian.
\end{proof}

\begin{cor}\label{ideal-just} Let $\fG$ be a Noetherian Lie algebra. 

If $\fG$ is just infinite,
then any nonzero ideal of $\fG$ is just-infinite.
\end{cor}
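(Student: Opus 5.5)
Let $\fa\neq 0$ be an ideal of the just-infinite Noetherian Lie algebra $\fG$. Since $\fG$ is just-infinite, $\fa$ has finite codimension, so $\dim\fa=\infty$. It remains to show that every nonzero ideal $\fm$ of $\fa$ has finite codimension in $\fa$.

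The plan is to pass from the ideal $\fm$ of $\fa$ to an ideal of $\fG$. Since $\fm$ is an ideal of $\fa$ and $\fa$ is an ideal of $\fG$, Lemma \ref{closure}(b), applied in the weakly Noetherian Lie algebra $\fa$ (weakly Noetherian by Lemma \ref{basic}(a)), shows that $\fm':=\Cl_\fa(\fm)$ is a characteristic ideal of $\fa$. Every $x\in\fG$ acts on $\fa$ by the derivation $\ad(x)\mid_\fa$, so $\fm'$ is stable under $\ad(\fG)$, i.e. $\fm'$ is an ideal of $\fG$. By Lemma \ref{closure}(a), $\dim\fm'/\fm<\infty$. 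Because $\fm'\supset\fm\neq 0$, just-infiniteness of $\fG$ gives $\codim_\fG\fm'<\infty$, hence $\codim_\fa\fm'<\infty$.

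It remains to pass from $\fm'$ back to $\fm$:
$$\dim\fa/\fm=\dim\fa/\fm'+\dim\fm'/\fm<\infty.$$
Hence every nonzero ideal of $\fa$ has finite codimension in $\fa$, and $\fa$ is just-infinite.

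The only potentially delicate point is whether Lemma \ref{closure} can be applied inside $\fa$ rather than $\fG$. This is justified since $\fa$ is weakly Noetherian as a section of $\fG$, and the statement of Lemma \ref{closure} requires only weak Noetherianity. The Noetherian hypothesis (ACC) on $\fG$ is therefore not really needed beyond weak Noetherianity.
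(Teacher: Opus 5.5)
Your proof is correct and is essentially the paper's argument: replace the ideal $\fm$ of $\fa$ by its finite closure $\Cl_\fa(\fm)$, which is characteristic in $\fa$ and hence an ideal of $\fG$, then combine just-infiniteness of $\fG$ with Lemma~\ref{closure}(a). The paper leaves implicit three points that you state explicitly, and all are correct: $\fa$ is weakly Noetherian as a subalgebra of $\fG$, $\dim\fa=\infty$, and the ACC hypothesis is never used.
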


\begin{proof} Let $\fa$ be a nonzero ideal
of $\fG$ and let $\fb$ be a nonzero  ideal
of $\fa$. By Lemma \ref{closure}(b),
$\Cl_{\fa}(\fb)$ is an ideal of 
$\fG$, thus by hypothesis

$$\dim \fG/\Cl_{\fa}(\fb)<\infty.$$

\noindent Moreover by Lemma \ref{closure}(a)
we have

$$\dim \Cl_{\fa}(\fb)/\fb<\infty,$$

\noindent from which it follows that 
$\fb$ has finite codimension in $\fa$. Hence
$\fa$ is just-infinite.
\end{proof}

\section{Examples of strongly Noetherian Lie algebras.}\label{ExKN}

\noindent
The {\it Krichever-Novikov algebras}
are the Lie algebras $\Vect_X$ of vector fields on a smooth affine curve $X$.  However, unlike
 \cite{KN}, we do not assume that 
 $X$ has exactly two points at infinity. We show that the Krichever-Novikov algebras, which are simple by Grabowski Theorem \cite{Grab1}, are Noetherian
 and just-infinite.

Throughout the section, we will use the following notation. Given a graded Lie algebra 
$\fG=\oplus_{n\in \Z} \fG_n$, we set

$$\fG_{\geq 1}=\oplus_{n\geq 1} \fG_n.$$

\subsection{Noetherianity for Filtered Lie algebras}

A Lie algebra $\g$ endowed  with a filtration
$0=\g(-1)\subset\fg(0)\subset \fg(1)\subset\cdots$ satisfying

\begin{align*}
[\fg(n),\fg(m)]&\subset \fg(n+m) & \cup_n\g(n)&=\g
\end{align*}

\noindent is called a {\it filtered  Lie algebra.}
The associated graded Lie algebra is

$$\fG=\oplus_{n\geq  0}\,\,\fG_n$$

\noindent where $\fG_n=\fg(n)/\fg(n-1)$.

\noindent Here {\it we do not assume that the homogenous component $\fG_0=\fg(0)$  has finite dimension.} However,  the hypotheses of the lemma will
imply that,  for $n\geq 1$, the components 
$\fG_{n}$ are finite dimensional.

\begin{lemma}\label{Noeth-just}
 Let $\g$ be a filtered Lie algebra, with associated graded algebra $\fG$. If

 \begin{itemize}
\item[(a1)]The Lie algebra $\fG_{\geq 1}$
is weakly Noetherian, and

\item[(a2)] the Lie algebra 
$\fg(0)$ is strongly Noetherian,
\end{itemize}

\noindent then the Lie algebra $\g$ is strongly  Noetherian.

\medbreak
Moreover, if 

 \begin{itemize}
\item[(b1)] All  graded ideals   of any graded subalgebra $\fB$ of $\G_{\geq 1}$ have 
finite codimension in $\fB$, and

\item[(b2)] any subalgebra of  $\fg(0)$ is just-infinite,
\end{itemize}

\noindent then all subalgebras of $\g$ are  just-infinite.
\end{lemma}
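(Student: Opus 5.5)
Fix a subalgebra $\fm\subset\g$; the plan is to pass to the induced filtration $\fm(n)=\fm\cap\g(n)$ and its associated graded $\mathrm{gr}\,\fm=\oplus_n \fm(n)/\fm(n-1)$, which is a graded subalgebra of $\fG$. Its positive part $\mathrm{gr}\,\fm_{\geq 1}$ is a graded subalgebra $\fB$ of $\fG_{\geq 1}$, and its degree zero part is $\fm(0)=\fm\cap\fg(0)$, a subalgebra of $\fg(0)$.

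For strong Noetherianity, I would first show that $\fB$ is finitely generated. Set $\fB'=[\fB,\fB]$. By (a1), $\dim\fB/\fB'<\infty$. Since $\fB'$ is graded, we can choose finitely many homogeneous elements of $\fB$ spanning a complement of $\fB'$. A standard degree induction then shows that these elements generate $\fB$: because $\fB$ lives in positive degrees, each homogeneous element of $\fB'$ of degree $n$ is a sum of brackets of homogeneous elements of strictly smaller positive degree, and these lie in the subalgebra already generated by induction.

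Next, I lift these generators to elements $y_1,\dots,y_r\in\fm$ with the correct leading symbols. By (a2), $\fm(0)$ is finitely generated, say by $z_1,\dots,z_s$. Let $\fm'$ be the subalgebra of $\fm$ generated by all the $y_i$ and $z_j$. I then claim $\fm'=\fm$, proved by induction on the filtration degree. If $x\in\fm(n)$ with $n\geq 1$, its symbol lies in $\fB_n$ and is a polynomial expression in the symbols of the $y_i$. Subtracting the same Lie expression in the $y_i$ leaves an element of $\fm(n-1)$. At degree $0$ we land in $\fm(0)$, which is generated by the $z_j$. The one point needing care is that the symbol of a bracket of lifts equals the bracket of the symbols modulo lower filtration; this holds because the filtration is compatible with the bracket.

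For the just-infinite part, take a subalgebra $\fm$ and a nonzero ideal $\fa\subset\fm$. The induced filtration gives $\mathrm{gr}\,\fa$, which is a graded ideal of $\mathrm{gr}\,\fm$. Its positive part is a graded ideal of $\fB$ (here $[\fB,\mathrm{gr}\,\fa_{\geq1}]$ lands in positive degrees). By (b1), this positive part is either of finite codimension in $\fB$, or it is $0$.

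Here the main obstacle appears. If the positive part of $\mathrm{gr}\,\fa$ is $0$, then $\fa\subset\fm(0)$, and this case must be excluded or handled separately. I would handle it by noting that $\fa\cap\fm(0)$ is an ideal of $\fm(0)$, and by (b2) it has finite codimension in $\fm(0)$ if nonzero. If $\fB\neq 0$, bracketing $\fa$ with elements of $\fm$ of positive filtration degree produces elements outside $\fm(0)$, except when these brackets vanish, and I expect one to argue via the grading that this forces a contradiction or a finite-dimensional configuration. I would try this first, carefully examining whether the hypotheses must be read as including this degenerate case.

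Assuming both graded pieces have finite codimension, $\dim\fm/\fa=\sum_n\dim\fm(n)/(\fm(n-1)+\fa(n))$ is finite. Infinite dimensionality of $\fm$ would also need to be checked, and I expect it comes from the hypotheses as phrased.
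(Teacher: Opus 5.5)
Your first half is correct and follows the paper's argument. The paper also lifts finitely many homogeneous generators of $\fB_{\geq 1}$ (finitely many because $\fB_{\geq1}/[\fB_{\geq1},\fB_{\geq1}]$ is finite dimensional and the grading is positive) and concludes $\fb=\fb'+\fb(0)$, which is exactly your induction on filtration degree.

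In the second half your outline is again the paper's: filter the ideal, pass to a graded ideal, use (b1) in positive degrees and (b2) in degree $0$, and add up codimensions. But the degenerate case you single out is a genuine gap, and it cannot be closed from the stated hypotheses. The paper's proof simply asserts that $\fb(0)/\fm(0)$ and $\fB_{\geq1}/\fM_{\geq1}$ are finite dimensional ``by hypotheses'', which silently assumes $\fm(0)\neq0$ and $\fM_{\geq1}\neq0$. Read literally, (b2) is unsatisfiable, since the zero subalgebra is not just-infinite, and (b1) applied to the zero ideal forces $\dim\fG_{\geq1}<\infty$. So the only meaningful reading is ``nonzero (graded) ideals have finite codimension'', and under that reading the statement is false.

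Take $\fg=Kc\oplus\Witt_{\geq1}(K)$ with $c$ central, $\fg(0)=Kc$ and $\fg(n)=Kc\oplus\bigoplus_{k=1}^{n}KL_k$. Then $\fG_{\geq1}\simeq\Witt_{\geq1}(K)$ satisfies (a1) and (b1) exactly as in the paper's application to $\Vect_X$ (Lemma \ref{graded-just}). Conditions (a2) and (b2) are trivial for the one-dimensional $\fg(0)$. Yet $\fa=Kc$ is a nonzero ideal of $\fg$ of infinite codimension. Since $c$ is central, every bracket of $\fa$ with elements of positive filtration degree vanishes, so no argument ``via the grading'' can produce the contradiction you hope for.

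The other degenerate case, $\fa\cap\fm(0)=0$ with $\fm(0)$ infinite dimensional, is left open by your ``if nonzero'' and fails in the same way. In $\Witt_{\geq1}(K)\times\Witt_{\geq1}(K)$, filter with the first factor as $\fg(0)$ and the second factor by degree; both factors satisfy the hypotheses by the leading-term argument. The second factor is then an ideal of infinite codimension meeting $\fg(0)$ trivially.

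Nor does infinite dimensionality come from the hypotheses: $Kc$ and $KL_1$ are finite-dimensional subalgebras, so the conclusion can at best mean that nonzero ideals of subalgebras have finite codimension. To get a true statement you need an extra hypothesis linking $\fg(0)$ to the positive part. For $\Vect_X$ this is the fact that bracketing with a field not vanishing at $P$ strictly lowers the order of vanishing at $P$, which rules out both degenerate cases.
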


\begin{proof}
Any subalgebra $\fb$ of $\fg$ admits a filtration

$$0=\fb(-1)\subset\fb(0)\subset \fb(1)\subset\ldots$$ 

\noindent defined by $\fb(n)=\fb\cap \fg(n)$ for all
$n\geq 1$. Let $\fB$ be the associated graded 
Lie algebra.

We now prove the first assertion, namely
that $\fb$ is finitely generated. 
Since $\fB$ is a Lie subalgebra of $\fG$, Hypothesis (a1)
implies that

$$\dim \fB_{\geq 1}/[\fB_{\geq 1},\fB_{\geq 1}]
<\infty.$$

Since $\fB_{\geq 1}$ is positively graded, we deduce that $\fB_{\geq 1}$ is generated by a finite set
$y_1,\dots,y_m$ of homogenous elements of degree respectively
$d_1,\ldots,d_m$. Hence, there are  elements $x_1,\cdots,x_m\in\fb$ such that

\centerline{$x_i\in\fb(d_i)$ and $y_i=x_i\mod \fb(d_i-1)$
for all $i=1,\ldots,m$.}
Let $\fb'$ be the subalgebra generated by  $x_1,\cdots,x_m$.  It is clear that, as  a vector space we have

$$\fb=\fb'+\fb(0).$$ 

\noindent Both subalgebras  $\fb'$ and $\fb(0)$ are   finitely generated, therefore
$\fb$ is finitely generated, which proves the first assertion.

We now turn to the second assertion, that is that $\dim\fb/\fm<\infty$ for any nonzero ideal
$\fm$ of $\fb$. Let 

$$0=\fm(-1)\subset\fm(0)\subset \fm(1)\subset\ldots$$ 

\noindent be the filtration of $\fm$ previously defined and let 
$\fM$ be the associated graded Lie algebra.

It is obvious that $\fm(0)$ is an ideal of  $\fb(0)$ and it is clear that $\fM_{\geq 1}$ is an ideal
of $\fB_{\geq 1}$. Therefore
$\fb(0)/\fm(0)$ and $\fB_{\geq 1}/\fM_{\geq 1}$ are finite dimensional by hypotheses. We deduce that $\fb/\fm$  is finite dimensional, which proves the second assertion.
 \epf

\subsection{The Novikov-Krichever algebras are strongly Noetherian}

\begin{lemma}\label{graded-just}

Let $\fB$ be a graded subalgebra of
$\Witt_{\geq 1}(K)$.
\begin{itemize}
\item[(a)]
The Lie algebra  $\fB$  is finitely generated.
\item[(b)] Any nonzero graded ideal $\fm$ of $\fB$  has 
finite codimension in $\fB$.
\end{itemize}
\end{lemma}

\pf Recall that $\Witt_{\geq 1}(K)$ has basis 
$\{L_n:=t^{n+1}\d/\d t\mid n\geq 1\}$ and Lie bracket

\begin{align*}
[L_n,L_m]=(m-n) L_{n+m}
\end{align*}

\noindent
 We observe that $\ad(L_n)(L_m)\neq 0$, except if $n=m$.
 Hence, for any $n\geq 1$,  $\Witt_{\geq 1}$ is a $K[\ad(L_n)]$-module of finite type.

We can assume $\fB\neq 0$, thus  $\fB$  contains some basis element $L_n$. Hence
$\fB$  is a  $K[\ad(L_n)]$-module of finite type.  Therefore  $\fB$ is finitely generated, which completes the proof of Assertion (a).

Similarly,  $\fm$ 
contains  an homogenous element $L_m$. We have just proved that $\fB/[L_m,\fB]$ has finite dimension, therefore $\fB/\fm$ is finite dimensional.
\end{proof}

\begin{theorem} Let $X$ be a smooth affine curve. 
The Krichever-Novikov algebra $\Vect_X$ is strongly Noetherian. 

Moreover any Lie subalgebra is just infinite. 
\end{theorem}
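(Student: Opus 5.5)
The plan is to apply Lemma \ref{Noeth-just} to a suitable filtration of $\Vect_X$ attached to a point at infinity. Let $\overline X$ be the smooth projective completion of $X$, and choose a point $P\in\overline X\setminus X$ with local parameter $z$ at $P$. For $n\geq 0$, let $\fg(n)$ be the space of vector fields $\partial\in\Vect_X$ whose pole order at $P$ is at most $n+c$, where $c$ is a fixed shift chosen so that the Lie bracket respects the filtration.

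\textbf{The filtration.} In the coordinate $z$, write $\partial=f\,\d/\d z$ with $f$ regular on $X$. If $f$ has a pole of order $a$ at $P$ and $g$ one of order $b$, then $fg'-gf'$ has a pole of order at most $a+b+1$. So setting $\fg(n)$ to be the fields with $\operatorname{ord}_P(f)\geq -(n+1)$ gives a multiplicative filtration, and $\fg(0)$ is the space of fields with $\operatorname{ord}_P(f)\geq -1$. Since $P$ is a point at infinity of an affine curve, Riemann--Roch makes each piece $\fg(n)$ finite dimensional. In particular $\fg(0)$ is finite dimensional, so it is strongly Noetherian, which gives (a2).

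\textbf{The associated graded.} Taking leading terms in $z$ shows that, for $n\geq 1$, $\fG_n$ is at most one dimensional and is spanned by the image of $z^{-n-1}\d/\d z$. With $w=z^{-1}$ this is, up to sign, $w^{n+1}\d/\d w$, that is $L_n$. The leading-term bracket is exactly the Witt bracket, so $\fG_{\geq 1}$ embeds as a graded subalgebra of $\Witt_{\geq 1}(K)$. Only finitely many $n$ are missing, because of the gaps in the Weierstrass semigroup.

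By Lemma \ref{graded-just}, every graded subalgebra $\fB$ of $\Witt_{\geq 1}$ is finitely generated, so $\fB/[\fB,\fB]$ is finite dimensional. This gives (a1), and Lemma \ref{graded-just}(b) gives (b1) for nonzero graded ideals. Lemma \ref{Noeth-just} then shows that $\Vect_X$ is strongly Noetherian. If $P$ is not $K$-rational, I would first pass to a finite extension of $K$ over which it is, or else filter by the pole divisor at $P$ weighted by the degree of the residue field, which replaces $\fG_n$ by a finite-dimensional space; I expect the rational case to carry the main idea.

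\textbf{Just-infiniteness.} Hypothesis (b2) of Lemma \ref{Noeth-just} fails as stated, because $\fg(0)$ is finite dimensional. Instead I argue directly, following the proof of that lemma. Let $\fm\neq 0$ be an ideal of a subalgebra $\fb$, where $\fb$ is infinite dimensional (finite-dimensional subalgebras are excluded by the definition). Then $\fM_{\geq 1}$ is a nonzero graded ideal of $\fB_{\geq 1}$, so it has finite codimension there. Since $\fb(0)$ is finite dimensional anyway, it follows that $\dim\fb/\fm<\infty$.

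The one delicate point, and the step I expect to be the main obstacle, is showing $\fM_{\geq 1}\neq 0$ when $\fm\neq 0$. If $\fm\subset\fb(0)$, then $[\fm,\fb]\subset\fm$ is finite dimensional. To get a contradiction I pick $y\in\fb$ of large degree $k$ and a nonzero $x\in\fm$, and compare leading terms: $[x,y]$ has degree $\deg x+k$ unless $x$ and $y$ have the same leading degree. Choosing $k$ different from $\deg x$ forces degrees of $\fm$ to be unbounded, which is the contradiction.
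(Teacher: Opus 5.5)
\textbf{The gap.} Your claim that Riemann--Roch makes each $\fg(n)$ finite dimensional holds only when $P$ is the \emph{unique} point at infinity of $X$. If $\overline X\setminus X$ contains other points, a field in $\fg(n)$ is constrained at $P$ but may have poles of any order at the remaining points at infinity. Then $\fg(n)$ is not the space of sections of any line bundle on $\overline X$, and it is in general infinite dimensional. Take $X=\mathbb{A}^1\setminus\{0\}$, so $\Vect_X=K[t,t^{-1}]\,\d/\d t$, and $P=\infty$ with $w=1/t$. Every $t^{-k}\,\d/\d t=-w^{k+2}\,\d/\d w$ with $k\geq 0$ vanishes at $P$, so $\fg(0)$ is infinite dimensional under any normalization. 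Consequently your verification of (a2) fails, and so does your just-infiniteness argument, which rests on ``$\fb(0)$ is finite dimensional anyway''. Your reduction to a rational $P$ by base change makes this worse: a single non-rational point at infinity can split into several rational ones. What you have proved is only the case where $\overline X\setminus X$ is one rational point.

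\textbf{The missing idea.} The paper inducts on the number of points at infinity. Set $Y=X\cup\{P\}$ and let $\fg(n)$ be the fields with pole order at most $n-1$ at $P$. Then $\fg(0)$ is the algebra of vector fields on $Y$ vanishing at $P$, a subalgebra of $\Vect_Y$. Here $Y$ is either projective, so $\Vect_Y$ is finite dimensional, or affine with fewer points at infinity. Hence $\fg(0)$ is strongly Noetherian by induction, since subalgebras of strongly Noetherian algebras are strongly Noetherian. In the just-infinite part, the inductive hypothesis on ideals of subalgebras of $\Vect_Y$ must replace the finite-dimensionality of $\fb(0)$. You would then have to check, when relevant, both that $\fm\cap\fg(0)\neq 0$ and that $\fM_{\geq 1}\neq 0$, by leading-term computations like the one you sketch. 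Your remark that (b2) cannot hold literally is fair, and this is exactly where a direct argument is needed.

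\textbf{A smaller error in your normalization.} With $\operatorname{ord}_P(f)\geq -(n+1)$ you only get $[\fg(n),\fg(m)]\subset\fg(n+m+2)$. Also $z^{-n-1}\,\d/\d z=-w^{n+3}\,\d/\d w$, not $\pm w^{n+1}\,\d/\d w$. The right choice is pole order at most $n-1$, so that $\fg(0)$ consists of the fields vanishing at $P$. Then $z^{1-n}\,\d/\d z=-w^{n+1}\,\d/\d w$ corresponds to $L_n$, as in the paper.
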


\begin{proof}
 The proof of both assertions runs by induction on the number of point at infinity of $X$. 
 Since for any field extension $L$ of $K$: 

$$ \Vect_X\otimes L {\rm\,\, Noetherian}
\Rightarrow \Vect_X{\rm\,\,Noetherian},$$

\noindent we can assume that all these points are rational.
 
If $X$ has no point at infinity, then 
$X$ is a projective curve of genus $g$. Thus
$\Vect_X$ is finite dimensional. Indeed
it is three-dimensional if $g=0$, one dimensional if
$g=1$ and $\Vect_X=0$ otherwise. 
 
Since $X$ is affine,  it admits
a point $P$ at infinity. Set $Y=X\cup\{P\}$ and $\fg=\Vect_X$.  For $n\geq 0$, let $\fg(n)$ the space of vector fields with a pole of order at most $n-1$
at $P$ and let $\fG$ be the graded space associated with the filtration 

$$\fg(0)\subset \fg(1)\subset\cdots.$$

Let $t$ be a local parameter at $P$ and let  
$\xi,\, \eta$ be vector fields with poles of order $n-1$ and $m-1$ at $P$. Up to some scalar multiple, we have

\begin{align*}
\xi=& t^{1-n}\d/\d t + o(t^{1-n})\\ 
\eta=& t^{1-m}\d/\d t + o(t^{1-m}).
\end{align*}

\noindent  Hence we have

\begin{align*}
[\xi,\eta]= (n-m) t^{1-m-n}\d/\d t + o(t^{1-m-n})
\end{align*}

It follows that 
$$[\fg(n),\fg(m)]\subset \fg(n+m),$$

\noindent and
$\fG_{\geq 1}$ carries a structure of Lie algebra isomorphic to a subalgebra of 
$\Witt_{\geq 1}(K)$. In fact if $Y$ has at least 
a point at infinity, $\fG_{\geq 1}$ is exactly
$\Witt_{\geq 1}(K)$. In the opposite, if $Y$ is projective, the set of degree of symbols has finitely many gaps, as the Riemann-Roch formula 
shows.

By definition, $\fg(0)$ is the Lie algebra of
vector fields on $Y$ which vanishes at $P$,
thus, by induction hypothesis,  $\fg(0)$ is strongly Noetherian. Together with Assertion (a) of Lemma \ref{graded-just}, $\Vect_X$ satisfies the Conditions (a1) and (a2) of
Lemma \ref{Noeth-just}. Thus $\Vect_X$ is strongly Noetherian.

The proof that any subalgebra of $\Vect_X$ is just infinite is almost identical, except that
is uses Assertion (b) of Lemma \ref{graded-just}
to check that $\Vect_X$ satisfies the Conditions (b1) and (b2) of
Lemma \ref{Noeth-just}.
\end{proof}

\subsection{Grabowski's Theorem}

The following statement is a particular case of a 
more general result proved by Grabowski. 

\begin{theorem}\label{Grabowski}\cite{Grab1}\cite{Grab2} Let $X$ be a smooth affine curve.
Then
\begin{enumerate}
\item[(a)] $\Vect_X$ is a simple Lie algebra, and
\item[(b)] $\Out(\Vect_X)=0$. 
\end{enumerate}
\end{theorem}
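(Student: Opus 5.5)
Since this is the cited theorem of Grabowski, the plan is to reprove it directly from the structure $\Vect_X=\Der A$, where $A=K[X]$. Here $\Der A$ is a projective $A$-module of rank one, and $A\cdot\Vect_X=\Vect_X$. We may extend scalars to $\overline K$: simplicity and the vanishing of $\Out$ descend, since $\Der(\fG\otimes L)=\Der(\fG)\otimes L$ for a finite extension $L$. So we assume $K$ is algebraically closed and work with the points of $X$.

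For (a), let $I\neq 0$ be an ideal and pick $\xi\in I$ with $\xi\neq 0$. The first step produces a nonzero ideal $J\subset A$ with $J\cdot\Vect_X\subset I$.
\begin{itemize}
\item From $[\xi,f\xi]=\xi(f)\xi\in I$ and the Leibniz identity $[\xi,f\eta]=\xi(f)\eta+f[\xi,\eta]$, combine iterated brackets $[[\xi,f\eta],g\zeta]$ with $f,g\in A$.
\item Cancelling the terms that are not multiples of a fixed function should show that $\xi(f)\xi(g)\,\eta\in I$ for all $f,g\in A$ and $\eta\in\Vect_X$.
\item Take $J$ to be the ideal generated by the products $\xi(f)\xi(g)$. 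It is nonzero because $\xi\neq 0$ and $A$ is a domain.
\end{itemize}
Since $\dim X=1$, $J$ has finite codimension and vanishes only on a finite set $S$ of points.

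The second step lowers the vanishing order at each point $P\in S$. At a point where $J$ vanishes to order $k$, the bracket $[J\Vect_X,\Vect_X]$ contains the terms $\eta(f)\zeta$ with $f\in J$. With a local parameter $t$ and $\eta=\partial_t$, these realize fields vanishing to order $k-1$ at $P$. Combined with Chinese remainder arguments in $A$, this should give $J'\Vect_X\subset I$ for a larger ideal $J'\supsetneq J$. Iterating this finitely many times yields $\Vect_X\subset I$, which proves simplicity.

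For (b), let $D\in\Der(\Vect_X)$. The strategy is to show that $D$ preserves, for every point $P\in X$, the subalgebra $\Vect_X(P)$ of fields vanishing at $P$.
\begin{itemize}
\item These subalgebras should be intrinsically characterized as the maximal subalgebras of codimension one, for instance via the local model: in $\Witt_{\geq -1}$ the codimension-one subalgebras are exactly the point stabilizers.
\item The family of such subalgebras is a one-parameter family indexed by $X$, and a derivation moves it infinitesimally along a vector field. Concretely, $D$ composed with restriction should induce a derivation $\delta$ of $A$ via $D(f\eta)-fD(\eta)=\delta(f)\eta$.
\item With $\delta$ defined, check that $D-\ad(\delta)$ is $A$-linear. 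An $A$-linear derivation of the rank-one module $\Der A$ that commutes appropriately with brackets is multiplication by a function $h$ satisfying $\eta(h)=0$ for all $\eta$. Hence $h$ is a constant, and the bracket relation forces $h=0$.
\end{itemize}
This gives $D=\ad(\delta)$, so $\Out(\Vect_X)=0$.

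The hardest step is the first bullet of (b): showing that $D$ respects the $A$-module structure, equivalently stabilizes the point stabilizers. I would first try the codimension-one characterization above. If it proves unwieldy, the alternative is to show that $D$ stabilizes the filtration by vanishing order at $P$. This would follow from the observation that this filtration is determined by $\ad$-nilpotency on $\fg(0)$, and that such nilpotency data is preserved by $D$ up to the finite-dimensional ambiguity already controlled by Lemma \ref{closure}.
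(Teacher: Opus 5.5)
The paper gives no proof of this statement; it quotes Grabowski. Your sketch therefore has to stand on its own.

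\textbf{Part (a).} This is the standard argument and is fine in outline. You find a nonzero ideal $J\subset A$ with $J\Vect_X\subset I$. Then you enlarge $J$ to $J+A\,\Vect_X(J)$, using the term $g\,\eta(f)\,\zeta$ in $[f\zeta,g\eta]$. The resulting chain stabilizes at a $\Vect_X$-stable ideal, which is $A$ by smoothness. Two small repairs are needed.
\begin{itemize}
\item The identity $\xi(f)\xi(g)\eta\in I$ is asserted rather than derived. A usable $J$ is easy to get from $[\xi,ub\eta]-[u\xi,b\eta]=\xi(u)\,b\eta+b\,\eta(u)\,\xi$, which lies in $I$ whenever $u\xi\in I$. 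Taking $u=\xi(a)$ (note $\xi(a)\xi=[\xi,a\xi]\in I$) and $\eta=\xi$ gives $J_0\xi\subset I$ for $J_0=A\,\xi(\xi(A))$. Taking $u\in J_0^2$ then gives $A\,\xi(J_0^2)\,\Vect_X\subset I$, and this is nonzero since $A$ is a domain of characteristic $0$.
\item Passing to $\overline K$ loses simplicity when $X$ is not geometrically connected: $\Witt(L)\otimes_K\overline K$ is a sum of $[L:K]$ Witt algebras. So you must first replace $K$ by its algebraic closure in $K[X]$.
\end{itemize}

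\textbf{Part (b) has a genuine gap.} The intermediate goal you set is that $D$ preserves $\Vect_X(P)$, or the vanishing-order filtration at $P$, for every $P$. This is false for every nonzero derivation.
\begin{itemize}
\item If $D$ preserves every $\Vect_X(P)=\mathbf{m}_P\Vect_X$, split $f\eta=f(P)\eta+(f-f(P))\eta$. This gives $D(f\eta)-fD(\eta)\in\mathbf{m}_P\Vect_X$ for all $P$, so $D$ is $A$-linear, and your own last bullet then forces $D=0$.
\item Concretely, $[\partial_t,t\partial_t]=\partial_t$, so $\ad(\partial_t)$ moves the stabilizer of $0$ in $\Der K[t]$.
\end{itemize}
So ``stabilizes the point stabilizers'' is not equivalent to $D(f\eta)=fD(\eta)+\delta(f)\eta$; it is equivalent to $D=0$.

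The correct target is that $D-\ad(\delta)$ stabilizes them. That presupposes constructing $\delta$, which is the whole content of Grabowski's theorem, and the proposal does not supply it.

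An intrinsic characterization of the codimension-one subalgebras does not close this gap. First, it is not proved for a general curve: the fact for $\Der K[t]$ does not transfer without a completion or realization argument in formal vector fields. Second, it only says that \emph{automorphisms} permute these subalgebras. $D$ is not an automorphism, and $\ad\theta$ is not locally nilpotent. So ``moves the family infinitesimally'' has to become an actual argument. One route would be:
\begin{itemize}
\item show that the cocycle $\Vect_X(P)\to\Vect_X/\Vect_X(P)$ induced by $D$ coincides with the one induced by $\ad\theta_P$ for some $\theta_P$, which is a vanishing statement in $H^1(\Vect_X(P),\Vect_X/\Vect_X(P))$;
\item then prove that $P\mapsto\theta_P(P)\in T_PX$ is a regular vector field.
\end{itemize}

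Your fallback via Lemma~\ref{closure} does not help either. That lemma concerns finite closures of ideals, whereas $\Vect_X$ is simple and $\Vect_X(P)$ and its filtration are only subalgebras. It also again aims at $D$-stability of the filtration at $P$, which fails for $D=\ad\theta$ whenever $\theta(P)\neq 0$.
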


\section{Centroids of weakly Noetherian Lie algebras}\label{Seccentroid}

We now prove that the centroids of weakly
Noetherian Lie algebras are finite dimensional, see Proposition \ref{centroid}.
This result will be used in the next section.

Recall that the  {\it centroid} of a Lie algebra $\fg$, denoted as $\Cent(\fG)$,  is the algebra

$$\Cent(\fG):=
\{\theta\in\End(\fG)\mid \theta([x,y])=[x,\theta(y)]\,\forall\,x,y\in\fG\}.$$

\noindent For simplicity, we will simply denote as $\theta\,x$ the action of an operator
$\theta\in \Cent(\fG)$ on an element $x\in\fG$. 

\begin{lemma}\label{com} For any 
$\theta,\theta'\in \Cent(\fG)$, we have

\centerline {$\theta\theta'\,z=
\theta'\theta\,z\,\, \,\forall z\in 
[\fG,\fG]$.}
\end{lemma}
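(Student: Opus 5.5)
It suffices to treat a generator $z=[x,y]$ of $[\fG,\fG]$, with $x,y\in\fG$, since both sides of the identity are linear in $z$. The only tool needed is the defining relation $\theta([u,v])=[u,\theta v]$, together with the consequence of antisymmetry that
$$\theta([u,v])=-\theta([v,u])=-[v,\theta u]=[\theta u,v].$$
So a centroid element may be moved onto either factor of a bracket.

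The computation runs as follows. First use the left-factor form for $\theta'$ and the right-factor form for $\theta$:
\begin{align*}
\theta\theta'[x,y]=\theta\big([\theta' x,y]\big)=[\theta' x,\theta y].
\end{align*}
For the other order, push $\theta'$ onto the left factor and $\theta$ onto the right factor in the same way:
\begin{align*}
\theta'\theta[x,y]=\theta'\big([x,\theta y]\big)=[\theta' x,\theta y].
\end{align*}
The two right-hand sides coincide. Hence $\theta\theta' z=\theta'\theta z$ for every bracket $z=[x,y]$, and by linearity for every $z\in[\fG,\fG]$.

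The only point needing care is to use the two-sided form of the centroid relation and not just the one-sided version written in the definition. This form follows directly from antisymmetry of the bracket. Once it is available, there is no real obstacle: the key step is to place $\theta'$ on the left factor and $\theta$ on the right factor in both orders, so the two expressions are visibly equal.
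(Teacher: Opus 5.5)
Your proof is correct and is essentially the paper's argument. The paper also derives the two-sided relation $\theta[x,y]=[\theta x,y]=[x,\theta y]$ from skew symmetry. It then writes the chain $\theta'\theta[x,y]=[x,\theta'\theta y]=[\theta\theta' x,y]=\theta\theta'[x,y]$, whose middle step implicitly passes through the element $[\theta' x,\theta y]$ where your two computations meet; your version just makes that intermediate element explicit.
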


\begin{proof} By skew symmetry of the bracket,
we have 

$$\theta\,[x,y]=[\theta\,x,y]=
[x,\theta\,y],$$

\noindent for any $\theta\in \Cent(\fg)$ and
$x,y\in\fg$. It follows that

$$\theta'\theta\,[x,y]=
[x,\theta'\theta\,y]=
[\theta\theta'\,x,y]=
\theta\theta'\,[x,y],$$

\noindent which proves that 
$\theta\theta'\,z=
\theta'\theta\,z \,\forall z\in 
[\fG,\fG]$.
\end{proof}

For a subset $S\subset \fG$, the left ideal

$$\Ann_{\Cent(\fG)}(S):=
\{a\in \Cent(\fG)\mid aS=0\}$$

\noindent is called the {\it annihilator
in $\Cent(\fG)$ of S}.

\begin{lemma}\label{cent1} Let $\fG$ be a weakly Noetherian Lie algebra. The annihilator

$$\Ann_{\Cent(\fG)}([\fG,\fG])$$ 

\noindent is a  two-sided ideal of $\Cent(\fG)$
of finite dimension.
\end{lemma}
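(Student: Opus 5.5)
Set $I=\Ann_{\Cent(\fG)}([\fG,\fG])$. The plan is to prove the two-sided ideal property by direct computation, and then bound $\dim I$ by identifying $I$ with a space of linear maps that is finite dimensional by weak Noetherianity.

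\emph{Two-sided ideal.} Let $\theta\in I$ and $\phi\in\Cent(\fG)$. For $z=[x,y]\in[\fG,\fG]$ we have $\theta\phi\,z=\theta[x,\phi y]=0$, since $[x,\phi y]\in[\fG,\fG]$. On the other side, $\phi\theta\,z=\phi(\theta z)=\phi(0)=0$. Hence $\theta\phi,\phi\theta\in I$. Closure under sums and scalars is obvious.

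\emph{Finite dimension.} The key observation is that any $\theta\in I$ has image in the center $\fz$ of $\fG$: for all $x,y$ we have $[x,\theta y]=\theta[x,y]=0$. Moreover $\theta$ vanishes on $[\fG,\fG]$. Thus $\theta$ factors through a linear map
$$\bar\theta:\fG/[\fG,\fG]\to\fz,$$
and $\theta\mapsto\bar\theta$ is injective. Conversely, every such linear map defines an element of $\Cent(\fG)$, because both sides of the centroid identity vanish. So
$$I\simeq\Hom\bigl(\fG/[\fG,\fG],\fz\bigr).$$
By weak Noetherianity (applied to the subalgebra $\fm=\fG$), $\fG/[\fG,\fG]$ is finite dimensional. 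The center $\fz$ is an abelian subalgebra, hence an abelian section, so it is finite dimensional by Lemma \ref{basic}(b). Therefore $\dim I\le\dim(\fG/[\fG,\fG])\cdot\dim\fz<\infty$.

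There is no serious obstacle. The only point needing care is to note that $I$ lands in the center; once that is seen, the bound follows immediately from Lemma \ref{basic}(b) and the definition of weakly Noetherian.
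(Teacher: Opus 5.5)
Your proof is correct and follows the paper's argument. Elements of the annihilator take values in the center $\fz$ and vanish on $[\fG,\fG]$, which gives $I\simeq\Hom(\fG/[\fG,\fG],\fz)$, and this space is finite dimensional by weak Noetherianity. The only minor difference is how you get the right-ideal property: you check it directly from $\Cent(\fG)[\fG,\fG]\subset[\fG,\fG]$, whereas the paper deduces it from the commutativity of $\Cent(\fG)$ on $[\fG,\fG]$ (Lemma \ref{com}).
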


\begin{proof} The left ideal $\Ann_{\Cent(\fG)}
([\fG,\fG])$ is also
a right ideal by Lemma \ref{com}.

Let $\fz$ be the center of
$\fG$. 
For $a\in \Ann_{\Cent(\fG)}([\fG,\fG])$, we have

$$[a\fG,\fG]=a[\fG,\fG]=0,$$

\noindent hence $a\fG$ lies in $\fz$. It follows easily that 

$$\Ann_{\Cent(\fG)}([\fG,\fG])=\Hom(\fG/[\fG,\fG],\fz).$$ 

Since $\fG$ is
weakly Noetherian, its abelian sections $\fG/[\fG,\fG]$ and $\fz$
are finite dimensional, thus the ideal 
$\Ann_{\Cent(\fG)}([\fG,\fG])$ is finite dimensional, which completes the proof.
\end{proof}

 We now state our last preliminary result.

\begin{lemma}\label{cent2} Let $\fG$ be a weakly Noetherian Lie algebra, and let $I\subset \Cent(\fG)$ be a linear subspace.

If $I^2=0$, then $I$ is finite dimensional.
\end{lemma}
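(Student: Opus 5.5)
Since $I^2=0$, for $a,b\in I$ we have $ab\,x=0$ for all $x\in\fG$. The idea is to produce an abelian section of $\fG$ into which $I$ embeds (modulo a finite dimensional piece), and then invoke Lemma \ref{basic}(b). By Lemma \ref{cent1} we may work modulo $J:=I\cap\Ann_{\Cent(\fG)}([\fG,\fG])$, which is finite dimensional. So it suffices to show that $I/J$ is finite dimensional, that is, that $I$ acts on $[\fG,\fG]$ through a finite dimensional space of operators.

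Set $\fa:=I\fG=\sum_{a\in I} a\fG$ and $\fa':=I[\fG,\fG]=\sum_{a\in I}a[\fG,\fG]$. We first check that $\fa'$ is an abelian ideal-like subspace. For $a,b\in I$ and $x,y,u,v\in\fG$, the centroid property and $ba=0$ give
$$[a[x,y],b[u,v]]=b[a[x,y],[u,v]]=b\,a[[x,y],[u,v]]=0 .$$
Hence $\fa'$ is an abelian subspace. It is also stable under $\ad\fG$, since $[z,a w]=a[z,w]$. So $\fa'$ is an abelian ideal of $\fG$ (possibly after adding brackets, which stay inside $\fa'$). By Lemma \ref{basic}(b), $\fa'$ is finite dimensional.

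Next, each $a\in I$ restricted to $[\fG,\fG]$ is a linear map $[\fG,\fG]\to\fa'$. Moreover, $a$ vanishes on $[\fG,[\fG,\fG]]$ modulo something controlled. Indeed, $a[x,[y,z]]=[x,a[y,z]]\in[\fG,\fa']$, so $a$ is determined on $\Cs^3\fG$ by its values lying in the finite dimensional space $\fa'$. The key point is that $a|_{[\fG,\fG]}$ is a $\fG$-module map from $[\fG,\fG]$ into the finite dimensional $\fG$-module $\fa'$. The $\fG$-action on $\fa'$ factors through the finite dimensional Lie algebra $\fg:=\fG/C_\fG(\fa')$. Every such module map kills $[C_\fG(\fa'),[\fG,\fG]]$, because $a[c,w]=[c,aw]=0$ for $c\in C_\fG(\fa')$. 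So $a$ factors through $[\fG,\fG]/\fn$, where $\fn\supset[C_\fG(\fa'),[\fG,\fG]]$ is an ideal.

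The remaining task, and the main obstacle, is to show that $[\fG,\fG]/[C_\fG(\fa'),[\fG,\fG]]$ is finite dimensional, or at least that the space of $\fG$-maps from it to $\fa'$ is. The ideal $\fc:=C_\fG(\fa')$ has finite codimension, since $\fG/\fc\hookrightarrow\fgl(\fa')$. The quotient $\fc/[\fc,\fc]$ is finite dimensional by weak Noetherianity, and the quotient $[\fG,\fG]/([\fG,\fG]\cap\fc)$ is finite dimensional. Since $[\fc,\fc]\subset[\fc,[\fG,\fG]]$ up to the finite dimensional defect coming from $\fc/(\fc\cap[\fG,\fG])$, the space $\fc\cap[\fG,\fG]$ modulo $[\fc,[\fG,\fG]]$ is finite dimensional. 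Then $[\fG,\fG]/[\fc,[\fG,\fG]]$ is finite dimensional. Consequently the $a|_{[\fG,\fG]}$ range in the finite dimensional space $\Hom([\fG,\fG]/[\fc,[\fG,\fG]],\fa')$. Thus $I/J$, and hence $I$, is finite dimensional.
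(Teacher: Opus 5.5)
Your argument is correct and is essentially the paper's: you form the abelian ideal spanned by the images of $I$, pass to its finite-codimensional centralizer, and embed $I$ (in your case $I/J$) into a finite-dimensional $\Hom$ space because $I$ kills brackets with that centralizer. The restriction to $[\fG,\fG]$ and the appeal to Lemma~\ref{cent1} are superfluous: $[ax,by]=ba[x,y]=0$ holds for all $x,y\in\fG$, so $I\fG$ is already an abelian ideal, and $I$ embeds directly in $\Hom(\fG/[\fG,\fm],I\fG)$ with $\fm=C_\fG(I\fG)$, where $[\fG,\fm]$ has finite codimension because $\fm/[\fG,\fm]$ is an abelian section.
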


\begin{proof} Set $\fa= I\fG$.We have

$$[\fa,\fa]=[I\fG, I\fG]=I^2[\fG,\fG]=0,$$

\noindent hence the commutative ideal $\fa$ is finite dimensional. Therefore its commutant 

$$\fm:=C_\fG(\fa),$$

\noindent has finite codimension. Since
$\fm/[\fG,\fm]$ is an abelian section,
the ideal $[\fG,\fm]$ also have finite codimension.

We have 

$$I[\fG,\fm]=[I\fG,\fm]=[\fa,\fm]=0$$

\noindent Hence there is a natural embedding
$I\subset\Hom( \fG/[\fG,\fm],\fa)$, which proves that $I$ is finite dimensional.
\end{proof}

\begin{prop}\label{centroid} The centroid
$\Cent(\fG)$ of a 
 weakly Noetherian Lie algebra
is finite dimensional. 
\end{prop}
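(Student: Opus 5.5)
We reduce to showing that the quotient algebra $C:=\Cent(\fG)/J$ is finite dimensional, where $J:=\Ann_{\Cent(\fG)}([\fG,\fG])$. By Lemma \ref{cent1}, $J$ is a finite dimensional two-sided ideal. By Lemma \ref{com}, $C$ is commutative: for $\theta,\theta'\in\Cent(\fG)$ the commutator $\theta\theta'-\theta'\theta$ kills $[\fG,\fG]$, hence lies in $J$. So it suffices to prove that the commutative algebra $C$ is finite dimensional.

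Since $\dim\fG/[\fG,\fG]<\infty$, I would also pass to $\fG':=[\fG,\fG]$. Every $\theta\in\Cent(\fG)$ preserves $\fG'$, because $\theta[x,y]=[x,\theta y]$. The restriction map sends $C$ injectively into $\End(\fG')$, and its image is a commutative algebra of operators commuting with $\ad(\fG)|_{\fG'}$.

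The key step is to show that $C$ is Artinian, or at least that it has no infinite chains of a suitable kind; finiteness then follows from Lemma \ref{cent2}. I would proceed as follows.

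\emph{Nilradical.} Let $N$ be the nilradical of $C$. For nilpotent elements, Lemma \ref{cent2} applies to subspaces of square zero. An ideal with $I^2=0$ in $C$ lifts to a subspace $\tilde I\subset\Cent(\fG)$ with $\tilde I^2\subset J$. Then $\tilde I^3$ kills $[\fG,\fG]$ in a controlled way, and a short variant of the argument of Lemma \ref{cent2}, applied to $\fa=\tilde I\fG$, shows $\fa$ is solvable of length at most $2$, hence finite dimensional by Lemma \ref{basic}(e). This yields $\dim I<\infty$. Iterating along $N\supset N^2\supset\cdots$ should bound $N$, provided we know $N$ is nilpotent, which needs a finiteness input.

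\emph{Reduced part.} For the reduced part, the idea is that an infinite dimensional reduced commutative algebra contains infinitely many orthogonal idempotents, or an element $\theta$ that is transcendental over $K$. Orthogonal idempotents $e_1,e_2,\ldots$ give ideals $e_i[\fG,\fG]$ whose sum is direct. Choosing a nonzero commutator-type element in each makes the quotient $[\fG,\fG]/[[\fG,\fG],[\fG,\fG]]$ contain an infinite direct sum, contradicting weak Noetherianity; concretely, $\bigoplus e_i\fG$ maps onto an infinite dimensional abelian section. For a transcendental $\theta$, the subalgebra $K[\theta]$ acts on the abelian section $\fG'/[\fG',\fG']$, and the centroid action makes $\fG'$ a Lie algebra over $K[\theta]$. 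I would then produce a subalgebra such as $(\theta-\lambda)\fG'$ for infinitely many $\lambda$, or the section $\fG'/(\theta)\fG'$, yielding infinite dimensional abelian sections.

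\emph{Main obstacle.} The main obstacle is this last step: turning "$C$ infinite dimensional" into an infinite dimensional abelian section, especially for a transcendental $\theta$. What I would try first is to consider the $K[\theta]$-Lie algebra $\fG'$ and the subalgebra $\fm=\theta\fG'+[\fG',\fG']$. If $\theta$ is not invertible modulo torsion, then $\fG'\supset\theta\fG'\supset\theta^2\fG'\supset\cdots$ gives a strictly decreasing chain of ideals. Their successive quotients are $\fG'$-modules, but they are not obviously abelian as Lie algebras, so this may need an additional argument using finite dimensionality of $\fG'/[\fG',\fG']$ together with Nakayama-type reasoning over $K[\theta]$.
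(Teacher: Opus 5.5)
\textbf{There is a genuine gap.} The steps that carry the weight of the proof are either left open or rest on false claims.

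\emph{Nilpotent part.} You still need $N$ to be nilpotent. Even granting that, Lemma \ref{cent2} does not iterate: $N^k/N^{k+1}$ is square-zero only modulo $N^{k+1}$, whereas the argument of that lemma needs $I^2[\fG,\fG]=0$ in $\Cent(\fG)$ itself.

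\emph{Reduced part.} Your dichotomy is false. An infinite algebraic extension field of $K$ (e.g.\ $\overline{\Q}$ over $\Q$) is reduced and infinite dimensional, yet it has no nontrivial idempotents and no transcendental elements, so this case is missing. For the same reason, proving $C$ Artinian and then invoking Lemma \ref{cent2} would not give finiteness, since the residue fields of $C$ could a priori be infinite over $K$.

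\emph{Idempotent case.} The argument as written fails: $[\fG,\fG]/[[\fG,\fG],[\fG,\fG]]$ is $0$ when $\fG$ is perfect, and the abelianization of $\bigoplus_i e_i[\fG,\fG]$ can likewise vanish. What does work is that any nonzero $y_i\in e_i[\fG,\fG]$ pairwise commute, since $[e_ix,e_jy]=e_je_i[x,y]=0$ for $i\neq j$.

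\emph{Transcendental case.} You leave this open yourself, and the chain $\theta^k\fG'$ cannot settle it. Its layers for $k\geq 1$ are in fact abelian, hence finite dimensional, but the chain may well be constant (e.g.\ if $\theta$ acts bijectively on $\fG'$).

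\textbf{The missing idea} makes the whole decomposition of $C$ unnecessary. For every $z\in\fG$, the space $\Cent(\fG)z$ is an abelian subalgebra, because $[\phi z,\psi z]=\psi\phi[z,z]=0$. Hence it is finite dimensional, and $\Ann_{\Cent(\fG)}(z)$ has finite codimension.

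The paper uses this directly. Assume $\dim\Cent(\fG)=\infty$.
\begin{enumerate}
\item[(i)] Each $I_n=\bigcap_{i\leq n}\Ann_{\Cent(\fG)}(z_i)$ has finite codimension, hence is infinite dimensional, so the argument of Lemma \ref{cent2} gives $I_n^2[\fG,\fG]\neq 0$.
\item[(ii)] By Lemma \ref{com} and polarization, for $z\in[\fG,\fG]$ the space $I_n^2z$ is spanned by the $a^2z$ with $a\in I_n$. So one finds $z_{n+1}\in[\fG,\fG]$ and $a_{n+1}\in I_n$ with $a_{n+1}^2z_{n+1}\neq 0$.
\item[(iii)] The elements $y_n=a_nz_n$ satisfy $[y_n,y_m]=[a_ma_nz_n,z_m]=0$ for $n<m$.
\item[(iv)] Moreover $a_ny_i=0$ for $i<n$ and $a_ny_n\neq 0$, so the $y_n$ are linearly independent and span an infinite dimensional abelian subalgebra.
\end{enumerate}

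The same observation would also close your open cases. An infinite algebraic field $F\subset C$ gives the infinite dimensional abelian subalgebra $Fz$ for a suitable $z\neq 0$. A transcendental $\theta$ makes every $z\in[\fG,\fG]$ torsion over $K[\theta]$. Then either infinitely many primary components occur or some exponent is unbounded, and each situation yields pairwise commuting independent elements by the same trick.
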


\begin{proof} Assume otherwise, that is
$\dim \Cent(\fG)=\infty$.
By lemma \ref{cent1}, the algebra

$$R:=\Cent(\fG)/\Ann_{\Cent(\fG)}([\fG,\fG]).$$

\noindent  is infinite dimensional.

We now find,
by induction,  an infinite
sequence $z_1,z_2\cdots$ of elements
in  $[\fg,\fg]$  such that the 
ideals 
$I_n:=\cap_{i=1}^n \Ann_{\Cent(\fG)}(z_i)$ satisfy 

$$I_n^2 z_{n+1}\neq 0.$$

To start the induction, we observe that  $R\neq 0$, hence there exists a nonzero element $z_1\in [\fG,\fG]$. Since $I_0=R$, we have $I_0^2 z_1\neq 0$.

Next, assume now  by induction that we have already found the elements $z_1,z_2,\ldots, z_n$.
Since for $z\in [\fG,\fG]$, the space  $R z$ is an abelian Lie subalgebra of $\fG$, the
ideal $\Ann_{\Cent(\fG)}(z)$ has finite codimension. Therefore 

$$I_n:=\cap_{i=1}^n \Ann_{\Cent(\fG)}(z_i)$$

\noindent has infinite dimension. By 
Lemma \ref{cent2} we have $I^2_n\neq 0$, and therefore
we can find $z_{n+1}\in[\fG,\fG]$ such that
$I^2_n\,z_{n+1}\neq 0$. Thus we have established the existence of the sequence $z_1,z_2\cdots$.

By Lemma \ref{com}, the algebra $R$ is commutative, so $I^2_n$ is the linear span
of the set $\{a^2\mid a\in I_n\}$. Therefore there exists elements 
$a_1\in I_0, a_2\in I_1,\ldots$ such that

$$a_n^2 z_n\neq 0, \forall n\geq 1.$$ 

Set $y_n=a_n z_n$. To conclude the proof, it is enough to show that the Lie algebra $\fa$ generated by the 
set $\{y_n\mid n\geq 1\}$  is an infinite dimension abelian Lie algebra. 

Since
$R$ is commutative, we observe that
for $n<m$, we have $a_m a_n z_n= a_n a_m z_n=0$.
Therefore

$$[y_n,y_m]=[a_n x_n, a_m x_m]=
[a_m a_n x_n, x_m]=0,$$

\noindent which proves that $\fa$ is abelian. It remains to prove that the elements
$y_n$ are linearly independant. Let

$$Y:=\sum_{i=1}^n\, c_i y_i$$

\noindent be any linear combination, where
$c_i$ are scalars and $c_n\neq 0$.
As before, we have

$$a_n y_i=a_na_i x_i=a_ia_nx_i=0$$ 

\noindent for $i<n$.
Thus $a_n.Y =c_n a_n.y_n=c_n a_n^2.z_n\neq 0$,
which shows that the set $\{y_i\mid i\geq 1\}$ is linearly independent.
\end{proof}

\begin{cor}\label{simplecentroid} Let $\fS$ be a weakly Noetherian simple 
Lie algebra. Then
\begin{enumerate}
\item[(a)] Its centroid $L$ is a finite field extension of $K$, hence $L\subset\overline{K}$, and
\item[(b)] $\fS\otimes_L\overline{K}$ is a simple
Lie algebra.
\end{enumerate}
\end{cor}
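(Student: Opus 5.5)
By Proposition \ref{centroid}, the centroid $L=\Cent(\fS)$ is a finite dimensional $K$-algebra, and it contains $K$ as the scalar multiples of the identity. For (a) it therefore suffices to show that $L$ is a commutative division algebra. Since $\fS$ is simple, it is perfect, so Lemma \ref{com} shows that any two elements of $L$ commute on $[\fS,\fS]=\fS$; hence $L$ is commutative.

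For the division property, take $0\neq\theta\in L$. The relation $\theta[x,y]=[x,\theta y]$ shows that both $\ker\theta$ and $\theta\fS$ are ideals of $\fS$. Simplicity gives $\ker\theta=0$ and $\theta\fS=\fS$, so $\theta$ is bijective. Its inverse $\theta^{-1}$ also lies in the centroid: apply $\theta^{-1}$ to $\theta[x,\theta^{-1}y]=[x,y]$. Thus $L$ is a field, finite over $K$, and so it embeds in $\overline K$. This settles (a).

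For (b), $\fS$ is naturally an $L$-Lie algebra, since the bracket is $L$-bilinear by the defining identity and the commutativity just proved. Its centroid over $L$ is exactly $L$. This is the classical situation in which a simple algebra whose centroid equals the ground field is central simple, and central simple algebras remain simple under any base field extension. I would prove this through the associative multiplication algebra $M\subset\End_K(\fS)$ generated by the operators $\ad(x)$. Simplicity means that $\fS$ is an irreducible $M$-module, and then $\End_M(\fS)=\Cent(\fS)=L$.

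Pass to $\fS\otimes_L\overline K$, and let $J\neq 0$ be an ideal. Choose a nonzero $u=\sum_{i=1}^r s_i\otimes\lambda_i\in J$ with $r$ minimal, where the $\lambda_i$ are $L$-linearly independent in $\overline K$. By the Jacobson density theorem over the division ring $L$ (applied since $\fS$ is irreducible and $\End_M(\fS)=L$), the elements $s_i$ can be taken $L$-independent, and there exists $m\in M$ with $ms_1=s_1$ and $ms_i=0$ for $i>1$. Applying $m\otimes 1$ to $u$ yields $s_1\otimes\lambda_1\in J$. Since $\fS$ is simple, $M s_1=\fS$, and therefore $J\supset\fS\otimes\lambda_1$. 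Because $J$ is $\overline K$-stable, it follows that $J$ is everything.

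The main obstacle is this density step. The content of Proposition \ref{centroid} is that $L$ is finite dimensional, which makes $\overline K\otimes_L$ meaningful and lets density apply over the field $L$. The rest is routine.
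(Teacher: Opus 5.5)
Your proof is correct and follows the paper's route. Commutativity of $L$ comes from Lemma \ref{com} and perfectness, finiteness from Proposition \ref{centroid}, and the division property from simplicity. For (b) you use the Jacobson density theorem for $\fS$ as a simple module over the image of $U(\fS)$ (your $M$), which you spell out via the minimal-length argument where the paper only cites the theorem. One small correction to your closing remark: density holds over any division ring, so finiteness of $L$ is needed only to embed $L$ in $\overline K$, not for the density step.
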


\begin{proof} The centroid $L$ of $\fS$ is obviously a division algebra. Since $\fS=[\fS,\fS]$, the algebra $L$ is commutative by Lemma \ref{com}. 
By Proposition \ref{centroid}, $L$ is a finite extension of $K$, which proves Assertion (a).

Assertion (b) follows from Jacobson density Theorem \cite{Jac} applied to the simple $U(\fS)$-module $\fS$.

\end{proof}

\section{Finite dimensional simple quotients}\label{Ass(c)}

Let $\fG$ be a weakly noetherian Lie algebra
and let $d>1$ be an integer. 

  We will prove that a refined version of the 
theorem \ref{structure1} stated in the introduction, namely 
that $\fG$ admits only finitely many maximal ideals 
$\fm$ such that $\fG/\fm$ is a simple Lie algebra
of dimension $d$ over its centroid 
$L:=\Cent(\fG/\fm)$. 

Since by  the previous corollary \ref{simplecentroid}
$\Cent(\fG/\fm)$ is a finite field extension 
of $K$, it means  that

$$\dim\,\fG/ \,\fm= d [L:K].$$

The proof is based on the theory of 
Formanek-Razmyslov Theorem about central identities
\cite{Formanek, Razmyslov}.

\subsection{Generalities about simple quotients}

We recall two elementary  well-known lemmas.

\begin{lemma}\label{dense} Let $R$ be a unital associative
algebra, and let $M, M_1,\cdots, M_n$ be a finite
collection of distinct maximal two-sided ideals.
Then the natural map

 $$M_1\cap M_2\cap\cdots\cap M_n\to  R/M$$
 
\noindent is surjective.
\end{lemma}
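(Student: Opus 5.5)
The plan is to use comaximality of distinct maximal two-sided ideals in a unital ring, then multiply. Since $R$ is unital, every two-sided ideal is contained in a maximal one, and the sum of two distinct maximal two-sided ideals is a two-sided ideal strictly containing each of them, hence equal to $R$.

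Concretely, fix $i\in\{1,\ldots,n\}$. Since $M_i\neq M$ and both are maximal, $M+M_i=R$, so we can write
\[
1=m_i+a_i,\qquad m_i\in M,\ a_i\in M_i .
\]
Set $a=a_1a_2\cdots a_n$. Because each $M_i$ is a two-sided ideal and $a_i\in M_i$, the product $a$ lies in $M_i$ for every $i$, hence $a\in M_1\cap\cdots\cap M_n$. On the other hand, modulo $M$ each $a_i$ is congruent to $1$, since $a_i=1-m_i$. Thus $a\equiv 1 \pmod M$.

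Now take an arbitrary element $r+M$ of $R/M$. The element $ra$ again lies in the two-sided ideal $M_1\cap\cdots\cap M_n$, because each $M_i$ is closed under left multiplication by $R$. Its image in $R/M$ is $r\cdot 1=r$ modulo $M$. So the map is surjective.

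There is no real obstacle here. The only points to check are that $M+M_i$ is a two-sided ideal properly containing the maximal ideal $M$, which uses $M_i\not\subset M$ (true since $M_i$ is maximal and $M_i\neq M$), and that products of elements taken in different ideals remain in each $M_i$, which uses two-sidedness. Noncommutativity causes no trouble, since we only multiply $a$ on the left by $r$.
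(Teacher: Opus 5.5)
Your proof is correct and is the paper's argument: the paper uses the comaximality $M+M_i=R$ to get $M+(M_1\cap\cdots\cap M_n)\supset M+M_1\cdots M_n\supset (M+M_1)\cdots(M+M_n)=R$. Your element $a=a_1\cdots a_n\in M_1\cap\cdots\cap M_n$ with $a\equiv 1 \pmod M$, followed by left multiplication by $r$, is simply the elementwise form of this product-of-ideals computation.
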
 

\begin{proof} Obviously we have

\begin{align*}
&M+\big(M_1\cap \cdots \cap M_n\big)\\
\supset& M+M_1M_2\cdots M_n\\
\supset&(M+M_1)(M+M_2)\cdots (M+M_n)
\end{align*}

\noindent However, by  maximality assumption,
we have $M+M_i=R$, for all positive integer $i\leq n$.
Therefore $(M+M_1)(M+M_2)\cdots (M+M_n)=R$.
It follows that

$$M+\big(M_1\cap \cdots \cap M_n\big)=R,$$

\noindent which amounts to the surjectivity of

$$M_1\cap M_2\cap\cdots\cap M_n\to  R/M.$$
\end{proof}

For an integer $d\geq 1$ and a field $L$,
 let $\Mat_d(L)$ be the algebra of
$d\times d$ matrices over $L$.

\begin{lemma}\label{Wedderburn} Let $\fg$ be a simple  Lie algebra
of finite dimension $d$ over its centroid  $L:=\Cent(\fg)$. 

Then, the image of $U(\fg)$ in $\End(\fg)$ is isomorphic to $\Mat_d(L)$.
\end{lemma}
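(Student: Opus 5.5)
The plan is to view $\fg$ as a simple module over the associative algebra $U(\fg)$ via the adjoint representation, and to apply the Jacobson density theorem. Since $\fg$ is simple as a Lie algebra, the $U(\fg)$-submodules of $\fg$ under $\ad$ are exactly the ideals of $\fg$. So $\fg$ is a simple $U(\fg)$-module; it is nonzero and the action is nontrivial because $[\fg,\fg]=\fg$.

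The first step is to identify the commutant $\End_{U(\fg)}(\fg)$. By definition, an endomorphism $\theta$ commutes with all $\ad(x)$ exactly when $\theta[x,y]=[x,\theta y]$ for all $x,y$. That is precisely the defining condition of the centroid, so $\End_{U(\fg)}(\fg)=\Cent(\fg)=L$. By Schur's lemma $L$ is a division algebra. It is commutative by Lemma \ref{com}, because $\fg=[\fg,\fg]$, and so $L$ is a field. Hence $\fg$ is an $L$-vector space of dimension $d$, and the image $A$ of $U(\fg)$ in $\End_K(\fg)$ lies in $\End_L(\fg)\simeq\Mat_d(L)$.

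The second step is density. Jacobson's density theorem says that for any $L$-linearly independent $v_1,\dots,v_k\in\fg$ and any $w_1,\dots,w_k\in\fg$, there is $a\in A$ with $a v_i=w_i$ for all $i$. Take $v_1,\dots,v_d$ to be an $L$-basis of $\fg$, which is possible since $\dim_L\fg=d<\infty$. Then every $L$-linear endomorphism of $\fg$ is realized by some element of $A$. This gives $A=\End_L(\fg)\simeq\Mat_d(L)$.

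The only points needing care are checking that the commutant is exactly the centroid and that it is commutative. Both are immediate from the definitions and Lemma \ref{com}. The real input is the density theorem, which is standard; finite dimensionality of $\fg$ over $L$ is what turns density into surjectivity onto $\End_L(\fg)$.
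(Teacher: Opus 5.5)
Your proof is correct and is essentially the paper's argument. The paper just cites Wedderburn's theorem for the simple finite-dimensional $U(\fg)$-module $\fg$, and you have unpacked this via $\End_{U(\fg)}(\fg)=\Cent(\fg)=L$ (a field by Lemma~\ref{com}) followed by the Jacobson density theorem; a small advantage of your formulation is that density only needs $\dim_L\fg<\infty$, so it does not rely on $L$ being finite over $K$ (which in any case holds in the paper's application, by Corollary~\ref{simplecentroid}).
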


\begin{proof} The assertion is a consequence of Wedderburn's Theorem applied to the 
simple finite dimensional $U(\fg)$-module
$\fg$. 
\end{proof}

\subsection{The Formanek-Razmyslov Theorem}

We will now state an important result of Formanek and Razmyslov. Let $\Q<x_1,\ldots,x_m>$ be the 
free $\Q$-algebra  over the $m$-uple
 ${\bf x}=(x_1,\ldots,x_m)$ of noncommutative variables. Elements $P({\bf x})$ in 
$\Q<x_1,\ldots,x_m>$ are called 
{\it polynomials over the noncommutative variables}
$x_1,\ldots,x_m$. 

Recall that, for any integer $d$, the center of $\Mat_d(K)$ is $K$.

\begin{theorem}\cite{Formanek}\cite{Razmyslov}\label{RazFor}
Let $d$ be a positive integer. There is an integer $m$ and a homogenous nonconstant polynomial
$P_d({\bf x})\in \Q<x_1,\ldots,x_m>$ such that

\begin{itemize}
\item[(a)] For any ${\bf a}=(a_1,\cdots ,a_m)\in \Mat_d(K)^m$, 
$P_d({\bf a})$ is central, and

\item[(b)] $P_d({\bf a})=1$ 
for some ${\bf a}\in \Mat_d(K)^m$.
\end{itemize}
\end{theorem}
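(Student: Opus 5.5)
The plan is to follow Formanek's explicit construction. The heuristic is that a generic matrix $x$ is conjugate to a diagonal matrix with distinct eigenvalues. For such $x$, expressions of the form $\sum c\, x^{a_1}y_1x^{a_2}\cdots y_dx^{a_{d+1}}$ are governed by a commutative polynomial $g(t_1,\ldots,t_{d+1})$ evaluated at eigenvalues. The task is to choose $g$ so that only "closed paths" of matrix indices survive.

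\textbf{Step 1: the polynomial.} Take
$$g(t_1,\ldots,t_{d+1})=\prod_{i=2}^{d}(t_1-t_i)(t_{d+1}-t_i)\prod_{2\le i<j\le d}(t_i-t_j)^2\in\Q[t_1,\ldots,t_{d+1}],$$
and write $g=\sum c_{\bf a}\,t_1^{a_1}\cdots t_{d+1}^{a_{d+1}}$. Set
$$F(x,y_1,\ldots,y_d)=\sum c_{\bf a}\,x^{a_1}y_1x^{a_2}y_2\cdots y_dx^{a_{d+1}}.$$
Because $g$ is homogeneous, $F$ is homogeneous and nonconstant. To make the diagonal entries equal we will use the symmetrized polynomial $P_d=\sum_{k}F(x,y_{k+1},\ldots,y_d,y_1,\ldots,y_k)$, taken over cyclic shifts; alternatively one can compose $F$ with an auxiliary variable, as Formanek does. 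In either case $m=d+1$ (or $d+2$).

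\textbf{Step 2: evaluation on diagonal $x$.} Let $x=\diag(\lambda_1,\ldots,\lambda_d)$ with distinct $\lambda_i$. Expanding in matrix units gives
$$F(x,{\bf y})=\sum g(\lambda_{i_1},\ldots,\lambda_{i_{d+1}})\,(y_1)_{i_1i_2}\cdots(y_d)_{i_di_{d+1}}\,e_{i_1i_{d+1}}.$$
The factor $g(\lambda_{i_1},\ldots,\lambda_{i_{d+1}})$ vanishes unless $i_2,\ldots,i_d$ are pairwise distinct and $i_1,i_{d+1}\notin\{i_2,\ldots,i_d\}$. Since there are only $d$ indices, this forces $i_1=i_{d+1}$, so $F(x,{\bf y})$ is diagonal. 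The $(i,i)$ entry is a sum over closed paths through all indices starting at $i$.

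After cyclic symmetrization in the $y$'s, each such path contributes the same total weighted by $g$ regardless of the base point. The weight is the constant $\prod_{j\ne i}(\lambda_i-\lambda_j)^2\prod(\lambda_{i_k}-\lambda_{i_l})^2$, which equals the discriminant of $x$ and is independent of $i$. This makes the diagonal constant, so the value is scalar. This bookkeeping is the main obstacle: one must check that the weight attached to a closed path really is independent of the starting vertex. If plain cyclic symmetrization does not do it, I would fall back on Formanek's trick of inserting an extra variable $z$ and using the identity $\sum_k e_{ki}ze_{ik}=\Tr(z)\cdot 1$ type manipulations.

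\textbf{Step 3: extension to all matrices, and normalization.} The map ${\bf a}\mapsto P_d({\bf a})$ is $GL_d$-equivariant. Hence $[P_d({\bf a}),a_{m+1}]=0$ holds whenever $a_1$ is diagonalizable with distinct eigenvalues over $\overline K$. The set of such $a_1$ is Zariski dense, and $K$ is infinite of characteristic $0$, so this polynomial identity holds on all of $\Mat_d(K)^m$; this gives (a).

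For (b), evaluate at $x=\diag(\lambda_i)$ and $y_k=e_{k,k+1}$ with $y_d=e_{d,1}$. Exactly one closed path survives, and the value is a nonzero scalar $c$ (a discriminant). Since $P_d$ is linear in $y_1$, replacing $y_1$ by $c^{-1}y_1$ gives the value $1$.
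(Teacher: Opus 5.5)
The paper gives no proof of this statement; it is quoted from Formanek and Razmyslov. Your proposal is a correct reconstruction of Formanek's argument: your cyclic sum $P_d$ is exactly his central polynomial. So you supply a self-contained proof where the paper only gives a reference.

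The one place where you misidentify the difficulty is Step 2. The weight is not the issue. For $x=\diag(\lambda_1,\ldots,\lambda_d)$, every surviving term already carries $g(\lambda_{\sigma_1},\ldots,\lambda_{\sigma_d},\lambda_{\sigma_1})=D:=\prod_{j<l}(\lambda_j-\lambda_l)^2$, for every Hamiltonian cycle $\sigma$ and every base point.

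What the cyclic sum actually does is redistribute the cycles over the base points. The $(i,i)$ entry of the $k$-th shifted term is $D\sum_{\sigma\in S_d,\ \sigma_1=i}\prod_{j=1}^d (y_{k+j})_{\sigma_j\sigma_{j+1}}$, with indices mod $d$. The substitution $\tau_{k+j}=\sigma_j$ turns the double sum over $k$ and $\sigma$ into a single sum over all $\tau\in S_d$. Each $\tau$ arises from the unique $k$ with $\tau_{k+1}=i$. Hence
$$P_d(x,{\bf y})=D\Big(\sum_{\tau\in S_d}\ \prod_{l=1}^d (y_l)_{\tau_l\tau_{l+1}}\Big)\,1_d,\qquad \tau_{d+1}=\tau_1.$$
So plain cyclic symmetrization works, and the fallback with an auxiliary variable is not needed.

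Your equivariance-plus-density step is fine; for instance, multiply the entries of $[P_d,z]$ by the discriminant of the characteristic polynomial of $x$ and use that $K$ is infinite. The rescaling of $y_1$ in (b) is also fine.

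For comparison, Razmyslov's route produces a multilinear central polynomial from a weak identity via his transform. Formanek's polynomial has high degree in $x$, but the statement only asks for a homogeneous nonconstant polynomial, so nothing is lost.
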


A polynomial $P_d$ satisfying the properties of the previous theorem is 
called a {\it central identity of degree $d$}.
For example $P_2(x_1,x_2):=[x_1,x_2]^2$ is a central identitity of degree $2$.
However,  there is no simple expressions
for a central identity of degree $d\geq 3$.  Indeed the proposed polynomials in the papers 
\cite{Formanek} and \cite{Razmyslov} are distinct.
For a  nice and recent account of central identities,
see \cite{BMRV}.

\subsection{Finite dimensional simple quotients}

We now prove the main result of the section.

\begin{theorem}\label{structure1} Let $\fG$ be a  weakly Noetherian Lie algebra and let $d> 1$.

Then $\fG$ contains only finitely many maximal ideals $\fm$ such that $\fG/\fm$ is a simple Lie algebra of dimension $d$ over its centroid $L=\Cent(\fG/\fm)$.
\end{theorem}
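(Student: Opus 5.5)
We argue by contradiction and use the proof pattern of Proposition \ref{centroid}. We produce infinitely many linearly independent elements in the centroid of a suitable quotient of $\fG$, and the central identities of Theorem \ref{RazFor} supply them. Assume $\fG$ has infinitely many pairwise distinct maximal ideals $\fm_1,\fm_2,\ldots$ such that each $\fS_j:=\fG/\fm_j$ is simple of dimension $d$ over its centroid $L_j=\Cent(\fS_j)$. Each $L_j$ is a finite extension of $K$ by Corollary \ref{simplecentroid}.

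Let $R=U(\fG)$, acting on $\fG$ through the adjoint representation, and let $M_j\subset R$ be the kernel of $R\to\End_K(\fS_j)$. By Lemma \ref{Wedderburn}, $R/M_j\simeq\Mat_d(L_j)$, so $M_j$ is a maximal two-sided ideal. The $M_j$ are pairwise distinct. Indeed, $\fS_j$ is centerless, so $x\in\fG$ lies in $\fm_j$ iff $\ad(x)$ acts by $0$ on $\fS_j$, i.e. $\fm_j=\fG\cap M_j$ (with $\fG\subset R$). Hence $M_i= M_j$ would force $\fm_i=\fm_j$.

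Fix $k$ and apply Lemma \ref{dense} to $M_1,\ldots,M_{k-1}$ and $M=M_k$: the map $M_1\cap\cdots\cap M_{k-1}\to\Mat_d(L_k)$ is surjective. Take the central polynomial $P_d$ of Theorem \ref{RazFor}. It has rational coefficients and the theorem applies over any field of characteristic zero, in particular over each $L_j$. Choose $u_1,\ldots,u_m\in M_1\cap\cdots\cap M_{k-1}$ whose images in $\Mat_d(L_k)$ satisfy $P_d=1$, and set $c_k=P_d(u_1,\ldots,u_m)\in R$. Since $P_d$ is homogeneous and nonconstant, it has no constant term, so $c_k\in M_1\cap\cdots\cap M_{k-1}$. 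By construction $c_k$ acts on $\fS_k$ as the identity, and by Theorem \ref{RazFor}(a) its image in every $R/M_j\simeq\Mat_d(L_j)$ is central.

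Now set $\fn=\bigcap_j\fm_j$ and $\bar\fG=\fG/\fn$, which is weakly Noetherian by Lemma \ref{basic}(a). Each $c_k\in R$ preserves every ideal, so it induces an endomorphism $\theta_k$ of $\bar\fG$. We claim $\theta_k\in\Cent(\bar\fG)$. Since $\bar\fG$ embeds in $\prod_j\fS_j$, it suffices to check that $c_k$ commutes with every $\ad(x)$ on each $\fS_j$. This holds because the image of $c_k$ is central in the image of $R$ in $\End(\fS_j)$, and that image contains $\ad(x)$.

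Finally, the $\theta_k$ are linearly independent, by a triangularity argument. Given a relation $\sum_{k\le n}\lambda_k\theta_k=0$ with $\lambda_n\neq0$, look at the action on $\fS_n$. For $k<n$, $\theta_k$ may act on $\fS_n$ only as some scalar in $L_n$, so use the other direction instead: $\theta_k$ kills $\fS_j$ for all $j<k$, while $\theta_k=\mathrm{id}$ on $\fS_k$. Taking the smallest $k$ with $\lambda_k\neq 0$ and restricting to $\fS_k$ gives $\lambda_k=0$, a contradiction. Hence $\dim\Cent(\bar\fG)=\infty$, contradicting Proposition \ref{centroid}.

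The main point to verify carefully is the passage from "central in each $\Mat_d(L_j)$" to "element of the centroid of $\bar\fG$". This needs the embedding $\bar\fG\hookrightarrow\prod_j\fS_j$, together with the fact that centrality in the image of $U(\fG)$ is exactly commutation with all $\ad(x)$. The remaining steps (distinctness of the $M_j$, applicability of $P_d$ over each $L_j$) appear routine.
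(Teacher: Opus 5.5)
Your proof is correct and takes essentially the same route as the paper. You lift a central-polynomial evaluation equal to $1$ in $\Mat_d(L_k)$ into $\bigcap_{j<k}\Ann_{U(\fG)}(\fG/\fm_j)$ via Lemmas \ref{dense} and \ref{Wedderburn}, read the resulting operators as centroid elements of $\fG/\bigcap_j\fm_j$, and use the triangular pattern to contradict Proposition \ref{centroid}; you also correctly make explicit what the paper leaves implicit, namely the distinctness of the $M_j$ via $\fm_j=\fG\cap M_j$, the reduction to $\bigcap_j\fm_j=0$ via Lemma \ref{basic}(a), and why centrality in each $\Mat_d(L_j)$ gives a centroid element.
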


\begin{proof} Suppose otherwise. We will show that it contradicts Proposition \ref{centroid}

Thus there is
an infinite sequence 
$$\fm_1,\fm_2\ldots$$ 
of distinct ideals such that 
$\fg/\fm_i$ is a simple Lie algebra of dimension $d$ over its centroid 
$L_i:=\Cent(\fG/\fm_i)$.

Without loss of generality, we can assume that
$\cap_{i=1}^\infty \,\fm_i=0$, therefore there is an embedding

$$\fG\subset \prod_{i=1}^\infty \,\fG/\fm_i.$$

Let $P_d(x_1,\cdots,x_m)$ be a central identity, which does exist by Theorem \ref{RazFor}.
For any $m$-uple 
${\bf a}=(a_1,\ldots,a_m)\in U(\fG)^m$,
the element $\Ad(P_d({\bf a}))$ lies in 
$\Cent(\fG)$. Since 

$$\Cent(\fg)\subset 
\prod_{i=1}^\infty \,L_i,$$

\noindent $\Ad(P_d({\bf a}))$ is represented as an infinite sequence of scalars
$(x_1,x_2,\ldots)$, where each $x_i$ belongs to $L_i$.

Set $I_n=\cap_{i=1}^n\,\Ann_{U(\fG)}(\fG/\fm_i)$. By Lemmas \ref{dense} and \ref{Wedderburn}, there is 
${\bf a}^{(n)}\in I_{n-1}$ such that
$\Ad(P_d({\bf a}^{(n)}))$ acts on
$\fG/\fm_n$ as the identity. Therefore we have

\begin{align*}
\Ad(P_d({\bf a}^{(1)}))&=(1,x_2, \ldots)\\
\Ad(P_d({\bf a}^{(2)}))&=(0,1,y_3,\ldots)\\
\Ad(P_d({\bf a}^{(3)}))&=(0,0,1,z_4,\ldots)\\
&\ldots
\end{align*}

\noindent where $x_2,\ldots, y_3,\ldots, z_4,\ldots$ are some undetermined scalars.
Since 

$$\Ad(P_d({\bf a}^{(1)})), \Ad(P_d({\bf a}^{(2)})), \Ad(P_d({\bf a}^{(3)})),\ldots $$

\noindent are linearly independent, we conclude that $\Cent(U\fg)$ is
infinite dimensional, which contradicts Proposition
\ref{centroid}.
\end{proof}

\subsection{Lie Polynomial Identities}

A {\it Lie polynomial} 
$P(x_1,\ldots,x_m)$ is  an element
of the free Lie $K$-algebra over the Lie
variables $x_1,\ldots,x_m$. A Lie algebra $\fG$
{\it satisfies the polynomial identity $P$}
if 

$$P(a_1,\ldots,a_m)=0, \,\forall a_1,\ldots, a_m\in \fG,$$

\noindent see the monograph \cite{Bahturin}. The following Lemma is well-known. 

\begin{lemma} \label{noidentity}
 Let $P(x_1,\ldots,x_m)$ be a Lie
polynomial, and 
$(\fs_1, \fs_2,\ldots )$ be a countable family of finite dimensional simple Lie algebras such that

$$\lim_{i \to \infty}\dim_{L_i}\,\fs_i=\infty,$$

\noindent where $L_i:=\Cent(\fs_i)$.

Then $\fs_i$ does not satisfies $P$ for $i\gg0$.
\end{lemma}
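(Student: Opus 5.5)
Since we may enlarge the field, the plan is to first reduce to split simple Lie algebras over an algebraically closed field. We do this by extending scalars via $\fs_i\otimes_{L_i}\overline{K}$. Here the identity $P$ has coefficients in $K$, and $P$ is multilinearizable in characteristic zero: replacing $P$ by its full multilinearization, which is still a nonzero identity satisfied by $\fs_i$, we may assume $P$ is multilinear. A multilinear identity satisfied by $\fs_i$ as a $K$-algebra is satisfied by $\fs_i$ viewed over $L_i$, hence by $\fs_i\otimes_{L_i}\overline{K}$, which is a simple Lie algebra over $\overline K$ of dimension $\dim_{L_i}\fs_i$. So it suffices to show: a fixed nonzero multilinear Lie polynomial $P$ is not an identity of simple Lie algebras $\fg$ over $\overline K$ of sufficiently large dimension.

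Next we use the classification of simple Lie algebras over $\overline K$. As $\dim\fg\to\infty$, the rank $r$ of $\fg$ tends to infinity, since there are finitely many simple algebras of each rank. Every simple algebra of large rank contains a copy of $\fsl_n$ with $n\to\infty$ as the rank grows: type $A_r$ is $\fsl_{r+1}$ itself, while $B_r$, $C_r$ and $D_r$ contain $\fgl_r\supset\fsl_r$ as a Levi subalgebra. An identity of $\fg$ passes to subalgebras. So it suffices to show that $P$ fails on $\fsl_n$ for $n$ large.

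For $\fsl_n$ we argue as follows. Write $P$ in the free associative algebra, via the Lie embedding of the free Lie algebra into $\Q\langle x_1,\ldots,x_m\rangle$. This is a nonzero multilinear associative polynomial of degree $m$, and its image is still nonzero because the Lie-to-associative map is injective. By the classical fact that $\Mat_n(\overline K)$ satisfies no nonzero associative identity of degree $<2n$ (the staircase argument with matrix units), $P$ is not an identity of $\Mat_n$ once $2n>m$.

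To pass from $\Mat_n=\fgl_n$ back to $\fsl_n$, note that $\fgl_n=\fsl_n\oplus\overline K\cdot 1$. In a Lie monomial of degree $\ge2$, any substitution of the central element $1$ kills it. So if $m\ge2$, multilinearity shows that $P$ vanishes identically on $\fgl_n$ as soon as it vanishes on $\fsl_n$. The degenerate case $m=1$, where $P=c\,x_1$, is trivial. Hence $\fsl_n$, and therefore $\fs_i$ for $i\gg0$, does not satisfy $P$.

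The main obstacle is the first step: making sure that an identity over $K$ of the $K$-algebra $\fs_i$ transfers to the $\overline K$-form. Multilinearization in characteristic zero handles this, but one must check that the multilinearized polynomial is nonzero. This is standard, since $P$ is recovered from its multilinearization by restitution.
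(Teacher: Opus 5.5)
Your proof is correct. Its skeleton matches the paper's: you reduce to $\overline K$ via $\fs_i\otimes_{L_i}\overline K$, use the classification to see that large simple algebras are classical and contain $\mathfrak{gl}_r$ or $\mathfrak{sl}_r$ with $r\to\infty$, and then show that $P$ fails on a matrix Lie algebra.

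The genuine difference is in that last step.

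\begin{itemize}
\item The paper obtains \emph{some} $N_0$ with $P$ not an identity of $\mathfrak{gl}_{N_0}$, non-constructively. The free Lie algebra $F_m$ is residually finite dimensional (for instance, kill all components of degree $>\deg P$). Ado's theorem then embeds the resulting finite-dimensional quotient in some $\mathfrak{gl}_{N_0}$. This route needs no multilinearization.
\item You instead push $P$ into the free associative algebra and substitute a staircase of matrix units. This gives the explicit bound $2n>\deg P$ and avoids Ado's theorem; it is essentially the Amitsur--Levitzki-type argument the paper cites but sidesteps. The price is that you must first multilinearize $P$. Concretely, pass to a nonzero multihomogeneous component, which is an identity because $K$ is infinite, and then linearize; nonvanishing follows by restitution in characteristic zero. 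Make this first step explicit, since full linearization is defined for homogeneous polynomials.
\end{itemize}

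Your multilinearization also buys something the paper leaves implicit. It justifies that an identity of the $K$-algebra $\fs_i$ stays an identity of $\fs_i\otimes_{L_i}\overline K$: a multilinear polynomial vanishing on tuples from an $L_i$-basis of $\fs_i$ vanishes on their $\overline K$-span.

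Finally, your passage from $\mathfrak{gl}_n$ to $\mathfrak{sl}_n$ through the central element is correct but avoidable. Since $\mathfrak{gl}_n$ embeds in $\mathfrak{sl}_{n+1}$ via $X\mapsto\diag(X,-\Tr X)$, all four classical families contain a large $\mathfrak{gl}$. This is how the paper can state that they contain $\mathfrak{gl}_{N_0}$.
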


\begin{proof} Indeed it is an easy consequence of
the celebrated paper of Amitsur and Levitzki
\cite{Amitsur-Levitzki}. For the sake of the reader, a quick proof is given below.

First we prove that
$\fgl_{N_0}$ does not satisfies 
$P$ for a certain integer $N_0>0$.
Let $F_m$ be the free Lie algebra over
$x_1,\cdots,x_m$. Since $F_m$ is residually finite dimensional, there is a finite codimensional ideal  $\fm\subset F_m$ such that
$P\notin\fm$. By Ado's theorem \cite{Bourbaki}, there is an embedding 
$F_m/\fm\subset \fgl_{N_0}$ for some integer $N_0$.
Thus $\fgl_{N_0}$ does not satisfies  $P$.

Next we observe that any polynomial identity 
of $\fs_i$ is also a polynomial identity for
the simple Lie algebra $\fs_i\otimes_{L_i} \overline{K}$. So we can add the simplifying hypothesis that $K$ is algebraically closed.
When $\dim \fs_i>248$, the Lie algebra
$\fs_i$ is isomorphic to 
$\fsl(n+1), \fso(2n+1), \fsp(2n)$ or $\fso(2n)$
for some $n$. Each of these Lie algebras contains $\fgl_{N_0}$ whenever $n\geq N_0$, therefore $\fs_i$ does not satisfies $P$ for $i\gg0$. 
\end{proof}

\begin{cor}\label{noPI} 
Any   residually semisimple weakly Noetherian
Lie algebra  $\fG$ does not satisfy any polynomial identity, except if $\fG$ is finite dimensional.
\end{cor}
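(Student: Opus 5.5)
Assume $\fG$ is residually semisimple, weakly Noetherian and infinite dimensional, and that it satisfies a Lie polynomial identity $P(x_1,\ldots,x_m)$. The aim is a contradiction with Lemma \ref{noidentity}.

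Residual semisimplicity means $\rad^f\fG=0$, so $\fG$ embeds densely in $\prod_{\alpha}\fs_\alpha$. Here the $\fs_\alpha=\fG/\fm_\alpha$ run over the finite dimensional simple quotients, with $\fm_\alpha$ maximal ideals of finite codimension $>1$. Each $\fs_\alpha$ is a homomorphic image of $\fG$, so each satisfies $P$. By Corollary \ref{simplecentroid}, the centroid $L_\alpha=\Cent(\fs_\alpha)$ is a finite extension of $K$. Set $d_\alpha=\dim_{L_\alpha}\fs_\alpha$; we have $d_\alpha>1$.

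The key input is Theorem \ref{structure1}: for each fixed $d>1$ there are only finitely many $\fm_\alpha$ with $d_\alpha=d$. I then split into two cases.

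\emph{Case 1: the set $\{d_\alpha\}$ is bounded.} Then there are only finitely many maximal ideals $\fm_\alpha$ in total. Since their intersection $\rad^f\fG$ is $0$, $\fG$ embeds in the finite product $\prod\fs_\alpha$, which is finite dimensional. This contradicts $\dim\fG=\infty$.

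\emph{Case 2: the set $\{d_\alpha\}$ is unbounded.} Because each value of $d$ occurs only finitely often, I can extract a countable sequence $\fs_1,\fs_2,\ldots$ of these quotients with $\dim_{L_i}\fs_i\to\infty$. Lemma \ref{noidentity} then says that $\fs_i$ does not satisfy $P$ for $i\gg0$. This contradicts the fact that every quotient of $\fG$ satisfies $P$.

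No step here is really an obstacle, since Theorem \ref{structure1} and Lemma \ref{noidentity} do the work. The only point needing care is Case 1: the finiteness count has to be made with dimension over the centroid, exactly as Theorem \ref{structure1} is phrased, and not with dimension over $K$. This is consistent, because Lemma \ref{noidentity} also measures $\dim_{L_i}\fs_i$. Any identity satisfied by $\fG$ passes to each quotient, so there is no issue going in that direction.
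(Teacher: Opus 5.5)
Your proof is correct and follows the paper's own argument. You use Theorem \ref{structure1} to get finite dimensional simple quotients whose dimensions over their centroids tend to infinity, and then Lemma \ref{noidentity} rules out any polynomial identity. Your case split only makes explicit the step the paper leaves implicit, namely that if the $d_\alpha$ were bounded, then $\rad^f\fG=0$ would force $\fG$ to be finite dimensional (in Case 2, unboundedness alone already gives the required sequence).
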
 

\begin{proof} Assume that $\fG$ is infinite dimensional.
By theorem \ref{structure1},
$\fG$ admits countably many simple quotients 
of finite dimension, say $\fs_1, \fs_2,\ldots$.
Moreover 

$$\lim_{i \to \infty}\dim_{L_i}\,\fs_i=\infty,$$

\noindent where $L_i:=\Cent(\fs_i)$. Thus by
Lemma \ref{noidentity}, $\fG$ does not satisfy any polynomial identity.
\end{proof}

\section{Characteristic ideals}\label{Charac}
Let $\fG$ be a weakly Noetherian Lie algebra.
In the present section we prove Theorem \ref{characteristic}, namely 
that any maximal ideal $\fm$ of 
codimension $>1$ is characteristic, which is
a crucial step in the proof of  Theorem A. 
When $\fm$ has infinite codimension, the result is a simple consequence of Lemma \ref{closure}(b). 

The case when $\fm$ has finite codimension is the
difficult part of the proof. It combines
the previous Theorem \ref{structure1} with some considerations about affine Lie algebras, defined below. The proof slighty differs from
the explanations of the Introduction. However, in Remark \ref{rk}, we briefly explains why the simplified version of Theorem \ref{structure1} is enough for the proof.

\subsection{Affine Lie algebras}

Let $\fs$ be a finite dimensional simple Lie algebra. A Lie algebra $\fG$ is called {\it affine} if

\begin{enumerate}
\item[(a)] $\fG$ contains $\fs$, and

\item[(b)] viewed as an $\fs$-module,  $\fG$ is a direct sum of adjoint modules.
\end{enumerate}

This kind of Lie algebras first appeared in 
Tits work \cite{Tits1}\cite{Tits2}. 
Over an algebraically closed field, or, more generally
when $\fs$ is a split simple Lie algebra,
affine Lie algebras are special
case of the Lie algebras graded by a root system
(LAGRS), which have been extensively investigated.
We can mention the early works of Koecher 
 \cite{Koe}, Kantor \cite{Kan} for rank one root system,  and, in the general case, there are a lot of papers, including  those of Berman and Moody \cite{BerMoo} or Benckart and Zelmanov \cite{BenZel}.  The main  difference  with the 
axioms of  \cite{Tits1}\cite{Tits2}  is the fact that affine Lie algebras do not contain trivial representations of $\fs$. Also, at the difference of other precited works, we do not assume that $\fs$ is split, so  there are no root decomposition in our setting.

Henceforth, it will be more easy to give direct proofs than extracting results from previous works. 

\bigskip
The first step  consists in the computation 
of the space
$\Hom_\fs(\fs\otimes\fs,\fs)$ 
 of all equivariant products on
$\fs$.  The Lie bracket
$[,]$  is a skew symmetric equivariant product.
When $\fs=\fsl_n$, there is a also a  
symmetric product $\cdot$ defined by

\centerline{$ x\cdot y:=1/2(xy+yx)-1/n\Tr(xy) 1_n$
for any $x, y\in \fsl_n$.}

\noindent Here $xy$ is the usual product of $n\times n$ matrices, $1_n$ denotes the identity of
$\Mat_n(K)$, and the additional term $1/n\Tr(xy) 1_n$ insures that
$x\cdot y$ is in $\fsl_n$. For $n\geq 3$, the product $\cdot$ is not zero.

The starting point is the following 
well-known observation:

\begin{lemma} \label{bilinear} Assume that $K=\overline K$ is algebraically closed.

If $\fs=\fsl_n$ with $n\geq 3$, then
$\Hom_\fs(\fs\otimes\fs,\fs)$ has dimension two,  with basis
the Lie bracket $[,]$ and the symmetric product $\cdot$.

Otherwise $\Hom_\fs(\fs\otimes\fs,\fs)$ is the one-dimensional space generated by the Lie bracket $[,]$.
\end{lemma}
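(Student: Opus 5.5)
Since $\fs$ is finite dimensional and $K$ is algebraically closed of characteristic zero, the plan is to identify $\Hom_\fs(\fs\otimes\fs,\fs)$ with the multiplicity space of the adjoint representation inside $\fs\otimes\fs$. We then split it into its symmetric and skew-symmetric parts:
$$\Hom_\fs(\fs\otimes\fs,\fs)=\Hom_\fs(S^2\fs,\fs)\oplus\Hom_\fs(\Lambda^2\fs,\fs).$$
So the dimension equals the multiplicity of $\fs$ in $S^2\fs$ plus its multiplicity in $\Lambda^2\fs$. By Schur's lemma each multiplicity is the dimension of the corresponding $\Hom$ space, and the Lie bracket $[,]$ and the product $\cdot$ are obviously equivariant. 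It therefore suffices to prove two facts: the multiplicity of $\fs$ in $\Lambda^2\fs$ is exactly one, and its multiplicity in $S^2\fs$ is one for $\fsl_n$ with $n\geq 3$ and zero otherwise.

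\emph{Skew part.} Using the Killing form $\kappa$, an equivariant skew map $\phi:\Lambda^2\fs\to\fs$ corresponds to the invariant trilinear form $\kappa(\phi(x,y),z)$. I would like to show this form is alternating and then use $H^3(\fs)$, but that argument is not direct. The cleaner route is to work with the adjoint action. First, $\Hom_\fs(\fs,\Lambda^2\fs)\subset \Lambda^2\fs\otimes\fs$ can be computed through highest weights. Take $\theta$ the highest root and $\alpha_i$ a simple root with $\theta-\alpha_i$ a root (for $\fsl_2$, $\theta-\alpha$ fails, but $\Lambda^2\fsl_2\simeq\fsl_2$ directly). The $\theta$-weight space of $\Lambda^2\fs$ is spanned by the wedges $e_\beta\wedge e_{\theta-\beta}$ together with $h\wedge e_\theta$. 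A highest weight vector is killed by every $e_{\alpha_j}$, and solving these linear conditions should leave a one-dimensional space. I expect this to be the messy step.

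\emph{Symmetric part (the main obstacle).} A uniform alternative is to use the identification $\Hom_\fs(S^2\fs,\fs)\cong (S^2\fs\otimes\fs)^\fs$, that is, invariant totally symmetric trilinear forms on $\fs$. Their restriction to the diagonal gives cubic invariant polynomials in $S^3(\fs^*)^\fs$, and polarization in characteristic zero makes this an isomorphism. By Chevalley's theorem, $S(\fs^*)^\fs\simeq S(\fh^*)^W$, which is a polynomial algebra on generators of degrees $m_i+1$, where the $m_i$ are the exponents. A cubic invariant exists exactly when some exponent equals $2$. Moreover, a degree-two generator is the Killing form, so invariants of degree three are all primitive, since degree $2+1$ products are impossible: there is no degree-one invariant because $\fs$ is simple. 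Hence the dimension of $S^3(\fs^*)^\fs$ is the number of exponents equal to $2$. Inspecting the tables of exponents, this number is one for $A_{n-1}$ with $n\geq3$ (exponents $1,\dots,n-1$) and zero for all other types, including $D_4$ and $A_1$.

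The skew part is handled the same way, using $(\Lambda^2\fs\otimes\fs)^\fs$, which corresponds to alternating invariant $3$-forms, i.e.\ $(\Lambda^3\fs^*)^\fs\simeq H^3(\fs)$. Its dimension is one, by the primitive generator of degree $2\cdot1+1$ (the exponent $m_1=1$ occurs once for simple $\fs$). This replaces the weight computation sketched above. The two computations together show that the dimension is $2$ for $\fsl_n$ with $n\geq 3$ and $1$ otherwise. Since $x\cdot y$ is nonzero (say on diagonal matrices) and symmetric, it is independent of the bracket, so $[,]$ and $\cdot$ form a basis in the $\fsl_n$ case and $[,]$ spans the space in all other cases.
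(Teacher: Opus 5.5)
There is a genuine gap, located exactly at the two identifications that bring in invariant theory.

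Through the Killing form $\kappa$, an equivariant symmetric $\phi$ gives the invariant trilinear form $B(x,y,z)=\kappa(\phi(x,y),z)$. This form is symmetric in $x,y$ only. So $\Hom_\fs(S^2\fs,\fs)\cong(S^2\fs^*\otimes\fs^*)^\fs$, not $(S^3\fs^*)^\fs$. Likewise $\Hom_\fs(\Lambda^2\fs,\fs)\cong(\Lambda^2\fs^*\otimes\fs^*)^\fs$, not $(\Lambda^3\fs^*)^\fs$. Since $S^2V\otimes V\cong S^3V\oplus S_{(2,1)}V$ and $\Lambda^2V\otimes V\cong\Lambda^3V\oplus S_{(2,1)}V$, each of your two spaces exceeds what you computed by $b:=\dim(S_{(2,1)}\fs^*)^\fs$. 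In fact $\dim\Hom_\fs(\fs\otimes\fs,\fs)=a+2b+c$, with $a=\dim(S^3\fs^*)^\fs$ and $c=\dim(\Lambda^3\fs^*)^\fs$.

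Full symmetry (resp.\ alternation) of $B$ would mean that every $\phi(x,\cdot)$ is $\kappa$-self-adjoint (resp.\ skew-adjoint), and this does not follow from equivariance. Restriction to the diagonal kills the $S_{(2,1)}$-part, so ``polarization makes this an isomorphism'' is unjustified. You flagged exactly this issue in the skew part (``I would like to show this form is alternating'') and then used it anyway. As a result, Chevalley's theorem and $H^3(\fs)$ correctly give $a$ and $c$, but only the lower bound $\dim\geq a+c$. That bound was already evident from $[\,,]$ and $\cdot$. The real content of the lemma is the upper bound, which is equivalent to $b=0$ and remains unproved. Your exponent bookkeeping itself is correct.

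What is missing is an upper bound on all of $\Hom_\fs(\fs\otimes\fs,\fs)$ at once, and the highest-weight computation you set aside is the right tool. The paper applies it to $\fs\otimes\fs$ directly:
\begin{enumerate}
\item[(i)] $\fs\otimes\fs$ is generated as an $\fs$-module by the $\fb$-stable subspace $Ke_\theta\otimes\fs$. So for $\pi\neq0$ the $\fb$-module $\pi(Ke_\theta\otimes\fs)$ is nonzero and hence contains $e_\theta$.
\item[(ii)] By weights this forces $\pi(e_\theta\otimes\fh)\neq0$, so the map $\pi\mapsto(\pi(e_\theta\otimes h_i))_i$ is injective.
\item[(iii)] If $\theta(h_i)=0$, then $[f_i,e_\theta]=0$ and $\pi(e_\theta\otimes e_i)=0$, so $\pi(e_\theta\otimes h_i)=-[f_i,\pi(e_\theta\otimes e_i)]=0$.
\end{enumerate}
Hence $\dim\leq\Card\{i\mid\theta(h_i)\neq0\}$, which is $2$ for $\fsl_n$ with $n\geq3$ and $1$ otherwise; the explicit maps give equality.

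If you want to stay with invariant theory, you would need Kostant's theorem that equivariant polynomial maps $\fs\to\fs$ form a free $S(\fs^*)^\fs$-module with generators in degrees $m_1,\dots,m_\ell$. Applied to quadratic maps, which do correspond to $\Hom_\fs(S^2\fs,\fs)$ by polarization, it gives $a+b=\Card\{i\mid m_i=2\}=a$, so $b=0$. This works, but it is far heavier than the weight argument.
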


\begin{proof} Let $\{e_i, f_i, h_i\mid i\in I\}$ be the standard Chevalley
generators of $\fs$, let 
$\fh=\oplus_{i\in I}\,K.h_i$ be the Cartan subalgebra and let
$\fb$ be   Borel subalgebra generated by
$\fh$ and the elements $e_i$. Let $\theta$ be the highest root
and let $e_\theta$ be the corresponding root element.

Let  $\pi:\fs\otimes\fs\to\fs$  be any nonzero equivariant product. Since the $\fs$-module $\fs\otimes \fs$ is generated by $Ke_\theta\otimes\fs$, we have

\centerline{$\pi(Ke_\theta\otimes\fs)\neq 0$.}

\noindent  Since
$Ke_\theta$ is the unique simple $\fb$-submodule of $\fg$,
the $\fb$-module $\pi(Ke_\theta\otimes\fs)$  intersects $Ke_\theta$. By weight considerations, we conclude that

\centerline{$\pi(Ke_\theta\otimes\fh)\neq 0$.}

Set $J:=\{i\in I\mid \exists \phi\in 
\Hom_\fs(\fs\otimes\fs,\fs) \,
\phi(e_\theta\otimes h_i)\neq 0 \,  \}$.
Since  $\pi(Ke_\theta\otimes\fh)$ lies in the one-dimensional space $K e_\theta$, we deduce that

\centerline{$\dim \Hom_\fs(\fs\otimes\fs,\fs)
\leq \Card \,J$.}

\noindent For $i\in I$ with $\theta(h_i)=0$, we have $[e_\theta,f_i]=0$. Obviously
 $\pi(e_\theta\otimes e_i)=0$, hence

\centerline{$0=[\pi(e_\theta\otimes e_i), f_i])=
\pi(e_\theta\otimes h_i)$.}

\noindent It follows that

\centerline{$J\subset \{i\in I\mid \theta(h_i)\neq 0\}$.}
  
If $\fs=\fsl_n$ with $n\geq 3$, the set 
$\{i\in I\mid \theta(h_i)\neq 0\}$ is a doubleton,
which proves the claim in that case. Otherwise,
the set 
$\{i\in I\mid \theta(h_i)\neq 0\}$ is a singleton,
which concludes the proof.
\end{proof}

We will now extend the result over 
nonalgebraically closed fields. Let $L$ be the centroid of the simple Lie algebra $\fs$. By Lemma \ref{com}, the division algebra $L$ is a field.

We can multiply any
$\mu\in \Hom_\fs(\fs\otimes\fs)$ by elements  $a\in L$, that is  $a\mu$ is the bilinear map

$$a\mu:x,y\in\fg\mapsto a\mu(x,y).$$

\noindent Therefore $\Hom_\fs(\fs\otimes\fs)$ is a $L$-vector space. A priori, we can define
another structure of $L$-vector space on
$\Hom_\fs(\fs\otimes\fs)$ by using the action of $L$ on the first argument, but Assertion (a)
of the next Lemma shows that it defines the same structure of $L$-vector space.

\begin{lemma}\label{bilinear2} Let $\fs$ be a simple finite dimensional Lie algebra and let $L$ be its centroid.

\begin{enumerate}
\item[(a)] Any $\mu\in \Hom_\fs(\fs\otimes\fs,\fs)$ is $L$-bilinear,
\item[(b)] if 
$\overline{K}\otimes_L \fs$ is isomorphic to 
$\fsl_n(\overline{K})$ for some $n\geq 3$,
then the $L$-vector space 
$\Hom_\fs(\fs\otimes\fs,\fs)$ has dimension two,
with basis the Lie bracket $[-,-]$ and a 
certain symmetric product $\cdot$,
\item[(c)] otherwise, the $L$-vector space 
$\Hom_\fs(\fs\otimes\fs,\fs)$ is generated by the Lie bracket $[-,-]$.

\end{enumerate}
\end{lemma}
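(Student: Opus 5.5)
We argue in three steps: first $L$-bilinearity, then a reduction to the algebraically closed case by base change, and finally we descend the symmetric product back to $L$.

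\emph{Step 1: $L$-bilinearity.} Fix $\mu\in\Hom_\fs(\fs\otimes\fs,\fs)$ and $a\in L$. For fixed $y$, consider the map $\phi_y:x\mapsto \mu(ax,y)-a\mu(x,y)$. Equivariance of $\mu$, together with the fact that $a$ commutes with every $\ad(z)$, gives
$$\phi_y([z,x])=[z,\phi_y(x)]-\phi_{[z,y]}(x).$$
So $\phi$, viewed as an element of $\Hom_\fs(\fs\otimes\fs,\fs)$, is itself equivariant. Equivalently, the map $D_a:\mu\mapsto \mu(a\,\cdot,\cdot)-a\mu$ is a $K$-linear endomorphism of $\Hom_\fs(\fs\otimes\fs,\fs)$.

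We want to show that $D_a=0$ for every $a\in L$. Both $\mu\mapsto\mu(a\,\cdot,\cdot)$ and $\mu\mapsto a\mu$ define actions of the field $L$, and they commute with each other. The bracket is fixed by both actions, since $a[x,y]=[ax,y]$, so the two actions agree on $L\cdot[-,-]$.

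To finish Step 1, base change to $\overline K$. The algebra $\fs\otimes_K\overline K$ splits as $\bigoplus_{\sigma}\fs_\sigma$ over the embeddings $\sigma:L\to\overline K$, and each $\fs_\sigma=\fs\otimes_{L,\sigma}\overline K$ is simple (Corollary \ref{simplecentroid}(b)). A $\overline K$-equivariant product decomposes into components
$$\fs_\sigma\otimes\fs_\tau\to\fs_\rho .$$
For $\sigma\neq\tau$, the component $\fs_\sigma\otimes\fs_\tau$ is a simple module for $\fs_\sigma\oplus\fs_\tau$ that is nontrivial on both factors, so it has no quotient isomorphic to any $\fs_\rho$. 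Hence every equivariant product is block diagonal, $\mu=\bigoplus_\sigma\mu_\sigma$ with $\mu_\sigma:\fs_\sigma\otimes\fs_\sigma\to\fs_\sigma$. On the block $\fs_\sigma$, an element $a\in L$ acts as the scalar $\sigma(a)$ in every argument, so $\mu(ax,y)=a\mu(x,y)=\mu(x,ay)$. This proves (a).

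\emph{Step 2: dimension count.} By (a), $\Hom_\fs(\fs\otimes\fs,\fs)=\Hom_{\fs,L}(\fs\otimes_L\fs,\fs)$, the space of $L$-bilinear equivariant products. This space commutes with the flat base change $\otimes_L\overline K$, so
$$\dim_L\Hom_\fs(\fs\otimes\fs,\fs)=\dim_{\overline K}\Hom_{\bar\fs}(\bar\fs\otimes\bar\fs,\bar\fs),\qquad \bar\fs=\fs\otimes_L\overline K .$$
The right-hand side is given by Lemma \ref{bilinear}: it is $2$ when $\bar\fs\simeq\fsl_n$ with $n\geq 3$, and $1$ otherwise. In the second case the bracket, which is nonzero, spans the space, which proves (c).

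\emph{Step 3: the symmetric product in case (b).} Here the $L$-space has dimension $2$. It splits into symmetric and skew-symmetric parts, and this splitting is compatible with base change. Over $\overline K$, the skew part is spanned by the bracket and the symmetric part by $\cdot$, each one-dimensional. Therefore the symmetric part over $L$ is also one-dimensional, and any nonzero element of it serves as the required product $\cdot$. It is a genuine descent of $\cdot$ up to a scalar in $\overline K^*$.

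The main obstacle is Step 1, specifically the claim that the off-diagonal blocks of $\fs\otimes_K\overline K$ contribute nothing. Once that is in place, everything else is formal base change.
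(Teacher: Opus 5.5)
Your proof is correct and uses the same ingredients as the paper's: base change to $\overline K$, the splitting of $\fs\otimes_K\overline K$ into $[L:K]$ conjugate simple blocks on which $L$ acts by scalars, and Lemma \ref{bilinear}. The difference is only one of order. You prove (a) first, making explicit the vanishing of the off-diagonal components that the paper uses tacitly when it asserts the $K$-dimensions $[L:K]$ and $2[L:K]$, and then compute the $L$-dimension directly by base change along $L\subset\overline K$; the paper instead counts $K$-dimensions and deduces (a) at the end.
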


\begin{proof} By definition of the centroid, the bracket is $L$-bilinear. Therefore Assertion (a) is proved when $\Hom_\fs(\fs\otimes\fs,\fs)$ 
has dimension one over $L$.

If $\overline{K}\otimes_L \fs$ is not isomorphic to $\fsl_n$ for some $n\geq 3$, then 
by Lemma \ref{bilinear},  the space
$\Hom_\fs(\fs\otimes\fs,\fs)$ has $K$-dimension
$[L:K]$, so it is generated by the bracket as
vector space over $L$, which proves Assertion (c).

Otherwise, by Lemma \ref{bilinear},  the space
$\Hom_\fs(\fs\otimes\fs,\fs)$ has $K$-dimension
$2[L:K]$ and it contains a symmetric product.
 Thus
$\Hom_\fs(\fs\otimes\fs,\fs)$ is a $L$-vector space generated by the  Lie bracket and some symmetric product $\cdot$, which proves assertion (b).

We also observe that the Lie algebra 
$\overline{K}\otimes \fs$ is a direct sum of
$[L:K]$ copies of $\overline{K}\otimes_L \fs$.
It is clear that the symmetric product on
$(\overline{K}\otimes_L \fs)^{[L:K]}$ is
bilinear with respect to 
${\overline K}\otimes L\simeq  {\overline K}
^{[L:K]}$. It follows that any equivariant symmetric
product in $\Hom_\fs(\fs\otimes\fs,\fs)$
is $L$-bilinear, which completes the proof of Assertion (a).
\end{proof}

Let $\fs$ be a simple finite dimensional Lie algebra, let $\fG\supset\fs$ be an affine Lie algebra over $\fs$ and set

$$A:=\Hom_\fs(\fs,\fG).$$ 

We observe that $A$ is a $L$-vector space,
where $L=\Cent(\fs)$, and there is a natural isomorphism of $\fs$-modules

$$\fG\simeq \fs\otimes_L A.$$

\noindent For $x\in\fs$ and $a\in A$, we denote the tensor $x\otimes a$ as $x(a)$. We also denote by $1$ the given embedding $\fs\to\fg$,
therefore $x(1)=x$, for any $x\in \fs$.

\begin{lemma}\label{aff} With the previous notations,
$A$ carries a structure of  unital $L$-algebra
with unit $1$. Moreover,  
 the bracket is expressed as follows:
\begin{enumerate}
\item[(a)] if $\fs\otimes_L \overline{K}
\simeq \fsl_n$ for some $n\geq 3$, then
$$[x(a),y(b)]=1/2\big([x,y](a\circ b+b\circ a)
+(x\cdot y)(a\circ b-b\circ a)\big)$$

\item[(b)] otherwise, $A$ is commutative and
$$[x(a),y(b)]=[x,y](a\circ b)$$
\end{enumerate}

\noindent where $\circ$ denotes the product on $A$.
\end{lemma}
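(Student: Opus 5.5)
The plan is to read off the product on $A$ from the bracket of $\fG$, using Lemma \ref{bilinear2}. For $a,b\in A$, the map
$$\mu_{a,b}:\fs\otimes\fs\to\fG,\quad x\otimes y\mapsto [x(a),y(b)]$$
is $\fs$-equivariant. This holds because $\fs$ acts on $\fG$ by $\ad$, $\ad$ is a derivation, and $[z,x(a)]=[z,x](a)$ by the definition of $A=\Hom_\fs(\fs,\fG)$. Since $\fG\simeq\fs\otimes_L A$ is a direct sum of adjoint modules, I compose with the projections onto the isotypic pieces; equivalently, $\Hom_\fs(\fs\otimes\fs,\fG)\simeq \Hom_\fs(\fs\otimes\fs,\fs)\otimes_L A$. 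This isomorphism needs a small finiteness argument: the image of $\fs\otimes\fs$ is finite dimensional, so it lies in finitely many summands. Lemma \ref{bilinear2}(a) tells us that all equivariant products are $L$-bilinear, so the tensor product over $L$ makes sense.

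In case (c) of Lemma \ref{bilinear2}, this gives a unique $\phi(a,b)\in A$ with $[x(a),y(b)]=[x,y](\phi(a,b))$. In case (b), it gives unique $\alpha(a,b),\beta(a,b)\in A$ with
$$[x(a),y(b)]=[x,y](\alpha(a,b))+(x\cdot y)(\beta(a,b)).$$
Both $\phi$ and $(\alpha,\beta)$ are $K$-bilinear, and in fact $L$-bilinear by Lemma \ref{bilinear2}(a). Skew symmetry of the bracket, combined with the antisymmetry of $[,]$ and the symmetry of $\cdot$, forces the following relations:
\begin{enumerate}
\item[] $\phi(a,b)=\phi(b,a)$ in case (c);
\item[] $\alpha$ is symmetric and $\beta$ is antisymmetric in case (b).
\end{enumerate}

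Now I define the product. In case (c), set $a\circ b:=\phi(a,b)$; this is automatically commutative. In case (b), set $a\circ b:=\alpha(a,b)+\beta(a,b)$. Then $\alpha=\tfrac12(a\circ b+b\circ a)$ and $\beta=\tfrac12(a\circ b-b\circ a)$, which is exactly the formula in (a) of the statement. The element $1$ is the embedding $\fs\hookrightarrow\fG$, so $[x(1),y(b)]=[x,y(b)]=[x,y](b)$. Comparing coefficients, using uniqueness in the decomposition, gives:
\begin{enumerate}
\item[] $\phi(1,b)=b$ in case (c);
\item[] $\alpha(1,b)=b$ and $\beta(1,b)=0$ in case (b), so $1\circ b=b$; antisymmetry of $\beta$ and symmetry of $\alpha$ then give $b\circ 1=b$.
\end{enumerate}
Hence $1$ is a two-sided unit, and $A$ is a unital $L$-algebra. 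The statement only asks for an algebra structure, so associativity or other identities coming from the Jacobi identity need not be derived here.

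The main obstacle is the decomposition step. Identifying $\Hom_\fs(\fs\otimes\fs,\fs\otimes_L A)$ with $\Hom_\fs(\fs\otimes\fs,\fs)\otimes_L A$ requires the $L$-structure on $A$ to be compatible. Concretely, $L$ must act on $\Hom_\fs(\fs,\fG)$ via precomposition, and $\fG\simeq\fs\otimes_L A$ must be an isomorphism of $\fs$-modules. I would check this by choosing an $L$-basis of $A$, which is possible since $A$ is an $L$-vector space. The equivariant map then decomposes coordinatewise into finitely many elements of $\Hom_\fs(\fs\otimes\fs,\fs)$, and each of these is expanded in the $L$-basis given by Lemma \ref{bilinear2}(b) or (c).
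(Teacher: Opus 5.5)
Your proposal is correct and follows essentially the same route as the paper. Both arguments expand the bracket along the $L$-basis of $\Hom_\fs(\fs\otimes\fs,\fs)$ given by Lemma \ref{bilinear2}, use skew-symmetry to make the coefficient of $[\,,]$ symmetric and that of $\cdot$ antisymmetric, take $a\circ b$ to be their sum, and check that $1$ is a unit. You simply make explicit what the paper leaves implicit: the equivariance of $x\otimes y\mapsto[x(a),y(b)]$, the identification $\Hom_\fs(\fs\otimes\fs,\fs\otimes_L A)\simeq\Hom_\fs(\fs\otimes\fs,\fs)\otimes_L A$ via an $L$-basis of $A$, and the unit computation.
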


\begin{proof} The bracket of $\fG$ is an
element of

$$\Hom_\fs(\fs\otimes\fs,\fs)
\otimes_L \Hom_L(A\otimes A, A).$$  

\noindent Assume now that  
$\fs\otimes_L \overline{K}
\simeq \fsl_n$ for some $n\geq 3$. Then 
by Lemma \ref{bilinear2},
$A$ is endowed with two products $\circ_1$ and
$\circ_2$ such that

$$[x(a),y(b)]=[x,y](a\circ_1 b)+
(x\cdot y)(a\circ_2 b),$$

\noindent for any $x,y\in\fg$ and $a,b\in A$.
Since the Lie bracket is skew-symmetric,
the product $\circ_1$ is commutative and 
$\circ_2$ is skew-symmetric. If we set

$$a\circ b= a\circ_1 b+a\circ_2 b,$$

\noindent we deduce the required formula for
$[x(a),y(b)]$.

When $\fs\otimes_L \overline{K}
\not\simeq \fsl_n$ for some $n\neq 2$,
it follows from Lemma \ref{bilinear2} that

 $$[x(a),y(b)]=[x,y](a\circ b),$$

\noindent for some commutative product $\circ$ on $A$.

On both cases, it is clear that $1$ is a unit.
\end{proof}

\noindent {\it Remark:} The previous proof only uses the skew symmetry of the bracket. Therefore the brackets defined by the previous lemma not need satisfy Jacobi identity. 

\begin{lemma}\label{corres} Let 
$\fG=\fs\otimes_L A$ be an affine Lie algebra as before. Then any ideal $\fm$ of $\fG$ is of the form

$$\fm=\fs\otimes_{L} {\bf m},$$

\noindent for some two-sided ideal $\bf{m}$ of $A$.
\end{lemma}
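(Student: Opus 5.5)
Since $\fm$ is an ideal of $\fG$, it is in particular stable under $\ad(\fs)$, so $\fm$ is an $\fs$-submodule of $\fG\simeq\fs\otimes_L A$. The first step is to show that every $\fs$-submodule of $\fs\otimes_L A$ has the form $\fs\otimes_L{\bf m}$ for some $L$-subspace ${\bf m}\subset A$. The natural candidate is
$${\bf m}:=\Hom_\fs(\fs,\fm)\subset\Hom_\fs(\fs,\fG)=A,$$
and the claim is that $\fm=\fs\otimes_L{\bf m}$.

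This holds because $\fG$ is a direct sum of copies of the adjoint module, hence semisimple as an $\fs$-module. So $\fm$ is a direct summand, and it is itself a sum of copies of $\fs$. Each copy is the image of some $\phi\in\Hom_\fs(\fs,\fm)$, and therefore lies in $\fs\otimes_L{\bf m}$. Conversely, $\fs\otimes_L{\bf m}$ is spanned by images of elements of ${\bf m}$, so it lies in $\fm$. The $L$-linearity of ${\bf m}$ follows from the fact that $\End_\fs(\fs)=L$ (the centroid). This identification $\Hom_\fs(\fs,-)$ versus $\fs\otimes_L-$ is the only slightly delicate point; it is an instance of the standard isotypic-component correspondence for the simple module $\fs$, whose endomorphism ring is $L$.

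Next I check that ${\bf m}$ is a two-sided ideal of $A$. Take $a\in A$ and $b\in{\bf m}$, and use the bracket formulas of Lemma \ref{aff}.

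In case (b), $[x(a),y(b)]=[x,y](a\circ b)$ lies in $\fm$ for all $x,y\in\fs$. Since $[\fs,\fs]=\fs$, this gives $\fs\otimes(a\circ b)\subset\fm$, hence $a\circ b\in{\bf m}$. Because $A$ is commutative in this case, ${\bf m}$ is a two-sided ideal.

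In case (a), the formula gives
$$[x,y]\,(a\circ b+b\circ a)+(x\cdot y)\,(a\circ b-b\circ a)\in\fm$$
for all $x,y\in\fs$. I plan to separate the two terms as follows. The map $\fs\otimes\fs\to\fs\otimes_L A$ given by $x\otimes y\mapsto[x(a),y(b)]$ lands in $\fm=\fs\otimes_L{\bf m}$. Project $A\to A/{\bf m}$; the composite
$$(x,y)\mapsto [x,y]\,\overline{u}+(x\cdot y)\,\overline{v}$$
must then vanish identically. Here $u=a\circ b+b\circ a$ and $v=a\circ b-b\circ a$.

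Because the bracket and $\cdot$ are $L$-linearly independent in $\Hom_\fs(\fs\otimes\fs,\fs)$ by Lemma \ref{bilinear2}, this forces $\overline u=\overline v=0$. Concretely, one tests against an $L$-linear functional on $A/{\bf m}$ to reduce to an equivariant product on $\fs$. It follows that $a\circ b$ and $b\circ a$ lie in ${\bf m}$, so ${\bf m}$ is two-sided.

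I expect the main obstacle to be this separation step in case (a). The plan is to choose an $L$-linear functional $\lambda$ on $A/{\bf m}$ and observe that $(x,y)\mapsto[x,y]\lambda(\overline u)+(x\cdot y)\lambda(\overline v)$ is an element of $\Hom_\fs(\fs\otimes\fs,\fs)$ that vanishes. Linear independence over $L$ then gives $\lambda(\overline u)=\lambda(\overline v)=0$ for every $\lambda$, hence $\overline u=\overline v=0$.
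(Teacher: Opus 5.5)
Your proof is correct, and its skeleton matches the paper's. You identify $\fm$ with $\fs\otimes_L{\bf m}$ from $\fs$-invariance; the paper simply asserts this, and your isotypic argument with ${\bf m}=\Hom_\fs(\fs,\fm)$ supplies the justification. You then read off the ideal property from the bracket formulas of Lemma \ref{aff}, and your commutative case is essentially the paper's.

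The real difference is in the $\fsl_n$ case. The paper first base changes to $\overline{K}$, so that $\fs\simeq\fsl_n$ and explicit matrices are available. It then isolates one product at a time with test elements. A diagonal $h$ with $h\cdot h\neq 0$ gives $[h(a),h(b)]=(h\cdot h)(a\circ_2 b)$. An anticommuting pair $I,J$ with $I\cdot J=0$ and $[I,J]=2IJ\neq 0$ gives $[I(a),J(b)]=2IJ(a\circ_1 b)$.

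You instead pass to $\fs\otimes_L(A/{\bf m})$ and separate the two terms using $L$-linear functionals and the $L$-independence of $[\,,]$ and $\cdot$ from Lemma \ref{bilinear2}(b). Your route works directly with the given, possibly non-split, $L$-form $\fs$, so no descent step and no matrix model are needed. The paper's version is more concrete: it exhibits specific brackets that witness each closure property.

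You can even drop the functionals. Your composite vanishes at both $(x,y)$ and $(y,x)$, and its first term is skew while its second is symmetric. Subtracting and adding therefore give $[x,y]\,\overline{u}=0$ and $(x\cdot y)\,\overline{v}=0$ for all $x,y\in\fs$. Choosing $[x,y]\neq 0$ and $x\cdot y\neq 0$ then yields $\overline{u}=\overline{v}=0$. This is the same symmetrization the paper uses in the proof of Lemma \ref{der}.
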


\begin{proof} Let $\fm$ be an ideal of $\fG$.
Since $\fm$ is $\fs$-invariant, we have
$\fm=\fs\otimes_L {\bf m}$ for some
$L$-vector subspace ${\bf m}\subset A$.
We will show that ${\bf m}\otimes_L\overline{K}$
is an ideal of $A\otimes_L\overline{K}$, therefore we can assume that $K=\overline{K}$
is algebraically closed.

First assume that $A$ is commutative. Pick
any $x, y\in \fs$ with $[x,y]\neq 0$. Then the formula 
$[x(a),y(b)]=[x,y](a\circ b)$ obviously implies
that ${\bf m}$ is an ideal.

Otherwise, $\fs\simeq \fsl_n$ for some 
$n\geq 3$. We decompose the product
$a\circ b$ of two elements of $A$ as

$$a\circ b=a\circ_1 b+ a\circ_2 b,$$

\noindent where $\circ_1$ is the symmetric part  and $\circ_2$ the antisymmetric part of the product $\circ$. We will prove that
${\bf m}$ is an ideal for each of the products
$\circ_1$ and $\circ_2$.
It does not matter the way   the symmetric product $x.y$ is normalized but we can assume that $x.y=1/2(xy+yx) -1/n \Tr(xy) 1_n$.

First choose a diagonal matrix $h\in\fsl_n$
such that $h^2$ is not proportional to $1_n$,
e.g. $h=\diag(1,-1,0,\ldots)$.
In follows that  $h.h\neq 0$. Since

$$[h(a),h(b)]=h.h(a\circ_2 b),$$

\noindent we deduce that $\bf{m}$ is an ideal for the product $\circ_2$. 

Next let $I,J\in \fsl_n$ be Paoli matrices.
They satisfy $IJ=-IJ\neq 0$. Since $\Tr(IJ)=0$, it follows  that 
$I.J=0$. From the relation

$$[I(a),J(b)]=2IJ(a\circ_1 b),$$

\noindent we deduce that $\bf{m}$ is an ideal for the product $\circ_1$, which completes the proof.
\end{proof}

\begin{lemma}\label{der} Let $\fs$ be a simple Lie algebra 
with centroid $L$, and let 
$\fG=\fs\otimes_L A$ be an affine Lie algebra.

Any derivation $\partial$ of $\fG$ is of the form

$$x(a)\mapsto [X,x(a)] + x(\delta a),$$

\noindent where $X$ belongs to $\fG$ and
$\delta$ is a derivation of $A$.
\end{lemma}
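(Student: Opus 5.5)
Given a derivation $\partial$ of $\fG=\fs\otimes_L A$, the plan is to reduce to an $\fs$-equivariant derivation and then read off $\delta$ from the algebra structure. First consider the restriction $\partial|_\fs:\fs\to\fG$. It is a derivation of $\fs$ with values in the $\fs$-module $\fG$, that is, a $1$-cocycle in $Z^1(\fs,\fG)$. Since $\fs$ is finite dimensional and semisimple and $\fG$ is a (possibly infinite) direct sum of finite dimensional modules, Whitehead's first lemma gives $H^1(\fs,\fG)=0$. Indeed, a cocycle takes values in a finite dimensional submodule, because $\fs$ is finite dimensional and its image is finite dimensional, so the finite dimensional statement suffices. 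Hence there is $X\in\fG$ with $\partial(x)=[X,x]$ for all $x\in\fs$. Replacing $\partial$ by $\partial':=\partial-\ad(X)$, we obtain a derivation vanishing on $\fs$.

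A derivation $\partial'$ with $\partial'(\fs)=0$ commutes with $\ad(x)$ for all $x\in\fs$, since $\partial'[x,y]=[x,\partial' y]$. It is therefore an $\fs$-module endomorphism of $\fG\simeq\fs\otimes_L A$. The next step is to show that $\Hom_\fs(\fs\otimes_L A,\fs\otimes_L A)=\End_L(A)$ acting on the second factor, i.e.\ $\partial'(x(a))=x(\delta a)$ for a unique $L$-linear $\delta:A\to A$. For this, note that $\End_\fs(\fs)=\Cent(\fs)=L$. An equivariant map restricted to a copy $\fs\otimes a$ lands, after projecting to each summand, in $\Hom_\fs(\fs,\fs)=L$. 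Only finitely many summands are involved, since the image of $\fs\otimes a$ is finite dimensional, so one defines $\delta(a)$ by $\partial'(x(a))=x(\delta a)$. Linearity over $L$ follows because $L$ acts through the centroid.

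Finally, expand the derivation identity $\partial'[x(a),y(b)]=[\partial' x(a),y(b)]+[x(a),\partial' y(b)]$ using the bracket formulas of Lemma \ref{aff}. In case (b) this gives $[x,y](\delta(a\circ b))=[x,y](\delta a\circ b+a\circ\delta b)$; choosing $x,y$ with $[x,y]\neq0$ shows that $\delta$ is a derivation of $A$. In case (a), after passing to $\overline K$ as in the proof of Lemma \ref{corres}, we separate the symmetric and antisymmetric parts. Using $h(a),h(b)$ with $h\cdot h\neq0$ and Pauli-type $I,J$ with $I\cdot J=0$, $[I,J]\neq0$, we get that $\delta$ is a derivation for both $\circ_1$ and $\circ_2$, hence for $\circ=\circ_1+\circ_2$.

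The expected main obstacle is the cohomology vanishing over a non-closed field with $\fG$ infinite dimensional. The local finiteness argument above, together with the fact that Whitehead's lemma holds over any field of characteristic zero (by extension of scalars), should handle it. The identification of equivariant endomorphisms with $\End_L(A)$ also needs care in the non-split case.
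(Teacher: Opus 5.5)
Your proof is correct and follows the paper's argument. You kill $\partial|_{\fs}$ via the vanishing of $H^1(\fs,\fG)$ (the paper writes this as $\Out(\fs)\otimes_L A=0$), then note that $\partial-\ad(X)$ is $\fs$-equivariant and hence of the form $x(a)\mapsto x(\delta a)$, and finally test the derivation identity on well-chosen pairs, using $h$ with $h\cdot h\neq 0$ for $\circ_2$. The only cosmetic difference concerns $\circ_1$: you pass to $\overline{K}$ and use Pauli-type matrices as in Lemma \ref{corres}, whereas the paper stays over $L$ and uses $[x(a),y(b)]+[x(b),y(a)]=2[x,y](a\circ_1 b)$ for any $x,y$ with $[x,y]\neq 0$; both work.
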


\begin{proof} 

The map $\partial\vert_{\fs}$
$$x\in \fs\mapsto \partial x\in\fG$$

\noindent is a derivation of $\fs$. By Cartan's Theorem \ref{cartan}(e), we have

$$H^1(\fs,\fG)=H^1(\fs,\fs)\otimes_L A=
\Out(\fs)\otimes_L A=0.$$

\noindent Hence, there is $X\in\fG$ such that

$$\partial x=[X,x], \, \forall x\in\fs.$$

\noindent Set $\partial'=\partial-\ad(X)$. 
Since $\partial'\fs=0$, the linear map 
$\partial'$ is $\fs$-equivariant. Thus
$\partial'$ is $L$-linear and 

$$\partial' x(a)=x(\delta a)$$ 

\noindent for some 
$L$-linear map $\delta:A\to A$.

We now show that  $\delta$ is a derivation of
$A$. Indeed 
we will only consider the case where  
$\overline{K}\otimes_L\fs\simeq\fsl_n$ for some $n\geq 3$, otherwise the proof is straightforward. First decompose the product $\circ$ of $A$ as $\circ=\circ_1+\circ_2$, where $\circ_1$ is commutative and $\circ_2$ is skew-symmetric. We will show that $\delta$ is a derivation of both products $\circ_1$ and $\circ_2$.

Let $x,y\in \fs$ with $[x,y]\neq 0$. Using
that 

$$[x(a),y(b)]+[x(b),y(a)]=2[x,y](a\circ_1 b),$$

\noindent we easily deduce that 

$$\delta(a\circ_1 b)=
\delta a\circ_1 b+a\circ_1 \delta b.$$

Similarly, consider an element $h\in \fs$ such that $h\cdot h\neq 0$. Then using that

$$[h(a),h(b)]=(h\cdot h)(a\circ_2 b),$$

\noindent we easily deduce that 

$$\delta(a\circ_2 b)=
\delta a\circ_2 b+a\circ_2 \delta b.$$

\noindent It follows that $\delta$ is a derivation, which completes the proof.
\end{proof}

For a given derivation $\partial$ of
$\fs\otimes_L A$,  $\delta$ 
is called {\it the derivation of $A$ induced by $\partial$}.

\subsection{Complete algebras with a derivation}\label{complete}

Let $A$ be a nonassociative  algebra and let

\centerline{${\bf m}_*: {\bf m}_1\supset {\bf m}_2\cdots$}

\noindent  be a decreasing 
sequence of two-sided ideals. 
The algebra $A$ is called {\it ${\bf m}_*$-complete} if 

\centerline{$A \simeq \varprojlim A/{\bf m}_n$.}

\noindent A typical example is the algebra of formal series  $K[[t]]$ in the variable $t$,  where ${\bf m}_*$ be the
decreasing  sequence of ideals $(t)\supset (t^2)\supset\cdots$. 

Let $A$ be a  nonassociative algebra, let
${\bf m}_1$ be a two-sided ideal and 
let $\partial:A\to A$ be a derivation. Let 

\centerline{${\bf m}_1\supset {\bf m}_2\cdots$}

\noindent  be the decreasing 
sequence of subspaces inductively defined by

\centerline{
${\bf m}_{k+1}:=\{x\in {\bf m}_k\mid \partial x\in{\bf m}_k\}$.}

\noindent As ${\bf m_1}$ is a two-sided ideal, it is proved by induction  that all subspaces ${\bf m}_k$ are two-sided ideals satisfying

\begin{align}
{\bf m}_k.{\bf m}_l&\subset {\bf m}_{k+l}\,\,
\forall \,k,l>0. \label{ideal-condition}
\end{align}

\begin{lemma}\label{formalseries} Assume that 

\begin{itemize}
\item[(a)] The algebra $A$ is unital and 
$A=K.1\oplus {\bf m}_1$

\item[(b)] $\partial {\bf m}_1\not\subset {\bf m}_1$

\item[(c)] $A$ is ${\bf m}_*$-complete.
\end{itemize}

Then $A$ is commutative and associative and we have
$$A\simeq K[[t]].$$
\end{lemma}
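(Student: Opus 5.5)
The strategy is to produce an element $t\in{\bf m}_1$ such that ${\bf m}_k$ is spanned modulo ${\bf m}_{k+1}$ by $t^k$, and then to pass to the completion. Everything rests on computing the graded pieces ${\bf m}_k/{\bf m}_{k+1}$.

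First, $\partial$ induces a linear map
$$\bar\partial_k:{\bf m}_k/{\bf m}_{k+1}\to {\bf m}_{k-1}/{\bf m}_k$$
for $k\geq 2$, and this map is injective by the very definition of ${\bf m}_{k+1}$. For $k=1$, hypothesis (a) gives $A/{\bf m}_1\simeq K$, and the composite ${\bf m}_1\to A\to A/{\bf m}_1=K$ induced by $\partial$ is a linear form $\lambda$ whose kernel is exactly ${\bf m}_2$. By hypothesis (b), $\lambda\neq 0$, so $\dim {\bf m}_1/{\bf m}_2=1$. By induction and injectivity of the $\bar\partial_k$, we get $\dim{\bf m}_k/{\bf m}_{k+1}\leq 1$ for all $k$.

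Choose $t\in{\bf m}_1$ with $\lambda(t)=1$, i.e. $\partial t\equiv 1 \bmod {\bf m}_1$. Note that $\partial 1=0$ since $1$ is the unit. The key computation is to show that $t^k$, defined with some fixed bracketing such as $t\cdot t^{k-1}$, lies in ${\bf m}_k\setminus{\bf m}_{k+1}$. By (\ref{ideal-condition}), $t^k\in{\bf m}_k$. To see that $t^k\notin{\bf m}_{k+1}$, I would prove by induction that
$$\partial(t^k)\equiv k\,t^{k-1} \bmod {\bf m}_k.$$
This follows from the Leibniz rule, $\partial(t\cdot t^{k-1})=\partial t\cdot t^{k-1}+t\cdot\partial t^{k-1}$, where $\partial t=1+u$ with $u\in{\bf m}_1$, so that $u\cdot t^{k-1}\in{\bf m}_k$, and $t\cdot\partial t^{k-1}\equiv (k-1)\,t\cdot t^{k-2}$ modulo ${\bf m}_1{\bf m}_{k-1}\subset{\bf m}_k$. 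Applying this $k$ times gives $\partial^k t^k\equiv k!\neq 0 \bmod {\bf m}_1$, using characteristic zero. Hence each ${\bf m}_k/{\bf m}_{k+1}$ is exactly one-dimensional, spanned by $t^k$. The subtle point is that different bracketings of $t^k$ agree modulo ${\bf m}_{k+1}$, since they have the same image under the injective maps $\bar\partial$; this is what shows the associated graded algebra $\bigoplus_k {\bf m}_k/{\bf m}_{k+1}$ is $K[t]$, commutative and associative.

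Next, using completeness (c), every element of $A$ is uniquely a convergent series $\sum_k c_k t^k$: we subtract the leading terms successively and use $A\simeq\varprojlim A/{\bf m}_n$. The separatedness $\bigcap_k{\bf m}_k=0$ follows from (c) as well. This yields a linear bijection $K[[t]]\to A$.

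The main obstacle is upgrading the statement about the graded algebra to commutativity and associativity of $A$ itself, since $A$ is a priori nonassociative and the powers $t^k$ depend on bracketing. I would handle this by defining the powers $t^{(k)}$ recursively as $t\cdot t^{(k-1)}$ and proving the exact identities $t^{(i)}\cdot t^{(j)}=t^{(i+j)}$ inside $A$. To do so, I would consider the difference $D=t^{(i)}\cdot t^{(j)}-t^{(i+j)}$, which lies in ${\bf m}_{i+j+1}$ by the graded statement. The aim is then to show $D\in{\bf m}_N$ for every $N$, by applying $\partial$ and using induction on $i+j$ together with $\partial t=1+u$. The error term $u$ is itself a series in the $t^{(k)}$, which makes this delicate; I would first try to renormalize $t$ so that $\partial t=1$ exactly, by solving $\partial t'=1$ via successive approximation in the complete algebra, using the injectivity of the $\bar\partial_k$. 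If that succeeds, $\partial$ becomes formal differentiation and the identities follow cleanly. By bilinearity and continuity, the product on $A$ then agrees with that of $K[[t]]$, so $A\simeq K[[t]]$.
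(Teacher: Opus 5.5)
Your proposal is correct, and the step you leave hedged does go through. The route, however, runs in the opposite order from the paper's.

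The paper proves the algebra laws first, without coordinates. A $\partial$-stable subspace $V\subset{\bf m}_1$ satisfies $V\subset{\bf m}_k\Rightarrow V\subset{\bf m}_{k+1}$, hence $V\subset\bigcap_k{\bf m}_k=0$. The paper applies this to the span $\Com(A)$ of commutators and the span $\Ass(A)$ of associators. Both are $\partial$-stable by the Leibniz rule, and they lie in ${\bf m}_2$ and ${\bf m}_3$ respectively, since commutators and associators involving $1$ vanish. The identification with $K[[t]]$ is then stated quickly. You instead compute the graded pieces, build the topological basis $t^{(k)}$, and read commutativity and associativity off the multiplication table.

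To close your hedge, three points are needed:
\begin{itemize}
\item The successive approximation for $\partial t'=1$ needs \emph{surjectivity} of $\bar\partial_{k+1}\colon{\bf m}_{k+1}/{\bf m}_{k+2}\to{\bf m}_k/{\bf m}_{k+1}$. This comes from injectivity together with the one-dimensionality you already proved, not from injectivity alone.
\item Continuity of $\partial$ (since $\partial{\bf m}_k\subset{\bf m}_{k-1}$) and separatedness then give $\partial t'=1$ exactly, so $\partial t^{(k)}=kt^{(k-1)}$.
\item Setting $D_{i,j}=t^{(i)}t^{(j)}-t^{(i+j)}$, one gets $\partial D_{i,j}=iD_{i-1,j}+jD_{i,j-1}$, hence $\partial D_{i,j}=0$ by induction. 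Then $D_{i,j}=0$ because $\ker\partial\cap{\bf m}_1=0$, which is the same principle the paper uses.
\end{itemize}

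As for what each approach buys: the paper's trick disposes of your ``main obstacle'' in two lines, and you could insert it right after your dimension count. Conversely, your computation $\partial^k t^k\equiv k!\bmod{\bf m}_1$ supplies what the paper glosses when it asserts that $A$ is a quotient of $K[[t]]$ with ${\bf m}_1=(t)$. Namely, it shows $t^k\notin{\bf m}_{k+1}$, so the powers of $t$ span every graded piece; this is exactly where characteristic zero enters, and it makes the bijection $K[[t]]\to A$ explicit.
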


\begin{proof} Since 
$\cap_{n\geq 0}\,{\bf m}_n=0$, we conclude
that ${\bf m}_1$ contains no 
nonzero $\partial$-invariant subspace.

Let $\Com(A)$ be the linear span of commutators
$\{ ab-ba\mid a,b \in A\}$. Any $a,b\in A$ can be writen as $a=x.1+\overline{a}$, 
$b=y.1+\overline{b}$ where $\overline{a}$ and  
$\overline{b}$ belong to ${\bf m}_1$ and where
$x, y$ are scalars. Since

$$ab-ba=\overline{a} \overline{b}-
\overline{b}\overline{a}.$$

\noindent we deduce that $\Com(A)$ lies in 
${\bf m}_2$. Since it is invariant by 
$\partial$, we conclude that 
$\Com(A)=0$, that is $A$ is commutative.

Let $\Ass(A)$ be the linear span of associators $\{ (ab)c-a(bc)\mid a,b,c \in A\}$.
We similarly prove that 
$\Ass(A)$ is $\partial$-invariant subspace of ${\bf m}_3$. Therefore $\Ass(A)=0$, that is $A$ is associative.

It follows that $A$ is an homorphic image of
$K[[t]]$ and that ${\bf m}_1$ is the ideal $(t)$. Since the ideal
$(t)$ in $K[[t]]/(t^n)$  is invariant by any derivation, we conclude that $A=K[[t]]$.
\end{proof}

\begin{lemma}\label{lift} 
Let $\fG$ be a Lie algebra and let ${\fm}_*: {\fm}_1\supset {\fm}_2\cdots$ be a decreasing 
sequence of ideals satisfying

$$[\fm_k,\fm_l]\subset \fm_{k+l}\,\,\forall \,k,l>0. $$

 Assume 

\begin{itemize}
\item[(a)] The Lie algebra $\fG$ is ${\fm}_*$-complete,
\item[(b)] the Lie algebra $\fs:=\fG/{\fm}_1$ is simple of finite dimension, and
\item[(c)] $\dim {\fm}_k/{\fm}_{k+1}<\infty$ for all $k\geq 1$.
\end{itemize}

Then $\fG\simeq \fs\ltimes{\fm}_1$.
\end{lemma}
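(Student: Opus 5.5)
We will construct the Levi-type splitting step by step along the filtration and then pass to the limit using completeness. The plan is to build a compatible sequence of Lie algebra homomorphisms
$$\sigma_k:\fs\to \fG/\fm_k,\qquad k\geq 1,$$
each lifting the identity of $\fs=\fG/\fm_1$. We will require that $\sigma_{k+1}$ composed with the projection $\fG/\fm_{k+1}\to\fG/\fm_k$ equals $\sigma_k$. Completeness (hypothesis (a)) then gives a homomorphism $\sigma=\varprojlim\sigma_k:\fs\to\varprojlim \fG/\fm_k\simeq\fG$ which splits the projection $\fG\to\fs$. Since $\fm_1$ is an ideal with $\fG/\fm_1\simeq\fs$, this yields $\fG=\sigma(\fs)\ltimes\fm_1\simeq \fs\ltimes\fm_1$.

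To start, $\sigma_1$ is the identity. For the inductive step, suppose $\sigma_k$ is constructed and consider the extension
$$0\to \fm_k/\fm_{k+1}\to \fG/\fm_{k+1}\to \fG/\fm_k\to 0.$$
The condition $[\fm_k,\fm_l]\subset\fm_{k+l}$ (with $l=k\geq 1$) shows that $V:=\fm_k/\fm_{k+1}$ is an abelian ideal of $\fG/\fm_{k+1}$. Pull this extension back along $\sigma_k$, setting $\fh:=\pi^{-1}(\sigma_k(\fs))\subset \fG/\fm_{k+1}$. This gives an extension $0\to V\to\fh\to\fs\to 0$ of the finite dimensional simple Lie algebra $\fs$ by the abelian ideal $V$. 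Because $V$ is abelian, it is naturally an $\fs$-module, and by hypothesis (c) it is finite dimensional, so $\fh$ is finite dimensional. Levi's theorem (Theorem \ref{cartan}(b)) applies: $\rad\fh$ contains $V$ with $\fh/V\simeq\fs$ semisimple, so $\rad\fh=V$ and $\fh\simeq\fs\ltimes V$. Equivalently, Whitehead's lemma gives $H^2(\fs,V)=0$. Either way we obtain a homomorphism $\sigma_{k+1}:\fs\to\fh\subset\fG/\fm_{k+1}$ lifting $\sigma_k$.

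The only delicate point is the passage to the limit, which is why the whole construction is phrased through the compatible system of the $\sigma_k$ rather than through independent Levi factors. We do not need uniqueness of the Levi factors; we only need compatibility, and this is built in because each $\sigma_{k+1}$ lifts $\sigma_k$. It remains to check that $\varprojlim\sigma_k$ is a Lie homomorphism and is injective. It is a homomorphism because each $\sigma_k$ is one and the bracket on $\varprojlim\fG/\fm_k$ is computed componentwise. It is injective because its composite with $\fG\to\fs$ is the identity. Finally, $\sigma(\fs)\cap\fm_1=0$ and $\sigma(\fs)+\fm_1=\fG$ give the semidirect product decomposition.
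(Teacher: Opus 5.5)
Your proposal is correct and follows the same approach as the paper: apply Levi's theorem at each finite level $\fG/\fm_k$, extend a lift of $\fs$ compatibly from level $k$ to level $k+1$, and pass to the inverse limit using completeness. Your construction of $\fh=\pi^{-1}(\sigma_k(\fs))$, together with Levi's theorem (or Whitehead's $H^2(\fs,V)=0$), supplies the justification for the paper's one-line assertion that any lift modulo $\fm_k$ extends to a lift modulo $\fm_{k+1}$.
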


\pf Let $k\geq 1$. By hypotheses, the Lie algebra 
$\fG/{\fm_k}$ is finite dimensional and $\fm_1/\fm_k$ 
is a nilpotent ideal. By
Theorem \ref{cartan}(b), we have

\centerline{
$\fg/{\fm_k}\simeq \fs\ltimes {\fm_1}/{\fm_k}$.}

\noindent Moreover any lift of $\fs$ modulo ${\fm_k}$
can be extended to a lift modulo ${\fm}_{k+1}$, from which the claim follows.
\epf

Let $\fG$ be a Lie  algebra, let $\fm_1$ be an ideal and
let $\partial:\fG\to \fG$ be a derivation. 
As before, we define the  decreasing 
sequence of ideals

\centerline{${\fm}_1\supset {\fm}_2\cdots$}

\noindent  by induction as

\centerline{${\fm}_{k+1}:=\{x\in {\bf m}_k\mid \partial x\in{\fm}_k\}$.}

\begin{lemma}\label{aff[[t]]} Assume 
\begin{enumerate}
\item[(a)] $\fs:=\fG/\fm_1$ is a finite dimensional simple Lie algebra,
\item[(b)] $\partial \fm_1\not\subset\fm_1$
\item[(c)] $\fG$ is $\fm_*$-complete.
\end{enumerate}

Then $\fG\simeq \fs\otimes K[[t]]$.
\end{lemma}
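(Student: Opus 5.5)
The plan is to combine Lemma \ref{lift}, Lemmas \ref{aff}, \ref{corres}, \ref{der} and Lemma \ref{formalseries}.

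First I verify the hypotheses of Lemma \ref{lift}. By induction, each $\fm_k$ is an ideal with $[\fm_k,\fm_l]\subset\fm_{k+l}$, exactly as for the algebra case (\ref{ideal-condition}). The codimensions are finite: the map $x\mapsto \partial x \bmod \fm_k$ gives an injection $\fm_k/\fm_{k+1}\hookrightarrow \fG/\fm_k$. By induction, $\dim\fG/\fm_k<\infty$, so each $\dim\fm_k/\fm_{k+1}<\infty$. Lemma \ref{lift} then gives $\fG\simeq\fs\ltimes\fm_1$, with $\fs$ embedded in $\fG$.

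Next I show that $\fG$ is affine over $\fs$. Each quotient $\fG/\fm_k$ is a finite dimensional $\fs$-module. Its composition factors are subquotients of the $\fs$-modules $\fm_j/\fm_{j+1}$, and $\fm_j/\fm_{j+1}$ is a quotient of $\fG/\fm_{j}$ under $x\mapsto\partial x$. Since $\partial$ commutes with $\ad$ up to terms in the smaller ideals, this map is $\fs$-equivariant modulo $\fm_j$.

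Here I expect the main obstacle. A trivial summand could a priori appear, because $\partial$ is not $\fs$-equivariant. I would first replace $\partial$ by $\partial-\ad(X)$ with $\partial'\fs\subset\fm_1$; this is possible by $H^1(\fs,\fG/\fm_k)$ vanishing in a compatible way. Then $\partial'$ is equivariant on the graded pieces, so $\fm_k/\fm_{k+1}$ is an $\fs$-equivariant image of $\fm_{k-1}/\fm_k$. By induction starting from $\fG/\fm_1=\fs$, every graded piece is a quotient of the adjoint module, hence isotypic adjoint. Complete reducibility (Weyl) on each finite dimensional $\fG/\fm_k$, passed to the inverse limit, then makes $\fG$ a (completed) product of adjoint modules. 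The completeness hypothesis is what allows this, so I would work with $A:=\Hom_\fs(\fs,\fG)=\varprojlim \Hom_\fs(\fs,\fG/\fm_k)$.

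With $\fG=\fs\otimes_L A$, Lemma \ref{aff} gives the unital algebra $A$ and Lemma \ref{corres} gives ideals ${\bf m}_k$ with $\fm_k=\fs\otimes_L{\bf m}_k$. Lemma \ref{der} gives the induced derivation $\delta$ of $A$, and the ${\bf m}_k$ are exactly the ideals generated from ${\bf m}_1$ by $\delta$. The remaining hypotheses of Lemma \ref{formalseries} transfer as follows:

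\begin{enumerate}
\item[(a)] $A=L\cdot 1\oplus{\bf m}_1$, since $\fG/\fm_1=\fs$.
\item[(b)] $\delta{\bf m}_1\not\subset{\bf m}_1$, from hypothesis (b), because $\ad(X)$ preserves $\fm_1$.
\item[(c)] $A$ is ${\bf m}_*$-complete, from the completeness of $\fG$.
\end{enumerate}

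Lemma \ref{formalseries}, applied over the field $L$, then gives $A\simeq L[[t]]$, and hence $\fG\simeq\fs\otimes_L L[[t]]$. I read $\fs\otimes K[[t]]$ in the statement as this completed tensor product over the centroid, which equals it when $L=K$. Commutativity of $A$ also removes the symmetric-product term of Lemma \ref{aff}(a), so the bracket is the standard one.
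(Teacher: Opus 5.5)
Your proposal is correct and follows the paper's proof. The chain is the same: Lemma~\ref{lift}, then affineness from the $\partial$-induced injections $\fm_k/\fm_{k+1}\hookrightarrow\fm_{k-1}/\fm_k\hookrightarrow\cdots\hookrightarrow\fG/\fm_1=\fs$, then Lemmas~\ref{aff}, \ref{corres}, \ref{der} and \ref{formalseries} applied over $L$. You also make explicit the finite-dimensionality hypothesis of Lemma~\ref{lift} and the passage to the inverse limit, which the paper leaves implicit.

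Two small points. First, replacing $\partial$ by $\partial-\ad(X)$ beforehand is unnecessary. Since $[\partial g,y]\in\fm_k$ for $y\in\fm_k$, the induced maps are already $\fG$-equivariant; note they are injections into the previous piece, not surjections from it. Second, you do not need to reinterpret the conclusion: $L[[t]]\cong L\otimes_K K[[t]]$ because $[L:K]<\infty$, so $\fs\otimes_L L[[t]]\cong\fs\otimes K[[t]]$.
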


\begin{proof}
By Lemma \ref{lift}, we have

$$\fG\simeq \fs\ltimes \fm_1.$$

The derivation $\partial$ induces injective morphisms of $\fs$-modules

\begin{align*}
\partial:\fm_{1}/\fm_{2}&\to
\fG/\fm_{1}=\fs\\
\partial:\fm_{2}/\fm_{3}&\to
\fm_{1}/\fm_{2}\\
&\cdots\\
\end{align*}

\noindent therefore each $\fs$-module 
$\fm_{k}/\fm_{k+1}$ is isomorphic to $\fs$ or to
$\{0\}$. Since any finite dimensional $\fs$-module is
semisimple, we deduce that $\fm_k$ is a direct sum of adjoint modules. 

It follows that  $\fG$ is a direct sum of adjoint modules, that is $\fG$ is an affine algebra.
Let $L$ be the centroid of $\fs$. By Lemma \ref{aff}, we have

$$\fg=\fs\otimes_L A,$$

\noindent for some unital $L$-algebra $A$.
By Lemma \ref{corres}, there is a decreasing sequence of  ideals ${\bf m}_k$ of $A$
such that 

$$\fm_k=\fs\otimes_L {\bf m}_k\,\forall k>0.$$

By Lemma \ref{der}, $\partial$ iduces a derivation $\delta$ of $A$. It is clear that
the triple $(A, \delta, m_*)$ satisfies the hypotheses of Lemma \ref{formalseries}.
Hence we have $A=L[[t]]$, and

$$\fg\simeq \fs\otimes_L L[[t]]= \fs\otimes K[[t]].$$
\end{proof}

\subsection{Maximal ideals of finite codimension}

In this subsection, we  finally  prove that, given a weakly
Noetherian Lie algebra $\fG$,
 any maximal ideal $\fm$ with
 $1<\codim\, \fm<\infty $ is characteristic.
 We start with some preparatory results.

\begin{lemma}\label{cartancor2} Let $\fs$ be a simple  finite dimensional Lie algebra with centroid $L$.
Let $\fa\subset\fs$ be a Lie subalgebra which contains a $L$-basis of $\fs$.

Then  

\begin{enumerate}
\item[(a)] $\fa$ is a simple Lie algebra,
\item[(b)] its centroid $L':=\Cent(\fa)$ satisfies
$K\subset L'\subset L$, and
\item[(c)] $\fa \otimes_{L'}L=\fs$.
\end{enumerate}
\end{lemma}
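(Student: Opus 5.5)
The plan is to compare $\fa$ with $\fs$ through the natural $L$-linear map
$$\phi:\fa\otimes_K L\to\fs,\qquad x\otimes\lambda\mapsto \lambda x.$$
Since $\fa$ contains an $L$-basis of $\fs$, $\phi$ is surjective. The bracket on $\fs$ is $L$-bilinear by definition of the centroid, so $\phi$ is a homomorphism of $L$-Lie algebras. The proof then has three steps.

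\textbf{Step 1 (simplicity of $\fa$).} Let $\fb\subset\fa$ be a nonzero ideal, and let $L\fb$ be its $L$-span in $\fs$. Since $L\fb$ is stable under $\ad(x)$ for $x\in\fa$, and $\fa$ spans $\fs$ over $L$, the bracket being $L$-bilinear shows that $L\fb$ is an ideal of $\fs$. Hence $L\fb=\fs$.

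To pass from $L\fb=\fs$ back to $\fb=\fa$, I would look at the associative subalgebra $B\subset\End_K(\fs)$ generated by $\ad(\fa)$. The $L$-span of $B$ is the image of $U(\fs)$, which is $\End_L(\fs)$ by the Wedderburn argument of Lemma \ref{Wedderburn}. The ideal $\fb$ is a $B$-submodule of $\fa$, and $\fa$ itself is a $B$-module that is finite dimensional over $K$. Because $LB=\End_L(\fs)$ is simple, $B$ is semisimple: its radical $J$ satisfies $LJ\subset\rad(\End_L(\fs))=0$. Therefore $\fa=\fb\oplus\fb'$ for some $\fa$-stable complement $\fb'$.

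This gives $[\fa,\fb']\subset\fb'$ and $[\fb,\fb']\subset\fb\cap\fb'=0$. Consequently $[L\fb,L\fb']=0$, while $L\fb=\fs$, so $L\fb'$ lies in the center of $\fs$, which is $0$. Hence $\fb'=0$ and $\fb=\fa$. Finally, $\fa$ is nonabelian because it spans $\fs$, so $\fa$ is simple. This semisimplicity step is the main obstacle, and the complement argument above is the route I would try first.

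\textbf{Step 2 (the centroid).} Put $L'=\Cent(\fa)$. By the same reasoning as in Corollary \ref{simplecentroid}, $L'$ is a field containing $K$. To embed $L'$ into $L$, I would use that every $\theta\in L'$ commutes with $\ad(\fa)$, hence with $B$. Since $\fs=L\fa$, it is natural to extend $\theta$ $L$-linearly by setting $\theta(\lambda x)=\lambda\theta(x)$.

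The delicate point is well-definedness, which amounts to injectivity of $\phi$. I would avoid proving it directly as follows. The kernel $\ker\phi$ is an ideal of the finite dimensional $L$-algebra $\fa\otimes_K L$. The centroid of $\fa\otimes_K L$ is $L'\otimes_K L$, a product of fields, so $\fa\otimes_K L$ is a direct sum of simple ideals indexed by the factors of $L'\otimes_K L$. Its simple quotient $\fs$ is one of these summands, and the corresponding factor map $L'\otimes_K L\to L$ restricts to an embedding $L'\hookrightarrow L$ compatible with the actions on $\fa\subset\fs$. This proves (b).

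\textbf{Step 3 (the isomorphism $\fa\otimes_{L'}L\simeq\fs$).} The map $\fa\otimes_{L'}L\to\fs$ is now well defined and surjective. Its source is the summand of $\fa\otimes_K L$ corresponding to the chosen factor of $L'\otimes_K L$. By Corollary \ref{simplecentroid}(b) and Jacobson density, this summand is simple, so the surjection is injective. This yields (c).
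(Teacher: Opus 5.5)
Your proof is correct, but it takes a genuinely different route from the paper's.

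\textbf{Part (a).} The paper takes a minimal ideal $\fr$ of $\fa$ and observes that $L\fr=\fs$, so $\fr$ is nonabelian and, by minimality, simple. It then combines $\Out(\fr)=0$ (Theorem \ref{cartan}(e)) with Lemma \ref{centralizer}(a) to split $\fa=C_\fa(\fr)\oplus\fr$. Finally, $C_\fa(\fr)=0$ because $\fs$ is centerless.

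You instead produce the ideal complement $\fb'$ from the semisimplicity of the associative envelope $B$, which you deduce from $LB=\End_L(\fs)$. This avoids the finite-dimensional structure theory (simplicity of a minimal nonabelian ideal, vanishing of $\Out$). It uses only the density/Wedderburn description of the image of $U(\fs)$, as in Lemma \ref{Wedderburn}.

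\textbf{Parts (b) and (c).} The paper simply asserts $L'=\End_\fa(\fa,\fa)\subset\End_\fa(\fa,\fs)=L$. It then uses simplicity of $\fa\otimes_{L'}L$ (Corollary \ref{simplecentroid}(b)) to make the surjection onto $\fs$ injective.

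Your decomposition of $\fa\otimes_K L$ along the field factors of $L'\otimes_K L$ is longer. However, it is essentially what is needed to justify the equality $\End_\fa(\fa,\fs)=L$, which the paper leaves implicit, and it yields (b) and (c) simultaneously.

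\textbf{A misstatement to correct.} Well-definedness of the $L$-linear extension of $\theta$ does not amount to injectivity of $\phi$. In fact $\phi$ is generally not injective: for $\fa=\fs$ and $L\neq K$, the source has $K$-dimension $[L:K]\dim_K\fs$. What is actually needed is that $\ker\phi$ be stable under $\theta\otimes 1$. Your argument does supply this, since $\ker\phi$ is a sum of simple summands.

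\textbf{A step worth spelling out.} You should say why the relevant factor of $L'\otimes_K L$ is $L$ itself rather than some larger field. The centroid $L'\otimes_K L$ preserves $\ker\phi$, so it acts on the quotient $\fs$ through $\Cent(\fs)=L$. This action is an $L$-algebra map that restricts to the identity on $1\otimes L$.
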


\begin{proof} Let $\fr$ be a minimal ideal
of $\fa$, and let $L.\fr$ be the $L$-vector space generated by $\fr$. Since $\fa$ contains an $L$-basis of $\fs$, the space $L.\fr$ is an
ideal of $\fs$, thus $L.\fr=\fs$. 

We claim that $\fr$ is a simple Lie algebra.  
Since $L.\fr=\fs$, the ideal $\fr$ cannot be abelian.
By minimality of $\fr$, we conclude that $\fr$ is simple.

By Cartan Theorem \ref{cartan}(e), we have $\Out(\fr)=0$.
Therefore by Lemma \ref{centralizer}, the Lie algebra
$\fa$ is a direct sum

$$\fa\simeq C_\fa(\fr)\oplus \fr.$$

\noindent Since $\fr$ contains a basis of $\fs$, its centralizer
$C_\fa(\fr)$ is trivial, from which we deduce that
$\fa=\fr$. Therefore we have proved that $\fa$ is simple.

It is clear that 

$$L'=\End_\fa(\fa,\fa)\subset \End_\fa(\fa,\fs)=L,$$

\noindent therefore $L'$ is a subfield of $L$. Moreover,
by Corollary \ref{simplecentroid}(b), $\fa\otimes_{L'} L$ is a simple Lie algebra, therefore the homomorphism
$\fa\otimes_{L'} L\to \fs$ is an isomophism, which completes the proof.
\end{proof}

Given   a simple finite dimensional Lie algebra $\fs$, we view
$$\fs\otimes K[[t]]=\varprojlim\, \fs\otimes K[t]/(t^n)$$ 

\noindent as a  topological Lie  algebra.

\begin{lemma}\label{reduction} Any dense Lie subalgebra $\fG\subset \fs\otimes K[[t]]$ contains a finitely
generated dense Lie subalgebra.
\end{lemma}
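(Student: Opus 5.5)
We need to show: a dense subalgebra $\fG\subset \fs\otimes K[[t]]$ contains a finitely generated dense subalgebra. Density means that for each $n$ the image of $\fG$ in $\fs\otimes K[t]/(t^n)$ is everything.

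The plan is to first pick a finite set $S_1\subset\fG$ whose image modulo $t^2$ spans $\fs\otimes K[t]/(t^2)$. This is possible because that quotient is finite dimensional and $\fG$ surjects onto it. Let $\fa$ be the subalgebra generated by $S_1$. The goal is then to show that $\fa$ surjects onto $\fs\otimes K[t]/(t^n)$ for every $n$, by induction on $n$.

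For the inductive step, assume the image of $\fa$ in $\fs\otimes K[t]/(t^n)$ is everything, with $n\geq 2$. Set $\fm_k=\fs\otimes t^kK[[t]]$, and use the graded pieces $\fm_k/\fm_{k+1}\simeq \fs\otimes t^k$.
\begin{itemize}
\item By surjectivity modulo $t^2$, and since $\fs=[\fs,\fs]$, for any $x,y\in\fs$ we find $X\in\fa$ with $X\equiv x \mod \fm_1$.
\item By the induction hypothesis we find $Y\in\fa$ with $Y\equiv y\,t^{n-1}\mod \fm_n$. Then $Y\in\fm_{n-1}$, because its image modulo $\fm_n$ lies in $\fm_{n-1}/\fm_n$.
\item We also find $Z\in\fa$ with $Z\equiv z\,t \mod \fm_2$.
\end{itemize}
Then $[Z,Y]\in\fa\cap\fm_n$, and
$$[Z,Y]\equiv [z,y]\,t^n \mod \fm_{n+1}.$$
Since $[\fs,\fs]=\fs$, the elements $[Z,Y]$ span $\fm_n/\fm_{n+1}$ modulo $\fm_{n+1}$. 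It follows that $\fa\cap\fm_n$ maps onto $\fm_n/\fm_{n+1}$. Combined with surjectivity modulo $\fm_n$, this gives surjectivity onto $\fG/\fm_{n+1}$, which completes the induction.

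The one point to check is the bracket congruence. Write $Z=zt+O(t^2)$ and $Y=yt^{n-1}+O(t^n)$. Their bracket is $[z,y]t^n+O(t^{n+1})$, using $[\fm_k,\fm_l]\subset\fm_{k+l}$. We also need $[\fs,\fs]=\fs$ as a $K$-span, which holds since $\fs$ is simple. No other obstacle is expected. The only subtle point is starting the induction at $n=2$ rather than $n=1$: generation of the degree-one piece $\fs\otimes t$ cannot come from brackets of degree-zero elements alone (these only give $\fs\otimes t$ if some element already has a $t$-component). Including a lift of $\fs\otimes t$ in $S_1$ handles this.

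Thus $\fa$ is finitely generated and dense, as required.
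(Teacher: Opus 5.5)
Your proof is correct and follows the paper's route. The paper simply asserts that any subalgebra surjecting onto $\fs\otimes K[t]/(t^2)$ is dense and then takes finitely many lifts; your induction (lifting $\fs\otimes t$ and $\fs\otimes t^{n-1}$ into the subalgebra and using $[\fs,\fs]=\fs$ to obtain all of $\fs\otimes t^{n}$ modulo $t^{n+1}$) supplies the justification the paper leaves implicit.

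Two minor slips. The first bullet about $X$ is never used. Also, ``$\fG/\fm_{n+1}$'' should read $(\fs\otimes K[[t]])/\fm_{n+1}$, since $\fG$ denotes the given dense subalgebra.
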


\pf In fact  a Lie subalgebra $\fG\subset \fs\otimes K[[t]]$ 
is dense whenever the induced homomorphism

\centerline{$\fG\to \fs\otimes K[[t]]/(t^2)$}

\noindent is surjective, from which the claim follows.
\epf

\begin{cor}\label{dense2} No dense subalgebra $\fG\subset \fs\otimes K[[t]]$ is  weakly
Noetherian.
\end{cor}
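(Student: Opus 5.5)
We argue by contradiction: suppose some dense subalgebra $\fG\subset \fs\otimes K[[t]]$ is weakly Noetherian. By Lemma \ref{reduction}, $\fG$ contains a finitely generated dense subalgebra $\fG'$. By Lemma \ref{basic}(a), $\fG'$ is again weakly Noetherian, being a subalgebra of $\fG$. So it is enough to show that a finitely generated dense subalgebra $\fG'$ of $\fs\otimes K[[t]]$ cannot be weakly Noetherian. The idea is to exhibit an infinite dimensional abelian section, which contradicts Lemma \ref{basic}(b), or to contradict the finiteness of the centroid, Proposition \ref{centroid}.

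The key point is countability versus uncountability. Write $\fG'$ as generated by $x_1,\ldots,x_r\in\fs\otimes K[[t]]$. Let $B\subset K[[t]]$ be the $K$-subalgebra generated by all the $t$-coefficients of the $x_i$ with respect to a fixed $K$-basis of $\fs$. Then $\fG'\subset \fs\otimes_K B$, and $\fG'$ is a $K$-vector space of countable dimension.

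I would use the centroid. Every $f\in K[[t]]$ acts on $\fs\otimes K[[t]]$ by multiplication, and this action commutes with brackets. The plan is to show that there are infinitely many linearly independent elements $f$ which preserve $\fG'$ up to finite codimension, in the following sense. Consider $\fm_k=\fG'\cap(\fs\otimes t^kK[[t]])$. By density, $\fG'/\fm_k\simeq \fs\otimes K[t]/(t^k)$. The subalgebra $\fm_1$ has $\fm_1/[\fm_1,\fm_1]$ finite dimensional by weak Noetherianity, so $[\fm_1,\fm_1]\supset \fm_N$ for some $N$.

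Density gives $[\fm_k,\fm_l]$ modulo $\fm_{k+l+1}$ equal to $\fs\otimes t^{k+l}$. Hence each quotient $\fm_k/\fm_{k+1}$ is isomorphic to $\fs$. It follows that the graded algebra of $\fG'$ is $\fs\otimes K[t]$, and the graded algebra of $\fm_1$ is $\fs\otimes tK[t]$.

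For the abelian section, take a Cartan-type abelian subalgebra $\fh\subset\fs$, or simply a nonzero $h\in\fs$ with $Kh$ abelian. Choose elements $y_k\in\fm_k$ lifting $h\otimes t^k$. I would then try to build a subalgebra $\fa\subset\fm_1$ whose graded algebra is $Kh\otimes tK[t]$. Such an $\fa$ is "almost abelian": its brackets vanish in the graded sense, so $[\fa,\fa]$ has higher filtration degree. One then checks that $\fa/[\fa,\fa]$ is infinite dimensional, because $[\fa,\fa]$ misses each leading term $h\otimes t^k$.

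The main obstacle is producing such an $\fa$ inside $\fG'$ as a genuine subalgebra. Leading terms alone do not control the brackets, since $[y_k,y_l]$ could be nonzero of higher order.

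What I would try first is the centralizer of a single element. Pick $y\in\fG'$ whose image is regular semisimple $h\otimes 1$ with $h$ regular. Then $\ad(y)$ on $\fs\otimes K[[t]]$ is conjugate over $K[[t]]$, by Hensel, to $\ad(h)$, and its centralizer in $\fs\otimes K[[t]]$ is a torus $\fh\otimes K[[t]]$.

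Next I would show $C_{\fG'}(y)$ has infinitely many independent leading terms. Any $z\in\fG'$ decomposes under the semisimple $\ad(y)$ into the centralizer component plus a component in the image of $\ad(y)$. This decomposition is polynomial in $\ad(y)$ at each finite level $\fG'/\fm_k$, so it stays in $\fG'$ modulo $\fm_k$. A compactness or diagonal argument then gives an abelian subalgebra $C_{\fG'}(y)$ of infinite dimension. This is exactly the step I expect to need care: the projection must be realized inside $\fG'$ itself, not just in the completion.

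The resulting infinite dimensional abelian subalgebra contradicts Lemma \ref{basic}(b).
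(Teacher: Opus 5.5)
There is a genuine gap at the decisive step. The reduction to a finitely generated dense $\fG'$ is fine, and so is $\fm_k/\fm_{k+1}\simeq\fs$. The claim $[\fm_1,\fm_1]\supset\fm_N$ is not justified, but you never use it. The real problem is that nothing afterwards produces an infinite dimensional abelian subalgebra or section.

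The centralizer argument breaks at the claim that, by Hensel, $\ad(y)$ is conjugate to $\ad(h)$. Hensel only conjugates $y$ into $\fh\otimes K[[t]]$. So the eigenvalues of $\ad(y)$ are power series $\alpha(h)+t\alpha(h_1)+\cdots\in K[[t]]$, not constants; for instance, take $y=h\otimes(1+t)$. Consequently the projection onto $\ker\ad(y)$ is a polynomial in $\ad(y)$ with coefficients in $K((t))$, and multiplication by $t$ does not preserve $\fG'$. Modulo $t^k$ you do get polynomials $p_k(\ad(y))$ with coefficients in $K$. However, their degree grows with $k$, and they only yield elements $z_k\in\fG'$ with $[y,z_k]\in\fm_k$. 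Since $\fG'$ is dense and far from closed in $\fs\otimes K[[t]]$, no compactness or diagonal argument turns these approximate centralizers into elements of $C_{\fG'}(y)$. This is the same obstruction you already identified for the leading-term construction, only moved to a different place.

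What is missing is a nonconstant $c\in L[[t]]$ with $c\,\fG'\subset\fG'$, that is, an infinite dimensional centroid. Given such a $c$ and any $0\neq y\in\fG'$, the elements $c^n y$ span an infinite dimensional abelian subalgebra. Approximation or leading-term arguments alone will not supply such a $c$; the input needed is the Formanek--Razmyslov central polynomials.

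The paper obtains this indirectly. It writes $\fG'\subset\fs\otimes_L A$ with $A$ a finitely generated domain. It then uses the determinant $f$ and the Nullstellensatz to exhibit infinitely many distinct ideals $\fm_x$, $x\in X^0$, such that $\fG'/\fm_x$ is simple of dimension $d=\dim_L\fs$ over its centroid. Theorem \ref{structure1} (central polynomials together with Proposition \ref{centroid}) then gives the contradiction.

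Your approach could be repaired with the same ingredient. Evaluate a central polynomial $P_d$ on elements of the image of $U(\fG')$ in $\End_{L[[t]]}(\fs\otimes_L L[[t]])\simeq\Mat_d(L[[t]])$. This yields scalars $c\in L[[t]]$ stabilizing $\fG'$, and density modulo $t^2$ lets you arrange $c\notin L$. That key idea is precisely what your proposal lacks.
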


\pf  Let $\fG$ be a dense subalgebra of

$$\fs\otimes K[[t]]\simeq \fs\otimes_L L[[t]],$$

\noindent where  $L=\Cent(\fs)$.
By Lemma \ref{reduction}, we can assume that 
 $\fG$ is finitely generated. Therefore 

\centerline{$\fG\subset \fs\otimes_L A$,}

\noindent for some finitely generated unital $L$-subalgebra $A$ of $L[[t]]$. 

Let $X$ be the set of closed points in 
$\Spec\,A$. For each $x\in X$, let
${\bf m}_x$ be the corresponding maximal ideal of $A$ and let $L_x:=A/{\bf m}_x$ be the residue field at $x$. We also denote as $0$ the point of $X$ defined by the maximal ideal
${\bf m}_0:=A\cap t L[[t]]$.

For $x\in X$ and $z\in\fG$, set 

\begin{align*}
{\fm}_x:=&\fG\cap \fs\otimes_L {\bf m}_x\\
{\fG}_x:=&\fG/{\fm}_x\\
K_x:=&\Cent({\fG}_x)
\end{align*}

\noindent and let $z(x)\in\fG_x$ be the value of $z$ at $x$, 
that is $z(x):=z\mod {\fm}_x$.

\smallskip
 First, we claim that there is an open dense subset 
$X^0\subset X$ such that
${\fG}_x$ is a simple Lie algebra of dimension 
$d$ over $K_x$.

Set $d:=\dim_L\fs$ and let $y_1,\ldots, y_d$ be an $L$-basis of $\fs$. Since $\fG$ is dense, we have $\fG_0=\fs$. Therefore there are  $z_1,\ldots z_d\in\fG $
such that 

$$z_1(0)=y_1,\ldots, z_d(0)=y_d.$$

\noindent Since $\fs\otimes_L A$ is a free $A$-module 
with basis $y_1,\ldots, y_d$, we have

$$z_1\wedge_A\ldots\wedge_A z_d
=f\, y_1\wedge_A \ldots\wedge_A y_d$$

\noindent for some $f\in A$. Set

$$X^0=\{x\in X\mid f(x)\neq 0\}.$$

\noindent Since $A$ has no zero-divisors, $X$ is irreducible. Since $0$ belongs to $X^0$,
the open set $X^0$ is dense.

By definition, we have $\fG_x\subset \fs\otimes_L\, L_x$
and, for $x\in X^0$,  the elements
$z_1(x),\ldots, z_d(x)\in \fG_x$ form  a
$L_x$ basis of the simple Lie algebra $\fs\otimes_L L_x$.
\noindent Therefore, by Lemma \ref{cartancor2}, 
$\fG_x$ is a simple Lie algebra of dimension $d$ over its centroid, for all $x\in X^0$, which proves the claim.

\smallskip 
Since $A$ is finitely generated, by Hilbert 
Nullstellensatz \cite{Hilbert0}, we have

$$\cap_{x\in X^0}\,{\bf m}_x=0.$$ 

\noindent Therefore

$$\cap_{x\in X^0}\,{\fm}_x=0.$$

\noindent Since $\fG$
is a dense subalgebra of $\fs\otimes K[[t]]$,
its dimension is infinite. The ideals ${\fm}_x$ need not be  pairwise distinct, but we can conclude than the set

$$\{{\fm}_x\mid x\in X^0\}$$

\noindent is infinite. Henceforth $\fG$ has infinitely many distinct simple quotients $\G_x$ of dimension $d$
over its centroid.  By Theorem
\ref{structure1}, the Lie algebra $\fG$ is not
weakly Noetherian.
\end{proof}

\begin{remark}\label{rk} The previous proof uses the refined version
of Theorem \ref{structure1}. 

In fact, a refined version of Hilbert's Nullstellensatz implies
that there is an integer $e$ such that the closed points of degree
$e$ are dense in $X^0$.  Hence the simplified version of
Theorem \ref{structure1} stated in the introduction is enough.
 \end{remark}

\begin{prop}\label{propfinitecodim} Let $\fG$ be a weakly Noetherian
Lie algebra. Any  maximal ideal $\fm$
with
$$1<\codim\, \fm<\infty,$$
\noindent is characteristic.
\end{prop}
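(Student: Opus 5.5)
We argue by contradiction. Suppose $\partial$ is a derivation of $\fG$ with $\partial\fm\not\subset\fm$, and set $\fm_1:=\fm$ and $\fm_{k+1}:=\{x\in\fm_k\mid \partial x\in\fm_k\}$, as in Subsection \ref{complete}. By induction each $\fm_k$ is an ideal, and $[\fm_k,\fm_l]\subset\fm_{k+l}$. The map $\partial$ induces injections $\fm_k/\fm_{k+1}\hookrightarrow \fm_{k-1}/\fm_k$, with $\fm_0:=\fG$. Since $\dim\fG/\fm<\infty$, all these quotients are finite dimensional, so every $\fm_k$ has finite codimension. Put $\fm_\infty:=\cap_k\fm_k$; this is a $\partial$-stable ideal. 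Let $\widehat\fG:=\varprojlim \fG/\fm_k$ and $\widehat{\fm}_k$ the closure of $\fm_k$. Then $\widehat\fG$ is $\widehat\fm_*$-complete, $\partial$ extends continuously, $\widehat\fG/\widehat\fm_1=\fG/\fm=:\fs$ is a finite dimensional simple Lie algebra (since $\codim\fm>1$ and $\fm$ is maximal), and $\partial\widehat\fm_1\not\subset\widehat\fm_1$. Lemma \ref{aff[[t]]} then gives $\widehat\fG\simeq \fs\otimes K[[t]]$, after checking that the sequence $\widehat\fm_k$ is the one attached to $\partial$ and $\widehat\fm_1$. This check is routine, since each $\fm_k$ has finite codimension and hence $\fG/\fm_k=\widehat\fG/\widehat\fm_k$.

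The image of $\fG$ in $\widehat\fG$ is $\fG/\fm_\infty$, a dense subalgebra of $\fs\otimes K[[t]]$, because it surjects onto every $\fG/\fm_k$. By Lemma \ref{basic}(a), $\fG/\fm_\infty$ is weakly Noetherian, as a quotient of $\fG$. This contradicts Corollary \ref{dense2}. The contradiction, however, requires $\fG/\fm_\infty$ to be infinite dimensional, which means the sequence $\fm_k$ must not stabilize.

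The main obstacle is therefore to rule out stabilization. Suppose $\fm_N=\fm_{N+1}$. Then $\fm_N$ is $\partial$-stable. The injections above then force the chain of dimensions $\dim\fm_k/\fm_{k+1}$ to be nonincreasing and to hit zero, so $\fm_N=\fm_\infty$. In this case $\fG/\fm_N$ is a finite dimensional Lie algebra with a derivation $\bar\partial$ that does not preserve the ideal $\fm/\fm_N$. Apply Lemma \ref{lift} (or Theorem \ref{cartan}) to $\fG/\fm_N$: it equals $\fs\ltimes\fm/\fm_N$, and $\fm/\fm_N$ is a nilpotent ideal. Indeed $[\fm_k,\fm_l]\subset\fm_{k+l}$, so $\fm/\fm_N$ is nilpotent. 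The radical of a finite dimensional Lie algebra is stable under all derivations; this is standard in characteristic $0$, where the nilradical and solvable radical are characteristic. Here $\fm/\fm_N$ is exactly the solvable radical, because the quotient is simple. Hence $\bar\partial(\fm/\fm_N)\subset\fm/\fm_N$, contradicting $\partial\fm\not\subset\fm$. So the sequence never stabilizes, $\fG/\fm_\infty$ is infinite dimensional, and Corollary \ref{dense2} finishes the proof.
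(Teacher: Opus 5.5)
Your proof is correct and follows the paper's own argument: you build the $\partial$-adapted filtration $\fm_k$, identify the completion with $\fs\otimes K[[t]]$ via Lemma~\ref{aff[[t]]}, and conclude with Corollary~\ref{dense2} applied to the dense, weakly Noetherian quotient $\fG/\fm_\infty$. Your separate paragraph ruling out stabilization is valid but redundant, because the conclusion $\widehat{\fG}\simeq\fs\otimes K[[t]]$ already forces the dense image $\fG/\fm_\infty$ to be infinite dimensional; the paper excludes the finite-length case inside Lemma~\ref{formalseries}, using that $(t)\subset K[[t]]/(t^n)$ is stable under every derivation, which is the algebra-level counterpart of your observation that the solvable radical $\fm/\fm_N$ of $\fG/\fm_N$ is characteristic.
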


\begin{proof}
Assume otherwise. Thus there is a derivation 
$\partial$ of $\fG$ such that 

$$\partial \fm\not\subset\fm.$$

Set $\fm_1=\fm$  and let

\centerline{${\fm}_1\supset {\fm}_2\cdots$}

\noindent  be the decreasing 
sequence of subspaces defined by induction as

\centerline{
${\fm}_{k+1}:=\{x\in {\bf m}_k\mid \partial x\in{\fm}_k\}$.}

\noindent As ${\fm_1}$ is an ideal, it has been observed 
at the begining of  Subsection \ref{complete} that all ${\fm}_{k}$ are ideals and

$$[\fm_k,\fm_l]\subset \fm_{k+l},\,\,\forall k,l>0.$$

Set

$$\overline{\fG}=\varprojlim\,\fG/\fm_k,$$

\noindent and, for $k\geq 1$,  let $\overline{\fm}_k$ be the closure of $\fm_k$ in $\overline{\fG}$. The derivation $\partial$ extends to $\overline{\fG}$ and we have

\begin{align*}
\overline{\fm}_{k+1}&=\{x\in 
\overline{\fm}_k\mid 
\partial x\in\overline{\fm}_k\}\\
[\overline{\fm_k},\overline{\fm_l}]&\subset 
\overline{\fm_{k+l}},
\end{align*}

\noindent for all $k, l>0$.
Therefore by Lemma \ref{aff[[t]]},
we have

$$\overline{\fG}\simeq \fs\otimes K[[t]],$$

\noindent where $\fs=\fG/\fm$.

Without loss of generality, we can assume that
$\cap_{k>0}\,\fm_k=0$, that is $\fG\subset \overline{\G}$ Since $\fG$ is a dense 
 subalgebra of $\fs\otimes K[[t]]$, the Lie algebra $\fG$ is not weakly Noetherian by  Corollary \ref{dense2}.
\end{proof}

\subsection{Maximal ideals of arbitrary codimension}

We can now prove the main result of the Section:

\begin{theorem}\label{characteristic} Let $\fG$ be a weakly Noetherian Lie algebra. Then any
maximal ideal  of codimension $>1$ is characteristic.
\end{theorem}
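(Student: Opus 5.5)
The plan is to split according to whether the maximal ideal $\fm$ (of codimension $>1$) has finite or infinite codimension in $\fG$.

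\emph{Finite codimension.} If $1<\codim\,\fm<\infty$, the statement is exactly Proposition \ref{propfinitecodim}, so nothing further is needed.

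\emph{Infinite codimension.} Assume now $\codim\,\fm=\infty$. I will show that $\fm$ is closed, i.e.\ $\Cl_\fG(\fm)=\fm$, and then invoke Lemma \ref{closure}(b), which says that $\Cl_\fG(\fm)$ is a characteristic ideal. By Lemma \ref{closure}(a), $\Cl_\fG(\fm)$ is an ideal containing $\fm$ with $\dim \Cl_\fG(\fm)/\fm<\infty$. Maximality of $\fm$ leaves only two possibilities: either $\Cl_\fG(\fm)=\fm$, or $\Cl_\fG(\fm)=\fG$. The second would give $\dim \fG/\fm<\infty$, contradicting $\codim\,\fm=\infty$. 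Hence $\fm=\Cl_\fG(\fm)$ is characteristic.

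For completeness, I recall why closed ideals are characteristic, as in the proof of Lemma \ref{closure}(b). For a derivation $\partial$, the space $\fm'=\fm+\partial\fm$ is an ideal, because $[x,\partial m]=\partial[x,m]-[\partial x,m]\in\fm+\partial\fm$. Moreover $\partial[\fm,\fm]\subset\fm$, so $\fm'/\fm$ is a quotient of $\fm/[\fm,\fm]$. This quotient is finite dimensional by weak Noetherianity, so closedness forces $\fm'=\fm$.

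The only real difficulty is the finite codimension case. That case was already handled through the affine Lie algebra analysis and Theorem \ref{structure1}, via Corollary \ref{dense2}, so once Proposition \ref{propfinitecodim} is taken as given, the remaining argument is purely formal.
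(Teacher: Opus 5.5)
Your proposal is correct and follows the paper's proof: the finite-codimension case is Proposition \ref{propfinitecodim}, and in the infinite-codimension case Lemma \ref{closure}(a) together with maximality forces $\fm=\Cl_{\fG}(\fm)$, which is then characteristic by Lemma \ref{closure}(b). You also spell out the maximality step ($\Cl_{\fG}(\fm)\in\{\fm,\fG\}$, and $\Cl_{\fG}(\fm)=\fG$ would make $\codim\,\fm$ finite), which the paper leaves implicit.
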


\begin{proof} Let $\fm$ be maximal ideal  of codimension $>1$. If $\fm$ has finite codimension, then 
$\fm$ is characteristic by Proposition \ref{propfinitecodim}.

Assume now that  $\fm$ has infinite codimension.
By  Lemma \ref{closure}(a), we have

$$\fm=\Cl_{\fG}(\fm),$$

 \noindent and, by Lemma \ref{closure}(b),
 $\fm$ is a characteristic ideal.
 \end{proof}

\section{Proof of Theorem A}\label{proofA}

In the section, we now prove a refined version  Theorem A,
which holds for an arbitrary weakly Noetherian Lie algebra $\fG$.

Indeed,  we define  a filtration

$$\fG\supset \fG_{(0)}\supset \fG_{(1)}\supset \ldots\fG_{(\alpha)}\supset\ldots$$ 

\noindent of $\fG$ as follows:

\begin{enumerate}
\item[(a)] \hskip27mm$\fG_{(0)}=\cD^*\rad^f(\fG),$
\item[(b)] if the ordinal $\alpha$ is a successor,
that is $\alpha=\beta+1$ for some $\beta$, then
$$\fG_{(\alpha)}=
[\rad(\fG_{(\beta)}),\rad(\fG_{(\beta)})]$$
\item[(c)] If $\alpha\neq 0$ is a limit ordinal, then
$$\fG_{(\alpha)}=[\cap_{\beta<\alpha}\,\rad(\fG_{(\beta)}), \cap_{\beta<\alpha}\,\rad(\fG_{(\beta)})].$$
\end{enumerate}

\noindent When $\fG$ satisfies the simplifying hypothesis of the 
introduction, the definition coincides with the previous definition.

\subsection{Outer derivations}
We start with an elementary fact.

\begin{cor}\label{quot} Let $\fG$ be a Lie algebra,
let $\fm$ be an ideal of $\fG$ and let
$\fr$ be a characteristic ideal of $\fm$.
 Assume that the Lie algebra 
$\fs:=\fm/\fr$ is a simple.
Then

\begin{enumerate}
\item[(a)] If $\fm\subset\rad\fG$, we have
$\Out(\fs)\neq 0$,
\item[(b)] If $\fm\subset\rad^f\fG$, the Lie algebra
$\fs$ is infinite dimensional.
\end{enumerate}
\end{cor}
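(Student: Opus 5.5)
The plan is to build, from the action of $\fG$ on $\fm$, a homomorphism $\fG\to\Der\fs$. If $\Out(\fs)=0$, this map lands in $\ad(\fs)\simeq\fs$, and its kernel is a maximal ideal of $\fG$ with simple quotient $\fs$. Such an ideal must contain the relevant radical, which forces it to contain $\fm$; but the map is nonzero on $\fm$, a contradiction. Both assertions follow this pattern. For (b) we additionally use Theorem \ref{cartan}(e) to get $\Out(\fs)=0$ automatically when $\fs$ is finite dimensional.

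Concretely, for $x\in\fG$ the restriction $\ad(x)\vert_{\fm}$ is a derivation of the ideal $\fm$. Since $\fr$ is a characteristic ideal of $\fm$, it satisfies $[x,\fr]\subset\fr$. Hence $\ad(x)$ induces a derivation $\pi(x)$ of $\fs=\fm/\fr$, and $\pi:\fG\to\Der\fs$ is a Lie algebra homomorphism. For $y\in\fm$, $\pi(y)=\ad(\bar y)$, where $\bar y$ is the image of $y$ in $\fs$. Thus $\pi(\fm)=\ad(\fs)$, which is nonzero because $\fs$ is simple, hence nonabelian.

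For (a), suppose $\Out(\fs)=0$. Since $\fs$ is simple, it is centerless, so $\ad:\fs\to\Der\fs$ is an isomorphism and $\pi$ gives a homomorphism $\phi:\fG\to\fs$. Its restriction to $\fm$ is the quotient map, so $\phi$ is surjective. Therefore $\fk:=\Ker\phi$ is an ideal with $\fG/\fk\simeq\fs$ simple. By the convention that simple Lie algebras are not one-dimensional, $\fk$ is a maximal ideal of codimension $>1$. By definition of the radical, $\rad\fG\subset\fk$, hence $\fm\subset\fk$ and $\phi(\fm)=0$, contradicting $\phi(\fm)=\fs\neq0$. So $\Out(\fs)\neq0$.

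For (b), suppose instead that $\fs$ is finite dimensional. Then $\fs$ is a finite dimensional simple, hence semisimple, Lie algebra, and Theorem \ref{cartan}(e) gives $\Out(\fs)=0$. The same construction produces a maximal ideal $\fk$ with $\fG/\fk\simeq\fs$, now of finite codimension $\dim\fs>1$. Reading the definition of $\rad^f\fG$ as the intersection of maximal ideals of finite codimension $>1$, we get $\rad^f\fG\subset\fk$, so again $\fm\subset\fk$, a contradiction. Hence $\fs$ is infinite dimensional. The only point requiring care is checking that $\pi$ is well defined, which is exactly where the characteristic hypothesis on $\fr$ is used; I do not expect any serious obstacle.
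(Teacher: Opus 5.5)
Your proof is correct and is essentially the paper's argument. The paper first passes to $\fG/\fr$ (noting that $\fr$ is an ideal of $\fG$) and applies Lemma \ref{centralizer}(a) to get $\fG=C_\fG(\fs)\oplus\fs$, so that $C_\fG(\fs)$ is a maximal ideal of codimension $>1$ which must contain $\rad\fG$ (resp.\ $\rad^f\fG$, read via codimension exactly as you did) and hence $\fs$; your kernel $\mathfrak{k}$ is precisely the preimage of that centralizer, and working directly on $\fG$ merely spares you the routine check that $\fm/\fr\subset\rad(\fG/\fr)$.
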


\begin{proof} The hypothesis implies that
$\fr$ is indeed an ideal of $\fG$.
Without loss of generality, we can assume that $\fr=0$.
So the ideal $\fm=\fs$ is a simple Lie algebra.

We prove Assertion (a) by contradiction, thus
assume that $\Out(\fs)=0$. By Lemma \ref{centralizer}(a), we have

$$\fG= C_\fG(\fs)\oplus \fs,$$

\noindent which implies that 
$C_\fG(\fs)$ is a maximal ideal of codimension $>1$. Therefore
$$C_\fG(\fs)\supset\rad\fG\supset \fs,$$
\noindent which is contradictory.

Similarily, we prove Assertion (b) by contradiction. So assume that $\fs$ is a finite dimensional simple Lie algebra.
We have $\Out(\fs)=0$ by Theorem \ref{cartan}(e), thus 
Lemma \ref{centralizer}(a) implies

$$\fG=C_\fG(\fs)\oplus \fs,$$

\noindent which means that 
$C_\fG(\fs)$ is a maximal ideal of finite codimension $> 1$. Therefore
$$C_\fG(\fs)\supset\rad^f\fG\supset \fs,$$
\noindent which is contradictory.
\end{proof}

\begin{lemma}\label{prepa} Let $\fG$ be a  Noetherian Lie algebra. Then for all ordinals $\alpha$,
\begin{enumerate}
\item[(a)] the Lie algebra $\fG_{(\alpha)}$ is a perfect ideal,
\item[(b)] Any simple quotient of $\fG_{(\alpha)}$ is infinite dimensional
\item[(c)] For $\alpha\geq 1$, no simple quotient of $\fG_{(\alpha)}$ is isomorphic to a Krichever-Novikov algebra.
\end{enumerate}
\end{lemma}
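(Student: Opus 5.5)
The plan is a transfinite induction on $\alpha$. The induction hypothesis is that $\fG_{(\beta)}$ is a perfect ideal of $\fG$, characteristic in $\fG$, for every $\beta<\alpha$.

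\emph{Base case.} We have $\fG_{(0)}=\cD^*\rad^f(\fG)$. Since $\rad^f(\fG)$ is an intersection of ideals, it is an ideal of $\fG$. The largest perfect ideal $\cD^*$ of an ideal is perfect. By Lemma \ref{perfect-char} it is characteristic in $\rad^f(\fG)$, and hence an ideal of $\fG$. Being perfect, it is in fact characteristic in $\fG$ as well.

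\emph{Successor step.} Let $\alpha=\beta+1$. By Theorem \ref{characteristic} applied to the weakly Noetherian Lie algebra $\fG_{(\beta)}$ (Lemma \ref{basic}(a)), each maximal ideal of codimension $>1$ in $\fG_{(\beta)}$ is characteristic. So $\rad(\fG_{(\beta)})$ is a characteristic ideal of $\fG_{(\beta)}$, hence an ideal of $\fG$, and $\fG_{(\alpha)}=[\rad\fG_{(\beta)},\rad\fG_{(\beta)}]$ is an ideal of $\fG$. The limit case is identical, with the intersection $\cap_{\beta<\alpha}\rad(\fG_{(\beta)})$ being an intersection of ideals of $\fG$.

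\emph{Perfectness.} This is the main point. Set $\fr=\rad(\fG_{(\beta)})$ (or the intersection in the limit case). We must show that $\fr':=[\fr,\fr]$ is perfect, i.e.\ that the derived series of $\fr$ stops at step one. Since $\fr'$ is an ideal of the perfect Lie algebra $\fG_{(\beta)}$, I argue as follows. The derived series $\cD^k\fr$ consists of characteristic ideals of $\fG_{(\beta)}$ of finite codimension in $\fr$ (Lemma \ref{basic}(e)). Using the ACC on ideals (Corollary \ref{idealACC}) together with the closure Lemma \ref{closure}, the derived series stabilizes modulo finite-dimensional pieces, so $\cD^\omega\fr$ is perfect by Lemma \ref{stabilization}(b).

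It then remains to show $\cD^\omega\fr=[\fr,\fr]$. The quotient $\fr/\cD^\omega\fr$ is a finite-dimensional solvable ideal of $\fG_{(\beta)}/\cD^\omega\fr$. The plan is to show that the solvable part must be central modulo $[\fr,\fr]$, using that $\fG_{(\beta)}$ is perfect, has no finite-dimensional simple quotients (point (b) at stage $\beta$), and that $\fr$ is contained in all maximal ideals. Concretely, the perfect algebra $\fG_{(\beta)}$ acts on the finite-dimensional space $\fr/[\fr,\fr]$. That action factors through a finite-dimensional perfect quotient, whose semisimple part would give a finite-dimensional simple quotient of $\fG_{(\beta)}$; there is none. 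So the action is nilpotent, and by perfectness it is trivial. I expect this step to be the main obstacle; I would try to combine it with Lemma \ref{basic}(g) to identify $\cD^\omega$ with $\Cs^\omega$.

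\emph{Point (b).} Apply Corollary \ref{quot}(b). A simple quotient $\fG_{(\alpha)}/\fr_0$, with $\fr_0$ a maximal ideal, is characteristic in $\fG_{(\alpha)}$ by Theorem \ref{characteristic}. For $\alpha\ge1$ we have $\fG_{(\alpha)}\subset\rad(\fG_{(\beta)})\subset\rad^f(\fG_{(\beta)})$, so Corollary \ref{quot}(b) applied inside $\fG_{(\beta)}$ shows the quotient is infinite dimensional. For $\alpha=0$, apply it inside $\fG$ with $\fG_{(0)}\subset\rad^f\fG$.

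\emph{Point (c).} For $\alpha\ge1$, Corollary \ref{quot}(a) applied inside $\fG_{(\beta)}$ gives $\Out(\fs)\neq0$ for every simple quotient $\fs$ of $\fG_{(\alpha)}$. On the other hand, Theorem \ref{Grabowski}(b) gives $\Out(\Vect_X)=0$. Hence no simple quotient of $\fG_{(\alpha)}$ is isomorphic to a Krichever-Novikov algebra.
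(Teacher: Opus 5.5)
\textbf{There is a genuine gap in the perfectness step.} Your treatment of ideal-ness and of points (b) and (c) is fine. For (c) you apply Corollary \ref{quot}(a) inside $\fG_{(\beta)}$, whereas the paper applies it inside $\fG_{(0)}$; both work because $\fG_{(\alpha)}\subset\rad(\fG_{(\beta)})$.

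The problems are in perfectness. First, the claim that the derived series of $\fr$ ``stabilizes modulo finite-dimensional pieces'' via the ACC and Lemma \ref{closure} is unsupported. The ACC controls ascending chains, while $\cD^k\fr$ is descending, and Lemma \ref{stabilization}(b) presupposes exactly the stabilization you are trying to prove. Likewise, nothing tells you that $\fr/\cD^\omega\fr$ is finite dimensional; only each $\fr/\cD^k\fr$ is.

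Second, and decisively, your concrete argument uses the wrong module. Showing that $\fG_{(\beta)}$ acts trivially on $\fr/[\fr,\fr]$ only gives $[\fG_{(\beta)},\fr]\subset[\fr,\fr]$. That inclusion is automatic up to equality and says nothing about $[[\fr,\fr],[\fr,\fr]]$, so it cannot yield $\cD^2\fr=\cD^1\fr$.

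\textbf{The fix.} Keep your correct general observation: a perfect Lie algebra with no finite-dimensional simple quotient acts trivially on every finite-dimensional module $M$. Indeed, its image in $\fgl(M)$ is perfect with zero Levi factor, hence solvable, hence zero. Apply this to $M=\fr/\cD^2\fr$ instead.
\begin{itemize}
\item[] $M$ is finite dimensional by Lemma \ref{basic}(e) applied to the weakly Noetherian algebra $\fr$.
\item[] $M$ is a module because $\cD^2\fr$ is an ideal of $\fG_{(\beta)}$ (or of $\fG$).
\item[] Restricting the trivial action to $\fr\subset\fG_{(\beta)}$ gives $[\fr,\fr]\subset\cD^2\fr$, i.e.\ $\cD^1\fr=\cD^2\fr$, so $\fG_{(\alpha)}=[\fr,\fr]$ is perfect.
\end{itemize}
The limit case is identical. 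This is precisely the paper's argument, except that the paper lets $\fG_{(0)}$ act, so it only needs point (b) at stage $0$. Your variant with $\fG_{(\beta)}$ also works inside the transfinite induction, since point (b) at stage $\beta$ does not depend on perfectness at later stages.
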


\begin{proof} Without loss of generality, we can assume that
$\fG_{(0)}\neq 0$.

\bigskip
\noindent {\it First step: proof of the lemma for $\alpha=0$.}
By definition 
$\fG_{(0)}:=\cD^*\rad^f(\fG)$ is
a perfect ideal. It remains to prove
that any simple quotient $\fs=\fG_{(0)}/\fr$ of $\fG_{(0)}$ is infinite dimensional. 
 By Theorem \ref{characteristic}, $\fr$ is a characteristic ideal of $\fG_{(0)}$ and, by definition, we have  $\fG_{(0)}\subset\rad^f(\fG)$.
Applying  Lemma \ref{quot}(b) to the Lie algebra $\fG$, its ideal
$\fm=\fG_{(0)}$ and its subideal $\fr$, we obtain that 
$\fs$ is infinite dimensional.

\bigskip
\noindent{\it Second step: proof, by transfinite
induction on $\alpha$, that $\fG_{(\alpha)}$ is
a perfect ideal.}  
Since $\fG_{(0)}$ is perfect and all simple quotients are infinite dimensional, we conclude that $\fG_{(0)}$  acts trivially on
any finite dimensional $\fG_{(0)}$-module $M$.

Since the case $\alpha=0$ is done, we now prove by induction that $\fG_{(\alpha)}$ is
a perfect ideal for any ordinal  $\alpha>0$.

First assume that $\alpha$ is a successor, that is $\alpha=\beta+1$ for some ordinal $\beta$.
By induction hypothesis, $\fG_{(\beta)}$ is an ideal.
By Theorem \ref{characteristic}, 
$\rad(\fG_{(\beta)})$ is a characteristic ideal
of $\fG_{(\beta)}$. Therefore 
$\rad(\fG_{(\beta)})$ is an ideal of $\fG$.

We now consider  $M:=\rad(\fG_{(\beta)})/\cD^2 \rad(\fG_{(\beta)})$ as an $\fG_{(0)}$-module.
 By Lemma \ref{basic}(e),  $M$ is finite dimensional,
 thus  $\fG_{(0)}$  acts trivially on $M$.
In particular we have 

$$\rad(\fG_{(\beta)}).M=0,$$ 

which amounts to
 
 $$\cD^2 \rad(\fG_{(\beta)})=\cD^1 \rad(\fG_{(\beta)}).$$
 
 \noindent Therefore 
 $\fG_{(\alpha)}=[\rad(\fG_{(\beta)}),\rad(\fG_{(\beta)})]$ is 
 a perfect ideal.
 
 Next assume that $\alpha$ is a limite ordinal.
 Set 
 
 $$\fr=\cap_{\beta<\alpha}\,\rad(\fG_{(\beta)})=\cap_{\beta<\alpha}\,\fG_{(\beta)}.$$
 
 \noindent By induction hypothesis, $\fr$ is an ideal
 of $\fG$.  As before, we can consider $M:=\fr/\cD^2\fr$ as an 
 $\fG_{(0)}$-module, which is finite dimensional by Lemma \ref{basic}(e). Since
 $\fG_{(0)}$ acts trivially on $M$, we conclude  that 
 $\fr.M=0$. Equivalently
 
 $$\cD^2 \fr=\cD^1 \fr.$$
 \noindent Therefore 
 $\fG_{(\alpha)}=[\fr,\fr]$ is perfect, which completes the proof that all Lie algebras $\fG_{(\alpha)}$ are perfect ideals.
 
 \bigskip
\noindent {\it Third step: proof that, for any $\alpha\geq 1$,
any simple quotient $\fs=\fG_{(\alpha)}/\fr$ of
$\fG_{(\alpha)}$
is infinite dimensional and is not a Krichever-Novikov algebra.}
 By Theorem \ref{characteristic}, $\fr$ is a characteristic ideal of $\fG_{(\alpha)}$ and, by definition, we have
 $\fG_{(\alpha)}\subset\rad(\fG_{(0)})$.
 Applying Lemma  \ref{quot}(a) to the Lie algebra 
 $\fG_{(0)}$, its ideal $\fm=\fG_{(\alpha)}$ and its subideal $\fr$,
 we conclude  that 
$$\Out(\fs)\neq 0.$$ 
 
 \noindent Therefore, $\fs$ is neither finite dimensional by Cartan's theorem \ref{cartan}(e) nor isomorphic to
 a Krichever-Novikov algebra by Grabowski's theorem 
 \ref{Grabowski}(b).
  \end{proof}
 
 \subsection{Proof of Theorem A}

 We now state a refined version of Theorem A.
 
\begin{mainA} Let $\fG$ be a weakly Noetherian Lie algebra. Then,
for any ordinal $\alpha$:

\begin{enumerate}
\item[(a)] the Lie algebra $\fG_{(\alpha)}$
is a  perfect ideal.

\item[(b)] 
$\fG_{(\alpha)}/\fG_{(\alpha+1)}$
is a central extension of 
$\fG_{(\alpha)}/\rad\fG_{(\alpha)}$ by a finite
dimensional center.

\item[(c)] For any integer $d$, $\fG/\fG_{(0)}$
admits only finitely many simple quotients $\fs$
with $\dim_{\Cent(\fs)}\, \fs=d$. 

In particular,  if $\dim \fG/\rad^f(\fG)=\infty$, the Lie algebra
$\fG/\rad^f(\fG)$ does
not satisfy any polynomial identity.

\item[(d)] Any
simple quotient  of $\fG_{(\alpha)}$ is infinite dimensional.

\item[(e)]
For $\alpha\geq 1$, no Krichever-Novikov algebras occur as a quotient of $\fG_{(\alpha)}$.
\end{enumerate}
\smallskip
Moreover if $\fG$ is Noetherian,
then $\fG_{(\alpha)}=0$ for $\alpha$ big enough.
\end{mainA}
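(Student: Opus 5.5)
Most of the statement is already contained in Lemma \ref{prepa}: assertion (a) is Lemma \ref{prepa}(a), (d) is Lemma \ref{prepa}(b), and (e) is Lemma \ref{prepa}(c). That lemma is stated for Noetherian $\fG$, but its proof uses only weak Noetherianity. The ingredients are Theorem \ref{characteristic}, Corollary \ref{quot} and Lemma \ref{basic}(e). So the first step is to check this and record it. The work then goes into (b), (c), and the final assertion about Noetherian $\fG$.

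For (b), set $\fa=\fG_{(\alpha)}$ and $\fr=\rad\fa$. For $\alpha$ a successor or a limit ordinal we have $\fG_{(\alpha+1)}=[\fr,\fr]$. The quotient $\fr/[\fr,\fr]$ is an abelian section, hence finite dimensional by Lemma \ref{basic}(b), and it is an ideal of $\fa/[\fr,\fr]$. Since $\fr$ is characteristic in $\fa$ by Theorem \ref{characteristic}, $M:=\fr/[\fr,\fr]$ is a finite dimensional $\fa$-module. As $\fa$ is perfect and all its simple quotients are infinite dimensional, $\fa$ acts trivially on $M$. This is the same argument as in the second step of Lemma \ref{prepa}: the image of $\fa$ in $\fgl(M)$ is a perfect finite dimensional Lie algebra with no simple quotients, hence zero by Theorem \ref{cartan}. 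Therefore $M$ is central in $\fa/\fG_{(\alpha+1)}$, and $\fa/\fG_{(\alpha+1)}$ is a central extension of $\fa/\fr$ by the finite dimensional center $M$.

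For (c), write $\fG/\fG_{(0)}$ as an extension of $\fG/\rad^f\fG$ by $\rad^f\fG/\cD^*\rad^f\fG$. A simple quotient $\fs$ of $\fG/\fG_{(0)}$ of finite dimension $d$ over $\Cent(\fs)$ is also a simple quotient of $\fG$. The quotient $\fG/\fG_{(0)}$ is weakly Noetherian by Lemma \ref{basic}(a), so Theorem \ref{structure1} applied to it gives finiteness directly. (A simple quotient with $d=1$ is excluded by convention.) For the polynomial identity clause, $\fG/\rad^f\fG$ is residually semisimple and weakly Noetherian, so Corollary \ref{noPI} applies when it is infinite dimensional.

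The main obstacle is the last assertion. Assume $\fG$ is Noetherian. Each $\fG_{(\alpha)}$ is a perfect ideal, hence characteristic by Lemma \ref{perfect-char}. The sequence is decreasing, so ACC does not apply directly. Instead I would consider the ideals $\fr_\alpha=\rad\fG_{(\alpha)}$, which are ideals of $\fG$ by Theorem \ref{characteristic}. Whenever $\fG_{(\alpha)}\neq 0$, it has a maximal ideal by Corollary \ref{idealACC}: it is an ideal of a Noetherian algebra, so ACC on its ideals holds, and a Zorn argument produces a maximal ideal, necessarily of codimension $>1$ since $\fG_{(\alpha)}$ is perfect. Hence $\fr_\alpha\subsetneq\fG_{(\alpha)}$ and the sequence strictly decreases while nonzero. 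To conclude, I would pass to Corollary \ref{idealACC}(b) and use a cardinality argument on the strictly decreasing ordinal-indexed chain. The sequence must stop because a strictly decreasing chain of subspaces indexed by ordinals has length bounded by a cardinal depending on $\dim\fG$. At a stopping ordinal we have $\fG_{(\alpha)}=\fG_{(\alpha+1)}\subset\rad\fG_{(\alpha)}$, which forces $\fG_{(\alpha)}=0$. So the key point is simply that $\fG_{(\alpha+1)}\subsetneq\fG_{(\alpha)}$ whenever $\fG_{(\alpha)}\ne0$, which is exactly where Noetherianity, through the existence of maximal ideals, enters.
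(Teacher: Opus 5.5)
Your proposal is correct and follows the paper's proof. Assertions (a), (d), (e) come from Lemma~\ref{prepa}, and you are right to check that its proof needs only weak Noetherianity, since the paper invokes it for weakly Noetherian $\fG$ without comment. Assertion (b) comes from the trivial action of the perfect ideal $\fG_{(\alpha)}$ on the finite-dimensional abelian section $\rad\fG_{(\alpha)}/\fG_{(\alpha+1)}$, and (c) from Theorem~\ref{structure1} and Corollary~\ref{noPI}. The final assertion is, as in the paper, the stabilization of the decreasing ordinal-indexed chain, combined with Corollary~\ref{idealACC}, which gives any nonzero $\fG_{(\alpha)}$ a maximal ideal (of codimension $>1$ because $\fG_{(\alpha)}$ is perfect).
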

 
 \begin{proof} Assertions (a)(d)(e) follow from
 Lemma \ref{prepa}. Assertion (c) follows from
 Theorem \ref{structure1} and   Corollary \ref{noPI}.
 
We now prove Assertion (b).
 By definition, $M:= 
 \rad\fG_{(\alpha)}/\fG_{(\alpha+1)}$ is an abelian section, thus $M$ is finite dimensional.
 Since $\fG_{(\alpha)}$ is perfect and does not admit finite dimensional simple quotient, 
we deduce that  $\fG_{(\alpha)}$ does not admit any nonzero finite dimensional quotient.
Therefore $\fG_{(\alpha)}$ acts trivially on $M$,
what amonts to the fact that 
$\fG_{(\alpha)}/\fG_{(\alpha+1)}$
is a central extension of 
$\fG_{(\alpha)}/\rad\fG_{(\alpha)}$ by a finite
dimensional center.

Next, assume that $\fG$ is Noetherian. 
By dimension considerations, it is clear that
the descending sequence $\fG_{(\alpha)}$ stabilizes, that is $\fG_{(\alpha+1)}=\fG_{(\alpha)}$ for some ordinal $\alpha$.

We claim that $\fG_{(\alpha)}=0$.
Otherwise, by Lemma \ref{idealACC}, $\fG_{(\alpha)}$
admits a maximal ideal $\fm$.
We have already proved that $\fG_{(\alpha)}$
is perfect,  so $\fm\supset \rad(\fG_{(\alpha)}$. It follows that 

$$\fG_{(\alpha+1)}\subset \rad(\fG_{(\alpha)})
\subset\fm\subsetneq \fG_{(\alpha)},$$

\noindent which contradicts that $\fG_{(\alpha+1)}=\fG_{(\alpha)}$.
\end{proof}

\section{Conclusion}\label{conclusion}

As a conclusion, we  reduce the Sierra Walton Conjecture to three distinct classes of Lie algebras. 
Then we discuss the status of the conjecture with 
intuitive arguments.

\subsection{Reduction of Sierra-Walton Conjecture to three cases}

\begin{lemma}\label{prepa3cases} Any  Noetherian Lie algebra
$\fG$ contains a characteristic ideal
$\fm$ such that $\fG/\fm$ is just-infinite.

Moreover, if $\fG$ is just infinite, then either

\begin{enumerate} 
\item[(a)]  $[\fG,\fG]$ is residually 
nilpotent, or
\item[(b)] $\cD^\omega\fG$ is a finite 
codimension ideal which is perfect and just-infinite.
\end{enumerate}
\end{lemma}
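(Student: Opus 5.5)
For the first assertion I would use the ACC on characteristic ideals. Since $\fG$ is infinite dimensional (implicitly assumed, else there is nothing to prove), the ideal $0$ is a characteristic ideal of infinite codimension. Consider the family of characteristic ideals of infinite codimension. By Noetherianity every chain in it stabilizes, so it has a maximal element $\fm$. I claim $\fG/\fm$ is just-infinite. Let $\fb/\fm$ be a nonzero ideal of $\fG/\fm$. Then $\fb$ is an ideal of $\fG$ strictly containing $\fm$, and by Lemma \ref{closure}(b) its closure $\Cl_\fG(\fb)$ is characteristic. If $\Cl_\fG(\fb)$ had infinite codimension, maximality would force $\Cl_\fG(\fb)=\fm$, contradicting $\fb\supsetneq\fm$. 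Hence $\Cl_\fG(\fb)$ has finite codimension, and by Lemma \ref{closure}(a) $\fb$ has finite codimension as well. This proves the first assertion.

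Now assume $\fG$ is just-infinite and Noetherian. By Lemma \ref{basic}(e) every $\cD^k\fG$ has finite codimension. I would split into two cases.

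\emph{Case 1: $\cD^\omega\fG=0$.} By Lemma \ref{basic}(f), $\Cs^\omega[\fG,\fG]=\cD^\omega\fG=0$, so $[\fG,\fG]$ is residually nilpotent. This is case (a).

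\emph{Case 2: $\cD^\omega\fG\neq 0$.} Then $\cD^\omega\fG$ is a nonzero ideal, so by just-infiniteness it has finite codimension. Its codimension is finite, so the descending chain $\cD^k\fG$, whose terms all contain $\cD^\omega\fG$, lives inside the finite dimensional quotient $\fG/\cD^\omega\fG$ and therefore stabilizes. By Lemma \ref{stabilization}(b), $\cD^\omega\fG$ is perfect. It is just-infinite by Corollary \ref{ideal-just}, since it is a nonzero ideal of the just-infinite Noetherian algebra $\fG$. This is case (b).

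The only delicate point is the stabilization argument in Case 2. A decreasing chain of subspaces each containing a fixed subspace of finite codimension must stabilize, so $\cD^\omega\fG=\cD^N\fG$ for some $N$, and then $\cD^{N+1}\fG=\cD^N\fG$. I expect this step, and the use of closures in the first part, to be the only points needing care; everything else follows directly from the cited lemmas.
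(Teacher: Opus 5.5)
Your proof is correct and follows essentially the same strategy as the paper's. The one real difference is in the first part.

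\textbf{Second part.} This coincides with the paper's argument. The paper splits on whether $\cD^\omega\fG$ has infinite codimension; for just-infinite $\fG$ this is your split on $\cD^\omega\fG=0$. Your stabilization argument showing that $\cD^\omega\fG$ is perfect fills in a step the paper leaves implicit. Your appeal to Lemma \ref{basic}(e) is harmless but not actually used.

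\textbf{First part.} You arrange things dually to the paper.
\begin{itemize}
\item The paper invokes Corollary \ref{idealACC}(a) to get the ACC on all ideals. It takes $\fm$ maximal among all ideals of infinite codimension, so just-infiniteness of $\fG/\fm$ is immediate. It then uses Lemma \ref{closure}(a) to get $\fm=\Cl_\fG(\fm)$, and concludes that $\fm$ is characteristic by Lemma \ref{closure}(b).
\item You maximize over characteristic ideals of infinite codimension, using the definition of Noetherianity directly, so characteristicness is immediate. You then get just-infiniteness by passing from $\fb$ to $\Cl_\fG(\fb)$.
\end{itemize}
Both routes rest on the same two parts of Lemma \ref{closure}. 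Yours avoids the detour through Corollary \ref{idealACC}, whose proof is itself a closure argument. The paper's version states explicitly that $\fm$ is maximal among all ideals of infinite codimension; your argument yields this too, after the fact.
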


\begin{proof} By Corollary \ref{idealACC}(a), the Lie algebra
$\fG$ satisfies the ACC condition on ideals. Therefore there exists  a maximal element 
$\fm$ in the set of  ideals 
of infinite codimension. 

By Lemma \ref{closure}(a), $\Cl_\fG(\fm)$ has infinite codimension, therefore the ideal $\fm$ is closed. By Lemma
\ref{closure}(b), the ideal $\fm$ is characteristic,
which proves the first claim.

Assume now that $\fG$ is just infinite.
If $\cD^\omega\fG$ has infinite codimension, then $\cD^\omega\fG=0$ and 
$[\fG,\fG]$ is residually 
nilpotent by Lemma \ref{basic}(f).

Otherwise $\cD^\omega\fG$ is a perfect ideal of finite codimension which is  just-infinite by Lemma \ref{ideal-just}.
\end{proof}

\begin{cor} \label{3cases} Any infinite dimensional Noetherian Lie algebra $\fG$ contains two characteristic ideals $\fp\supset\fq$ with
\begin{align*}
\codim&\, \fp<\infty & \codim\,\fq=\infty
\end{align*} 
\noindent such that the  Lie algebra $\fp/\fq$ 
is just-infinite and  either

\begin{enumerate}
\item[(type A)] $\fp/\fq$ is simple, or

\item[(type B)] $\fp/\fq$ is residually nilpotent, or

\item[(type C)] $\fp/\fq$ is perfect, residually semi-simple
and does not satisfies any polynomial identity.
\end{enumerate} 
\end{cor}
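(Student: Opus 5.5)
We combine Lemma \ref{prepa3cases} with Theorem \ref{structure1}, Corollary \ref{noPI} and Theorem \ref{characteristic}. By the first part of Lemma \ref{prepa3cases}, $\fG$ contains a characteristic ideal $\fm$ of infinite codimension such that $\fH:=\fG/\fm$ is just-infinite. Every $\fp/\fq$ we construct will be a section of $\fH$. Hence $\fq\supset\fm$ has infinite codimension, and we only need $\fp$ of finite codimension in $\fG$.

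The ideals $\fp,\fq$ must also be characteristic in $\fG$. We handle this by using only canonically defined ideals (derived and central series terms, radicals, perfect cores) together with finite closures, and invoking Lemma \ref{perfect-char}, Lemma \ref{closure}(b) and Theorem \ref{characteristic}. One point needs care: a characteristic ideal of $\fG/\fm$ pulls back to a characteristic ideal of $\fG$. This holds because $\fm$ is characteristic, so every derivation of $\fG$ descends to $\fG/\fm$.

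\emph{Case 1: $[\fH,\fH]$ is residually nilpotent.} Take $\fp$ to be the preimage of $[\fH,\fH]$ and $\fq=\fm$. Then $\fp$ is perfect modulo nothing, but it is characteristic by the derivation argument $\partial[\fG,\fG]\subset[\fG,\fG]$. It has finite codimension by Lemma \ref{basic}(e). The quotient $\fp/\fq=[\fH,\fH]$ is just-infinite by Corollary \ref{ideal-just}, since $\fH$ is Noetherian by Corollary \ref{idealACC}(b). So $\fp/\fq$ is of type B.

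\emph{Case 2: $\fP:=\cD^\omega\fH$ is perfect, just-infinite and of finite codimension.} Take $\fp$ to be the preimage of $\fP$; it is characteristic because $\fP$ is perfect and $\fm$ is characteristic. We now split according to $\rad^f\fP$, which is an intersection of maximal ideals and is characteristic in $\fP$ by Theorem \ref{characteristic}.

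If $\rad^f\fP=0$, set $\fq=\fm$. Then $\fP$ is perfect and residually semisimple, and since it is infinite dimensional, Corollary \ref{noPI} shows it satisfies no polynomial identity. This is type C.

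If $\rad^f\fP\neq0$, then $\rad^f\fP$ has finite codimension because $\fP$ is just-infinite. There are two subcases.

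\textbf{Subcase 2a: $\fP$ has a maximal ideal $\fr$ of infinite codimension.} Just-infiniteness forces $\fr=0$, so $\fP$ is simple, which is type A.

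\textbf{Subcase 2b: every maximal ideal of $\fP$ has finite codimension.} Then $\rad\fP=\rad^f\fP$ has finite codimension. By Theorem \ref{structure1}, $\fP/\rad^f\fP$ is a finite sum of finite-dimensional simple algebras. Since $\rad^f\fP$ is a nonzero ideal of the Noetherian algebra $\fP$, it has a maximal ideal by Corollary \ref{idealACC}. Moreover $\rad^f\fP$ is just-infinite by Corollary \ref{ideal-just}. We then iterate the whole analysis with $\fP$ replaced by $\cD^\omega\rad^f\fP$ or $\cD^*\rad^f\fP$, and use Lemma \ref{prepa3cases} again to choose between the residually nilpotent and the perfect branch.

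The main obstacle is proving that this iteration terminates. The strategy is to run it inside the tower $\fG_{(\alpha)}$ of Theorem A. Perfectness (Theorem A(a)) and the absence of finite-dimensional simple quotients (Theorem A(d)) mean that at stage $\alpha\geq0$ the radical is the intersection of infinite-codimension maximal ideals. In a just-infinite setting, any such maximal ideal is zero, which yields a simple quotient and hence type A. Termination then follows from the statement in Theorem A that $\fG_{(\alpha)}=0$ for large $\alpha$.

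The second point needing care is bookkeeping: at each step the new $\fp$ must still have finite codimension in $\fG$ and be characteristic in $\fG$. The first holds because at each step we pass to a finite-codimension ideal of a just-infinite algebra. The second holds by chaining Lemma \ref{perfect-char} and Theorem \ref{characteristic} through ideals, each of which is an ideal of $\fG$.
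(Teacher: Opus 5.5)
Your argument is correct and, once unwound, it is the paper's proof. The paper also reduces to $\fG/\fm$ just-infinite and disposes of the case where $[\fG,\fG]$ is residually nilpotent. It then replaces $\fG$ by the perfect just-infinite ideal $\cD^\omega\fG$ and splits into three cases. If $\fG_{(0)}=\cD^*\rad^f(\fG)\neq 0$, it is simple (it has a maximal ideal, all its simple quotients are infinite dimensional, and it is just-infinite), giving type A. If $\fG_{(0)}=0\neq\rad^f(\fG)$, then $\rad^f(\fG)$ is residually nilpotent, giving type B. If $\rad^f(\fG)=0$, one gets type C.

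Your last paragraph already shows that the ``iteration'' stops at stage $\alpha=0$. So you can drop the split into Subcases 2a/2b, which 2a is subsumed by $\fG_{(0)}\neq0$. You can also drop the appeal to $\fG_{(\alpha)}=0$ for large $\alpha$, since reaching $0$ would by itself yield no quotient of any type. Finally, drop the citation of Theorem \ref{structure1}: finiteness of $\fP/\rad^f\fP$ comes from just-infiniteness, not from that theorem.
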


\begin{proof} By Lemma \ref{prepa3cases},
$\fG$ contains a characteristic ideal $\fm$ such that  $\fG/\fm$ is just-infinite. Even if it entails using $\fG/\fm$ instead of $\fG$, we can assume that that $\fm=0$. We will show that
$\fG$ contains a characteristic ideal $\fp$ 
of type A, B, or C.

If $\fp:=[\fG,\fG]$ is residually nilpotent the claim is proved. Thus by Lemma \ref{prepa3cases}(b)
we can assume that 
$\cD^\omega\fG$ is a finite 
codimension ideal which is perfect and just-infinite. Thus, using $\cD^\omega\fG$ instead of
$\fG$ if necessary, we can assume that $\fG$
is perfect.

In $\fG$ any ideal is zero or infinite dimensional,  therefore
$\cD^*\rad^f(\fG)=\cD^{\omega}\rad^f(\fG)$ and there are inclusions

$$\fG_{(0)}=\cD^{\omega}\rad^f(\fG)
\subset\rad^f(\fG)\subset\fG.$$

\begin{enumerate}
\item[(a)] If $\fG_{(0)}\neq 0$, all its simple quotient are infinite dimensional by Theorem A(a). In such a case, we deduce that $\fp:=\fG_{(0)}$ is simple, that is of type A. 

\item[(b)] If $\fG_{(0)}=0$ but $\rad^f(\fG)\neq 0$,
then $\fp:=\rad^f(\fG)$ is residually nilpotent by Lemma \ref{basic}(f), that is of type B.

\item[(c)] If $\fG_{(0)}=\rad^f(\fG)=0$, then
$\fp:=\fG$ is  perfect and residually semisimple. Moreover it does not satisfy any polynomial identity by
Theorem A(c), that is 
$\fp$ is of type $C$.
\end{enumerate}
\end{proof}

In consequence, we obtain

\begin{infcor} 
The Sierra Walton conjecture 
is true  if it is proved for all Lie algebras of types A, B and C.
\end{infcor}

\subsection{Could Sierra-Walton Conjecture be undecidable for some simple Lie algebras?}\label{undecidable}

Let $\alpha$ be a countable  ordinal. We ask

{\it Does there exists a  weakly Noetherian Lie Algebras 
$\fG$ which satisfy
\begin{enumerate}
\item[(a)]  $\G$ is perfect, $\dim\G=\aleph_0$ and 
$\fG/\rad(\fG)$ is a
Novikov-Krichever Lie algebra $\Vect_X$,
\item[(b)] the Lie algebra $\fS:=\fG_{(\alpha)}$ is simple, and
\item[(c)] the natural morphism $\fG\to\Der\,\fS$ is injective?
\end{enumerate}}

{\it The following comments}, which assume a positive answer to the previous question, {\it  are unproved statements:}
Condition (a) implies that $\fG=\fG_{(0)}$. 
Since Theorem A shows that $\fG_{(0)}$ has a very constrained structure, heuristic arguments suggests that we should assume 
that $X$ is a  curve a positive genus. It is likely that  the existence of a large  Lie algebra of outer derivations $\Out(\fS)$ matters in the outcome of the Sierra-Walton conjecture.
Indeed some intuitive arguments  suggest that for $\alpha$ nonrecursive  the left Noetherianity of $U(\fS)$ could be undecidable.

\part*{\hskip35mm Part B: Proof of Theorem B}

\section{$\Z^n$-graded Noetherian Lie algebras}
\label{Z-Noetherian}

\subsection{Reduction to $\Z$-graded Lie algebras}

First, we show that a weakly Noethrian
$\Z^n$-graded Lie algebra $\cL$ admits 
a $\Z$-grading, with homogenous components of finite dimension.

To simplify notations, we denote the $n$-uples
of integers as 
$${\bf m}=(m_1,\ldots,m_n)\in \Z^n.$$
Let $\oplus_{{\bf m}\in\Z^n}\cL_{\bf m}$
be a $\Z^n$-graded Lie algebra. 
Except stated otherwise, it is always assumed that the components $\cL_{\bf m}$ are finite dimensional.

For an additive map $\pi:\Z^n\to\Z$ we define the $\Z$-graded Lie algebra
$\pi_*\cL$ by

$$(\pi_*\cL)_n=\oplus_{\pi({\bf m})=n}
\,\cL_{\bf m}.$$

\noindent In general, the homogenous components
$(\pi_*\cL)_n$ are infinite dimensional.

\begin{lemma}\label{reduc} Let $\cL$ be a weakly 
Noetherian $\Z^n$-graded Lie algebra. Then
there is an additive map 
$\pi:\Z^n\to\Z$ such that

$$\dim (\pi_*\cL)_n<\infty, \,\forall n\in\Z.$$
\end{lemma}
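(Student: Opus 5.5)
We want a linear form $\pi:\Z^n\to\Z$ such that only finitely many ${\bf m}$ with $\cL_{\bf m}\neq 0$ lie on each level set $\pi^{-1}(k)$. The plan is to show that the support
$$S:=\{{\bf m}\in\Z^n\mid \cL_{\bf m}\neq 0\}$$
is "thin" in a controlled way, and then to choose $\pi$ generic with respect to this thinness. It is enough to find $\pi$ for which every fiber $\pi^{-1}(k)\cap S$ is finite, since each $\cL_{\bf m}$ is finite dimensional.

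\textbf{Step 1: abelian sections along lines.} For each ${\bf m}\in\Z^n$, consider the graded subalgebra
$$\fa_{\bf m}=\oplus_{j\in\Z\setminus\{0\}}\cL_{j{\bf m}}.$$
Its positive part $\fa^+_{\bf m}=\oplus_{j>0}\cL_{j{\bf m}}$ is a positively $\Z$-graded subalgebra with finite-dimensional pieces. By weak Noetherianity, $\fa^+_{\bf m}/[\fa^+_{\bf m},\fa^+_{\bf m}]$ is finite dimensional, so $\fa^+_{\bf m}$ is generated in bounded degree. This alone does not give finiteness, so it is not the key point.

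\textbf{Step 2 (the main obstacle): find a sublattice ${\bf v}^\perp$ that is nearly empty.} More useful is to look at kernels. For a primitive integral linear form $\lambda$, the subspace
$$\cL^{\lambda}_0=\oplus_{\lambda({\bf m})=0}\cL_{\bf m}$$
is a $\Z^{n}$-graded subalgebra supported on a rank $n-1$ sublattice. Induction on $n$ applies to it, since it is weakly Noetherian by Lemma \ref{basic}(a). The difficulty is to exclude that every hyperplane contains infinitely many support points.

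I would first try a genericity argument. Take $\pi$ to be the restriction to $\Z^n$ of a linear form $\Z^n\to\Z$ whose coefficients are "very independent", for instance $\pi({\bf m})=\sum m_iN^{i}$ with $N$ huge. Then argue by contradiction. Suppose $\pi^{-1}(k)\cap S$ is infinite for some $k$. Then for any two points ${\bf m},{\bf m}'$ in this set, ${\bf m}-{\bf m}'\in\Ker\pi$. I would extract infinitely many ${\bf m}_i$ in $\pi^{-1}(k)$ whose pairwise sums ${\bf m}_i+{\bf m}_j$ avoid $S$ (possible if $S\cap\pi^{-1}(2k)$ is sparser), making $\oplus_i\cL_{{\bf m}_i}$ an abelian subalgebra. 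Its infinite dimension contradicts Lemma \ref{basic}(b).

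The case $k=0$, and the need to control $S\cap\pi^{-1}(2k)$, is where this breaks down. So instead I would run the Ramsey-type extraction inside $\Ker\pi\cap S$ itself, which spans a sublattice of rank $\le n-1$. Among infinitely many points ${\bf m}_i$ of $\Ker\pi$, pass to a subsequence that goes to infinity in a fixed direction in $\Ker\pi\otimes\mathbb{R}$. Choosing $\pi$ irrational-like, a direction along which the support is infinite should force infinitely many points with pairwise sums outside the support, or a free subalgebra, giving a contradiction with Lemma \ref{basic}(b),(c).

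\textbf{Step 3: conclude.} Once each fiber is finite for a suitable $\pi$, each $(\pi_*\cL)_k$ is a finite sum of finite-dimensional spaces, which proves the lemma. I expect Step 2, namely controlling brackets within a fiber of $\pi$ in order to produce an infinite-dimensional abelian (or free) subalgebra, to be the real obstacle. If the direct extraction fails, I would fall back on induction on $n$ via $\cL^\lambda_0$ combined with Lemma \ref{closure}.
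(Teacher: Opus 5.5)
Your proposal has a genuine gap. Step 2, which you correctly identify as the heart of the matter, is only heuristic. You claim that one can extract infinitely many support points in a fiber with pairwise sums outside $S$, or that a direction of infinite support ``should force'' such points or a free subalgebra. Neither claim comes with a mechanism, and you give no argument controlling which sums ${\bf m}_i+{\bf m}_j$ lie in $S$.

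Moreover, for $k\neq 0$ no such control is needed. For \emph{any} additive $\pi$ and any $k\neq0$, the space $\cP=\oplus_{j\geq 1}(\pi_*\cL)_{jk}$ is a subalgebra, and $\cQ=\oplus_{j\geq 2}(\pi_*\cL)_{jk}$ is an ideal of $\cP$ containing $[\cP,\cP]$. So $(\pi_*\cL)_k\simeq\cP/\cQ$ is an abelian section, hence finite dimensional by Lemma \ref{basic}(b). Brackets are killed by passing to a quotient, not by choosing points whose sums avoid the support. The whole lemma therefore reduces to finding $\pi$ with $\Ker\pi\cap S=\{{\bf 0}\}$, and no genericity is needed for the nonzero fibers.

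For that remaining step, the missing idea is the observation you dismissed in Step 1, applied to half-spaces rather than to lines. The paper argues as follows.
Write degrees as $({\bf m},k)\in\Z^{n-1}\times\Z$ and set $\cL(k)=\oplus_{{\bf m}}\cL_{({\bf m},k)}$. By induction on $n$, applied to the weakly Noetherian $\Z^{n-1}$-graded subalgebra $\cL(0)$, choose $\mu:\Z^{n-1}\to\Z$ with $\Ker\mu\cap\Deg\cL(0)=\{{\bf 0}\}$.

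The subalgebras $\cL^{\pm}=\oplus_{\pm k>0}\cL(k)$ are graded by $\pm k\geq 1$. By induction on $|k|$, each is therefore generated by a $\Z^n$-graded complement of $[\cL^\pm,\cL^\pm]$. Since $V^\pm=\cL^\pm/[\cL^\pm,\cL^\pm]$ is finite dimensional, every element of $\Deg\cL^\pm$ is a sum of elements of the finite set $\Deg V^\pm$.

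Take $a\gg0$ so that $\pm(\mu({\bf m})+ak)>0$ on $\Deg V^\pm$. Then the form $\pi({\bf m},k)=\mu({\bf m})+ak$ is positive on $\Deg\cL^+$, negative on $\Deg\cL^-$, and equal to $\mu$ on $\Deg\cL(0)$. Hence $\Ker\pi\cap S=\{{\bf 0}\}$.

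Your fallback (induction on $n$ via a hyperplane subalgebra) points the right way, but Lemma \ref{closure} plays no role. It is the finite generation of $\cL^{\pm}$ that carries the induction hypothesis from the hyperplane to all of $\Z^n$. Iterating this argument shows that your candidate $\sum_i m_iN^i$ with $N\gg0$ does work, but only as a consequence of the argument, not of any genericity.
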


\begin{proof}
First we prove that there is an 
additive map $\pi:\Z^n\to\Z$ such that

$$(\pi_*\cL)_0=\cL_{\bf 0}.$$

The proof runs by induction over $n$.
We write the $n$-uples
of integers as $({\bf m},k)$, where
${\bf m}$ is a $n-1$-uple and $k$ is an integer.
Let 
$$\cL=\oplus_{k\in\Z}\,\cL(k)$$
be the decomposition of the Lie algebra $\cL$
defined by 
$\cL(k)=\oplus_{{\bf m}\in \Z^{n-1}} 
\cL_{({\bf m},k)}.$

By definition, $\cL(0)$
is a $\Z^{n-1}$-graded Lie algebra. Set

$$\Deg\,\cL(0):=
\{{\bf m}\in  \Z^{n-1}\mid 
\cL_{({\bf m},0)}\neq 0\}.$$

By induction
hypothesis, there is an additive map
$\mu:  \Z^{n-1}\to\Z$ such that
$\Ker\,\mu\cap \Deg\,\cL(0)=\{{\bf 0}\}$, where
${\bf 0}$ is the $n-1$-uple $(0,\ldots,0)$.

Next set 
$\cL^\pm:=\oplus_{\pm k>0}\, \cL(k)$. 
The vector spaces 

$$\cL^\pm,\,V^\pm:=\cL^\pm/[\cL^\pm,\cL^\pm]$$

\noindent inherit a $\Z^n$ grading. Set

\begin{align*}
\Deg\,\cL=&\{(\bf m,k)\in \Z^n
\mid \cL_{(\bf m,k)}\neq 0\}\\
\Deg\,\cL^\pm=&\{(\bf m,k)\in \Z^n
\mid \cL^\pm_{(\bf m,k)}\neq 0\}\\
\Deg\,V^\pm=&\{(\bf m,k)\in \Z^n
\mid V^\pm_{(\bf m,k)}\neq 0\}.
\end{align*}

For a positive integer $a$, we define the addive map $\pi:\Z^n\to\Z$ by 
$\pi(\bf m,k)=\mu({\bf m})+ak$. 
Since $V^\pm$ are abelian sections of $\cL$,  the sets
$\Deg\,V^\pm$ are finite. Therefore we can
 choose $a$ big enough such that

$$\pm\pi(\bf m,k)>0, \,\,\forall
(\bf m,k)\in \Deg\,V^\pm.$$

\noindent Since the elements of $\Deg\,\cL^\pm$
are sums of elements in $\Deg\,V^\pm$, we have

$$\pm\pi(\bf m,k)>0,\,\,\forall
(\bf m,k)\in \Deg\,\cL^\pm.$$

\noindent It follows that
$\Ker\pi\cap \Deg\,\cL=\{({\bf 0},0)\}$,
which proves that $(\pi_*\cL)_0=\cL_{\bf 0}$.

We now claim that each component 
$(\pi_*\cL)_n$ is finite dimensional.
It has been proved for $n=0$, so we can assume
$n\neq 0$. We observe that 
$(\pi_*\cL)_n$ is isomorphic to the abelian section 
$\cP/\cQ$, where

\begin{align*}
\cP&=\oplus_{k{\geq 1}}\, (\pi_*\cL)_{kn}\\
\cQ&=\oplus_{k{\geq 2}}\, (\pi_*\cL)_{kn}.
\end{align*}

\noindent Therefore $(\pi_*\cL)_n$ is finite dimensional for all integers $n$.
\end{proof}

\subsection{Weak Noetherianity implies strong Noetherianity} As a consequence of the previous lemma, we deduce:
\begin{cor}\label{strongNoeth}
Any weakly Noetherian $\Z^n$-graded Lie algebra
$\cL$ is strongly Noetherian.
\end{cor}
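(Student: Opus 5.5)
By Lemma \ref{reduc} we may replace the $\Z^n$-grading by a $\Z$-grading $\pi_*\cL$ with finite dimensional components and $(\pi_*\cL)_0=\cL_{\bf 0}$. Hence from now on $\cL=\oplus_{n\in\Z}\cL_n$ with $\dim\cL_n<\infty$. Let $\fb\subset\cL$ be an arbitrary subalgebra (not necessarily graded). We must show that $\fb$ is finitely generated, and the plan is to reduce this to the positively graded situation handled in Lemma \ref{Noeth-just}.

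\emph{Step 1: the half algebras.} Set $\cL^{+}=\oplus_{n\geq 1}\cL_n$ and $\cL^{-}=\oplus_{n\leq -1}\cL_n$. Each is a subalgebra of $\cL$, hence weakly Noetherian by Lemma \ref{basic}(a). The key claim is that every subalgebra of $\cL^{+}$ is finitely generated, and similarly for $\cL^-$.

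To prove the claim for a subalgebra $\fb\subset\cL^{+}$, filter $\fb$ by degree. Define $\fb(k)=\fb\cap\big(\oplus_{n\geq k}\cL_n\big)$; this gives a decreasing filtration. The associated graded Lie algebra $\fB=\oplus_k\fb(k)/\fb(k+1)$ embeds as a graded subalgebra of $\cL^+$ (via leading terms), so $\fB$ is weakly Noetherian. Since $\fB$ is positively graded, the finiteness of $\dim\fB/[\fB,\fB]$ shows that $\fB$ is generated by finitely many homogeneous elements. Lifting these to $\fb$, we must show the lifts generate $\fb$. Because the filtration is decreasing and the graded pieces are finite dimensional, the subalgebra $\fb'$ they generate satisfies $\fb'+\fb(k)=\fb$ for every $k$. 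Elements of $\fb$ are finite sums, so a leading-term induction on the lowest degree shows $\fb'=\fb$. One needs care here: the induction raises the degree and might not terminate. However, each element has bounded top degree, and $\fb(k)\cap(\oplus_{n\le N}\cL_n)$ is finite dimensional and vanishes for $k>N$, so the descent terminates.

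\emph{Step 2: general subalgebras.} For an arbitrary $\fb\subset\cL$, use instead the increasing-in-$|\text{degree}|$ two-sided picture. Consider $\fb_{\geq 0}$-type pieces; these are not subalgebras of $\fb$, so mimic Lemma \ref{Noeth-just}. Filter $\fb$ by top degree, $\fb(n)=\fb\cap\oplus_{m\le n}\cL_m$. The associated graded embeds into $\cL$ via top components, so its positive part $\fB_{\ge1}$ is a subalgebra of $\cL^+$. By Step 1, $\fB_{\ge1}$ is finitely generated by homogeneous elements, and these lift to $x_1,\dots,x_m\in\fb$. Then $\fb=\langle x_i\rangle+\fb(0)$, where $\fb(0)=\fb\cap\oplus_{m\le0}\cL_m$ is a subalgebra of $\cL^{-}\oplus\cL_0$. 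Applying the same argument to $\fb(0)$ with bottom degrees yields $\fb(0)=\langle y_j\rangle+(\fb\cap\cL_0)$, and $\cL_0$ is finite dimensional. Hence $\fb$ is finitely generated. The main obstacle is ensuring that the lifting arguments in Steps 1 and 2 genuinely span $\fb$, and not merely modulo a filtration piece. This is handled by the fact that the filtrations are exhaustive and finite on each element, exactly as in the proof of Lemma \ref{Noeth-just}.
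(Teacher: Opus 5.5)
Your Step 2 is correct and is essentially the paper's proof. Filtering by top degree, as in Lemma \ref{Noeth-just}, reduces $\fb$ to $\fb(0)\subset\cL_0\oplus\cL^-$. The mirror-image argument, filtering by bottom degree, then reduces $\fb(0)$ to the finite dimensional space $\fb\cap\cL_0$. The paper packages this as two applications of Lemma \ref{Noeth-just}: first to show that $\cL_0\oplus\cL^-$ is strongly Noetherian, then to $\cL$ with $\fg(0)=\cL_0\oplus\cL^-$.

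\textbf{The gap is in Step 1.} Your argument there for arbitrary, non-graded subalgebras of $\cL^+$ does not work, and the termination reasoning is false. With the decreasing lowest-degree filtration, the relation $\fb'+\fb(k)=\fb$ for all $k$ only says that $\fb'$ is dense in $\fb$. Subtracting a bracket-combination of the \emph{non-homogeneous} lifts $x_i$ can raise the top degree. So the successive remainders need not stay in $\oplus_{n\le N}\cL_n$, and the process can be infinite.

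Here is a counterexample. Take $\fb=\Witt_{\ge 1}(K)$, whose associated graded is generated by $L_1,L_2$, and fix $a\in K^*$. Then $x_1=L_1-a^{-1}L_2=t^2(1-t/a)\,\d/\d t$ and $x_2=L_2-a^{-1}L_3=t^3(1-t/a)\,\d/\d t$ are admissible lifts. Both vanish at $t=a$, so they generate a subalgebra of the proper subalgebra of vector fields vanishing at $a$. Writing $x_n:=L_n-a^{-1}L_{n+1}$, one only has $L_1=\sum_{n\ge1}a^{1-n}x_n$ as an infinite series.

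This does not damage the overall proof. Step 2 invokes Step 1 only for the graded subalgebra $\fB_{\ge1}$ and its mirror in $\cL^-$. There, all you need is the valid first half of Step 1: a positively graded Lie algebra with finite dimensional abelianization is generated by finitely many homogeneous elements. So either delete the general claim of Step 1, or prove it with the increasing top-degree filtration exactly as you do in Step 2.
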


\begin{proof} By Lemma \ref{reduc}, we can assume that $\cL$ is $\Z$-graded. Set 
$$\cL^\pm=\oplus_{\pm k>0}\cL_{\pm k}.$$

First, we apply Lemma \ref{Noeth-just} to the Lie algebra $\fg=\cL_0\oplus\cL^+$ endowed with the filtration $\fg(0)\subset\fg(1)\subset\ldots$, where $\fg(n)=\oplus_{k=0}^n\,\cL_k$, for any $n\geq 0$. Let $\fG$ be the associated graded Lie algebra. We observe that $\fG_{\geq 1}= \cL^+$ is weakly Noetherian and $\fg(0)=\cL_0$
is strongly Noetherian. Therefore by Lemma 
\ref{Noeth-just}, the Lie algebra $\fg=\cL_0\oplus\cL^+$ is strongly Noetherian.

Exchanging the role of $\cL^+$ and $\cL^-$, we conclude that $\cL_0\oplus\cL^-$ is strongly Noetherian as well.

 Next, we apply again Lemma \ref{Noeth-just} to the Lie algebra $\fg=\cL$ endowed with the filtration $\fg(0)\subset\fg(1)\subset\ldots$, where $\fg(n)=\oplus_{k\leq n}\,\,\cL_k$, for any $n\geq 0$. Let $\fG$ be the associated graded Lie algebra. We observe that $\fG_{\geq 1}= \cL^+$ is weakly Noetherian and we have just proved that 
 $\fg(0)=\cL_0\oplus \cL^-$
is strongly Noetherian. Therefore by Lemma 
\ref{Noeth-just}, the Lie algebra $\fg=\cL$ is strongly Noetherian, which completes the proof.
\end{proof}

We have just proved that, for a $\Z^n$-graded Lie algebra $\cL$,
weak Noetherianity and strong Noetherianity are equivalent. From now  on, we will simply say that 
such a Lie algebra $\cL$ is {\it Noetherian}.

\section{Simple $\Z^n$-graded Lie algebras.}\label{classification}

In this section, we classify
strictly Noetherian simple $\Z^n$-graded Lie algebras.
We deduce that, for any Noetherian 
$\Z^n$-graded Lie algebra $\cL$, we
have $\cL_{(1)}=0$.

\subsection{$L$-forms of $\Witt$ and $\Vir$}

Let $\fG_{\overline{K}}$ be a simple Lie algebra
over $\overline{K}$ and let $L$ be a finite extension of $K$.  A simple Lie algebra
$\fG_L$ with centroid $L$ is called an {\it $L$-form of 
$\fG_{\overline{K}}$} if

$$\G_L\otimes_{L}\,{\overline{K}}\simeq \fG_{\overline{K}}.$$ 

\noindent In general, $L$-forms of $\fG_{\overline{K}}$ are classified by the nonabelian Galois cohomology
$$H^1(\Gal(\overline{K}/L), \Aut(\fG_{\overline{K}})),$$
\noindent see \cite{Serre}.

Assume now that the simple Lie algebra $\fG_{\overline{K}}$ is a 
$\Z$-graded Lie algebra. A simple 
$\Z$-graded Lie algebra
$\G_L$ with centroid $L$ is called a {\it $\Z$-graded $L$-form of $\fG_{\overline{K}}$} if

$$\G_L\otimes_{L}\,{\overline{K}}\simeq \fG_{\overline{K}}
\,\,\text{ as a $\Z$-graded Lie algebra}.$$

Obviously,  $\Z$-graded $L$-forms are classified by 
$$H^1(\Gal(\overline{K}/L), \Aut_0(\fG_{\overline{K}}),$$ 

\noindent where
$\Aut_0(\fG_{\overline{K}})$ is the group of
grading-preserving automorphisms of $\fG_{\overline{K}}$.

\begin{lemma}\label{form} Let $L$ be a finite 
extension of $K$.
\begin{enumerate}
\item[(a)] Any $\Z$-graded $L$-form of $\Witt(\overline{K})$ is isomorphic to $\Witt(L)$, and
\item[(b)] Any $\Z$-graded $L$-form of $\Vir(\overline{K})$ is isomorphic to $\Vir(L)$.
\end{enumerate}
\end{lemma}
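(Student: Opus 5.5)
The plan is to compute the group $\Aut_0(\fG_{\overline{K}})$ of grading-preserving automorphisms and show that it is the multiplicative group $\overline{K}^*$, acting by $L_n\mapsto \lambda^n L_n$. Then Hilbert's Theorem 90, $H^1(\Gal(\overline{K}/L),\overline{K}^*)=0$, implies that there is only one $\Z$-graded $L$-form, and $\Witt(L)$ (resp. $\Vir(L)$) is evidently such a form.

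The computation proceeds in two steps. First, recall the $\Z$-grading: $\Witt(\overline K)$ has basis $L_n=t^{n+1}\d/\d t$, $n\geq -1$, and $\Vir(\overline K)$ has basis $L_n$, $n\in\Z$, with $[L_n,L_m]=(m-n)L_{n+m}$. Each homogeneous component is one-dimensional, so a graded automorphism $\phi$ acts by $\phi(L_n)=c_nL_n$ with $c_n\in\overline{K}^*$.

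Second, the bracket relations force $c_nc_m=c_{n+m}$ whenever $m\neq n$ and $L_{n+m}$ exists. Taking $n=0$, $m\neq 0$ gives $c_0=1$. For $\Vir$, put $\lambda=c_1$. The relation with $n=1$ gives $c_{m+1}=\lambda c_m$ for $m\neq1$, and $n=-1$ gives $c_{m-1}=\lambda^{-1}c_m$ for $m\neq -1$. These recursions yield $c_2=\lambda^2$ via $[L_{-1},L_3]$-type relations: use $c_{-1}c_3=c_2$ together with $c_3=c_2\lambda$, or more simply $c_{-1}=\lambda^{-1}$ and $[L_{-1},L_3]=4L_2$. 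In either case one gets $c_n=\lambda^n$ for all $n$. For $\Witt$, the same argument uses $L_{-1},L_0,L_1$ and $[L_{-1},L_{n+1}]=(n+2)L_n$, which gives $c_{n}=c_{-1}c_{n+1}$ and hence $c_n=\lambda^n$ with $\lambda=c_{-1}^{-1}$. Conversely, every $\lambda\in\overline{K}^*$ defines such an automorphism. The isomorphism $\Aut_0\simeq\overline{K}^*$ is $\Gal(\overline{K}/L)$-equivariant, because the basis $L_n$ is defined over $K$, so the Galois action on $\Aut_0$ is the natural action on $\lambda$.

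Hilbert 90 then gives
$$H^1(\Gal(\overline{K}/L),\Aut_0(\fG_{\overline K}))=H^1(\Gal(\overline{K}/L),\overline{K}^*)=0.$$
By the classification recalled just before the statement, every $\Z$-graded $L$-form is therefore isomorphic to the split one. The split form is $\Witt(L)$ (resp. $\Vir(L)$), since $\Witt(L)\otimes_L\overline{K}\simeq\Witt(\overline K)$ as graded algebras and its centroid is $L$.

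The main obstacle is justifying the Galois-descent classification itself. The components are finite-dimensional but the algebra is not, so one should work with continuous cocycles. The remedy is to note that any $\Z$-graded form $\fG_L$ has one-dimensional $L$-components $(\fG_L)_n$, so the descent can be done degree by degree. Concretely, choose $0\neq x_{1}\in(\fG_L)_{1}$ and $x_{-1}\in(\fG_L)_{-1}$ (or $x_{-1}$ and $x_{2}$ for $\Witt$), rescaled within $L$ so that they satisfy the Witt relations. This rescaling is possible because the structure constants all lie in $L$ and the relevant products are nonzero. The normalized elements generate a copy of $\Witt(L)$ or $\Vir(L)$ spanning every component, which gives a direct, cohomology-free proof that I would present as the actual argument.
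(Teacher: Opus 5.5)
Your Hilbert~90 argument is the paper's proof. The paper likewise notes that every grading-preserving automorphism has the form $L_n\mapsto \lambda^nL_n$, so $\Aut_0(\fG_{\overline K})=\overline K^*$, and then applies Hilbert~90. Your extra remarks only make explicit what the paper leaves implicit:
\begin{itemize}
\item the identification is Galois-equivariant because the $L_n$ are defined over $K$;
\item the cocycle is automatically continuous, since a graded isomorphism $\fG_L\otimes_L\overline K\to\fG_{\overline K}$ is determined by the images of finitely many generators and is therefore defined over a finite extension.
\end{itemize}
There is one slip in the automorphism computation. Deriving $c_2=\lambda^2$ from $c_{-1}c_3=c_2$ and $c_3=\lambda c_2$ is circular. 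The relation you need is $[L_{-1},L_2]=3L_1$, i.e.\ $c_{-1}c_2=c_1$, which is the case $m=2$ of your second recursion. Symmetrically, $[L_{-2},L_1]=3L_{-1}$ gives $c_{-2}=\lambda^{-2}$.

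The ``direct, cohomology-free'' argument, which you say you would present as the actual proof, fails as stated for part (b). In $\Vir(L)$ the elements $L_{-1},L_0,L_1$ span a copy of $\fsl_2(L)$, because $[L_{-1},L_1]=2L_0$ and $[L_0,L_{\pm1}]=\pm L_{\pm1}$. So $x_{\pm1}$, however normalized, generate a three-dimensional subalgebra and never reach degrees $|n|\geq 2$. Your choice of $x_{-1},x_2$ for $\Witt$ is fine, since $L_{-1},L_2$ do generate $\Der L[t]$.

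For $\Vir$ you must add generators, each normalized by a condition that can be met inside $L$. For example, require $\ad(x_{-1})^3x_2=6x_{-1}$, and then $\ad(x_1)^3x_{-2}=-6x_1$ with $x_1=\frac{1}{3}[x_{-1},x_2]$. Each condition is attainable because the left side lies in a one-dimensional $L$-space. The phrase ``rescaled so that they satisfy the Witt relations'' also needs content. The reason the generated algebra is then $\Witt(L)$ or $\Vir(L)$ is your $\Aut_0$ computation: after composing a graded $\overline K$-isomorphism with a suitable $L_n\mapsto\lambda^nL_n$, the normalized generators map to the standard ones.

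A cleaner way to run your idea is as follows.
\begin{itemize}
\item Fix nonzero elements $e_n\in(\fG_L)_n$.
\item Take a graded $\overline K$-isomorphism $\phi$, rescaled so that $\phi(e_1)=L_1$, and write $\phi(e_n)=d_nL_n$.
\item The brackets $[e_0,e_1]$, $[e_{-1},e_1]$, $[e_{-1},e_{n+1}]$ ($n\geq1$) and $[e_1,e_m]$ ($m\leq-2$) have structure constants in $L$. By induction they force $d_n\in L^*$ for all $n$.
\item Hence $\phi(\fG_L)=\Witt(L)$, resp.\ $\Vir(L)$.
\end{itemize}
This is Hilbert~90 for $\overline K^*$ unwound by hand. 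It is a legitimate, more elementary replacement for the paper's cohomological step, but only once the generator issue is fixed.
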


\begin{proof} 

Clearly any automorphism
in  $\Aut_0 (\Witt(\overline{K}))$
or in $\Aut_0 (\Vir(\overline{K}))$ is of the form

$$t^n\frac{t\d}{\d t}
\mapsto a^n t^n\frac{t\d}{\d t},$$

\noindent for some nonzero scalar $a$.
Therefore 

$$\Aut_0 (\Witt(\overline{K}))=\Aut_0 (\Vir(\overline{K}))=\overline{K}^*.$$

Since, by Hilbert 90 Theorem \cite{Hilbert90}, 

$$H^1(\Gal(\overline{K}/L), 
\overline{K}^*)=\{1\},$$

\noindent we deduce that 
$\Witt(\overline{K})$ and
$\Vir(\overline{K})$ admits only one $\Z$-graded $L$-form,
namely $\Witt(L)$ and $\Vir(L)$, which completes the proof.
\end{proof}

\subsection{Cartan subalgebras and splitting fields.}
Let $\cL=\oplus \cL_n$ be a $\Z$-graded Lie algebra. As usual, we assume that all
components $\cL_n$ are finite dimensional.
A subalgebra $\fh\subset \cL_0$ is called 
a {\it Cartan subalgebra} if $\fh$ is nilpotent 
and $N_{\cL_0}(\fh)=\fh$. 

For any $\alpha\in \fh^*$, set 

\begin{align*}
\mathcal{L}^{(\alpha)}&:=
\{x\in \mathcal{L}\mid \, (\ad(h)-\alpha(h))^N(x) = 0 \quad
\forall h\in\fh \text{ and } N\gg0\}\\
 \end{align*}
 
The spaces $\mathcal{L}^{(\alpha)}$ are called the {\it generalized $\fh$-weightspaces}. They admit a decomposition 

$$\mathcal{L}^{(\alpha)}=\oplus_{n\in\Z}\mathcal{L}^{((\alpha,n))},$$

\noindent where $\mathcal{L}^{((\alpha,n))}=\cL_n\cap \mathcal{L}^{(\alpha)}$. We say that the Cartan subalgebra $\fh$ is {\it split} if  $\cL$ is a direct sum of generalized weightspaces,
that is:
 
 $$\cL=\oplus_{\alpha\in\fh^*}\,\,\mathcal{L}^{(\alpha)}.$$

\begin{lemma}\label{split} Let $\cL$ be a finitely generated
$\Z$-graded Lie algebra, and let $\fh$ be a Cartan subalgebra of $\cL_0$.
\begin{enumerate} 
\item[(a)] There exists a finite extension $E$ of $K$ such that $\fh\otimes E$ is a split Cartan subalgebra of $\cL\otimes E$.
\item[(b)] Assume moreover that $\cL$ is simple with centroid $L$.  
Then 
\begin{enumerate}
\item[(b1)] $L$ is a finite extension of $K$,
\item[(b2)] $\fh$ is a $L$-vector subspace of $\cL_0$, and
\item[(b3)] there exists a finite extension $E$ of $L$ such that $\fh\otimes_L E$ is a split Cartan subalgebra of $\cL\otimes_L E$.
\end{enumerate}
\end{enumerate}
\end{lemma}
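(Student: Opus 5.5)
Since $\cL$ is finitely generated and $\Z$-graded, I would first choose homogeneous generators $x_1,\ldots,x_r$ of degrees $d_1,\ldots,d_r$, and set
$$N:=\max_i |d_i|.$$
The key observation is that $\cL$ is generated as an $\fh$-module by a finite-dimensional $\fh$-stable subspace in each degree, and that the generalized weights can be controlled by the finitely many components $\cL_n$ with $|n|\leq N$.

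Step 1: the split part. For each $n$ with $|n|\leq N$, the space $\cL_n$ is a finite-dimensional module over the nilpotent Lie algebra $\fh$ (it is stable because $\fh\subset\cL_0$). Choose a basis $h_1,\ldots,h_s$ of $\fh$ and let $E$ be the splitting field of the characteristic polynomials of the operators $\ad(h_j)|_{\cL_n}$, for all $j$ and all $|n|\leq N$. This is a finite extension of $K$.

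Over $E$, each $\cL_n\otimes E$ with $|n|\le N$ decomposes into generalized weight spaces for $\fh\otimes E$. This is the classical Lie-theorem argument for nilpotent Lie algebras: the operators $\ad(h)$ have all eigenvalues in $E$, and their generalized eigenspaces are $\fh$-stable.

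Since $\fh$ acts on $\cL$ by derivations, we have
$$[\cL^{(\alpha)},\cL^{(\beta)}]\subset\cL^{(\alpha+\beta)}.$$
This is the standard binomial (Leibniz) identity for $(\ad h-\alpha(h)-\beta(h))^N$. The generators lie in a sum of generalized weight spaces, and iterated brackets of them span $\cL\otimes E$. Hence
$$\cL\otimes E=\oplus_\alpha\cL^{(\alpha)}.$$

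It remains to check that $\fh\otimes E$ is still a Cartan subalgebra of $\cL_0\otimes E$. Nilpotency is preserved by base change. The normalizer commutes with base change because $N_{\cL_0}(\fh)$ is the kernel of a $K$-linear map $\cL_0\to\Hom(\fh,\cL_0/\fh)$. This gives (a).

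Step 2: the centroid. For (b1), the argument of Corollary \ref{simplecentroid} applies once we know that $\Cent(\cL)$ is finite-dimensional. Any centroid element is determined by its values on the generators, and it commutes with $\ad(\cL_0)$. The cleanest route is as follows: the grading derivation $D$ (multiplication by $n$ on $\cL_n$) commutes with $\theta\in\Cent(\cL)$. Indeed, $[D,\theta]$ lies in the centroid and shifts... Rather than rely on that, I would argue directly. Since $\cL$ is simple and perfect, $\theta$ is determined by $\theta(x_1)$ for a single nonzero homogeneous generator. Moreover $L$ is a field, so $L\cdot x_1$ is an $L$-line inside $\cL$.

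To force $L$ to be finite over $K$, I would show that $\theta$ preserves the grading. The idea is that the centroid of a simple graded algebra with finite-dimensional components is graded, so $L=\oplus L_k$. In that case $L$ is a graded field, hence $L=L_0$ or a Laurent-type ring. The latter is excluded because $L$ is a field with $L_kL_{-k}\subset L_0$ and each $L_0x_1\subset\cL_{d_1}$ is finite-dimensional. Making this "centroid is graded" step rigorous is the main obstacle. I would try to prove it via the identity $\theta[x,y]=[x,\theta y]$ together with the fact that $\cL$ is generated by finitely many homogeneous elements, decomposing $\theta$ into homogeneous components $\theta_k$ and observing that only finitely many $\theta_k$ are nonzero on the generators.

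Step 3: the $L$-structure. For (b2), elements of $L_0$ commute with $\ad\cL_0$ and preserve $\cL_0$. The space $L\fh$ is nilpotent because $[a h,b h']=ab[h,h']$. It normalizes $\fh$, so it is contained in $N(\fh)=\fh$, and hence $\fh$ is an $L$-subspace.

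Finally, for (b3), repeat Step 1 over $L$ instead of $K$, using the $L$-linear operators $\ad(h_j)$ on the finite-dimensional $L$-spaces $\cL_n$ with $|n|\leq N$. This produces a finite extension $E/L$, and the same propagation-by-generators argument yields the splitting of $\cL\otimes_L E$.
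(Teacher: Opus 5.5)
\textbf{Gap in (b2).} Your arguments for (a) and (b3) are the paper's: split a finite-dimensional generating $\fh$-module such as $\oplus_{|n|\le N}\cL_n$ over a finite $E$, then propagate using $[\cL^{(\alpha)},\cL^{(\beta)}]\subset\cL^{(\alpha+\beta)}$. Your check that $\fh\otimes E$ stays self-normalizing is a welcome addition.

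The problem is the step ``$L\fh$ normalizes $\fh$''. For $a\in L$ and $h,h'\in\fh$ you have $[ah,h']=a[h,h']\in L[\fh,\fh]$. Knowing that this lies in $\fh$ is already part of what you want to prove, so the step is circular unless $\fh$ is abelian. You prove that $L\fh$ is nilpotent but never use it, and that is exactly what closes the gap; this is the paper's argument. By (b1), $L=L_0$, so $L\fh\subset\cL_0$ and $N_{L\fh}(\fh)\subset N_{\cL_0}(\fh)=\fh$. A proper subalgebra of a nilpotent Lie algebra is strictly contained in its normalizer, so $L\fh=\fh$.

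Concretely, let $c$ be the nilpotency class of $\fh$ and argue by descending induction on $k$. Suppose $L\,\Cs^{k+1}\fh\subset\fh$. Then for $a\in L$ and $x\in\Cs^k\fh$ you get $[ax,\fh]=a[x,\fh]\subset L\,\Cs^{k+1}\fh\subset\fh$, hence $ax\in N_{\cL_0}(\fh)=\fh$. Starting from $\Cs^{c+1}\fh=0$, this gives $L\fh\subset\fh$. Part (b3) needs $\fh$ to be an $L$-subspace, so it depends on this repair.

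\textbf{On (b1).} Your route matches the paper's: the centroid is graded, it has no components of nonzero degree, and then $L=L_0$ is finite over $K$. Your plan for gradedness is sound, and it supplies a step the paper leaves implicit when it writes ``we can find $a\in L$ of degree $k\neq0$''.
\begin{itemize}
\item Comparing homogeneous components of $\theta[x,y]=[x,\theta y]$ for homogeneous $x,y$ shows that each component $\theta_k$ of $\theta$ is again in the centroid.
\item A centroid element of a simple algebra is determined by its value at one nonzero $x_1$, since its kernel is an ideal. Hence only finitely many $\theta_k$ are nonzero.
\end{itemize}

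However, the reasons you give for excluding the Laurent case do not work: neither $L_kL_{-k}\subset L_0$ nor the finite dimensionality of $L_0x_1$ rules it out. What does rule it out, as in the paper, is this. If $0\neq a\in L_k$ with $k\neq0$, then $1+a$ has no inverse in the direct sum $\oplus_k L_k$; compare the lowest- and highest-degree components of $(1+a)b$. This contradicts that $L$ is a field. After that, $[L:K]<\infty$ follows from the injectivity of $a\mapsto ax_1$ into $\cL_{d_1}$, as you say.
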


\begin{proof} We prove Assertion (a). By hypothesis, there is 
a finite dimensional $\fh$-module $M$ which generates $\cL$. There is a finite extension
$E$ of $K$ such that 
the $\fh\otimes E$-module $M\otimes E$ is a direct sum of generalized weightspaces. Set $\cL_E=\cL\otimes E$. Since
$$[\cL^{(\alpha)}, \cL^{(\beta)}]\subset \cL^{(\alpha+\beta)}
\,\,\forall \alpha,\beta\in\fh^*,$$

\noindent we concude that $\cL_E$ is a direct sum of generalized weight spaces, therefore $\fh\otimes E$ is a split Cartan subalgebra.

We now prove Assertion (b1). We claim that each
homogenous component $\cL_n$ is a $L$-vector space.
Otherwise, we can find an element $a\in L$
of degree $k\neq 0$, that is satisfying
 $$a\cL_n\subset \cL_{n+k},\,\,\forall n\in \Z.$$
It follows easily  that 
$\cL$ is a free $K[a,a^{-1}]$-module of finite rank,
which contradicts that
$1+a$ is  invertible. Therefore each  $\cL_n$ is a $L$-vector space, which implies that $L$ is a finite extension of $K$.

For the proof of Assertion (b2), write $\tilde{\fh}$ for the $L$-vector space generated by $\fh$. Clearly $\tilde{\fh}$ is a nilpotent Lie algebra. Since $N_{\tilde h}(\fh)=\fh$, we deduce that $\tilde{\fh}=\fh$, which means that $\fh$ is a $L$-vector space. 
Moreover Assertion (b3) follows from Assertion (a) applied to 
$\cL$, viewed as a
Lie algebra over $L$. 
\end{proof}

\subsection{Rank of simple $\Z$-graded Lie algebras}
We now assume that $\cL$ is a finitely generated $\Z$-graded simple Lie algebra with centroid $L$ and let $\fh\subset\cL_0$ be a Cartan subalgebra. 

For any extension $E$ of $L$, set
\begin{align*}
\cL_E&:=\cL\otimes_L E\\
\fh_E&:=\fh \otimes_L E
\end{align*}

The field $E$ is called a {\it splitting field} if
$\fh_E$ is a split Cartan subalgebra.
By Lemma \ref{split}(b2), there are finite extensions  $E$ of $L$
which are splitting fields. Given a splitting field,
the {\it set of roots}  of 
$\mathcal{L}_E$ is:

$$\Delta\coloneqq \{\widetilde{\alpha}\in \fh^*_E\times\Z\mid \cL_E^{\widetilde\alpha}\neq 0\},$$

\noindent where, for  $\widetilde\alpha=(\alpha,n)\fh^*_E\times\Z$,

$$\cL_E^{(\widetilde\alpha)}:=\cL_E^{(\alpha)}\cap (\cL_E)_n.$$

\noindent We will see that $\cL_0\neq 0$ and, with our nonstandard definition, $(0,0)$ is a root, called {\it the trivial root}.
A root $\widetilde{\alpha}=(\alpha,n)$ is called
{\it real} if $\alpha\neq 0$ and  {\it imaginary} otherwise. Let $\Delta_{\operatorname{re}}$ be the set of real roots.

The {\it root lattice} is the subgroup
$Q\subset \fh_E^*\times \Z$ generated by $\Delta$. 
As an abstract group, $Q$ is independent of the choice of a splitting field. We define the rank 
$\rk\cL$ of
$\cL$ as the rank of $Q$. When $\cL$ is infinite dimensional,
we have obviously $\rk\cL\geq 1$. Indeed  it is also true when $\cL$ is finite dimensional, see  Lemma \ref{imageCSA}.
Since  it is assumed that $\cL$ is finitely generated,
$\rk\cL$ is finite.

\smallskip
\begin{remark} Assume that
$\fh$ is a split Cartan subalgebra of
$\cL$. Any element $x\in\cL_0^{(\alpha)}$
with $\alpha\neq 0$ acts locally nilpotently on $\cL$, thus 

$$\exp\ad(x)$$ 

\noindent is a well defined automorphism in $\Aut_0(\cL)$.
The subgroup $\Elem(\cL)\subset\Aut_0(\cL)$ generated by those automorphisms is called the group of {\it elementary automorphism of $\cL$}. It follows from \cite{Bourbaki} that any two split Cartan subalgebras of ${\mathcal L}_0$ are conjugated by an elementary automorphism. 

Therefore $Q$ and  $\rk\cL$
are indeed independent of the choice of a split Cartan subalgebra of ${\mathcal L}_0$. Since we do not need
this fact, we will not provide more details.
\end{remark}

\subsection{Simple Lie algebras of rank one}
\label{rank1}

Let $\cL$ be a finitely generated infinite dimensional 
$\Z$-graded simple Lie algebra 
of rank one,  let $\fh$ be a Cartan subalgebra of $\cL_0$. Set $L=\Cent(\cL)$ and let $E$ be a splitting field.

By definition of the rank, there is 
$\tilde{\alpha}=(\alpha, 1)\in 
(\fh_E^*,\Z)$ such that

$$\cL_E=\oplus 
\cL_E^{(n\tilde{\alpha})}.$$

There are two cases
\begin{enumerate}
\item[(a)]$\alpha=0$, that is all roots are imaginary, and
\item[(b)]$\alpha\neq0$, that is all roots are reals, except the trivial root.
\end{enumerate}

The first case is impossible:

\begin{lemma}\cite[Lemma 22]{Mathieu-jalg} 
\label{V}
Let $\cL$ be a simple $\Z$-graded Lie algebra. If  $\cL_{\overline{K}}$ is finitely generated,  then $\Delta$ contains
at least on real root.
\end{lemma}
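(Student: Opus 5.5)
We argue by contradiction: assume every root is imaginary, so that after base change to $\overline{K}$ the Cartan subalgebra $\fh_{\overline K}\subset(\cL_{\overline K})_0$ acts on all of $\cL_{\overline K}$ with the single generalized weight $0$. The strategy is to show that this forces $\ad(\fh)$ to act nilpotently in a uniform way, and then to produce a nonzero proper ideal from this.

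\emph{Step 1: uniform nilpotence.} Since $\cL_{\overline K}$ is finitely generated, fix a finite dimensional graded generating subspace $M$. Every $h\in\fh$ has only the eigenvalue $0$ on each $\cL_n$, so $\ad(h)$ is locally nilpotent on $\cL$. Because $\ad(h)$ is a derivation, the subspaces $\Ker\ad(h)^k$ give a filtration compatible with the bracket. We then look at $\cL_0$. It is finite dimensional, and $\fh$ is a Cartan subalgebra of $\cL_0$ whose only generalized weight there is $0$. Hence $\cL_0=\cL_0^{(0)}=\fh$, i.e.\ $\cL_0$ is nilpotent.

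\emph{Step 2: a proper ideal.} Choose a minimal nonzero $\ad(\cL_0)$-submodule $V\subset\cL_n$ with $n\neq0$ and $\cL_n\neq0$; such $n$ exists because $\cL$ is infinite dimensional. Since $\cL_0$ is nilpotent and acts on $V$ with weight $0$, Engel's theorem gives $\cL_0$ acting trivially on some nonzero $v\in V$. Put $I$ for the ideal generated by $v$. Simplicity requires $I=\cL$, so in particular $I$ must contain $\cL_0$, and hence $\cL_0=\sum_k[\cL_{-k},\cL_k]$ restricted to the relevant pieces. The plan is to show that the trace/weight considerations of Step 1 force $[\cL_{-k},\cL_k]$ to act nilpotently with zero trace in a way that contradicts $\cL_0$ acting faithfully. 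Concretely, one applies the Jacobi identity to $[\cL_{-k},\cL_k]\subset\cL_0$ acting on a finite dimensional $\cL_0$-module $\cL_m$. In the real-root case such elements are used to produce $\fsl_2$-triples, and the absence of any $\alpha\neq0$ is meant to show that no $\fsl_2$-triple exists.

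\emph{Step 3: the contradiction.} If there are no $\fsl_2$-triples, the grading element must act nilpotently. The grading derivation $\d:x\mapsto nx$ on $\cL_n$ is an outer or inner derivation. If it is inner, say $\d=\ad(h_0)$ with $h_0\in\cL_0=\fh$, then $h_0$ has eigenvalue $n\neq0$ on $\cL_n$, i.e.\ a nonzero weight, giving a real root. If it is not inner, we pass to the semidirect product $K\d\ltimes\cL$, which is still finitely generated, and argue with the Cartan subalgebra $\fh+K\d$ there. The obstacle is transferring back to $\cL$ itself. For this I would use the growth/finite generation: since $\cL$ is finitely generated and all $\fh$-weights vanish, one gets $\dim\cL_n$ polynomially bounded and a nilpotent-by-finite structure. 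Then Lemma~\ref{basic}-type arguments (or a Tits–Kantor type construction) yield a graded ideal $\oplus_{|n|\ge N}\cL_n$-type object, contradicting simplicity.

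The main obstacle is Step 3, namely making rigorous that "all roots imaginary" excludes simplicity for a finitely generated infinite dimensional $\cL$. This is the genuine content of \cite[Lemma 22]{Mathieu-jalg}, and in practice I would reduce to that argument over $\overline K$.
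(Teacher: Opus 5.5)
\textbf{There is a genuine gap, and in the end your proposal defers to the reference it is meant to replace.} The paper itself gives no argument for this statement; it imports it from \cite[Lemma 22]{Mathieu-jalg}. Read as a self-contained proof, only two things in your proposal are established:
\begin{itemize}
\item The reduction is correct. If all roots are imaginary, the Fitting decomposition of $(\cL_E)_0$ relative to its Cartan subalgebra $\fh_E$ gives $\cL_0=\fh$. So $\cL_0$ is nilpotent and acts nilpotently on every $\cL_n$, and $\cL=\cL_0$ is impossible.
\item The inner case is correct. Let $D$ be the degree derivation, $D=n$ on $\cL_n$, and suppose $D=\ad(h_0)$. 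Then $\ad(Dh_0)=[D,\ad h_0]=0$, so $Dh_0=0$ because $\cL$ is centerless. Hence $h_0\in\cL_0=\fh$ acts by $n\neq 0$ on some $\cL_n\neq 0$, which is a real root. This also covers the finite dimensional case, where $\Out=0$.
\end{itemize}
The case $D\notin\ad(\cL)$ is where the lemma has its content, and it cannot be avoided. It occurs for finitely generated simple graded algebras: for instance $K[p,q]/K$ with the Poisson bracket, graded by polynomial degree minus $2$, has a degree derivation that is not of the form $\{h,-\}$.

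Nothing in your Steps 2--3 handles this outer case:
\begin{itemize}
\item An $\cL_0$-invariant vector $v$ that generates $\cL$ as an ideal is fully compatible with simplicity.
\item A nilpotent Lie algebra can act faithfully by nilpotent, trace-zero operators, so no contradiction with faithfulness arises.
\item ``No $\fsl_2$-triples'' merely restates the hypothesis.
\item The claim that the grading element ``must act nilpotently'' is false: $D$ is diagonalizable with eigenvalues $n$.
\item In $KD\ltimes\cL$, real roots appear only through $D$ and carry no information about how $\fh$ acts on $\cL$.
\item Finite generation does not imply polynomial growth, and the lemma has no growth hypothesis.
\item $\oplus_{|n|\geq N}\cL_n$ is not even a subalgebra, since $[\cL_N,\cL_{-N}]\subset\cL_0$.
\item Lemma \ref{basic} requires weak Noetherianity, which is not assumed here.
\item The ``uniform nilpotence'' announced in Step 1 is never obtained: nilpotence of $\ad(h)$ on each $\cL_n$ gives no bound independent of $n$.
\end{itemize}
What is missing is an actual mechanism that uses finite generation and simplicity, with $\cL_0$ nilpotent, acting nilpotently, and $D$ outer, to produce either a nonzero proper ideal or an element of $\cL_0$ with a nonzero eigenvalue. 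As it stands, your argument proves the lemma only when the grading derivation is inner.
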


We turn now our attention to the second case, that is $\alpha\neq 0$.
The following result is implicitly proved in
\cite{Mathieu-inv2}, and more detailled account is given in \cite[theorem 5.8]{AndMath}. The appearance of a free Lie algebra in  Assertion (b) is connected with the
Gabber-Kac Theorem \cite{GabberKac} for the contragredient Lie algebra $G(^{2\,2}_{2\,2})$.

\begin{lemma}\label{S}
Let $\cL$ be a 
finitely generated simple $\Z$-graded Lie algebra of rank one. 
Then 
\begin{enumerate}
\item[(a)] Either $\cL_{\overline{K}}$ is isomorphic to $\fsl_2(\overline{K})$, $\Witt(\overline{K})$, or $\Vir(\overline{K})$, 
\item[(b)] or $\cL_{\overline{K}}$ contains 
a nonabelian free Lie algebra.
\end{enumerate} 
\end{lemma}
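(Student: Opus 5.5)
Working over $\overline{K}$ and writing $\cL$ for $\cL_{\overline{K}}$, I would reduce to a rank-one $\fsl_2$-type analysis. Since the rank is one, there is $\tilde\alpha=(\alpha,1)$ with $\cL=\oplus_n \cL^{(n\tilde\alpha)}$. By Lemma \ref{V} there is a real root, so $\alpha\neq 0$. Hence $\cL_n=\cL^{(n\tilde\alpha)}$ is exactly the generalized $\fh$-weight space of weight $n\alpha$, and $\cL_0=\cL^{(0)}$ is the generalized null space of $\fh$.

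Next I would pick $h\in\fh$ with $\alpha(h)=1$; then $\ad(h)$ acts on $\cL_n$ with single eigenvalue $n$. The first goal is to show that $\ad(h)$ is semisimple and can be corrected by a central/derivation-type argument to become a grading element. Concretely, I would consider $\fh'=\fh\cap[\cL_0,\cL_0]$ together with the nilpotent parts, and use simplicity: the span $\bigoplus_n[\cL_n,\cL_{-n}]$ together with the $\cL_{\pm n}$ forms an ideal, hence equals $\cL$. Then I would choose nonzero $e\in\cL_1$ and $f\in\cL_{-1}$ (if $\cL_{\pm1}\neq0$; otherwise start from the minimal positive and negative degrees) with $[e,f]$ acting nontrivially. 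This produces either an $\fsl_2$-triple or a Heisenberg-type pair.

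The core of the argument is the analysis of the subalgebra $\cL'$ generated by the local part $\cL_{-1}\oplus\cL_0\oplus\cL_1$. This is where the contragredient / Kac–Moody style analysis enters, which is the content of \cite{Mathieu-inv2}. The dichotomy I expect is the following. Either some pair $e\in\cL_1$, $f\in\cL_{-1}$ generates, together with $\fh$, a copy of a rank-one contragredient algebra. If the Cartan matrix entry forces $\dim\cL_{\pm n}$ to remain at most one for large $n$, one classifies directly: if $\cL$ is finite dimensional one gets $\fsl_2$; otherwise, using the Witt-type brackets $[L_n,L_m]=(m-n)L_{n+m}$ on the generalized eigenvectors, one identifies $\Witt$ or $\Vir$ by its support (one-sided versus two-sided), as in the proof of Lemma \ref{form}. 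Or there are two independent elements $e_1,e_2$ of some $\cL_k$ whose brackets are not constrained. In that case I would exhibit the contragredient algebra $G(^{2\,2}_{2\,2})$ inside $\cL$, or an image of it. By the Gabber–Kac theorem \cite{GabberKac}, its positive part is free on the two generators, and simplicity prevents the map from having a kernel on the positive part. This gives the nonabelian free subalgebra of Assertion (b).

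The main obstacle is the growth dichotomy: proving that if all $\cL_n$ have dimension at most one in the appropriate sense, one lands in case (a), and otherwise a $G(^{2\,2}_{2\,2})$ sits injectively on its positive part. For this step I would first try to use the invariant $\fsl_2$-subalgebras generated by real root vectors. Decomposing $\cL$ under each such subalgebra, the $\fsl_2$ representation theory then bounds the Cartan integers and forces either one-dimensional components or the symmetric $2$-matrix.
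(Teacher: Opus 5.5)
The paper does not prove this lemma either: it imports it from \cite{Mathieu-inv2} and \cite[Theorem 5.8]{AndMath}, and only points to Gabber--Kac for $G(^{2\,2}_{2\,2})$. Your opening reductions are correct: Lemma \ref{V} gives $\alpha\neq0$, so $\cL_n$ is the generalized $\fh$-weight space of weight $n\alpha$. You also correctly sense why the matrix with all entries $2$ is the relevant one, since in rank one all root vectors of degree $k$ have the same $\fh$-weight $k\alpha$. But the step you call ``the main obstacle'' is the entire content of the lemma, and the proposal does not supply it.

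First, you never construct $e_1,e_2\in\cL_k$ and $f_1,f_2\in\cL_{-k}$ satisfying the contragredient relations \emph{exactly}, i.e.\ $[e_i,f_j]=\delta_{ij}h_i$, $[h_i,e_j]=2e_j$, $[h_i,f_j]=-2f_j$. The algebra $\fh$ acts on $\cL_{\pm k}$ only through generalized eigenvalues, and $\cL_0$ need not act by scalars when $\dim\cL_{\pm k}>1$. So an arbitrary $e\in\cL_1$, $f\in\cL_{-1}$ gives neither an $\fsl_2$-triple nor a Heisenberg pair, because $[[e,f],e]$ need not be a multiple of $e$. Making $[e_1,f_2]$ and $[e_2,f_1]$ actually vanish, and making each $h_i=[e_i,f_i]$ act on both $e_j$ by the scalar $2$, is precisely the hard point.

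Second, the complementary branch is not handled. One-dimensional components plus knowledge of the support do not identify $\cL$ with $\fsl_2(\overline{K})$, $\Witt(\overline{K})$ or $\Vir(\overline{K})$. The structure constants must be determined, which is a genuine classification problem. Lemma \ref{form} does not help here: it only classifies $L$-forms of an algebra already identified over $\overline{K}$.

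Third, your dichotomy is not made precise, let alone shown to be exhaustive, and the suggested $\fsl_2$-argument points the wrong way. If $[f_1,e_2]=0$ and $[h_1,e_2]=2e_2$, then $e_2$ is a lowest weight vector of positive weight, so $\ad(e_1)^n e_2\neq0$ for all $n$. The relevant $\fsl_2$-modules are not locally finite; already in $\Witt(\overline{K})$, $\ad(t^2\,\d/\d t)$ is not locally nilpotent. Hence finite-dimensional $\fsl_2$-theory cannot bound anything here.

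The sentence ``simplicity prevents the map from having a kernel on the positive part'' is also wrong if it refers to $\cL$: simplicity of the target gives no control on the kernel of a map \emph{into} it. Injectivity has to come from the source. Once the exact relations hold, the subalgebra they generate is a quotient of $\tilde{\fg}'(A)$, where $A=(a_{ij})$ with all $a_{ij}=2$. The kernel is a height-graded ideal meeting $\overline{K}h_1+\overline{K}h_2$ at most in the central line $\overline{K}(h_1-h_2)$. The usual minimal-height argument then puts its positive part inside $\mathfrak{r}_+$, where $\mathfrak{r}$ is the maximal ideal of $\tilde{\fg}(A)$ meeting the Cartan subalgebra trivially. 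Finally $\mathfrak{r}_+=0$ by \cite{GabberKac}, since $A$ is symmetric with positive off-diagonal entries and so there are no Serre relations.

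Your preliminary goals, namely that $\ad(h)$ is semisimple and can be corrected into a grading element, are also never established. The ideal $\sum_n(\cL_n+[\cL_n,\cL_{-n}])$ only yields $\cL_0=\sum_{n>0}[\cL_n,\cL_{-n}]$. As written, the proposal reduces to the same citation of \cite{Mathieu-inv2} that the paper makes, surrounded by heuristics, one of which is incorrect; it is not a proof of the lemma.
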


We deduce:

\begin{cor}\label{rk1} Let $\cL$ be a simple $\Z$-graded Lie algebra of rank one. If $\cL$ is 
strictly Noetherian
and infinite dimensional, then

$$\cL\simeq\Witt(L)\rm{\,\,or\,\,} 
\cL\simeq\Vir(L),$$

\noindent for some finite extension $L$ of $K$.
\end{cor}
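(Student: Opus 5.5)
The plan is to combine Lemma~\ref{S} with strict Noetherianity to identify $\cL_{\overline{K}}$, and then descend from $\overline{K}$ to $L$ using Lemma~\ref{form}.

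First, let $L=\Cent(\cL)$. Since $\cL$ is simple and weakly Noetherian, Corollary~\ref{simplecentroid} shows that $L$ is a finite extension of $K$ and that $\cL_{\overline{K}}=\cL\otimes_L\overline{K}$ is simple. By Corollary~\ref{strongNoeth}, $\cL$ is strongly Noetherian, so $\cL$ is finitely generated. Hence Lemma~\ref{S} applies, and either $\cL_{\overline{K}}$ is isomorphic to one of $\fsl_2(\overline{K})$, $\Witt(\overline{K})$, $\Vir(\overline{K})$, or $\cL_{\overline{K}}$ contains a nonabelian free Lie algebra.

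The main point is to exclude the free-subalgebra alternative. A nonabelian free Lie subalgebra of $\cL_{\overline{K}}$ is generated by two elements $x,y$. These lie in $\cL\otimes_L E$ for some finite extension $E$ of $L$, and therefore also in $\cL\otimes_K E$, which contains $\cL\otimes_L E$ as a quotient and a subalgebra summand. The subalgebra they generate in $\cL\otimes_L E$ is still free: any Lie relation holding there would also hold after extending scalars to $\overline{K}$, and $\cL\otimes_L E\subset\cL_{\overline{K}}$, where the subalgebra is free.

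Next, $\cL\otimes_L E$ is a section of $\cL\otimes_K E$. This holds because $L\otimes_K E$ is a product of fields, one of which is $E$, so $\cL\otimes_K E\simeq\cL\otimes_L(L\otimes_K E)$ decomposes with $\cL\otimes_L E$ as a direct factor. Since $\cL$ is strictly weakly Noetherian, $\cL\otimes_K E$ is weakly Noetherian. By Lemma~\ref{basic}(a) its section $\cL\otimes_L E$ is then weakly Noetherian, and by Lemma~\ref{basic}(c) it contains no nonabelian free subalgebra. This is a contradiction. The $\fsl_2$ case is excluded because $\cL$ is infinite dimensional.

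Finally, $\cL_{\overline{K}}\simeq\Witt(\overline{K})$ or $\Vir(\overline{K})$. I expect the remaining delicate step to be that this isomorphism must be one of $\Z$-graded algebras, possibly after regrading, so that $\cL$ becomes a $\Z$-graded $L$-form in the sense of Lemma~\ref{form}. The rank-one structure gives $\cL_E=\oplus_n\cL_E^{(n\tilde\alpha)}$, and the identification in Lemma~\ref{S} is made compatibly with this root grading. Up to rescaling the degree by $\pm1$, the grading should therefore match the standard one on $\Witt$ or $\Vir$. I would take this compatibility from the proof of Lemma~\ref{S} in \cite{AndMath}. Lemma~\ref{form} then gives $\cL\simeq\Witt(L)$ or $\Vir(L)$.
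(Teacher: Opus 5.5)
Your proposal is correct and follows essentially the same route as the paper: finite generation from Corollary~\ref{strongNoeth}, then the dichotomy of Lemma~\ref{S}, then exclusion of the free subalgebra via strict Noetherianity and Lemma~\ref{basic}(c), then exclusion of $\fsl_2$ by infinite dimensionality, and finally descent via Lemma~\ref{form}. The paper passes silently over the two points you treat with extra care (that $\cL\otimes_L E$ is a direct factor of $\cL\otimes_K E$, and that the isomorphism over $\overline{K}$ must respect the grading); the second can also be settled directly, because the grading derivation of $\cL_{\overline{K}}$ is inner by Theorem~\ref{Grabowski}(b), hence equal to $\ad$ of an element conjugate to a nonzero integer multiple of $t\,\d/\d t$.
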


\begin{proof} By By Corollary \ref{strongNoeth}, $\cL$ is finitely generated. 

If  $\cL_{\overline K}$ contains a noncommutative free Lie algebra, it contains a free
 Lie algebra over two generators $x$ and $y$.
 These generators belong to
$\cL_E$ for some finite extension
$E$ of $K$. By Lemma \ref{basic}(c), 
$\cL_E$ is not Noetherian, which contradicts that $\cL$ is strictly Noetherian.

Therefore, by Lemma \ref{S}, 
$\cL_{\overline{K}}$ is isomorphic to 
$\Witt(\overline{K})$, or $\Vir(\overline{K})$.
Thus by Lemma \ref{form}, we have

$$\cL\simeq\Witt(L)\rm{\,\,or\,\,} 
\cL\simeq\Vir(L),$$

\noindent where $L$ is the centroid of $\cL$.
\end{proof}

\subsection{Simple Lie $\overline{K}$-algebras of rank $\geq 2$ }

\medbreak
\noindent Let $\fG$ be a simple 
$\Z$-graded Lie algebra of rank $\geq 2$
over $\overline{K}$ and let $\fh$ be a Cartan subalgebra of $\fG_0$.
We now define two hypothetical properties, and we will see that
any such $\fG$ satisfies one of them.
By the end of the section it will be clear that these properties are mutually exclusive.

\medbreak
To start with, we define the notion of a string. Let $\widetilde\alpha\in Q$ and 
$\widetilde\beta\in \Delta$.
There are $a,b\in\Z\cup\{\pm\infty\}$
with $a<0<b$ such that

\begin{enumerate}[leftmargin=*,label=\rm{(\roman*)}]
\item $\widetilde\beta+k\widetilde\alpha$ belongs to $\Delta$ for any $k\in]a,b[$, but
	
\medbreak
\item neither 
$\widetilde\beta+a\widetilde\alpha$ nor
$\widetilde\beta+b\widetilde\alpha$ belongs to 
$\Delta$.
\end{enumerate}

\medbreak
\noindent The set 
$\{\widetilde\beta+k\widetilde\alpha\mid k\in]a,b[\}$ is called the 
\emph{$\widetilde\alpha$-string through $\widetilde\beta$}. Obviously, the string is infinite if
$a=-\infty$ or $b=\infty$.

\medbreak
The first hypothetical property  
$(\mathcal{H}_{\operatorname{re}})$ is the following: 
\begin{align}\tag{$\mathcal{H}_{\operatorname{re}}$} 
\begin{aligned}
&\text{There exist }\widetilde{\alpha}\in\Delta_{\operatorname{re}}, \quad
\widetilde{\beta}\in \Delta, \quad 
\widetilde{\beta} \notin \Q.\widetilde{\alpha},  \text{ such that}
\\ &\text{the $\widetilde\alpha$-string through $\widetilde\beta$ is infinite.}
\end{aligned}
\end{align}

\medbreak
The second hypothetical property is the notion of weak integrability.
Following \cite{Mathieu-inv2}, we say that $\mathcal{\G}$ is {\it weakly integrable} if,
for any $\widetilde{\alpha}\in\Delta_{\operatorname{re}}$, we have 
\begin{align*}
\bigcap_{n\geq 0}\,\Ad(\mathcal{\G}^{\widetilde\alpha})^n(\mathcal{\G})=0.
\end{align*}

The following result has been proved in
\cite{AndMath}, see Lemma 6.5.

\begin{lemma}\label{dicho} Let $\mathcal{\fG}$ be a simple $\Z$-graded $\overline{K}$-algebra of
rank $\geq 2$. Then either

\begin{enumerate}[leftmargin=*,label=\rm{(\alph*)}]
\item   $\mathcal{\fG}$ satisfies the hypothesis 
$(\mathcal{H}_{\operatorname{re}})$, or

\item  $\mathcal{\fG}$ is weakly integrable.
\end{enumerate}
\end{lemma}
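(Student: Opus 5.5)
The plan is to argue by contradiction: assume $\fG$ satisfies neither $(\mathcal{H}_{\operatorname{re}})$ nor weak integrability, and then build an invariant of $\fG$ that is forced to be finite (weakly integrable) in one case and infinite (an infinite string) in the other. Throughout, $\fh$ is a split Cartan subalgebra of $\fG_0$; this is automatic since $\overline{K}$ is algebraically closed and the components are finite dimensional.

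\textbf{Step 1: produce a suitable real root.} Since $\fG$ is not weakly integrable, there is a real root $\widetilde\alpha=(\alpha,n)\in\Delta_{\operatorname{re}}$ with
\[
\bigcap_{k\geq 0}\Ad(\fG^{\widetilde\alpha})^k(\fG)\neq 0 .
\]

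\textbf{Step 2: reduce to a single string.} The space $\fG$ decomposes into $\widetilde\alpha$-strings. For each class $\widetilde\beta+\Z\widetilde\alpha$, set
\[
\fG[\widetilde\beta]=\bigoplus_{k}\fG^{(\widetilde\beta+k\widetilde\alpha)} .
\]
Each $\fG[\widetilde\beta]$ is stable under $\ad\fG^{\widetilde\alpha}$, and $\Ad(\fG^{\widetilde\alpha})^k(\fG)$ is the direct sum of the pieces $\Ad(\fG^{\widetilde\alpha})^k(\fG[\widetilde\beta])$. A nonzero element of the intersection therefore has a nonzero component in some $\fG[\widetilde\beta]$, and that component again lies in every $\Ad(\fG^{\widetilde\alpha})^k(\fG[\widetilde\beta])$.

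Since $\ad(x)$ for $x\in\fG^{\widetilde\alpha}$ shifts the root by $\widetilde\alpha$, an element of root $\widetilde\gamma$ lying in $\Ad(\fG^{\widetilde\alpha})^k(\fG)$ requires $\widetilde\gamma-k\widetilde\alpha\in\Delta$. If this holds for all $k$, the $\widetilde\alpha$-string through $\widetilde\gamma$ is infinite in the negative direction.

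\textbf{Step 3: rule out $\widetilde\gamma$ proportional to $\widetilde\alpha$.} If $\widetilde\gamma\notin\Q\widetilde\alpha$, we have obtained $(\mathcal{H}_{\operatorname{re}})$ and are done. So the remaining case is $\widetilde\gamma\in\Q\widetilde\alpha$, meaning the whole line $\Q\widetilde\alpha\cap\Delta$ is infinite in one direction.

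In that case, consider the subalgebra
\[
\fG_{\Q\widetilde\alpha}=\bigoplus_{\widetilde\delta\in\Q\widetilde\alpha}\fG^{(\widetilde\delta)} ,
\]
which is a rank-one graded algebra. Since $\fG$ is simple of rank $\geq 2$, there exists a root $\widetilde\beta\notin\Q\widetilde\alpha$. Pick a nonzero $y\in\fG^{(\widetilde\beta)}$ and use simplicity to bracket $y$ with the infinitely many elements of the line. The aim is to show that infinitely many roots $\widetilde\beta+k\widetilde\alpha$ occur, which again gives $(\mathcal{H}_{\operatorname{re}})$.

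\textbf{Main obstacle.} This last propagation is the step I expect to be hardest. The naive argument fails because brackets can vanish. My first attempt is to use the $\fsl_2$-type triple (or Heisenberg triple) generated by $\fG^{(\widetilde\alpha)}$ and $\fG^{(-\widetilde\alpha)}$, together with the module $\fG[\widetilde\beta]$. For that module, local nilpotence on every string transverse to the line should be incompatible with the line itself being an infinite string, by an argument like Kac's integrability: integrability would force finiteness of the $\alpha$-weights on $\fG_{\Q\widetilde\alpha}$, contradicting $\alpha\neq 0$ and the presence of infinitely many weights $k\alpha$. If this tracking becomes unmanageable, I would fall back on invoking the corresponding result of \cite{AndMath} (Lemma 6.5), as the paper does.
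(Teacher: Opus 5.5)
\textbf{There is a genuine gap.} The paper gives no argument of its own; it only cites \cite{AndMath}, Lemma~6.5. If you fall back on that citation, your proposal establishes nothing beyond the easy reduction.

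That reduction is sound. Since $\ad\fG^{(\widetilde\alpha)}$ shifts roots by $\widetilde\alpha$, the space $I:=\bigcap_{k\geq0}\ad(\fG^{\widetilde\alpha})^k(\fG)$ is the sum of its intersections with the root spaces. A nonzero $z\in I$ of root $\widetilde\gamma$ then forces $\widetilde\gamma-k\widetilde\alpha\in\Delta$ for all $k\geq0$. But the case $\widetilde\gamma\in\Q\widetilde\alpha$ is the entire content of the lemma, and you leave it open. The route you sketch would not close it:
\begin{itemize}
\item No $\fsl_2$- or Heisenberg-triple is attached to a real root in this generality, because $-\widetilde\alpha$ need not be a root and root spaces need not be one-dimensional. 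For example, in $\Der\,\overline{K}[x_1,x_2]$ with its standard grading and diagonal Cartan subalgebra, the root of $x_1^3\,\partial/\partial x_1$ has a two-dimensional root space and no opposite root. So the integrability device has nothing to attach to.
\item Your stated target, infinitely many roots $\widetilde\beta+k\widetilde\alpha$, would not give $(\mathcal{H}_{\operatorname{re}})$, since a string must be unbroken.
\item Most importantly, by keeping only ``the line $\Q\widetilde\alpha\cap\Delta$ is infinite'' you discard the one fact that makes a proof possible, namely the nonzero vector $z\in I$.
\end{itemize}

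The missing idea is to argue the contrapositive and use $I$ directly. Fix a real root $\widetilde\alpha$ and assume every $\widetilde\alpha$-string through a root $\widetilde\beta\notin\Q\widetilde\alpha$ is finite. By your Step 2, $I$ is then contained in $\bigoplus_{\widetilde\delta\in\Q\widetilde\alpha}\fG^{(\widetilde\delta)}$. Take $w\in\fG^{(\widetilde\beta)}$ with $\widetilde\beta\notin\Q\widetilde\alpha$ and $\widetilde\beta+b\widetilde\alpha\notin\Delta$ for some $b\geq1$. For $x_1,\ldots,x_k\in\fG^{(\widetilde\alpha)}$ and $y\in\fG$, the Leibniz rule gives
\[
[w,\ad(x_1)\cdots\ad(x_k)\,y]=\sum_{S\subseteq\{1,\ldots,k\}}\ad(x_{t_1})\cdots\ad(x_{t_{k-|S|}})\,[w_S,y].
\]
Here $t_1<\cdots<t_{k-|S|}$ enumerate the complement of $S$, and $w_S$ is $w$ bracketed successively on the right by $x_{s_1},\ldots,x_{s_j}$, where $S=\{s_1<\cdots<s_j\}$. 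Since $w_S\in\fG^{(\widetilde\beta+|S|\widetilde\alpha)}$, it vanishes when $|S|\geq b$. Hence $[w,\ad(\fG^{\widetilde\alpha})^k(\fG)]\subset\ad(\fG^{\widetilde\alpha})^{k-b+1}(\fG)$, and therefore $[w,I]\subset I$. But $[w,I]$ lies in root spaces off the line, where $I$ has no component, so $[w,I]=0$.

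Next, the root spaces off the line span a subspace normalized by the root spaces on the line, so they generate an ideal. This ideal is nonzero because $\rk\fG\geq2$, so it equals $\fG$ by simplicity. Thus $C_\fG(I)=\fG$, so $I$ is central and hence $I=0$. If $(\mathcal{H}_{\operatorname{re}})$ fails, this applies to every real root, so $\fG$ is weakly integrable.
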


The following result has been  proved in \cite{Mathieu-inv1}
and  \cite{Mathieu-inv2}, see Theorem 4.

\begin{theorem}\label{integrability}  If $\mathcal{\fG}$ is weakly integrable, then $\fG$ is an affine Lie algebra.
\end{theorem}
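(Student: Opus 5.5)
The plan is to follow the strategy of \cite{Mathieu-inv1, Mathieu-inv2}: use weak integrability to build enough $\fsl_2$-triples and a Weyl group, show that $\fG$ is a quotient of a Kac--Moody algebra, and then use the finite-dimensionality of the graded components to force the Cartan matrix to be of affine type.

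\emph{Step 1: local nilpotence and $\fsl_2$-triples.} Fix a real root $\widetilde\alpha=(\alpha,n)\in\Delta_{\operatorname{re}}$. For $x\in\fG^{\widetilde\alpha}$, I first show that $\ad(x)$ is locally nilpotent on $\fG$.
\begin{itemize}
\item The condition $\bigcap_{n\geq 0}\Ad(\fG^{\widetilde\alpha})^n(\fG)=0$ means $\fG$ has no nonzero subspace stable under $\fG^{\widetilde\alpha}$ on which it acts ``surjectively''.
\item Each $\widetilde\alpha$-string $\widetilde\beta+k\widetilde\alpha$ lives in distinct finite-dimensional homogeneous pieces.
\item Since $\widetilde\alpha$ is real, the $\alpha$-weights along a string are pairwise distinct. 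Combining this with simplicity of $\fG$, I expect to get that each string is finite in the positive direction.
\end{itemize}
Then, as in Kac's theory of integrable modules, I pair $\fG^{\widetilde\alpha}$ with $\fG^{-\widetilde\alpha}$; the latter is nonzero, by simplicity and by an argument symmetric to the one above. This produces an $\fsl_2$-triple $(e,h,f)$ such that $\fG$ is an integrable $\fsl_2$-module. Consequently $r_{\widetilde\alpha}=\exp\ad e\,\exp\ad(-f)\,\exp\ad e$ is a well-defined automorphism of $\fG$ acting on $\Delta$ by a reflection $s_{\widetilde\alpha}$.

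\emph{Step 2: root system and Kac--Moody presentation.} Let $W$ be the group generated by the reflections $s_{\widetilde\alpha}$; it permutes $\Delta$ and preserves root multiplicities. Since the rank is $\geq 2$ and the real roots span a lattice of rank $\geq 2$ (using Lemma~\ref{V} and simplicity), I choose a generic linear functional and a corresponding base of simple real roots $\alpha_1,\ldots,\alpha_\ell$. The numbers $a_{ij}=\langle\alpha_j,h_i\rangle$ then form a generalized Cartan matrix. The subalgebra $\fG'$ generated by the $e_i,f_i$ is a quotient of the Kac--Moody algebra $\fG(A)$. By the Gabber--Kac theorem \cite{GabberKac}, and since $\fG$ is simple, $\fG'$ is the simple quotient $\fG(A)/\text{(center)}$. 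I then show $\fG'=\fG$: the ideal of $\fG$ generated by real root spaces is all of $\fG$ by simplicity, and every real root is $W$-conjugate to a simple root.

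\emph{Step 3: growth.} Because $\fG$ is $\Z$-graded with finite-dimensional components, and the $\Z$-degree is a linear function on $Q$, the algebra has polynomial growth, in fact linear growth in the $\Z$-degree. A Kac--Moody algebra with indecomposable $A$ has finite growth only if $A$ is of finite or affine type; hyperbolic and indefinite types have exponential growth. Since $\fG$ is infinite-dimensional (rank $\geq 2$ with an infinite string direction), $A$ is affine, so $\fG$ is a (twisted or untwisted) loop algebra modulo center, i.e.\ an affine Lie algebra. Any extra derivation part is excluded because the grading already makes all components finite-dimensional.

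\emph{Main obstacle.} I expect the hardest step to be Step 1 together with the choice of base in Step 2. There I must prove that weak integrability gives genuine local nilpotence and nonzero opposite root spaces without already knowing a triangular decomposition. I must also verify that the $W$-orbits of finitely many simple roots exhaust $\Delta_{\operatorname{re}}$, which requires a careful argument that the real roots form a locally finite reflection system in $Q\otimes\Q$.
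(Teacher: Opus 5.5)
The paper does not reprove this statement; it cites \cite{Mathieu-inv1,Mathieu-inv2}. The step of yours that fails outright is Step 3. Finite-dimensionality of the homogeneous components gives no bound on how fast $\dim\fG_n$ grows. A free Lie algebra on finitely many generators of degree one, or a Kac--Moody algebra of indefinite type with its principal gradation, has all components finite-dimensional and exponential growth. So ``the algebra has polynomial growth, in fact linear growth'' does not follow from the hypotheses. Your exclusion of indefinite (e.g.\ hyperbolic) Cartan matrices therefore has no support, and nothing else in your outline rules them out.

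You need a genuinely additional input at this point. One option is an explicit growth hypothesis; the cited papers \cite{Mathieu-inv1,Mathieu-inv2} work with graded Lie algebras of finite growth. The other is an argument that really uses weak integrability for \emph{every} root $(\alpha,n)$ with $\alpha\neq 0$ on $\fh$. That class is much larger than the Kac--Moody real roots: for an indefinite Cartan matrix with $\det A\neq 0$ in the principal gradation, $\fG_0=\fh$, and every nonzero root is real in the paper's sense, Kac--Moody imaginary ones included. The same conflation affects Step 2. In Kac--Moody theory only Kac--Moody real roots are $W$-conjugate to simple roots, so ``every real root is $W$-conjugate to a simple root'' cannot be imported from there; it would have to be derived from weak integrability itself.

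Step 1, which carries the actual content, is not proved either: ``combining this with simplicity, I expect'' is a placeholder. Weak integrability only says that, in each fixed root space, the images of the products $\ad(x_1)\cdots\ad(x_n)$ with $x_i\in\fG^{\widetilde\alpha}$, applied to $\fG^{\widetilde\gamma-n\widetilde\alpha}$, vanish for $n\gg 0$. This is vacuous along $\widetilde\alpha$-strings that are bounded below. It also gives no direct control over $\ad(x)^n y$ for a fixed $y$, which is what local nilpotence requires. Passing from weak integrability to integrability is exactly the nontrivial theorem of \cite{Mathieu-inv2} recalled in the remark following the statement.

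Two further points in Steps 1--2 also need proof. An $\fsl_2$-triple requires some $h\in[\fG^{\widetilde\alpha},\fG^{-\widetilde\alpha}]$ with $\alpha(h)\neq 0$, which must be shown without an invariant form. The appeal to \cite{GabberKac} presupposes a symmetrizable Cartan matrix, which you have not established. As written, the two load-bearing steps of the proposal (local nilpotence in Step 1, exclusion of indefinite types in Step 3) are respectively missing and incorrect.
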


\bigskip{\it Remark} The Lie algebra 
$\fG$ is called {\it integrable} if
for any real root $\tilde{\alpha}$, 
$ad(\fG^{(\tilde\alpha)})$ is locally nilpotent.
This definition of integrability is more natural, and Theorem \ref{integrability} 
is proved in \cite{Mathieu-inv1} under the condition that $\fG$ is integrable.

However the notion of weak integrability is more
adapted to the proofs, and it is proved
in \cite{Mathieu-inv2} that weak integrability implies integrability.

\subsection{Simple Lie algebras of rank $\geq 2$ over the nonalgebraically closed field $K$}

The previous subsection did involve
simple $\Z$-graded Lie algebras of rank $\geq 2$
over $\overline{K}$. 

We now investigate
$\Z$-graded Lie algebras $\cL$ of rank $\geq 2$
over $K$. We also assume that $\cL$ is simple as a non-graded Lie algebra.

\begin{lemma}\label{Nork2} No simple finitely generated simple Lie algebra 
$\cL$ of rank $\geq 2$ is
strictly Noetherian.
\end{lemma}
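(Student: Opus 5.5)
We argue by contradiction: assume that $\cL$ is a finitely generated simple $\Z$-graded Lie algebra of rank $\geq 2$ which is strictly Noetherian, with centroid $L$. By Lemma \ref{split}(b), $L$ is a finite extension of $K$, and there is a finite extension $E$ of $L$ which is a splitting field for a Cartan subalgebra $\fh\subset\cL_0$. By Corollary \ref{simplecentroid}(b), $\fG:=\cL\otimes_L\overline{K}$ is a simple $\Z$-graded Lie algebra over $\overline{K}$, with the same root lattice, hence of rank $\geq 2$. Lemma \ref{dicho} then splits the argument into two cases: either $\fG$ satisfies $(\mathcal{H}_{\operatorname{re}})$, or $\fG$ is weakly integrable.

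\emph{The weakly integrable case.} By Theorem \ref{integrability}, $\fG$ is an affine Lie algebra, that is, a loop-type algebra $\fs\otimes_{\overline K}A$ (possibly twisted), with $A$ infinite dimensional. Such an algebra has infinitely many maximal ideals $\fs\otimes{\bf m}_x$ with quotients of the same dimension $d=\dim\fs$, one for each closed point $x$ of $\Spec A$. We want to pull this back to $\cL_E=\cL\otimes_L E$ for a finite extension $E$ over which $\fs$, the grading and the coordinate ring $A$ are all defined. This needs care: finite generation of $\cL$ ensures that finitely many structure constants suffice, so such an $E$ exists. Then $\cL_E$ has infinitely many maximal ideals with simple quotients of dimension $d$ over their centroids, which contradicts Theorem \ref{structure1} applied to the weakly Noetherian algebra $\cL_E$. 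An alternative is to apply Corollary \ref{dense2} through a completion at a point, or to note that $\cL_E$ is residually semisimple and use Corollary \ref{noPI}. However, affine algebras satisfy identities coming from $\fs$, so Theorem \ref{structure1} is the cleaner route.

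\emph{The case $(\mathcal{H}_{\operatorname{re}})$.} Here there is a real root $\widetilde\alpha$ and a root $\widetilde\beta\notin\Q\widetilde\alpha$ such that the $\widetilde\alpha$-string through $\widetilde\beta$ is infinite. The plan is to produce an infinite dimensional abelian section, or a free subalgebra, over a finite extension $E$ over which the relevant root vectors are defined. Consider the subalgebra $\fm$ generated by the root spaces $\cL_E^{(\widetilde\beta+k\widetilde\alpha)}$ along the infinite half-string, say for $k\geq 0$, together with $\cL_E^{(\widetilde\alpha)}$, or suitable positive-degree variants of these. Since $\widetilde\beta$ and $\widetilde\alpha$ are independent, a grading argument shows that $\fm/[\fm,\fm]$ surjects onto infinitely many root spaces. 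Indeed, $[\fm,\fm]$ has components only along the roots $2\widetilde\beta+k\widetilde\alpha$, $\widetilde\beta+k\widetilde\alpha$ coming from brackets with $\widetilde\alpha$, and so on. To make this work, we take the subalgebra generated by the root spaces of roots $\widetilde\beta+k\widetilde\alpha$ alone: brackets then land in $2\widetilde\beta+\Z\widetilde\alpha$, so each nonzero $\widetilde\beta+k\widetilde\alpha$ component survives in the abelianization. This contradicts weak Noetherianity of $\cL_E$, which follows from strict Noetherianity by Corollary \ref{logical}.

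The main obstacle is the weakly integrable case, namely descending the affine structure from $\overline K$ to a finite extension while keeping infinitely many distinct ideals of bounded dimension over their centroids. Remark \ref{rk} suggests this can be handled by working over $E$ with closed points of a fixed degree.
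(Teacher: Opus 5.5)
\paragraph{The $(\mathcal{H}_{\operatorname{re}})$ case.} Your argument here is essentially the paper's. The paper uses the abelian section $\fp/\fq$, where $\fp=\oplus_{k\geq 1}\oplus_{n}\cL_E^{(k\widetilde\beta+n\widetilde\alpha)}$ and $\fq$ is the part with $k\geq 2$. Your subalgebra $\fm$, generated by the string, lies in $\fp$ and satisfies $[\fm,\fm]\subset\fq$. In both versions, the $\Q$-linear independence of $\widetilde\alpha$ and $\widetilde\beta$ is what lets the infinitely many nonzero spaces $\cL_E^{(\widetilde\beta+n\widetilde\alpha)}$ survive in an abelian section.

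\paragraph{The weakly integrable case.} Here your write-up has a gap. The descent of the affine structure to a finite extension $E$ is only asserted, and you yourself flag it as the main obstacle. The remark that ``finitely many structure constants suffice'' does not produce an isomorphism $\cL_E\simeq\fs_E\otimes_E A_E$.

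The step is unnecessary, however, because this case is vacuous by a fact you already wrote down. By Corollary \ref{simplecentroid}(b), $\fG=\cL\otimes_L\overline{K}$ is simple. An affine algebra $\fs\otimes A$ with $\dim A=\infty$, twisted or not, has proper nonzero ideals such as $\fs\otimes{\bf m}_x$, and its centroid is infinite dimensional. Your own sentence ``such an algebra has infinitely many maximal ideals'' therefore already contradicts the simplicity of $\fG$. This is exactly how the paper disposes of the alternative, in one line. Lemma \ref{dicho} then forces $(\mathcal{H}_{\operatorname{re}})$, and the abelian-section argument finishes the proof.

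Neither Theorem \ref{structure1} nor any descent is needed. Incidentally, your reason for dismissing the Corollary \ref{noPI} route is backwards: satisfying the identities of $\fs$ is precisely what would contradict that corollary.
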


\begin{proof} Let $\cL$ be a simple finitely generated simple Lie algebra of rank $\geq 2$ and let $\fh$ be a Cartan subalgebra of $\cL_0$. 

Any affine Lie algebra is not simple as an abstract
Lie algebra, and its centroid is infinite dimensional,
therefore $\cL_{\overline K}$ is not affine.
 By Lemma \ref{dicho}, we conclude that $\cL_{\overline K}$  satisfies hypothesis 
$(\mathcal{H}_{\operatorname{re}})$.

By Lemma \ref{split},  there exists a finite extension $E$ of $L$ such that 
$\fh_E$ is split. We observe that
$\cL_E$ admits the same root system as 
$\cL_{\overline K}$. Therefore there is a real root
$\tilde{\alpha}\in \Delta_{\operatorname{re}}$
and another  root 
$\tilde{\beta}\not\in\Q\alpha$ such that the $\tilde{\alpha}$-string going through $\tilde{\beta}$ is infinite.
Set 

\begin{align*}
\fp=&\oplus_{k=1}^\infty
\oplus_{n\in\Z}\, \cL_E^{k\tilde{\beta}+n\tilde{\alpha}}\\
\fq=&\oplus_{k=2}^\infty
\oplus_{n\in\Z}\, \cL_E^{k\tilde{\beta}+n\tilde{\alpha}}
\end{align*}

\noindent Obviously,
$\fp/\fq\simeq  
\oplus_{n\in\Z}\, \cL_E^{\tilde{\beta}+n\tilde{\alpha}}$ is an infinite dimensional abelian section of $\cL_E$. Thus
$\cL$ is not strictly Noetherian.
\end{proof}

\subsection{Strictly Noetherian simple $\Z^n$-graded Lie algebras}

As a conclusion of the  section, we obtain

\begin{theorem}\label{simplequot} Let $\cL$ be an
infinite-dimensional $\Z^n$-graded
Lie algebra over $K$. 

If $\cL$ is simple and strictly Noetherian, 
then $\cL$ is isomorphic to
$\Witt(L)$ or $\Vir(L)$ for some 
finite extension $L$ of $K$.
\end{theorem}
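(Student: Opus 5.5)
The plan is to reduce the statement to the rank-one and higher-rank results just established, namely Corollary \ref{rk1} and Lemma \ref{Nork2}.

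First, I apply Lemma \ref{reduc}. It lets me replace the $\Z^n$-grading by a $\Z$-grading $\pi_*\cL$ whose homogeneous components are finite dimensional. Simplicity, strict Noetherianity and infinite dimensionality do not depend on the grading, so from now on $\cL$ is $\Z$-graded. By Corollary \ref{strongNoeth}, $\cL$ is strongly Noetherian, hence finitely generated. Its centroid $L$ is a finite extension of $K$, by Corollary \ref{simplecentroid} or Lemma \ref{split}(b1). I then choose a Cartan subalgebra $\fh\subset\cL_0$; one exists because $\cL_0$ is finite dimensional. By Lemma \ref{split}(b3) there is a finite splitting extension $E$ of $L$, so the root system $\Delta$, the root lattice $Q$ and the rank $\rk\cL$ are all defined. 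Since $\cL$ is infinite dimensional and each $\cL_n$ is finite dimensional, infinitely many degrees occur, so $\rk\cL\geq 1$.

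Next I split into two cases according to the rank. If $\rk\cL\geq 2$, Lemma \ref{Nork2} says a finitely generated simple Lie algebra of rank $\geq 2$ cannot be strictly Noetherian, which is a contradiction. Hence $\rk\cL=1$. Corollary \ref{rk1} then gives $\cL\simeq\Witt(L)$ or $\cL\simeq\Vir(L)$, with $L$ the centroid. This descent from $\overline K$ to $L$ rests on Lemma \ref{form}, via Hilbert 90.

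The main obstacle is the compatibility of the objects used in the reduction step. Lemma \ref{Nork2} and Corollary \ref{rk1} are phrased over $L$ and rely on $\cL_{\overline K}=\cL\otimes_L\overline K$ being simple and graded-isomorphic to $\Witt$ or $\Vir$. Here I would invoke Corollary \ref{simplecentroid}(b), which shows that $\cL\otimes_L\overline K$ is simple. I also need that strict Noetherianity over $K$ passes to every finite extension $E\supset L$. This holds because $\cL_E$ is a quotient, hence a section, of $\cL\otimes_K E$, and sections of weakly Noetherian algebras are weakly Noetherian by Lemma \ref{basic}(a).

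A further subtlety is that the Witt/Virasoro identification is only as $\Z$-graded algebras for the chosen grading $\pi_*\cL$. This causes no problem, since the conclusion is an abstract isomorphism and the grading was only an auxiliary device.
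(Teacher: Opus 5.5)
Your proposal is correct and follows the same route as the paper: reduce to a $\Z$-grading via Lemma~\ref{reduc}, rule out rank $\geq 2$ with Lemma~\ref{Nork2}, and conclude in rank one with Corollary~\ref{rk1}. The extra checks you add (finite generation, existence of a Cartan subalgebra and a splitting field, $\rk\cL\geq 1$, simplicity of $\cL\otimes_L\overline K$, and passage of strict Noetherianity to extensions of $L$) are hypotheses the paper's three-line proof uses without stating them.
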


\begin{proof} By Lemma \ref{reduc}, we can assume that $n=1$.
By Lemma \ref{Nork2}, the Lie algebra has rank one. Thus 
the Theorem follows from Corollary \ref{rk1}.
\end{proof}

\begin{cor}\label{vanishing} Let $\cL$ be a strictly Noetherian
$\Z^n$-graded Lie algebra. Then
$$\cL_{(1)}=0.$$
\end{cor}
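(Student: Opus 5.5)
We argue by contradiction: suppose $\cL_{(1)}\neq 0$ and produce a simple quotient of $\cL_{(1)}$ that is forbidden by Theorem A(e).

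\textbf{Reductions.} By Corollary \ref{strongNoeth}, $\cL$ is strongly Noetherian, so every subalgebra is finitely generated. Since $\cL$ is weakly Noetherian, every ideal and every quotient of an ideal is Noetherian in the sense needed (Corollary \ref{idealACC}). The same holds after tensoring with any finite extension, because strict Noetherianity passes to $\cL\otimes E$.

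\textbf{Step 1: $\cL_{(1)}$ is a graded ideal with a graded maximal ideal.} I would show by transfinite induction that each $\cL_{(\alpha)}$ is a graded ideal. The key input is that maximal ideals of codimension $>1$ are characteristic (Theorem \ref{characteristic}). The grading derivations (degree operators $x\mapsto m_i x$ on $\cL_{\bf m}$) are derivations, so radicals and derived algebras of graded ideals are stable under them, hence graded. For $\cL_{(0)}=\cD^*\rad^f(\cL)$, note that $\rad^f$ is an intersection of characteristic ideals and $\cD^*$ is the largest perfect ideal, so both are stable under degree derivations. Now assume $\cL_{(1)}\neq0$. It is a nonzero perfect graded ideal, finitely generated as a subalgebra. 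Choosing a finite homogeneous generating set and using Zorn's lemma on proper graded ideals, I would obtain a maximal graded ideal $\fm$, so that $\cL_{(1)}/\fm$ is graded-simple.

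\textbf{Step 2: pass from graded-simple to simple.} This is the main obstacle: Theorem \ref{simplequot} needs $\cL_{(1)}/\fm$ simple as an ungraded Lie algebra with finite-dimensional components. The components are finite-dimensional as quotients of those of $\cL$. For simplicity, a graded-simple Lie algebra with finite-dimensional centroid should be simple over its degree-zero centroid, or else be a loop-type algebra. Any homogeneous centroid element of nonzero degree would make the quotient a free module over $K[a,a^{-1}]$, as in the argument of Lemma \ref{split}(b1). But $\Cent$ is finite-dimensional by Proposition \ref{centroid}, so no such element exists and the centroid sits in degree zero. Then the ungraded ideals are graded, using that a maximal ideal is characteristic, hence stable under degree derivations. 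So the quotient $\fS:=\cL_{(1)}/\fm$ is simple.

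\textbf{Step 3: contradiction.} By Theorem A(d), $\fS$ is infinite-dimensional. It is strictly Noetherian, as a section of $\cL\otimes E$ for each finite extension $E$. Theorem \ref{simplequot} then gives $\fS\simeq\Witt(L)$ or $\Vir(L)$. Both are Krichever–Novikov algebras, being vector fields on $\mathbb A^1_L$ and on $\mathbb G_{m,L}$ respectively, regarded over $K$. This contradicts Theorem A(e), and more directly Lemma \ref{prepa}(c), which gives $\Out(\fS)\neq0$ against Grabowski's Theorem \ref{Grabowski}(b). Hence $\cL_{(1)}=0$.
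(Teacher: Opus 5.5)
Your proof is correct and is essentially the paper's argument. You assume $\cL_{(1)}\neq 0$, take a maximal ideal, use Theorem \ref{characteristic} (stability under the degree derivations) to see it is graded, identify the quotient via Theorem \ref{simplequot}, and contradict Theorem A(d),(e). The only difference is that you first take a maximal \emph{graded} ideal and then must show the graded-simple quotient is simple; that really follows from the quotient's ACC on ideals (every proper ideal lies in a maximal ideal, which is characteristic, hence graded, hence zero) rather than from your centroid remark, and the paper avoids this detour by taking an honest maximal ideal of $\cL_{(1)}$ directly via Corollary \ref{idealACC}.
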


\begin{proof} Assume otherwise. By
Corollary \ref{idealACC}, $\cL_{(1)}$ satisfies the ACC on ideals, therefore $\cL_{(1)}$ admits a maximal ideal. By Theorem \ref{characteristic}, $\fm$ is a graded ideal. 

 Thus by Theorem \ref{simplequot}, $\cL_{(1)}/\fm$ is either isomorphic to
$\Witt(L)$ or $\Vir(L)$  or $\cL_{(1)}/\fm$ is finite dimensional.

This contradicts  Assertions (d) and (e) of Theorem A,
which asserts that no simple quotient of
$\cL_{(1)}$ is a finite dimensional Lie algebra or  is a Krichever-Novikov
Lie algebra. We conclude that $\cL_{(1)}=0$.
\end{proof}

\section{The structure Theorem for perfect Noetherian graded Lie algebras}\label{proofB}

Let $\cL$ be a strictly Noetherian
$\Z^n$-graded Lie algebra. 

In Corollary 
\ref{vanishing}, we have already shown that
$\cL_{(1)}=0$. 
In Subsection \ref{L0/L1}, we determine
the structure of 
$\cL_{(0)}\simeq \cL_{(0)}/\cL_{(1)}$.
Furthermore, assume that $\cL$ is perfect.
In Subsection \ref{secL/L0}, we show that
$\cL/\cL_{(0)}$ is finite dimensional.
We conclude in the last subsection that,
up to isogeny, $\cL$ is a direct sum of
$\cL/\cL_{(0)}$ and $\cL_{(0)}$, which
proves Theorem B.

\subsection{Simple quotients of $\cL$}
\label{L0/L1}

In this subsection, we show that any strictly Noetherian
$\Z^n$-graded Lie algebra $\cL$ admits only finitely many maximal ideals of codimension $>1$. Then we deduce the structure of
$\cL_{(0)}\simeq \cL_{(0)}/\cL_{(1)}$.

\begin{lemma}\label{imageCSA} Let 
$\fs=\oplus_{n\in \Z}\, \fs_n$ be a finite dimensional simple Lie algebra endowed with a $\Z$-grading.

Then any Cartan subalgebra $\fh$ of $\fs_0$ is a
Cartan subalgebra of $\fs$.
\end{lemma}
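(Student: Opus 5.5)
The plan is to realize the grading by an inner derivation, and then use the corresponding element to show that the normalizer of $\fh$ in $\fs$ cannot leave $\fs_0$.

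\emph{Step 1: the grading is given by an element $d\in\fs_0$.} Define the degree derivation
$$D:\fs\to\fs,\qquad D(x)=nx \quad\text{for } x\in\fs_n.$$
It is a derivation because $[\fs_n,\fs_m]\subset\fs_{n+m}$. By Theorem \ref{cartan}(e) we have $\Out(\fs)=0$, so $D=\ad(d)$ for some $d\in\fs$. Write $d=\sum_n d_n$ with $d_n\in\fs_n$. Since $D(d)=[d,d]=0$, the element $d$ lies in $\fs_0$.

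Moreover $[d,x]=D(x)=0$ for every $x\in\fs_0$, so $d$ is central in $\fs_0$.

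\emph{Step 2: $d\in\fh$.} Because $d$ commutes with $\fh$, it normalizes $\fh$. Hence $d\in N_{\fs_0}(\fh)=\fh$.

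\emph{Step 3: $N_\fs(\fh)=\fh$.} The subalgebra $\fh$ is nilpotent by assumption, so only self-normalization in $\fs$ needs checking. Let $x\in N_\fs(\fh)$ and write $x=\sum_n x_n$ with $x_n\in\fs_n$.

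For $y\in\fh\subset\fs_0$, the component $[x_n,y]$ lies in $\fs_n$. Since $[x,y]\in\fh\subset\fs_0$, we get $[x_n,y]=0$ for all $n\neq 0$. Thus each $x_n$ with $n\neq0$ centralizes $\fh$, and in particular commutes with $d\in\fh$.

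But $[d,x_n]=nx_n$, so $nx_n=0$, and hence $x_n=0$ for $n\neq 0$ because $\operatorname{char}K=0$. Therefore $x=x_0\in N_{\fs_0}(\fh)=\fh$. So $\fh$ is nilpotent and self-normalizing in $\fs$, that is, a Cartan subalgebra of $\fs$.

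\emph{Main obstacle.} The only non-formal input is Step 1, producing $d$ with $\ad(d)=D$. This is exactly where Theorem \ref{cartan}(e) ($\Out(\fs)=0$ for semisimple $\fs$) is needed. It holds over any field of characteristic zero, so no splitting of $\fh$ or algebraic closure is required. Everything after that is bookkeeping with homogeneous components.
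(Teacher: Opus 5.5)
Your proof is correct and follows the paper's argument: you realize the degree derivation as $\ad(d)$ via $\Out(\fs)=0$, note that $d$ is central in $\fs_0$ and therefore lies in $\fh$, and use $d\in\fh$ to force $N_\fs(\fh)\subset\fs_0$, so that $N_\fs(\fh)=N_{\fs_0}(\fh)=\fh$. Your Step 3 spells out what the paper states tersely, namely that the centralizer of $d$ in $\fs$ is exactly $\fs_0$.
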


\begin{proof} Since by Cartan theorem \ref{cartan}(e),
we have $\Out(\fs)=0$, there is an element $D\in \fs$ such that

$$[D,x]=nx\,\,\forall x\in \fs_n.$$

\noindent Clearly, $D$ belongs to $\fs_0$. Since
$D$ is central in $\fs_0$, it belongs to
$\fh$. 

Since $N_D(\fs)=\fs_0$, we conclude that

$$N_\fs(\fh)=N_{\fs_0}(\fh)$$

\noindent therefore $\fh$ is a Cartan subalgebra of $\fs$.
\end{proof}

\begin{lemma}\label{finite} Let $\cL$ be a strictly Noetherian
$\Z^n$-graded Lie algebra.
Then $\cL$ admits only finitely many maximal ideals.
\end{lemma}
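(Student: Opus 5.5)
The idea is to bound the number of maximal ideals (of codimension $>1$, which are the ones relevant here) by $\dim\cL_0$. By Lemma \ref{reduc} I will first replace the $\Z^n$-grading by a $\Z$-grading with finite-dimensional components. The argument then comes from three facts: every such maximal ideal is graded, the graded simple quotients have nonzero degree-$0$ component, and $\cL_0$ surjects onto the direct sum of these components.

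\emph{Step 1: maximal ideals are graded.} Let $\fm$ be a maximal ideal of codimension $>1$. By Theorem \ref{characteristic}, $\fm$ is characteristic. The degree derivation $\partial:x\mapsto kx$ for $x\in\cL_k$ is a derivation of $\cL$, so $\partial\fm\subset\fm$. Since $\fm$ is stable under $\partial$, and every element lies in finitely many graded components, a Vandermonde argument shows that $\fm$ is a graded ideal. Hence $\fs:=\cL/\fm$ is a simple $\Z$-graded Lie algebra with $\fs_k$ the image of $\cL_k$.

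\emph{Step 2: $\fs_0\neq 0$.} If $\fs$ is finite dimensional, the proof of Lemma \ref{imageCSA} gives an element $D\in\fs_0$ with $[D,x]=kx$ on $\fs_k$. If $D=0$ then $\fs=\fs_0\neq0$; otherwise $\fs_0\ni D$ is nonzero. If $\fs$ is infinite dimensional, Theorem \ref{simplequot} (using strict Noetherianity) gives $\fs\simeq\Witt(L)$ or $\Vir(L)$. I then need that the induced $\Z$-grading still has a nonzero degree-$0$ part. I would argue as in the finite case, using the Euler field $t\,\d/\d t$ (Grabowski's theorem gives the relevant outer-derivation control), or directly via Lemma \ref{V}, which says that the trivial root occurs. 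This is the step I expect to be the most delicate: it must rule out a grading on $\Witt(L)$ or $\Vir(L)$ with zero degree-$0$ part. The cleanest route is to note that the degree derivation of $\fs$ is inner up to $\Out(\fs)$, which is small, so it is represented by a degree-$0$ element, and that element is nonzero because the grading is nontrivial.

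\emph{Step 3: a Chinese remainder argument.} Suppose $\fm_1,\dots,\fm_k$ are distinct maximal ideals of codimension $>1$. The quotients $\cL/\fm_i$ are simple and non-abelian, hence perfect. The standard argument, the Lie analogue of Lemma \ref{dense}, then shows that the natural map
$$\cL\to\textstyle\bigoplus_{i=1}^k\cL/\fm_i$$
is surjective. Briefly: by induction on $k$, the image is an ideal of the sum which projects onto each factor. Its intersection with each factor is an ideal of a simple algebra, and it is nonzero because the $\fm_i$ are distinct and the algebras are perfect. Since all $\fm_i$ are graded, this surjection is graded. Restricting to degree $0$ gives a surjection $\cL_0\to\bigoplus_i(\cL/\fm_i)_0$, and each summand is nonzero by Step 2. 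Therefore $k\leq\dim\cL_0<\infty$, so $\cL$ has only finitely many maximal ideals of codimension $>1$.
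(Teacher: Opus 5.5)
Your proof is correct and is essentially the paper's argument. You restrict to maximal ideals of codimension $>1$, which is the right reading since Theorem \ref{characteristic} only covers these; these ideals are graded, each graded simple quotient has nonzero degree-$0$ part (via the inner degree derivation, or Theorem \ref{simplequot}), and $\dim\cL_0<\infty$ then bounds their number. The paper instead uses the images of a Cartan subalgebra $\fh\subset\cL_0$ (via Lemma \ref{imageCSA}) and bounds by $\dim\fh$, leaving implicit the Chinese-remainder surjectivity you spell out; in your Step 2 the precise input is $\Out(\fs)=0$ by Theorem \ref{Grabowski}(b), and Lemma \ref{V} gives a real root rather than the trivial root, though a real root already forces $\fs_0\neq 0$.
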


\begin{proof} By Lemma \ref{reduc},
we can assume that $\cL$ is $\Z$-graded.
Let $\fh$ be a Cartan subalgebra of $\cL_0$,
let $\fm_1,\ldots,\fm_m$ be a finite family of distinct maximal ideals, and let
$\fh_i$ be the image of $\fh$ in $\cL/\fm_i$.

By Theorem \ref{characteristic}, we observe that all ideals
$\fm_i$ are graded ideals. When $\cL/\fm_i$ is finite dimensional,
$\fh_i$ is a Cartan subalgra of $\cL/\fm_i$ by
Lemma \ref{imageCSA}.
Otherwise, $\cL/\fm_i$ is isomorphic to
$\Witt(L)$ or $\Vir(L)$ by Proposition
\ref{simplequot} and $\fh_i$ is isomorphic to $L$, the zero part of $\Witt(L)$ or $\Vir(L)$.

Therefore the number of maximal ideals is bounded by $\dim \fh$.
\end{proof}

Let $E$ be a finite extension of $K$. Recall that 
$\Witt(E)$ is centrally closed \cite{Fuks} and that
$\widehat{\Vir(E)}$ is a central extension of
$\Vir(E)$ by $E$ \cite{Fuks}.

\begin{cor}\label{L0} We have

 $$
\widehat{\cL}_{(0)}\simeq 
\,\big[\oplus_{i=1}^n\Witt(E_i)\big]\,\oplus 
\,\big[\oplus_{j=1}^m\widehat{\Vir}(F_j)\big]
$$

\noindent where 
$n$, $m$ are integers and 
$E_1,\ldots,E_n,F_1,\ldots, F_m$ are finite extensions of $K$.
\end{cor}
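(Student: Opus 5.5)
We work with $\cL$ a strictly Noetherian $\Z^n$-graded Lie algebra, and the plan is to realize $\cL_{(0)}$ as a central extension of a finite direct sum of simple graded Lie algebras $\Witt(E_i)$ and $\Vir(F_j)$, and then pass to universal central extensions. By Corollary \ref{vanishing}, $\cL_{(1)}=[\rad\cL_{(0)},\rad\cL_{(0)}]=0$, so $\fa:=\rad(\cL_{(0)})$ is abelian, hence finite dimensional by Lemma \ref{basic}(b). By Theorem A(a), $\cL_{(0)}$ is a perfect ideal; by Theorem A(b), $\cL_{(0)}=\cL_{(0)}/\cL_{(1)}$ is a central extension of $\cL_{(0)}/\fa$ by the finite dimensional center $\fa$. (If $\cL_{(0)}=0$ the statement holds with $n=m=0$, so assume $\cL_{(0)}\neq 0$.)

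\emph{Step 1: the quotient $\cL_{(0)}/\fa$.} The ideal $\cL_{(0)}$ is perfect, hence characteristic (Lemma \ref{perfect-char}), so it is graded; it is a graded section of $\cL$, and $\cL_{(0)}\otimes L$ is a section of $\cL\otimes L$, so $\cL_{(0)}$ is again strictly Noetherian and $\Z^n$-graded. By Lemma \ref{finite}, $\cL_{(0)}$ has only finitely many maximal ideals $\fm_1,\dots,\fm_r$. Each has codimension $>1$ since $\cL_{(0)}$ is perfect, and each is graded by Theorem \ref{characteristic}, since the grading operators are derivations. The quotients $\cL_{(0)}/\fm_i$ are simple, infinite dimensional by Theorem A(d), and strictly Noetherian. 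By Theorem \ref{simplequot}, each is isomorphic to $\Witt(E)$ or $\Vir(F)$ for a finite extension $E$ or $F$ of $K$. By Lemma \ref{dense}, applied in the form of the Chinese remainder argument for the pairwise comaximal ideals $\fm_i$ (we have $\fm_i+\fm_j=\cL_{(0)}$, and the products $[\fm_i,\fm_j]$ behave like products of ideals because the quotients are perfect), the natural map
$$\cL_{(0)}/\fa\to\bigoplus_{i=1}^r\cL_{(0)}/\fm_i$$
is an isomorphism. Concretely, I would prove surjectivity by induction on $r$: the image of $\bigcap_{j<r}\fm_j$ in $\cL_{(0)}/\fm_r$ is a nonzero ideal, because otherwise $\bigcap_{j<r}\fm_j\subset\fm_r$, which forces some $\fm_j\subset\fm_r$ after taking brackets. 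Since $\cL_{(0)}/\fm_r$ is simple, that image is everything.

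\emph{Step 2: universal central extensions.} Now $\cL_{(0)}$ is a perfect central extension of $\fS:=\bigoplus_i\fS_i$ with $\fS_i\in\{\Witt(E),\Vir(F)\}$. Consequently $\widehat{\cL}_{(0)}\simeq\widehat{\fS}$, since both are the universal central extension of the perfect algebra $\fS$. The universal central extension of a finite direct sum of perfect Lie algebras is the direct sum of their universal central extensions; one can see this from $H_2(\fS_1\oplus\fS_2)=H_2(\fS_1)\oplus H_2(\fS_2)\oplus(H_1\otimes H_1)$ together with $H_1=0$, or via Lemma \ref{centralizer}(b), since $\Out(\Witt(E))=\Out(\Vir(F))=0$ by Grabowski's Theorem \ref{Grabowski} for the Krichever–Novikov algebras $\Vect_{\mathbb A^1}$ and $\Vect_{\mathbb G_m}$ over the respective fields. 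Finally, $\widehat{\Witt(E)}=\Witt(E)$ because $\Witt(E)$ is centrally closed, and $\widehat{\Vir(F)}=\widehat{\Vir}(F)$ by definition, both as recalled from \cite{Fuks}. This gives the stated decomposition.

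The main obstacle is Step 1, the surjectivity onto the direct sum for a Lie (rather than associative) algebra, together with checking that the kernel is exactly $\fa=\rad\cL_{(0)}$. The kernel claim is immediate, since the kernel is $\bigcap\fm_i$, which is $\rad\cL_{(0)}$ by definition. For the surjectivity, the fallback is Lemma \ref{centralizer}(a) applied inductively: the image of $\cL_{(0)}$ in $\bigoplus_i\cL_{(0)}/\fm_i$ projects onto each factor. Its intersection with a factor $\fS_r$ is an ideal of that image, and it is nonzero because $\fS_r$ is perfect and the projections are distinct. Hence this intersection equals $\fS_r$, and splitting off $\fS_r$ reduces the number of factors by one.
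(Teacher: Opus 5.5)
Your proposal is correct and follows the same route as the paper. Lemma \ref{finite} and Theorem \ref{simplequot}, with Theorem A(d) excluding finite dimensional factors, identify $\cL_{(0)}/\rad(\cL_{(0)})$ with a finite sum of $\Witt(E_i)$'s and $\Vir(F_j)$'s; Theorem A(b) and Corollary \ref{vanishing} then make $\cL_{(0)}$ a perfect central extension of that sum, and one passes to universal central extensions. You additionally make explicit steps the paper leaves implicit: applying Lemma \ref{finite} to the graded, strictly Noetherian algebra $\cL_{(0)}$ itself, the Chinese-remainder isomorphism onto the direct sum, and the behaviour of universal central extensions under perfect central extensions and finite direct sums.
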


\begin{proof} By Lemma \ref{finite} and Theorem \ref{simplequot},
we have
\begin{align*}
\cL_{(0)}/\rad(\cL_{(0)})\simeq 
\,\big[\oplus_{i=1}^n\Witt(E_i)\big]\,\oplus 
\,\big[\oplus_{j=1}^m{\Vir}(F_j)\big]
\end{align*}

\noindent for some integers 
$n$, $m$ and some finite extensions 
$E_1,\ldots,E_n,F_1,\ldots, F_m$ of $K$.
By Theorem A (b), $\cL_{(0)}/\cL_{(1)}$ is a central extension
of $\cL_{(0)}/\rad(\cL_{(0)})$ and by 
Corollary \ref{vanishing} we have
$\cL_{(1)}=0$. It follows that

$$
\widehat{\cL}_{(0)}\simeq 
\,\big[\oplus_{i=1}^n\Witt(E_i)\big]\,\oplus 
\,\big[\oplus_{j=1}^m\widehat{\Vir}(F_j)\big].
$$
\end{proof}

\subsection{Quasi-minuscule weights}

Let $\fs$ be a finite dimensional semi-simple Lie algebra,
and let $\fh$ be a split Cartan subalgebra.
We implicity assume that a Borel subalgebra is given, therefore the notion of
positive roots and dominant weights are well
defined.  For any dominant weight, we denote by
$L(\omega)$ the simple module with highest weight 
$\omega$.

First, assume that $\fs$ is simple and let
$W$ be its Weyl group. If the set of weight
of $L(\omega)$ is $W.\omega)$
or $W.\omega\cup\{0\}$, the weight $\omega$ is called {\it quasi-minuscule}. The latter case occurs only when $\omega$ is the highest  root if $\fs$ is of type ADE, or the highest short
root otherwise.

We will now extend the classical definition 
of quasi-minuscule weights to the
 case where $\fs$ is semi-simple, that is

$$\fs=\oplus_{i=1}^m\,\fs_i,$$

\noindent where each summand $\fs_i$ is simple.
A  weight $\mu$ is called 
{\it quasi-minuscule} if

$$\mu=\sum_{i\in I}\,\omega_i,$$

\noindent where $I$ is a non-empty subset
of $\{1,\ldots,m\}$ and each $\omega_i$ is a
quasi-minuscule weight for $\fs_i$.

\begin{lemma}\label{quasiminuscule}
Let $V$ be a finite dimensional $\fs$-module. Assume that 

\centerline{$V^{(\mu)}=0$}
 
 \noindent for any  quasi-minuscule weight $\mu$. Then $V$ is a trivial $\fs$-module, that is $\fs.V = 0$.
\end{lemma}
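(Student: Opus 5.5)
By complete reducibility (Weyl's theorem), $V$ is a direct sum of simple modules. It therefore suffices to show that every nontrivial simple module $L(\lambda)$ has some quasi-minuscule weight among its weights. The hypothesis $V^{(\mu)}=0$ for all quasi-minuscule $\mu$ then forces every simple summand to be trivial, so $\fs.V=0$.

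Write $\fs=\oplus_{i=1}^m\fs_i$ and $\fh=\oplus_i\fh_i$. A simple module is an outer tensor product $L(\lambda)=\boxtimes_i L_i(\lambda_i)$, and its weights are the sums $\sum_i\nu_i$ with each $\nu_i$ a weight of $L_i(\lambda_i)$. Since $L(\lambda)$ is nontrivial, the set $I=\{i\mid \lambda_i\neq 0\}$ is nonempty. For $i\notin I$ the only weight of $L_i(0)$ is $0$. So it is enough to find, for each $i\in I$, a quasi-minuscule weight $\omega_i$ of $\fs_i$ that occurs as a weight of $L_i(\lambda_i)$. Then $\mu=\sum_{i\in I}\omega_i$ is a weight of $L(\lambda)$ and is quasi-minuscule by the definition just given.

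This reduces the lemma to the simple case: \emph{every nontrivial simple $\fs_i$-module $L(\lambda)$ has a quasi-minuscule weight}. The classical argument runs as follows.
\begin{enumerate}
\item[(1)] The weights of $L(\lambda)$ are saturated: they form the set of all $\nu\in\lambda+Q$ such that $w\nu\le\lambda$ for every $w\in W$.
\item[(2)] Consequently, every dominant weight $\nu\le\lambda$ with $\nu\in\lambda+Q$ is a weight of $L(\lambda)$.
\item[(3)] Choose a dominant $\nu\neq 0$ in $\lambda+Q$ that is minimal for the dominance order among nonzero dominant weights in this coset. Such a $\nu$ exists because $\lambda\neq 0$ and the set of dominant weights below $\lambda$ is finite.
\item[(4)] This $\nu$ is either minuscule, in which case all weights of $L(\nu)$ lie in $W\nu$, or, when $\lambda\in Q$, it is the highest short root $\theta_s$. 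In the latter case the weights of $L(\nu)$ are $W\nu\cup\{0\}$. In both cases $\nu$ is quasi-minuscule and is a weight of $L(\lambda)$ by (2).
\end{enumerate}

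The main point to check is step (4): in the root-lattice coset, the unique minimal nonzero dominant weight is $\theta_s$. This is a standard fact (Bourbaki, Ch.~VIII, \S7, Ex.; see also Stembridge's work on minuscule posets). One argues as follows. If $\nu$ is dominant, nonzero and in $Q$, then $\nu\ge\theta_s$. Indeed, $\nu$ is a nonnegative combination of simple roots, and one can descend by subtracting positive roots $\beta$ with $\langle\nu,\beta^\vee\rangle>0$, staying dominant in the coset, until reaching $\theta_s$. For the non-root cosets, the minimal dominant weights are exactly the minuscule ones, and their weight sets are $W\nu$.

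The base field is not assumed algebraically closed. Since the paper assumes $\fh$ is split, the whole argument takes place with the split form, and highest weight theory applies directly. If one prefers, one can also extend scalars, since the weight-space condition is preserved.
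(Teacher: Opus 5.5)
Your proposal is correct and follows the same route as the paper: reduce to a nontrivial simple module, write it as a tensor product of simple $\fs_i$-modules, choose a quasi-minuscule weight in each nontrivial factor, and add them. The paper simply cites Bourbaki, Ch.~VIII, for the simple-factor fact, which you additionally sketch via saturation and minimal dominant weights. The only loose point is the descent sketch in step (4): for an arbitrary positive root $\beta$ with $\langle\nu,\beta^\vee\rangle>0$, the weight $\nu-\beta$ need not be dominant, so you should invoke Stembridge's lemma or a minimal-length argument there. The fact $\nu\geq\theta_s$ itself is standard and correctly cited.
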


\begin{proof}  Clearly the statement amounts to the fact that whenever $V$ is a nontrivial
simple $\fs$-module $V$, we have $V^{(\mu)}\neq 0$, for some  quasi-minuscule weight $\mu$. 

Indeed $V$ is of the form

$$V=\otimes_{i\in I}\, V_i,$$

\noindent where $I$ is a non-empty subset
of $\{1,\ldots,m\}$ and each $V_i$ is a 
nontrivial simple $\fs_i$-module. Therefore for
$j\not\in I$, $\fs_j$ acts trivially.

By \cite{Bourbaki} {Chapter 8}, for each
$i\in I$, there is a quasi-minuscle weight 
$\omega_i$ such that $V_i^{(\omega_i)}\neq 0$.
Thus we have

$$V^{(\omega)}\neq 0,$$

\noindent where $\omega$ is the quasi-minuscule weight $\omega=\sum_{i\in I}\,\omega_i$.
\end{proof}

\subsection{Proof that $\cL/\cL{(0)}$ is finite dimensional}
\label{secL/L0}

We will now use that $\cL$ is assumed to be perfect.

\begin{lemma}\label{stable} Let $\cL$ be a  strictly 
Noetherian perfect $\Z$-graded Lie algebra.
Then 

\begin{itemize}
\item[(a)] $\cL/\rad^f(\cL)$ is finite dimensional, and
\item[(b)] there is an integer $M$ such that
$$\cC^{M+1} \rad^f(\cL)=
\cC^M \rad^f(\cL).$$
\end{itemize}
\end{lemma}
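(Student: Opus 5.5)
Part (a) will follow from the finiteness results for maximal ideals. Lemma \ref{finite} says that $\cL$ has only finitely many maximal ideals $\fm_1,\ldots,\fm_r$. By definition, $\rad^f(\cL)$ is the intersection of those $\fm_i$ with $1<\codim\fm_i<\infty$, so $\cL/\rad^f(\cL)$ embeds into $\prod_i \cL/\fm_i$ over those indices. This is a finite product of finite dimensional simple Lie algebras, hence finite dimensional. Since $\cL$ is perfect, no maximal ideal has codimension one, so no degenerate case arises. In fact, by the Chinese-remainder argument of Lemma \ref{dense} applied to the images in $U(\cL)$ acting on each quotient, the embedding is onto and $\cL/\rad^f(\cL)\simeq \fs:=\oplus_i \fs_i$ is semisimple. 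We do not need surjectivity for (a), but it will help in (b).

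For (b), set $\fr:=\rad^f(\cL)$. The plan is to show that the graded pieces $V_k:=\cC^k\fr/\cC^{k+1}\fr$ are finite dimensional, and that they eventually cannot carry any nontrivial $\fs$-module structure while still being nonzero. First, each $V_k$ is an abelian section, so it is finite dimensional by Lemma \ref{basic}(b). Each $\cC^k\fr$ is an ideal of $\cL$, because $\fr$ is an ideal. Hence $\cL$ acts on $V_k$, and $\fr$ acts trivially on it, so $V_k$ is a finite dimensional $\fs$-module.

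Next, I would show that $V_1$ generates everything: the bracket induces surjections $V_1^{\otimes k}\to V_k$ of $\fs$-modules. If some $V_k$ has a trivial isotypic component, that part is central in $\cL/\cC^{k+1}\fr$ modulo lower terms, and perfectness of $\cL$ should force it to vanish. My first attempt would be to write $\cL=[\cL,\cL]$ and use a Levi lift of $\fs$ inside the finite dimensional algebra $\cL/\cC^{k+1}\fr$. This quotient is finite dimensional, since $\cL/\fr$ and all the $V_j$ with $j\le k$ are. Perfectness of that quotient then implies that its radical is nilpotent and is generated by the nontrivial $\fs$-isotypic part (Theorem \ref{cartan}(c)).

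The main obstacle is bounding $k$. Here I would use the $\Z$-grading. Via Lemma \ref{reduc}, pick a Cartan subalgebra $\fh$ of $\cL_0$ and pass to a splitting field $E$, which is permitted by strict Noetherianity. Its image in $\fs$ is a Cartan subalgebra by Lemma \ref{imageCSA}. If the descending series did not stabilize, we would get infinitely many nonzero $V_k$, which are nontrivial $\fs$-modules by the previous step. By Lemma \ref{quasiminuscule}, each has a nonzero weight space for some quasi-minuscule weight $\mu$, and there are only finitely many such $\mu$. Choose one $\mu$ that occurs for infinitely many $k$. Taking a weight vector of weight $\mu$ in each of these $V_k$ together with the degree, I would try to assemble an infinite dimensional abelian section of $\cL_E$, in the style of the proof of Lemma \ref{Nork2}. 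The relevant fact is that $2\mu$ is never a weight of $V_1^{\otimes 2}$ sitting inside a simple constituent of the form needed, so the brackets of these vectors land in $\cC^{k+l+1}$ modulo the appropriate weight filtration. This produces a subalgebra whose abelianization is infinite dimensional, contradicting weak Noetherianity of $\cL_E$. Making this weight bookkeeping rigorous is the step I expect to be hardest.
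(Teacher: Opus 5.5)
Part (a) is fine and coincides with the paper's argument. The gap is in (b), at the step meant to produce nontrivial layers. Your claim is that a trivial isotypic component of $V_k$ must vanish because $\cL$ is perfect. This is false, even under the hypotheses of the lemma.

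Take $\cL=\fsl_2\ltimes\mathfrak{n}$, where $\mathfrak{n}=K^2\oplus Kz$ is the Heisenberg algebra with $[u,v]=\det(u,v)\,z$, and $\fsl_2$ acts naturally on $K^2$ and trivially on $z$. This $\cL$ is perfect and finite dimensional, hence strictly Noetherian. We have $\rad^f(\cL)=\mathfrak{n}$, and $V_2=Kz$ is a nonzero trivial $\fs$-module, central in $\cL$. Replacing $\mathfrak{n}$ by the free $3$-step nilpotent Lie algebra on $K^2$ gives $V_2$ trivial but $V_3\simeq K^2\neq 0$. So a single trivial layer does not stop the series either.

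What perfectness actually gives is only $V_1=[\fs,V_1]$, via $\cL/[\fr,\fr]\simeq\fs\ltimes V_1$. The missing idea is how to use this. Suppose $\fs$ acts trivially on $V_k$ for all $k\geq N$. The bracket induces an $\fs$-equivariant surjection $\beta\colon V_1\otimes V_N\to V_{N+1}$, and it satisfies $\beta(x\cdot v\otimes w)=x\cdot\beta(v\otimes w)$ for $w\in V_N$. Hence $V_{N+1}=\beta([\fs,V_1]\otimes V_N)\subseteq\fs\cdot V_{N+1}=0$. This is exactly what converts ``only finitely many nontrivial $V_k$'' into stabilization. Without it, your contradiction argument has no starting point: a nonzero $V_k$ need not be a nontrivial $\fs$-module.

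Conversely, the step you expect to be hardest is easy. It needs no analysis of $V_1^{\otimes 2}$ or of $\cC^{k+l+1}\fr$.
\begin{itemize}
\item After splitting $\fh$, take any weight $\alpha\neq 0$. Then $\fp=\oplus_{j\geq 1}\cL^{(j\alpha)}$ is a subalgebra, and $\fq=\oplus_{j\geq 2}\cL^{(j\alpha)}$ is an ideal of $\fp$ containing $[\fp,\fp]$. So $\cL^{(\alpha)}\simeq\fp/\fq$ is an abelian section and is finite dimensional.
\item Since $\cC^k\fr$ is $\fh$-stable, $V_k^{(\omega)}\simeq(\cC^k\fr)^{(\omega)}/(\cC^{k+1}\fr)^{(\omega)}$. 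Hence $\sum_k\dim V_k^{(\omega)}\leq\dim\cL^{(\omega)}<\infty$ for each of the finitely many quasi-minuscule weights $\omega$.
\item By Lemma \ref{quasiminuscule}, there is $N$ with $V_k$ trivial for all $k\geq N$. The equivariance argument above then concludes, with $M=N+1$.
\end{itemize}
Your remark about $2\mu$ and $V_1^{\otimes 2}$ is beside the point. Brackets of weight-$\mu$ vectors automatically land in weight $2\mu$, that is, in $\fq$.
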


\begin{proof} By Lemma \ref{finite}, $\cL$ admits only finitely many maximal ideals, thus $\cL/\rad^f(\cL)$ is finite dimensional, which proves Assertion (a)

We now prove Assertion (b). 
Let $\fh$ be a Cartan subalgebra 
of  $\cL_0$. By Lemma \ref{split}(a), 
there is a finite extension $E$ of $K$ such that
$\fh\otimes E$ is a split Cartan subalgebra
in $\cL\otimes E$. 

We observe that  

$$\rad^f(\cL\otimes E)=\rad^f(\cL)\otimes E.$$

\noindent Even if it means choosing $\cL\otimes E$ instead of $\cL$, we may assume that $\fh$ is split.

Therefore $\cL$ admits a generalized weight decomposition

$$\cL=\oplus_{\alpha\in \fh^*}\cL^{(\alpha)},$$

\noindent where $\cL^{(\alpha)}=\{x\in\cL\mid 
(\ad(h)-\alpha(h))^N(x)=0 {\rm\,\, for\,} N\gg0\}$.

We claim that $\cL^{(\alpha)}$ is finite dimensional whenever $\alpha\neq 0$.
Set $\fp=\oplus_{k\geq 1}\,\cL^{(k\alpha)}$
and $\fq=\oplus_{k\geq 2}\,\cL^{(k\alpha)}$.
Then $\cL^{(\alpha)}$ is isomorphic to the abelian section $\fp/\fq$, which proves the claim.

Set $\fr=\rad^f(\cL)$, $\fs=\cL/\fr$
and, for any $k\geq 1$, set 
$V_k=\cC^k\fr/\cC^{k+1} \fr$.
We have already proved that $\fs$ is a 
(finite dimensional) semisimple Lie algebra.
Since they are abelian sections, each component $V_k$ is a finite dimensional $\fs$-module.

Since the Lie algebra $\cL/[\fr,\fr]$ is finite dimensional,
Theorem \ref{cartan}(b) implies that

$$\cL/[\fr,\fr]\simeq \fs\ltimes V_1.$$

\noindent Since $\cL$ is perfect
and $V_1$  is an abelian ideal, we deduce that

\begin{align}
V_1&=[\fs, V_1].
\end{align}

Let $\overline{\fh}$ be the image of $\fh$
in $\fs$. Since each  $V_k$ is an $\fs$-module,
$\fh$ acts diagonally  on $V_k$ 
and the action factors trough $\overline{\fh}$.
Thus for any quasi-minuscule weight 
$\omega$ we can consider the weightspace

$$\,V_k^{(\omega)}=\{x\in V_k\mid h.x=\omega(h)x
\,\forall h\in\fh\}.$$

We have proved that $\cL^{(\omega)}$ is finite dimensional, henceforth 

$$\oplus_{k{\geq 1}}
\,V_k^{(\omega)}$$

\noindent is finite dimensional. Therefore there
is an integer $N$, such that

$$V_k^{(\omega)}=0,$$

\noindent for all $k\geq N$ and all
quasi-minuscule weight $\omega$. By Lemma
\ref{quasiminuscule}, we obtain that
$\fs$ acts trivially on $V_k$ for $k\geq N$.

The Lie bracket
$\fr\times \cC^N\,\fr\to C^{N+1}\,\fr$
induces a surjective
$\fs$-equivariant map

$$\mu: V_1\otimes V_N
 \to V_{N+1}.$$
 
 \noindent We have proved that $V_1=\fs.V_1$
 and $\fs.V_N=\fs.V_{N+1}=0$. It follows that
 
 \begin{align*}
 V_{N+1}&=\mu(V_1\otimes V_N)\\
 &=\mu(\fs.V_1\otimes V_N)\\
 &=\fs.\mu(V_1\otimes V_N)\\
 &=\fs.V_{N+1},
 \end{align*}
 
\noindent from which  we conclude that $V_{N+1}=0$, that is
 
 $$\cC^{M}\,\fr=\cC^{M+1}\,\fr,$$
\noindent where $M=N+1$, which completes the proof.
\end{proof}

\begin{cor}\label{L/L0}
Let $\cL$ be a  strictly 
Noetherian perfect $\Z$-graded Lie algebra.

Then $\cL/\cL_{(0)}$ is finite dimensional.
\end{cor}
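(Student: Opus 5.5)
We combine the two assertions of Lemma \ref{stable} with the definition $\cL_{(0)}=\cD^*\rad^f(\cL)$. Set $\fr=\rad^f(\cL)$. By Lemma \ref{stable}(a), $\cL/\fr$ is finite dimensional. It therefore suffices to show that $\fr/\cD^*\fr$ is finite dimensional.

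By Lemma \ref{stable}(b), there is an integer $M$ with $\cC^{M+1}\fr=\cC^M\fr$. Since $\cL$ is weakly Noetherian, so is its section $\fr$ by Lemma \ref{basic}(a). Hence $\fr/[\fr,\fr]$ is finite dimensional, and Lemma \ref{stabilization}(a) shows that $\fr/\cC^M\fr=\fr/\Cs^\omega\fr$ is a finite dimensional nilpotent Lie algebra. It remains to see that $\cC^M\fr$ is perfect, so that $\cC^M\fr\subset\cD^*\fr$.

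For this, set $\fp=\cC^M\fr$. We have $[\fr,\fp]=\fp$, but we want $[\fp,\fp]=\fp$. The quotient $\fp/[\fp,\fp]$ is an abelian section, hence finite dimensional, and it is an $\fr$-module (since $[\fp,\fp]$ is an ideal of $\cL$, being characteristic in the ideal $\fp$, cf.\ Lemma \ref{perfect-char} style arguments) on which $\fr$ acts through the finite dimensional Lie algebra $\fr/\fp$. The latter is nilpotent. So its action on $\fp/[\fp,\fp]$ is by nilpotent operators: indeed $\cC^k\fr$ maps $\fp$ into $\cC^{M+k-1}\fr=\fp$ but, modulo $[\fp,\fp]$, the action of $\cC^M\fr=\fp$ is trivial, so iterating $M$ times the action of $\fr$ lands in $[\fp,\fp]$. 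Thus $\fr^{M}\cdot(\fp/[\fp,\fp])=0$. Combined with $[\fr,\fp]=\fp$ (hence $\fr^M\cdot \fp=\fp$ modulo $[\fp,\fp]$), we get $\fp=[\fp,\fp]$. Alternatively one can invoke Lemma \ref{basic}(f)/(g) style reasoning: $\cD^\omega\fr=\Cs^\omega[\fr,\fr]$, and the stabilized central series yields a perfect term.

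Hence $\fp\subset\cD^*\fr=\cL_{(0)}$. Then
$$\dim\cL/\cL_{(0)}\le\dim\cL/\fr+\dim\fr/\fp<\infty.$$
The main obstacle is the perfectness of $\cC^M\fr$, i.e.\ justifying the nilpotent-action argument above. All the substantive input (quasi-minuscule weights, finite dimensionality of $\cL^{(\alpha)}$) is already packaged in Lemma \ref{stable}.
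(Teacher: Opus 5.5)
\textbf{There is a genuine gap in your argument that $\fp=\Cs^M\fr$ is perfect.} Everything before that step is fine.

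The failure is here. Triviality of the action of $\Cs^M\fr=\fp$ on $V=\fp/[\fp,\fp]$ controls iterated brackets $[x_1,[x_2,\ldots,x_M]]$. It says nothing about iterated compositions $\ad(x_1)\cdots\ad(x_M)$. A nilpotent Lie algebra need not act by nilpotent operators on a finite dimensional module.

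Your step uses only three facts: $[\fr,\fp]=\fp$, nilpotence of $\fr/\fp$, and finite dimensionality. All three hold for $\fr=Kh\oplus Ke$ with $[h,e]=e$:
\begin{itemize}
\item $\Cs^2\fr=\Cs^3\fr=Ke=\fp$ and $[\fr,\fp]=\fp$;
\item $\fr/\fp$ is abelian;
\item yet $\ad(h)$ acts as the identity on $V=\fp$, and $\fp$ is not perfect.
\end{itemize}
So this step cannot be proved without the hypothesis that $\cL$ is perfect, and your argument never uses it here; it enters only through the existence of $M$.

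Your fallback via Lemma \ref{basic}(f) does not help either. It identifies $\cD^\omega\fr$ with $\Cs^\omega[\fr,\fr]$, which is $0$ in this example, whereas $\Cs^\omega\fr=Ke$.

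\textbf{The missing input is the perfectness of $\cL$ combined with Theorem \ref{cartan}(c), i.e.\ Lemma \ref{basic}(g).} This is exactly what the paper uses:
\begin{enumerate}
\item By Lemma \ref{stable}(a), $\fr$ has finite codimension, so $\cL/\cD^k\fr$ is a finite dimensional perfect Lie algebra.
\item Its solvable ideal $\fr/\cD^k\fr$ is therefore nilpotent, which gives $\cD^k\fr\supset\Cs^\omega\fr=\Cs^M\fr$ for every $k$.
\item Since also $\cD^k\fr\subset\Cs^{2^k}\fr$, one gets $\cD^k\fr=\Cs^M\fr$ once $2^k\geq M$.
\item Hence $\Cs^M\fr$ is perfect, and it equals $\cL_{(0)}$.
\end{enumerate}

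Your own approach can also be repaired by letting all of $\cL$, not just $\fr$, act on $V$:
\begin{enumerate}
\item The image $\fg$ of $\cL$ in $\fgl(V)$ is finite dimensional and perfect.
\item The image of $\fr$ is a solvable ideal of $\fg$, hence lies in $\rad\fg=[\fg,\fg]\cap\rad\fg$.
\item That ideal acts by nilpotent operators on every finite dimensional $\fg$-module.
\item Engel's theorem then gives $\fr^{N}\cdot V=0$ with $N=\dim V$, and $[\fr,\fp]=\fp$ forces $V=0$.
\end{enumerate}
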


\begin{proof} By Lemma \ref{stable}(a),
$\rad^f(\cL)$ has finite codimension,
therefore by Lemma \ref{basic}(g) we have

$$\cD^\omega\rad(\cL)=\cC^\omega\rad(\cL)$$

By Lemma \ref{stable}(b), there is an integer $M$ 
such that
$$\cC^\omega\rad(\cL)=\cC^M\rad(\cL).$$

Hence the descending derived series stabilizes. It implies that $\cC^M\rad(\cL)$ is perfect, which means that

$$\cL_{(0)}=\cC^M\rad(\cL),$$

\noindent which implies the claim.
\end{proof}

\subsection{Proof of Theorem B}

We finally prove  the following result:

\begin{mainB}
Let $\cL$ be a perfect strictly Noetherian
$\Z^n$-graded Lie algebra
and let $\widehat{\cL}$ be its its universal central extension. Then we have

\begin{align*}
\widehat{\cL}\simeq \fg\,\oplus 
\,\big[\oplus_{i=1}^n\Witt(E_i)\big]\,\oplus 
\,\big[\oplus_{j=1}^m\widehat{\Vir}(F_j)\big]
\end{align*}

\noindent where $\fg\simeq \fs\ltimes\fr$ is a  (perfect) finite dimensional Lie algebra,
$n$, $m$ are integers and 
$E_1,\ldots,E_n,F_1,\ldots, F_m$ are finite extensions of $K$.
\end{mainB}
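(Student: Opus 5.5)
We plan to assemble the three pieces already established: the structure of $\cL_{(0)}$ (Corollary \ref{L0}), the finite dimensionality of $\cL/\cL_{(0)}$ (Corollary \ref{L/L0}), and a splitting argument based on Lemma \ref{centralizer} together with Grabowski's theorem \ref{Grabowski}(b). By Lemma \ref{reduc} we first replace the $\Z^n$-grading by a $\Z$-grading with finite dimensional components. This is harmless: perfectness, strict Noetherianity and the ideals $\cL_{(\alpha)}$ do not depend on the grading.

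Set $\fa:=\cL_{(0)}$. By Theorem A(a), $\fa$ is a perfect ideal, and by Corollary \ref{L/L0} the quotient $\fg_0:=\cL/\fa$ is finite dimensional. Since $\cL$ is perfect, $\fg_0$ is a finite dimensional perfect Lie algebra, so Theorem \ref{cartan} gives $\fg_0\simeq \fs\ltimes\fr$ with $\fr$ nilpotent. Corollary \ref{vanishing} gives $\cL_{(1)}=0$, so $\rad(\fa)$ is abelian. By Theorem A(b), $\fa$ is then a central extension of $\fa/\rad\fa\simeq \bigoplus\Witt(E_i)\oplus\bigoplus\Vir(F_j)$ by the finite dimensional center $\fz=\rad\fa$.

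The quotient $\fa/\fz$ is a finite direct sum of simple Lie algebras $\Witt(E_i)$ and $\Vir(F_j)$. Each of these is a Krichever--Novikov algebra $\Vect_X$ (with $X$ the affine line or $\mathbb{G}_m$ over $E_i$ or $F_j$, viewed over $K$), so Theorem \ref{Grabowski}(b) gives $\Out=0$ for each summand. To pass to the direct sum we argue as follows. A derivation of $\bigoplus_k\fS_k$, with the $\fS_k$ simple and centerless, maps each $\fS_k=[\fS_k,\fS_k]$ into itself, because $[\fS_k,\fS_l]=0$ for $k\neq l$ and the sum is centerless. Hence $\Out(\fa/\fz)=\bigoplus_k\Out(\fS_k)=0$. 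Lemma \ref{centralizer}(b) now applies, since $\cL$ and $\fa$ are both perfect, and yields
$$\widehat{\cL}\simeq \widehat{\cL/\fa}\oplus\widehat{\fa}.$$

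It remains to identify the two factors. By Corollary \ref{L0}, $\widehat{\fa}\simeq\bigoplus\Witt(E_i)\oplus\bigoplus\widehat{\Vir}(F_j)$. The universal central extension $\fg:=\widehat{\fg_0}$ of the finite dimensional perfect Lie algebra $\fg_0$ is finite dimensional, because $H_2(\fg_0)$ is finite dimensional. It is perfect, so Theorem \ref{cartan} gives $\fg\simeq\fs\ltimes\fr'$. This yields Theorem B.

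The main obstacle we expect is the application of Lemma \ref{centralizer}(b). Its proof establishes only $\fG/\fz\simeq\fG/\fa\oplus\fa/\fz$, so we must check that this splitting indeed lifts to the universal central extensions. It does: the universal central extension of a direct sum of perfect Lie algebras is the direct sum of their universal central extensions, and $\widehat{\fG}=\widehat{\fG/\fz}$ because $\fz$ is central. A second point to verify is that $\Witt(E)$ and $\Vir(E)$ qualify as Krichever--Novikov algebras over the base field $K$, so that Grabowski's theorem applies to them.
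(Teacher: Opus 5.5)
Your proposal is correct and follows the paper's proof of Theorem B essentially step for step: $\Out(\cL_{(0)}/\fz)=0$ via Grabowski, then Lemma \ref{centralizer}(b) to split $\widehat{\cL}\simeq\widehat{\cL/\cL_{(0)}}\oplus\widehat{\cL_{(0)}}$, then Corollaries \ref{L/L0} and \ref{L0} to identify the two factors. Your extra checks (passing $\Out=0$ to the direct sum, lifting the splitting to universal central extensions) fill in details the paper leaves implicit. The one point you assert without proof is that $\fz$ is central in all of $\cL$, not just in $\cL_{(0)}$, which you need for $\widehat{\cL}=\widehat{\cL/\fz}$. It does hold: for $x\in\cL$, choose $y\in\cL_{(0)}$ with $\ad(x)-\ad(y)$ mapping $\cL_{(0)}$ into $\fz$; since a derivation of the perfect algebra $\cL_{(0)}$ with values in its center vanishes, $[x,\fz]=[y,\fz]=0$.
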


\begin{proof} Let
$\fz$ be the center of $\cL_{(0)}$.
By Corollary \ref{L0},  $\cL_{(0)}/\fz$ is a direct sum 
of Witt and Virasoro Lie algebras. Thus by
Grabowski Theorem \ref{Grabowski}, we have
$\Out(\cL_{(0)}/\fz)=0$. Since $\cL$ and $\cL_{(0)}$ are perfect,
by Lemma \ref{centralizer}(b),
we have

$$\widehat{\cL}=\widehat{\cL/\cL_{(0)})}+\widehat{\cL_{(0)}}.$$

\noindent By Corollary \ref{L/L0}, 
$\fg:=\widehat{\cL/\cL_{(0)})}$ is finite dimensional.
  Moreover by Corollary \ref{L0}, we have

 \begin{align*}
\widehat{\cL}_{(0)}\simeq 
\,\big[\oplus_{i=1}^n\Witt(E_i)\big]\,\oplus 
\,\big[\oplus_{j=1}^m\widehat{\Vir}(F_j)\big]
\end{align*}

\noindent for some integers
$n$, $m$  and some finite extensions
$E_1,\ldots,E_n,F_1,\ldots, F_m$  of $K$. This completes the proof of the Theorem.
\end{proof}

\begin{cor}\label{SW}
Let $\cL$ be a perfect 
$\Z^n$-graded Lie algebra. 
The algebra $U(\cL)$ is left Noetherian
only if $\cL$ has finite dimension.
\end{cor}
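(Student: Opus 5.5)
We argue by contraposition: assuming $U(\cL)$ is left Noetherian, we show $\dim \cL<\infty$. By Corollary \ref{logical} and the lemma preceding it, left Noetherianity of $U(\cL)$ is preserved under finite field extensions. So $U(\cL\otimes L)=U(\cL)\otimes L$ is left Noetherian for every finite extension $L$ of $K$.

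The first step is to check that a left Noetherian enveloping algebra forces strong Noetherianity. For any subalgebra $\fm\subset\cL$, the left ideal $U(\cL)\fm$ is finitely generated, and by PBW this gives that $\fm$ is finitely generated. This is the standard implication recorded in Corollary \ref{logical}. Hence $\cL$ is strictly strongly Noetherian, and in particular strictly weakly Noetherian. Since $\cL$ is perfect, Theorem B applies and gives
$$\widehat{\cL}\simeq \fg\oplus\big[\oplus_{i}\Witt(E_i)\big]\oplus\big[\oplus_j\widehat{\Vir}(F_j)\big],$$
with $\fg$ finite dimensional.

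Next, suppose $\cL$ is infinite dimensional. Then $\widehat{\cL}$ is infinite dimensional, so some factor $\Witt(E_i)$ or $\widehat{\Vir}(F_j)$ occurs. Consequently $\cL$ itself surjects onto $\Witt(E_i)$ or onto $\Vir(F_j)$. To see this, note that $\cL\simeq\widehat{\cL}/\fz$ with $\fz$ central, and project $\widehat{\cL}$ onto that factor. The image of $\fz$ there is central, hence zero in $\Witt(E)$ (which is centerless) and contained in the center $F$ of $\widehat{\Vir}(F)$. Thus $\cL$ has a quotient isomorphic to $\Witt(E)$ or $\Vir(F)$, and $U(\cL)$ surjects onto $U(\Witt(E))$ or $U(\Vir(F))$. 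Quotients of left Noetherian rings are left Noetherian, so the contradiction reduces to showing that $U(\Witt(E))$ and $U(\Vir(F))$ are not left Noetherian as $K$-algebras.

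The main obstacle is exactly this final step, which we obtain from the Sierra--Walton theorem \ref{SWthm} and its extension \cite{Buzaglo}. Over $K$, the algebra $U(\Vir(F))$, with $F$ regarded as a $K$-Lie algebra, equals $U_F(\Vir(F))$, because the $F$-structure is not part of the $K$-enveloping algebra. Rather than relying on that identification, we would first use $\Vir(F)\otimes_K \overline{K}\simeq \Vir(\overline{K})^{[F:K]}$, a direct sum, together with the strict Noetherianity obtained above. Extending scalars to a Galois closure containing $F$, $U(\Vir(F))\otimes_K L$ surjects onto $U_L(\Vir(L))$. The Witt case is the same: $\Vir(L)$ contains $\Witt(L)$ as the subalgebra $\Der L[t]$, and by \cite{Buzaglo} Noetherianity fails for Krichever--Novikov algebras $\Vect_{\mathbb G_m}=\Vir$ as well as for $\Witt=\Vect_{\mathbb A^1}$. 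For the Sierra--Walton theorem over an arbitrary characteristic-zero field $L$ rather than $K$, we rely on their argument being field-independent, and this is where we expect to need the most care. Granting it, $U(\cL)\otimes L$ would be left Noetherian and surject onto a non-Noetherian algebra, a contradiction. Hence $\cL$ is finite dimensional.
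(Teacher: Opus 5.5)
Your proof is correct and has the same skeleton as the paper's: left Noetherian $U(\cL)$ $\Rightarrow$ $\cL$ strictly Noetherian $\Rightarrow$ Theorem B $\Rightarrow$ a Witt or Virasoro factor $\Rightarrow$ contradiction with Sierra--Walton. You finish differently, though.

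The paper stays over $K$. It observes that $\cL$ has no section isomorphic to $\Witt(K)$, for two reasons. First, left Noetherianity of enveloping algebras passes to quotients, and to Lie subalgebras because $U(\fG)$ is free over $U(\fH)$ for any Lie subalgebra $\fH\subset\fG$ (PBW). Second, every factor $\Witt(E)$ or $\Vir(F)$ contains the $K$-subalgebra $K[t]\,\d/\d t\simeq\Witt(K)$.

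You use only quotients. You make explicit something the paper leaves implicit: $\cL$ itself, not just $\widehat{\cL}$, maps onto $\Witt(E)$ or $\Vir(F)$. You then base change to a Galois closure $L$, using $\Vir(F)\otimes_K L\simeq\Vir(L)^{[F:K]}$ and its Witt analogue. This is sound, but it costs you two things: you need Sierra--Walton over $L$, and for $\Vir(L)$ as well as $\Witt(L)$.

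The first is harmless, since Theorem \ref{SWthm} is stated over an arbitrary field of characteristic zero. Your hedge is therefore not a real gap. On the second, if you deduce the $\Vir(L)$ case from $\Witt(L)\subset\Vir(L)$ rather than citing it, you are using exactly the subalgebra descent that gives the paper its shorter route.

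Finally, your aside that $U(\Vir(F))$ equals $U_F(\Vir(F))$ is false. For $F\neq K$ the natural map is not even surjective: its image is $K\oplus\Vir(F)\,U_F(\Vir(F))$. Since you explicitly do not rely on it, no harm is done.
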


\begin{proof} Assume that $U(\cL)$ is left Noetherian.
Then by Corollary \ref{logical}  the Lie algebra 
$\cL$ is strictly  Noetherian. 
By Sierra Walton Theorem \ref{SWthm}, $\cL$ does not admit a section
isomorphic to $\Witt(K)$. Thus by Theorem B,
$\cL$ is finite dimensional.
\end{proof}

\subsection*{Acknowledgements} 
The author  thanks Efim Zelmanov and Slava Futorny 
for the warm hospitality during their visit  to the 
Shenzhen International Center for Mathematics.
He also thanks Ivan Shestakov for interesting discussions
about polynomial identities. 

Initially, the author was skeptical about the existence of general structure results for  infinite dimensional Lie algebras,
beside the case of locally finite dimensional Lie algebras
treated by I. Stewart and its coauthors.  Special thanks are due to Nicolas Andruskiewitsch, who 
had raise the issue repeatedly.

\end{document}